\numberwithin{equation}{section}
\newcommand{\R}{\mathbb{R}}
\newcommand{\C}{\mathbb{C}}
\newcommand{\be}{\begin{equation}}
\newcommand{\en}{\end{equation}}
\newcommand{\ee}{\end{equation}}
\DeclareMathOperator{\dist}{dist}
\DeclareMathOperator{\supp}{supp}
\newcommand{\bt}{\begin{theorem}}
\newcommand{\et}{\end{theorem}}
\newcommand{\bp}{\begin{proof}}
\newcommand{\ep}{\end{proof}}
\newcommand{\bc}{\begin{cor}}
\newcommand{\ec}{\end{cor}}
\newcommand{\bl}{\begin{lemma}}
\newcommand{\el}{\end{lemma}}
\newcommand{\bprop}{\begin{prop}}
\newcommand{\eprop}{\end{prop}}
\newtheorem{theolett}{Theorem}
\newtheorem{theorem}{Theorem}[section]
\newtheorem{defi}[theorem]{Definition}
\newtheorem{remark}[theorem]{Remark}
\newtheorem{lemma}[theorem]{Lemma}
\newtheorem{prop}[theorem]{Proposition}
\newtheorem{cor}[theorem]{Corollary}
\newtheorem{claim}[theorem]{Claim}
\newtheorem{example}[theorem]{Example}
\pgfplotsset{compat=1.7}
\theoremstyle{definition}
\def\namedlabel#1#2{\begingroup
    #2%
    \def\@currentlabel{#2}%
    \phantomsection\label{#1}\endgroup
}
\author{A. J. Mendez}
\address{Universidade Estadual de Campinas, Rua Sérgio Buarque de Holanda, 651, cep 13083-859, Campinas, S\~{a}o Paulo, Brasil} \email{ajmendez@ime.unicamp.br}
\author[O. Ria\~no]{Oscar Ria\~no}
\address{Departamento de Matem\'aticas, Universidad Nacional de Colombia, Carrera 45 No. 26-85, Edificio Uriel Guti\'errez, Bogot\'a D.C., Colombia} \email{ogrianoc@unal.edu.co}
\thanks{}
\date{\today}
\title[On decay  properties for solutions of the ZK equation]{ On decay  properties for solutions of the Zakharov-Kuznetsov equation}
\keywords{}
\begin{document}
\begin{abstract}
This work mainly focuses on spatial decay properties of solutions to the Zakharov-Kuznetsov equation. For the two- and three-dimensional cases, it was established that if the initial condition $u_0$ verifies $\langle \sigma\cdot x\rangle^{r}u_{0}\in L^{2}(\left\{\sigma\cdot x\geq \kappa\right\}),$ for some $r\in\mathbb{N}$, $\kappa \in\mathbb{R}$, being $\sigma$ be a suitable non-null vector in the Euclidean space,  then the corresponding solution $u(t)$ generated from this initial condition verifies $\langle \sigma\cdot x\rangle ^{r}u(t)\in L^2\left(\left\{\sigma\cdot x>\kappa-\nu t\right\}\right)$, for any  $\nu >0$. Additionally,  depending on the magnitude of the weight $r$, it was also deduced some localized gain of regularity. In this regard, we first extend such results to arbitrary dimensions, decay power $r>0$ not necessarily an integer, and we give a detailed description of the gain of regularity propagated by solutions. The deduction of our results depends on a new class of pseudo-differential operators, which is useful for quantifying decay and smoothness properties on a fractional scale. Secondly, we show that if the initial data $u_{0}$ has a decay of exponential type on a particular half space, that is, $e^{b\, \sigma\cdot x}u_{0}\in L^{2}(\left\{\sigma\cdot x\geq \kappa\right\}),$ then the corresponding solution satisfies $e^{b\, \sigma\cdot x} u(t)\in H^{p}\left(\left\{\sigma\cdot x>\kappa-t\right\}\right),$ for all $p\in\mathbb{N}$,  and time $t\geq \delta,$ where  $\delta>0$. To our knowledge, this is the first study of such property. As a further consequence, we also obtain well-posedness results in anisotropic weighted Sobolev spaces in arbitrary dimensions. 

Finally, as a by-product of the techniques considered here, we show that our results are also valid for solutions of the Korteweg-de Vries equation.

\end{abstract}
\maketitle


\section{Introduction}\label{intro}

We consider solutions of the initial value problem (IVP) associated to the \emph{ Zakharov-Kuznetsov}-(ZK) equation:
\begin{equation}\label{ZK}
    \left\{\begin{aligned}
    &\partial_t u +
    \partial_{x_1}\Delta u+u\partial_{x_{1}}u =0,\quad (x_1,\dots,x_n)\in \R^{n}, \, t\in \mathbb{R}, \\
    &u(x,0)=u_0(x),
    \end{aligned}\right.
\end{equation}
where $n\geq 2$ denotes the spatial dimension and $\Delta=\partial_{x_{1}}^{2}+\partial_{x_{2}}^{2}+\cdots+\partial_{x_{n}}^{2}$ stands for the Laplace operator. The ZK equation was originally deduced in dimension $n=3$ in the context of plasma physics to model weakly nonlinear ion-acoustic waves in the presence of a uniform magnetic field. Nevertheless, it also has a physical meaning when $n=2;$ for a more detailed exposition on this subject we refer to  \cite{NaumkinShishmar1994, ShriraVoronoVyac1996, ZakharovKuznet1974,LannesLinaresSaut2013}. 
%

Referring to well-posedness  in $H^s(\mathbb{R}^n)$ and weighted spaces (a Cauchy problem is well-posed in a functional space if one can assure existence, uniqueness of solution, and continuous dependence of the data-to-solution flow map):

 In \cite{Kinoshita2021}, it was established that the two-dimensional ZK equation is locally well-posed (LWP) in $H^{s}(\mathbb{R}^2)$, $s>-\frac{1}{4}$, and globally well-posed (GWP) in $L^2(\mathbb{R}^2)$. For previous results on the well-posedness, see \cite{Faminski1996, LinaresPastor2009, MolinetPilod2015, GrunrockHerr2014}. It was also proved in \cite{Kinoshita2021} that the well-posedness theory in this case is sharp concerning the application of Banach fixed point theorem based on the integral formulation of \eqref{ZK}. 

Concerning the $n$-dimensional case with $n\geq 3$, in \cite{HerrKinoshita2023},  LWP was established in $H^{s}(\mathbb{R}^n)$, $s>\frac{n-4}{2}$, and thus the Cauchy problem \eqref{ZK} is GWP in $L^2(\mathbb{R}^3)$, and \eqref{ZK} is GWP in $H^1(\mathbb{R}^4)$ under smallness assumption of the $L^2(\mathbb{R}^4)$-norm (which appears as the ZK equation is $L^2$-critical for $n=4$).  For previous results, we also refer to \cite{LinaresSaut2009, MolinetPilod2015, RibaudVento2012}.

 The well-posedness has also been addressed in weighted spaces, such as $H^s(\mathbb{R}^2)\cap L^2((|x|^{2r_1}+|y|^{2r_1})\, \mathrm{d}x \mathrm{d}y)$, $r_1, r_2 \geq 0$, see \cite{BustamanteJimenezMejia2016,FonsecaPachon2016}. Also, in exponential spaces  $H^s(\mathbb{R}^n)\cap L^2(e^{b \cdot x}\, \mathrm{d}x)$, $n\geq 2$, LWP has been studied in \cite{DeleageLinares2023}. 

The ZK equation possesses \emph{soliton solutions} of the form $u(x_1,\dots,x_n,t)=Q_c(x_1-ct,\dots,x_n)$, $c>0$, where $Q_c=cQ(\sqrt{c} x_1,\dots,\sqrt{c}x_n)$, and $Q$ solves the elliptic equation
\begin{equation}\label{groundstateeq}
    \Delta Q-Q+\frac{1}{2}Q^2=0,
\end{equation}
and the dimension $n\leq 6$. It is well-known that there exists a unique positive radial symmetric solution $Q\in H^{1}(\mathbb{R}^n)\cap C^{\infty}(\mathbb{R}^n)$, such that there exists $\delta>0$ for which 
\begin{equation}\label{solitodecay}
|\partial^{\alpha}Q(x)|\lesssim e^{-\delta |x|}, \, \, \text{ for all }\, x\in \mathbb{R}^n,
\end{equation}
for all multi-index $\alpha$. In \cite{deBouard1996}, it was proved orbital stability and instability of solitons. The asymptotic stability for the two-dimensional case was established in \cite{CoteMunozPilodSimpson2016}, and for the three-dimensional case in \cite{FarahHolmerRoudYang2023}. For other studies on the ZK equation, such as the asymptotic behavior of solutions,  unique continuation properties, along with others, we refer to \cite{MendezMunozPobletePozo2021, Valet2021, CossetiFanelliLinares2019, BustamanteIsazaMejia2013} and the references therein. 

The initial valued problem \eqref{ZK} can be regarded as a generalization to higher dimensions of the widely studied initial value problem associated to the \emph{Korteweg-de Vries} equation (KdV)
\begin{equation}\label{KdV}
    \left\{\begin{aligned}
    &\partial_t u +
    \partial_{x}^3 u+u\partial_{x} u =0,\quad x\in \R, \, t\in \mathbb{R}, \\
    &u(x,0)=u_0(x).
    \end{aligned}\right.
\end{equation}
The KdV equation has been studied in several physical contexts such as shallow-water waves, long internal waves in a density-stratified ocean, and ion-acoustic waves in a plasma,  among many others, see \cite{JeffreyKakutani1972, KortewegVries1895, Miura1976} and reference therein.

This paper concerns the study of the propagation of regularity and decay of solutions of \eqref{ZK} when the initial condition enjoys extra localized decay (such as polynomial and exponential). This phenomenon is closely related to smoothing effects for dispersive equations. Such effects have been applied in different situations, among them, we remark on well-posedness theory, where they have been used to determine the minimal regularity required to deduce the existence/uniqueness of solutions. To motivate our results, we consider the initial value problem associated with the KdV equation \eqref{KdV}. Kato \cite{Kato1983} deduced that if $u\in C([0,T];H^{s}(\mathbb{R}))$ solves the KdV equation with an initial condition $u_0 \in H^s(\mathbb{R})$, $s>\frac{3}{2}$, then $\partial_x J^su\in L^{1}([0,T];L^2(-R,R))$ for any $R>0$. In other words, the solution to the initial value problem is, locally, one derivative smoother than the initial data. This result motivates the question: How is extra regularity of the initial data propagated by the solution flow of the equation?
Addressing such a question,  Isaza, Linares, and Ponce \cite{IsazaLinaresPonce2015} found what nowadays is known as  \emph{propagation of regularity phenomena} for solutions of the KdV equation. They proved that extra regularity in the initial data localized on the right-hand side of the real line travels to the left with an infinite speed. 
Since this pioneering work, the study of the propagation of regularity has been investigated for other dispersive equations: 
In dimension $n=1$, see \cite{BolingGuoquan2018, IsazaLinaresPonce2015,IsazaLinaresPonceBO2016, KenigLinaresVega2018,LinaresMiyaGus2019, LinarePOnceSmith2017,Argenis2020, MendezA2020,MunozPonceSaut2021, SegataSmith2017}, and in higher dimensions, $n\geq 2$, see \cite{RicardoArgenisOscar2022, IsazaLinaresPonceKP2016,LinaresPonce2018, Mendez2024,Nascimiento2019, Nascimiento2020,NascimentoA2021, Argenis2023}. For a more recent survey about the study of the propagation of the regularity principle, we refer to \cite{LinaresPonce2023}.

The increasing study of the propagation of regularity gives rise to other questions, including how localized polynomial or exponential behavior is propagated. Referring to polynomial weights, such questions have only been studied for the KdV equation and ZK equation with $n=2,3$, in the case of integer polynomial decay, see \cite{IsazaLinaresPonce2015,LinaresPonce2018}. In this sense, this article extends the results in \cite{IsazaLinaresPonce2015}  and \cite{LinaresPonce2018} to fractional weights, and it seems to be the first study of the propagation of localized fractional weights in higher-dimensional models. We emphasize that the presence of fractional weights offers more flexibility concerning applications and yields a complete study of the balance between decay and regularity.

Certainly, the initial data  $u_{0}$ of the IVP \eqref{ZK}  could have a stronger decay than the polynomial one, and the immediate question is to determine if such decay on a localized region is also propagated by solutions of ZK. Nevertheless, determining the class of weights that are preserved by the flow solution of the IVP \eqref{ZK} is by far a quite hard question to address. This situation is reflected, for example, in this work where it is necessary to define a new class of pseudo-differential operators to study such a problem in the case where the weights are of polynomial type. 
In this sense, inequality \eqref{solitodecay} is quite instructive to be our proposals, since it is natural to study the propagation of weights in the case where the initial data $u_{0}$ of the IVP \eqref{ZK} has a decay almost similar to the soliton solution on a particular half-space.  Such a problem has its roots in  Kato's work \cite{Kato1983}, where he deduced that the KdV equation has a parabolic behavior in $L^{2}$-weighted exponential spaces (see \cite{Kato1983} for more details).  More recently, the study of the persistence of solutions of the ZK equation in weighted exponential spaces was addressed in \cite{DeleageLinares2023}, constituting an extension of Kato's work to higher dimensions.

In this work, 
we establish that if for some $\beta\in \mathbb{R}, b>0,$  and some non-null vector $\sigma\in \mathbb{R}^{n},$ (which depend on the dispersion)  the initial data of the Cauchy problem \eqref{ZK} satisfies $e^{b \sigma\cdot x  }u_{0}\in L^{2}\left(\left\{\sigma \cdot x \geq \beta \right\}\right),$ then the corresponding solution of ZK  preserves this same decay on a moving region depending on $t,$ but unlike the polynomial case, the solution becomes smooth almost instantaneously. We shall point out that the exponential decay studied in this work is more general than that of the soliton solution $Q.$ 
To our knowledge, this work is the first one to address the propagation of exponential decay both in the one-dimensional case (KdV equation) and the multidimensional setting (the ZK equation).

The result already described leads us to conjecture that solutions could be analytic in the moving region where the smoothing effects occur, or in the worst case, it has to be in an intermediate class of functions between $C^{\infty}$ and analytical, e.g., some modified Gevrey class. Additionally, we believe that if some non-null initial data has the localized exponential decay considered in this work, this would imply that the corresponding solution $u(t)$ of ZK cannot have compact support in the region of propagation for any $t \in(0, T).$ Nevertheless, we do not address these questions in this work.


\subsection{Main results}

Before stating some results, let us introduce some preliminary notation. For a given non-null vector $\sigma$ in $\mathbb{R}^n$ and $\kappa \in \mathbb{R}$, we define the \emph{half-space} as the set determined by
\begin{equation*}
\mathcal{H}_{\{\sigma,\kappa\}}=\left\{x\in\mathbb{R}^n|\, \sigma \cdot x >\kappa\right\}.
\end{equation*}
Furthermore, for $\gamma, \kappa \in \mathbb{R}$ with $\gamma<\kappa$ we define the \emph{strip}
\begin{equation}\label{defQ}
Q_{\{\sigma,\gamma, \kappa\}}=\left\{x\in\mathbb{R}^n|\, \gamma<\sigma \cdot x<\kappa \right\}.
\end{equation}
As a further preliminary, we require the propagation of regularity results for solutions of the ZK equation. The following result was proven in \cite{Mendez2024} (for the integer regularity case $s\in \mathbb{Z}$ of the following theorem, see  \cite{LinaresPonce2018}). 
\begin{theolett}\label{TheorArg}
Let $u_0 \in H^{\big(\frac{n+2}{2}\big)^{+}}(\mathbb{R}^n)$. If for some $\sigma=(\sigma_1,\sigma_2,\dots,\sigma_n)\in \mathbb{R}^n$, $n\geq 2$ with
\begin{equation}
\sigma_1>0, \qquad \text{ and } \qquad \sqrt{3}\sigma_1> \sqrt{\sigma_2^2+\dots+\sigma_n^2}
\end{equation}
and for some $s\in \mathbb{R}$, $s>\frac{n+2}{2}$, and $\kappa \in \mathbb{R}$
\begin{equation}
\|J^s u_0\|_{L^2(\mathcal{H}_{\{\sigma,\kappa\}})}<\infty,
\end{equation}
then the corresponding solution $u=u(x,t)\in C([0,T];H^{\big(\frac{n+2}{2}\big)^{+}}(\mathbb{R}^n))$ of the IVP \eqref{ZK} satisfies: for any $\nu > 0$, $\epsilon>0$, 
\begin{equation}
\sup_{0\leq t \leq T} \int_{\mathcal{H}_{\{\sigma,\kappa+\epsilon-\nu t\}}} \big(J^r u(x,t) \big)^2 \mathrm{d}x \leq c^{\ast}
\end{equation}
for any $r\in (0,s]$ with $c^{\ast}=c^{\ast}\big(\epsilon,\sigma,T,\nu,\|u_0\|_{H^{\frac{n+2}{2}^{+}}},\|J^s u_0\|_{L^2(\mathcal{H}_{\{\sigma,\kappa\}})}\big)$. In addition, for any $\nu> 0$, $\epsilon>0$, and $\tau \geq 5\epsilon,$
\begin{equation}
\int_0^T \int_{Q_{\{\sigma, \epsilon-\nu t+\kappa,\tau-\nu t+\kappa\}}} \big(J^{s+1}u\big)^2 \mathrm{d}x \, \mathrm{d}t< c^{\ast},
\end{equation}
where $c^{\ast}=c^{\ast}\big(\epsilon,\sigma,\tau,T,\nu,\|u_0\|_{H^{\frac{n+2}{2}^{+}}},\|J^s u_0\|_{L^2(\mathcal{H}_{\{\sigma,\kappa\}})}\big)$.
\end{theolett}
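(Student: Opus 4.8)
The plan is to run a weighted–energy / local-smoothing argument in the spirit of Kato and Isaza--Linares--Ponce, adapted to the anisotropic operator $\partial_{x_1}\Delta$ and to the oblique half-space geometry. First I would reduce to smooth solutions: by the local well-posedness in $H^{((n+2)/2)^{+}}(\mathbb{R}^n)$ and a regularization of the data, it suffices to establish the stated bounds as \emph{a priori} estimates for smooth solutions on $[0,T]$, with constants depending only on the listed quantities, and then recover the general case by approximation, the uniqueness of the limit, and continuity of the flow. Next I would introduce a one-parameter family of cut-offs: smooth non-decreasing functions $\chi(\ell)$ with $\chi\equiv 0$ on $\ell\le 0$, $\chi\equiv 1$ on $\ell\ge\epsilon$, $\chi'\ge 0$, together with companion functions whose supports are suitably nested so that an induction on the order of differentiation can be closed; the relevant space--time weight is $\chi(\sigma\cdot x+\nu t-\kappa-c\epsilon)$, which encodes the leftward motion of the region with speed $\nu$, so that $\partial_t$ applied to the weight produces $\nu\chi'$, a term of the \emph{good} sign.

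The heart of the matter is the commutator identity for the linear part. For a test function $v$ and a weight $\phi=\phi(\sigma\cdot x)$ one computes, after integrating by parts,
\[
\int (\partial_{x_1}\Delta v)\,v\,\phi\,dx \;=\; \int \Bigl(\tfrac{\sigma_1}{2}|\nabla v|^2+(\sigma\cdot\nabla v)\,\partial_{x_1}v\Bigr)\phi'\,dx\;-\;\tfrac{\sigma_1}{2}|\sigma|^2\!\int v^2\,\phi'''\,dx,
\]
and the quadratic form $w\mapsto \tfrac{\sigma_1}{2}|w|^2+(\sigma\cdot w)\,w_1$ is positive definite precisely when $\sigma_1>0$ and $\sqrt3\,\sigma_1>\sqrt{\sigma_2^2+\dots+\sigma_n^2}$ — this is exactly where the hypothesis on $\sigma$ is used, and it is what turns $\phi'\ge 0$ into a genuine gain of one derivative, namely the positive term $\int|\nabla v|^2\phi'$. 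I would then take $v=\partial^{\beta}u$ with $|\beta|=m$ (first for integers $m\le s$) in the equation differentiated $m$ times, multiply by $\phi=\chi$ and integrate; the good term $\int|\nabla\partial^{\beta}u|^2\chi'$ is kept on the left, the $\phi'''$-term and the $\partial_t\chi=\nu\chi'$-term are absorbed using the inductive hypothesis (they involve at most $m$ derivatives on a slightly larger region, or precisely the smoothing term produced one step earlier), and the nonlinear contributions — in which at most one derivative lands on the ``bad'' factor — are controlled by $\|u\|_{H^{((n+2)/2)^{+}}}$ via the endpoint embedding $H^{((n+2)/2)^{+}}\hookrightarrow W^{1,\infty}$ together with the already-controlled localized norms. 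Gronwall in $t$ then yields both the pointwise-in-time bound for $\int (J^{r}u)^2$ over $\mathcal H_{\{\sigma,\kappa+\epsilon-\nu t\}}$ and, from the accumulated good term, the space--time bound for $\int\!\!\int (J^{s+1}u)^2$ over the strip.

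The main obstacle is the fractional range $s\notin\mathbb{Z}$: cut-offs do not commute with $J^{s}$, so one cannot simply differentiate a fixed number of times. I would handle this either by interpolating the integer-order estimates against suitably localized weights, or — more robustly — by replacing $\chi J^{s}$ with a genuine pseudo-differential operator whose symbol is localized both in $x$ (to the half-space, through $\chi$) and in frequency, and by tracking the commutator $[\,\partial_{x_1}\Delta,\ \mathrm{Op}(a)\,]$ in the calculus; its principal symbol reproduces exactly the positive quadratic form above, while the remaining terms are of lower order and fall under the inductive control. A secondary technical point is the oblique direction $\sigma$: a linear change of variables sending $\sigma$ to a coordinate axis turns $\partial_{x_1}\Delta$ into a non-standard constant-coefficient third-order operator, so all the commutator bookkeeping must be carried out either in the rotated frame or directly with $\phi(\sigma\cdot x)$ as above. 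Keeping the nonlinear estimates uniform in the cut-off parameters, and closing the induction all the way to the possibly non-integer top order $s$ (and then to arbitrary $r\in(0,s]$), is the part I expect to require the most care.
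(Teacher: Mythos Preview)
This statement is not proved in the present paper: it is quoted as a preliminary result from \cite{Mendez2024} (with the integer case attributed to \cite{LinaresPonce2018}), so there is no proof here against which to compare your proposal.

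That said, your outline is essentially the correct strategy and is the one carried out in those references. The weighted energy identity you wrote, the role of the positive-definite quadratic form (which in the paper's notation is $\langle \nabla v,\nabla v\rangle_M$ with the matrix $M$ appearing in the proof of Lemma~\ref{linearEstlemma1}), the induction on integer orders with nested cut-offs $\chi_{\epsilon,\tau}$, and the treatment of the nonlinearity via $H^{((n+2)/2)^{+}}\hookrightarrow W^{1,\infty}$ are all exactly as in that line of work. Your identification of the fractional range as the main obstacle is also right, and the remedy used in \cite{Mendez2024} is precisely the second option you mention: one works with localized pseudo-differential operators and tracks the commutator $[\partial_{x_1}\Delta,\Psi_a]$ in the calculus, the principal symbol reproducing the positive form and the remainders being of lower order. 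One small correction: the weight should be $\chi(\sigma\cdot x+\nu t-\kappa)$ (note the sign of $\kappa$), matching the region $\{\sigma\cdot x>\kappa+\epsilon-\nu t\}$ in the statement.
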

Our first main result studies how the persistence property in weighted spaces is propagated by solutions of the ZK equation. Roughly, we prove that if the initial condition enjoys extra polynomial decay localized in precise regions of the space, then the solution of \eqref{ZK} generated from it propagates the same amount of polynomial decay, and it gains extra localized regularity determined by the magnitude of the polynomial decay of the initial condition. 
\begin{theorem}\label{mainTHM}
Let $n\geq 2$ be fixed, $u_0 \in H^{\big(\frac{n+2}{2}\big)^{+}}(\mathbb{R}^n)$, and $\sigma=(\sigma_1,\sigma_2,\dots,\sigma_n)\in \mathbb{R}^n$, with
\begin{equation}\label{sigmacond}
\sigma_1>0, \qquad \text{ and } \qquad \sqrt{3}\sigma_1> \sqrt{\sigma_2^2+\dots+\sigma_n^2}.
\end{equation}
If for some $r\in \mathbb{R}^{+}$ and $\kappa \in \mathbb{R}$ \footnote{For $x\in\mathbb{R}^{n}$  we  use the notation   $\langle x \rangle:=(1+|x|^2)^{\frac{1}{2}}$. Additionally, for $x\in \mathbb{R}$, $\lfloor x \rfloor$ denotes the \emph{greatest integer less than or equal to} $x$, and $\lceil x\rceil$ denotes the \emph{least integer greater than or equal to} $x$. }
\begin{equation}\label{inicond} 
\|\langle \sigma \cdot x \rangle^r u_0\|_{L^2(\mathcal{H}_{\{\sigma,\kappa\}})}<\infty,
\end{equation}
then the corresponding solution $u=u(x,t)\in C([0,T];H^{\big(\frac{n+2}{2}\big)^{+}}(\mathbb{R}^n))$ of the IVP \eqref{ZK} satisfies: For any $\nu > 0$, $\epsilon>0$, 
\begin{equation}\label{MainTHMeq1}
\sup_{0\leq t \leq T} \int_{\mathcal{H}_{\{\sigma,\kappa+\epsilon-\nu t\}}} \big(\langle \sigma \cdot x \rangle^r u(x,t) \big)^2\,  \mathrm{d}x \leq c^{\ast}
\end{equation}
with $c^{\ast}=c^{\ast}\big(\epsilon,\sigma,T,\nu,\|u_0\|_{H^{\frac{n+2}{2}^{+}}},\|\langle \sigma\cdot x \rangle^r u_0\|_{L^2(\mathcal{H}_{\{\sigma,\kappa\}})}\big)$. Furthermore, for any $\nu> 0$, $\delta>0$ $\epsilon>0$, and $\tau \geq 5\epsilon$,
\begin{equation}\label{MainTHMeq2.1}
\int_{\delta}^T \int_{Q_{\{\sigma, \epsilon-\nu t+\kappa,\tau-\nu t+\kappa\}}} \big(J^{\lfloor2r\rfloor+1}u\big)^2 \mathrm{d}x \, \mathrm{d}t< c^{\ast},
\end{equation}
and if in addition $2r\notin \mathbb{Z}^{+}$,
\begin{equation}\label{MainTHMeq2}
\int_{\delta}^{T}\int_{\mathcal{H}_{\{\sigma,\kappa+\epsilon-\nu t\}}}  \big(\nabla J^{\lfloor 2r\rfloor} u(x,t) \big)^2 \langle \sigma \cdot x \rangle^{2\big(r-\big(\frac{\lfloor 2r\rfloor+1}{2}\big)\big)} \, \mathrm{d}x\, \mathrm{d}t  \leq c^{\ast}.
\end{equation}
Moreover, if $r\geq \frac{1}{2}$
\begin{equation}\label{MainTHMeq3}
\sup_{\delta\leq t \leq T}  \int_{\mathcal{H}_{\{\sigma,\kappa+\epsilon-\nu t\}}}  \big(J^{s} u(x,t) \big)^2 \langle \sigma \cdot x \rangle^{2r_s} \, \mathrm{d}x 
 \leq c^{\ast},
\end{equation}
where 
\begin{equation*}
 r_s=\max\left\{\left(1-\frac{s}{\lfloor 2r \rfloor}\right)r,r-\frac{\lceil s \rceil}{2}\right\},
 \end{equation*}
$0\leq s \leq \lfloor 2r \rfloor,$ and $c^{\ast}$ has the same dependence that in \eqref{MainTHMeq1}. 
\end{theorem}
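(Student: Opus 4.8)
\textbf{Proof proposal for Theorem \ref{mainTHM}.}

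The plan is to run a weighted energy method built on a carefully chosen family of cutoff functions traveling with speed $\nu$, combined with the fractional pseudo-differential operators announced in the introduction, iterated on the level of the weight exponent. I would first fix the direction $\sigma$ and, after a rotation that does not alter the structure of the equation (this is why the condition \eqref{sigmacond} is imposed — it guarantees that the rotated leading symbol $\xi_1^3 + \xi_1(\xi_2^2+\dots+\xi_n^2)$ keeps a strictly positive ``$\partial_{x_1}$-coercivity'' along $\sigma$), reduce to the case $\sigma = e_1$, so that all weights become functions of $x_1$ alone and all cutoffs $\chi$, $\phi$ are built from one-variable templates $\theta_{\epsilon,\nu}(x_1+\nu t-\kappa)$ with $\theta'\geq 0$ supported appropriately. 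The base case $0<r\leq 1/2$ (equivalently $\lfloor 2r\rfloor = 0$ or the very first half-integer step) should follow by multiplying \eqref{ZK} by $u\,\langle x_1\rangle^{2r}\chi^2$, integrating, and controlling the nonlinear term $u\partial_{x_1}u$ via the already-established propagation of regularity from Theorem~\ref{TheorArg}, which supplies the $H^{((n+2)/2)^+}$ local smoothing needed to close the $L^2$-weighted estimate; the dispersive term produces, after integration by parts, a positive commutator $\sim \int (\partial_{x_1}u)^2 \langle x_1\rangle^{2r-1}\chi^2$ plus lower-order pieces, giving both \eqref{MainTHMeq1} and the first gain \eqref{MainTHMeq2} at level $r$.

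Next I would set up the induction: assuming the conclusions hold up to weight exponent $r-1/2$, I apply the operator $\partial_{x_1}$ (or more precisely a localized fractional derivative of order matching the non-integer part of $2r$) to the equation, multiply by $\partial_{x_1}u \,\langle x_1\rangle^{2(r-1)}\chi^2$ and integrate. Each step trades one unit of weight for one localized derivative — this is the mechanism producing the exponents $2(r-(\lfloor 2r\rfloor+1)/2)$ in \eqref{MainTHMeq2} and the interpolated $r_s$ in \eqref{MainTHMeq3}. The integer derivatives are handled by ordinary differentiation of the equation; the final fractional step (present precisely when $2r\notin\mathbb{Z}^+$) is where the new pseudo-differential calculus enters: I would use that the commutator $[\langle x_1\rangle^{a}\,|\partial_{x_1}|^{b}\chi,\ \partial_{x_1}\Delta]$ lies in the good symbol class and produces a sign-definite main term plus remainders absorbable by the inductive hypothesis and by the local smoothing from Theorem~\ref{TheorArg}. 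The time integration from $\delta$ rather than $0$ in \eqref{MainTHMeq2}--\eqref{MainTHMeq3} is forced because the gain of regularity is a smoothing effect, not a persistence property: the constants blow up like a negative power of $\delta$, which is harmless for a fixed $\delta>0$.

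The main obstacle I anticipate is the fractional commutator bookkeeping in the inductive step: one must verify that the weight $\langle x_1\rangle^{a}$ composed with $|\partial_{x_1}|^{b}$ and a cutoff genuinely belongs to a symbol class closed under the relevant operations, that the ``error'' terms carry strictly fewer derivatives or strictly less weight (so the induction actually decreases a well-defined quantity), and that the cross terms coming from the anisotropy — the $\partial_{x_2}^2,\dots,\partial_{x_n}^2$ contributions to $\Delta$, which under condition \eqref{sigmacond} contribute with the favorable sign after the rotation — do not destroy the positivity of the leading commutator. A secondary technical point is the passage from smooth, sufficiently decaying solutions (where all integrations by parts are justified) to general $u_0$ satisfying only \eqref{inicond}, which I would handle by a standard approximation argument using continuous dependence together with uniform-in-approximation bounds on the right-hand sides $c^{\ast}$. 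Once these are in place, \eqref{MainTHMeq1}--\eqref{MainTHMeq3} follow by reading off the estimates accumulated along the induction, and \eqref{MainTHMeq2.1} is the endpoint case obtained by dropping the weight and keeping the top derivative $J^{\lfloor 2r\rfloor+1}$ produced after $\lfloor 2r\rfloor$ integer steps plus the last (possibly fractional) half-step.
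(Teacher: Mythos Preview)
Your overall strategy---weighted energy estimates with traveling cutoffs, an induction that trades weight for localized regularity, invoking Theorem~\ref{TheorArg} at each step, and closing by approximation using smooth solutions---is exactly the paper's approach. Two points, however, need correction.

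First, the rotation you propose does not exist. The ZK equation is \emph{not} rotationally invariant: the dispersion $\partial_{x_1}\Delta$ and the nonlinearity $u\partial_{x_1}u$ single out the $x_1$-direction, so rotating to send $\sigma$ to $e_1$ changes the equation. The paper never rotates; it works directly with weights depending on $\sigma\cdot x+\nu t$, and condition~\eqref{sigmacond} enters not through any symbol after rotation but through the positivity of the matrix
\[
M=\begin{pmatrix}
3\sigma_1 & \sigma_2 & \cdots & \sigma_n\\
\sigma_2 & \sigma_1 & & \\
\vdots & & \ddots & \\
\sigma_n & & & \sigma_1
\end{pmatrix},
\]
which arises when one integrates by parts the term $\int \partial_{x_1}\Delta(\mathcal{B}^s u)\,\mathcal{B}^s u\,\chi_{r,\epsilon,\tau,\sigma}^2$. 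The quadratic form $\langle\nabla\mathcal{B}^s u,\nabla\mathcal{B}^s u\rangle_M$ is comparable to $|\nabla\mathcal{B}^s u|^2$ precisely when $M$ is positive definite, i.e.\ when \eqref{sigmacond} holds. So replace the rotation by this direct computation (Lemma~\ref{linearEstlemma1} in the paper).

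Second, you underestimate where the pseudo-differential calculus is needed. It is not only for a ``final fractional step'': the class $\mathbb{S}^{m,q}_{\sigma,\omega}$ is used throughout the induction to (i) commute the weight $\chi_{r,\epsilon,\tau,\sigma}$ through $J^s$ when estimating the nonlinear term at regularity $s\geq 3$ (Lemma~\ref{NonlinearEstlemma1}, via Corollary~\ref{remlemmadecom} and Kato--Ponce), (ii) transfer localized bounds on $\partial^\beta u$ to bounds on $J^m u$ and vice versa (Lemma~\ref{lemmapropaform}), and (iii) obtain the interpolated decay exponent $r_s$ in \eqref{MainTHMeq3} (Lemma~\ref{InterpLemma}). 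Also note the bookkeeping: each iteration drops the weight by $\tfrac12$ (from $r_{l-1}=r-\tfrac{l-1}{2}$ to $r_l=r-\tfrac{l}{2}$) while gaining one full derivative, not one-for-one as you wrote; this is why the process terminates at $l=\lfloor 2r\rfloor$.
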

The proof of Theorem \ref{mainTHM} is based on weighted energy estimates which extend the arguments introduced by Izasa, Ponce, and Linares \cite{IsazaLinaresPonce2015} to the case where both regularity and decay are measured by fractional parameters. To achieve such a goal, we introduce a new class of pseudo-differential operators (see, Definition \ref{pesudodeff}), and we deduce some properties of the pseudo-differential calculus for such class, see also Appendix  \ref{Appendix}. As an outcome of our results, we establish new commutator formulas between derivatives and weights in Lemma \ref{lemmadecom}, Corollaries \ref{supportsepareted3}, and \ref{remlemmadecom}. Furthermore, we obtain new propagation formulas in Lemma \ref{lemmapropaform}, which are fundamental to keeping localization properties of decay and regularity. We also deduce interpolation formulas in Lemma \ref{InterpLemma}. We believe that the pseudo-differential calculus here defined is of independent interest, it can be applied to study the fractional propagation of weights for different dispersive models in any dimension.

We notice that to justify the energy estimates in the proof of Theorem \ref{mainTHM}, one requires an approximation argument based on the existence of smooth solutions of \eqref{ZK} with enough decay, as well as, a continuous dependence statement. To justify such an approach rigorously, we establish a more general local well-posedness result in weighted anisotropic Sobolev spaces for solutions of \eqref{ZK} in any spatial dimension $n\geq 1$ (with $n=1$ being \eqref{KdV}).
\begin{theorem}\label{wellposweighted}
Let $n\geq 1$, $r_1,\dots, r_n\in \mathbb{R}^{+}\cup\{0\}$, and $s\geq \max\left\{2r_1,\dots,2r_n,(\frac{n}{2}+1)^{+}\right\}.$ Then the Cauchy problem \eqref{ZK} is locally well-posed in $$H^{s}(\mathbb{R}^n)\cap L^2\left(\left(|x_1|^{2r_1}+\dots+|x_n|^{2r_n}\right)\, dx\right).$$
\end{theorem}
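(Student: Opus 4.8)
The plan is the classical energy-method approach to persistence in weighted spaces, carried out coordinate by coordinate so as to respect the anisotropic weight. By \cite{HerrKinoshita2023,Kinoshita2021} (and the classical theory when $n=1$) the Cauchy problem \eqref{ZK} is locally well-posed in $H^{s}(\R^n)$ for $s\ge(\tfrac{n}{2}+1)^{+}$, so one already has $u\in C([0,T];H^{s}(\R^n))$ with $\sup_{[0,T]}\|u(t)\|_{H^{s}}\le C\|u_0\|_{H^{s}}$, and with continuous (indeed Lipschitz, in $H^{s'}$ for $s'\le s$) dependence on $u_0$. It remains to propagate the weight: for each fixed $k\in\{1,\dots,n\}$ one must show $\sup_{[0,T]}\big\|\,|x_k|^{r_k}u(t)\big\|_{L^2}<\infty$ with a bound depending only on $\|u_0\|_{H^{s}}$ and $\big\|\,|x_k|^{r_k}u_0\big\|_{L^2}$, and then upgrade this to continuity in $t$ and to continuous dependence of the full flow. (Replacing $|x_k|^{2r_k}$ by $\langle x_k\rangle^{2r_k}$ is harmless once $u\in H^s\subset L^2$.)

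For the core a priori estimate, fix $k$ and choose smooth bounded truncations $w=w_N=w_{N,k}(x_k)$ increasing to $\langle x_k\rangle^{r_k}$ and satisfying $|\partial_{x_k}^{\,j}(w^2)|\ls w^2$ for $0\le j\le\lceil 2r_k\rceil+1$, with constants independent of $N$. Assuming for the moment that $u$ is smooth and rapidly decaying (justified below), differentiate and use \eqref{ZK}:
\[
\tfrac{d}{dt}\|wu\|_{L^2}^2=-2\langle w^2u,\partial_{x_1}\Delta u\rangle-2\langle w^2u,u\partial_{x_1}u\rangle .
\]
After one integration by parts the nonlinear term is a multiple of $\langle\partial_{x_1}(w^2),u^3\rangle$, hence $\ls\|u\|_{L^\infty}\|wu\|_{L^2}^2\ls\|u\|_{H^{s}}\|wu\|_{L^2}^2$ (using $s>\tfrac{n}{2}+1$ and $|\partial_{x_1}(w^2)|\ls w^2$, the latter being trivial for $k\ne1$). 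For the dispersive term write $\partial_{x_1}\Delta=\partial_{x_1}^3+\sum_{j=2}^{n}\partial_{x_1}\partial_{x_j}^2$ and integrate by parts repeatedly; since $w^2$ depends only on $x_k$, every surviving boundary-free term is schematically $\int(\partial_{x_k}^{a}w^2)(\partial^{b}u)(\partial^{c}u)\,dx$ with $a+b+c=3$, $1\le a\le3$, and for $k\ne1$ most of these vanish because $\partial_{x_1}\partial_{x_k}(w^2)=0$. The only critical contributions are $\int(\partial_{x_k}w^2)|\nabla u|^2\ls\int\langle x_k\rangle^{2r_k-1}|\nabla u|^2$ and $\int(\partial_{x_k}^3w^2)u^2\ls\int\langle x_k\rangle^{2r_k-3}u^2$.

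These are absorbed by the hypothesis $s\ge2r_k$ via a coordinate-wise interpolation inequality of Nahas--Ponce type: for $\theta\in[0,1]$,
\[
\big\|\langle x_k\rangle^{\theta r_k}J^{(1-\theta)s}g\big\|_{L^2}\ls\big\|\langle x_k\rangle^{r_k}g\big\|_{L^2}^{\theta}\,\|J^{s}g\|_{L^2}^{1-\theta}\ls\big\|\langle x_k\rangle^{r_k}g\big\|_{L^2}+\|J^{s}g\|_{L^2},
\]
together with its truncated version (with $\langle x_k\rangle^{r_k}$ replaced by $w_N$ and an $N$-independent constant). If $r_k<\tfrac12$ the weights $\langle x_k\rangle^{2r_k-1},\langle x_k\rangle^{2r_k-3}$ are bounded and the two critical terms are just $\ls\|u\|_{H^{s}}^2$; if $r_k\ge\tfrac12$, take $\theta=1-\tfrac{1}{2r_k}$, so that $\theta r_k=r_k-\tfrac12$ and $(1-\theta)s=\tfrac{s}{2r_k}\ge1$, whence $\|\langle x_k\rangle^{r_k-1/2}\nabla u\|_{L^2}\ls\|w_Nu\|_{L^2}+\|u\|_{H^{s}}$ (as $J^{(1-\theta)s}$ dominates $\nabla$), and similarly $\|\langle x_k\rangle^{r_k-3/2}u\|_{L^2}\ls\|w_Nu\|_{L^2}+\|u\|_{L^2}$. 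Collecting terms gives $\tfrac{d}{dt}\|w_Nu\|_{L^2}^2\le C(\|u\|_{H^{s}})(\|w_Nu\|_{L^2}^2+\|u\|_{H^{s}}^2)$, and Gronwall yields $\sup_{[0,T]}\|w_Nu(t)\|_{L^2}^2\le e^{CT}\big(\|\,|x_k|^{r_k}u_0\|_{L^2}^2+CT\|u_0\|_{H^{s}}^2\big)$ uniformly in $N$; letting $N\to\infty$ by monotone convergence gives the claimed bound.

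To make this rigorous, approximate $u_0$ by $u_0^{(m)}\in\mathcal{S}(\R^n)$ in $H^{s}\cap L^2\big(\sum_k|x_k|^{2r_k}\,dx\big)$; since the linear group $e^{-t\partial_{x_1}\Delta}$ preserves $\mathcal{S}(\R^n)$ and the $H^{s}$ theory propagates regularity, each $u^{(m)}\in C([0,T];H^{\infty})$, and the truncated-weight estimates above (which use only the boundedness of $w_N$), applied inductively on the weight exponent, show $u^{(m)}\in C([0,T];\mathcal{S}(\R^n))$, so all integrations by parts are legitimate for $u^{(m)}$. The uniform-in-$m$ bound, the $H^{s}$ continuous dependence, and weak lower semicontinuity of $\|\cdot\|_{L^2(\langle x_k\rangle^{2r_k})}$ pass to the limit and give $u\in L^\infty\big([0,T];L^2(\sum_k|x_k|^{2r_k}dx)\big)$ with the stated bound; weak-in-time continuity plus continuity of $t\mapsto\|w_Nu(t)\|_{L^2}$ upgrade this to $u\in C\big([0,T];L^2(\sum_k|x_k|^{2r_k}dx)\big)$, while continuous dependence of the full flow follows from the same energy estimate applied to the difference of two solutions combined with the $H^{s}$ theory. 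For $n=1$ the argument is identical with $\partial_{x_1}\Delta$ replaced by $\partial_x^3$, giving the KdV case. The main obstacle is the dispersive energy estimate: after integrating by parts, controlling the mixed derivative--weight terms $\int\langle x_k\rangle^{2r_k-1}|\nabla u|^2$ (and the lower-order analogue) uniformly in the truncation $N$ — this is precisely where $s\ge2r_k$ is used, and it forces the truncated anisotropic interpolation inequality; establishing that inequality and choosing the $w_{N,k}$ so that all relevant derivatives of $w_{N,k}^2$ are dominated by $w_{N,k}^2$ uniformly in $N$ is the technical heart, whereas the approximation/weak-limit step, though routine, requires some care since the ZK dispersion is anisotropic.
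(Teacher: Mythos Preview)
Your approach is essentially the same as the paper's: both rely on the $H^s$ local theory, a coordinate-wise weighted energy estimate for the equation, an anisotropic interpolation inequality to absorb the critical term $\int\langle x_k\rangle^{2r_k-1}|\nabla u|^2\,dx$ (this is precisely Lemma~\ref{InterpLemma} with $\sigma=e_k$, which the paper derives from its pseudo-differential calculus and which you invoke as a Nahas--Ponce type estimate), Gronwall, and passage to the limit through smooth approximations. The only organizational difference is that the paper first establishes persistence of the Schwartz class (Claim~\ref{claimSmooth}) via a Duhamel bootstrap and then runs the energy estimate directly with the full weight $\langle x_j\rangle^r$ on the smooth approximants, whereas you carry truncated weights $w_N$ throughout; this is immaterial to the argument.
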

The results of Theorem \ref{wellposweighted} in dimensions $n=1$, and $n=2$ with improvements on the regularity condition have been studied before, for example, in $n=1$, see  \cite{FonsecaLinaresPonce2015}, when $n=2$, see the well-posedness results in  \cite{FonsecaPachon2016}. When the dimension $n\geq 3$, our conclusions in Theorem \ref{wellposweighted} seem to be the first result in anisotropic spaces for solutions of \eqref{ZK}. Certainly, our arguments admit an improvement on the condition $s>\frac{n}{2}+1$, which we have used for simplicity in our arguments since we do not pursue minimal regularity in this paper. For more details on the validity of our results, see Remark \ref{Remarkscont} (v) below when $n\geq 2$, and see Remark \ref{Remarkscont2} (iii) in dimension $n=1$. An interesting problem is establishing well-posedness in anisotropic spaces with the exact condition $s\geq \max\{2r_1,\dots,2r_n\}$. 

The proof of the Theorem \ref{wellposweighted} is inspired by the ideas for the fractional KdV in \cite{CunhaRiano2022}, which are based on the LWP in $H^s(\mathbb{R}^n)$, $s>\frac{n}{2}+1$, and an approximation argument using smooth solutions in anisotropic spaces. A fundamental tool in the proof of Theorem \ref{wellposweighted} is the interpolation estimates in Lemma \ref{InterpLemma}, which are a consequence of the pseudo-differential calculus developed below. Because this paper focuses on the propagation of weighted decay, the proof of Theorem \ref{InterpLemma} is given in Appendix  \ref{AppendixWellposs}.
\begin{remark}    
Let us review the decay condition \eqref{MainTHMeq3} in Theorem \ref{mainTHM}.  The condition $\big(1-\frac{s}{\lfloor 2r \rfloor}\big)r$ in \eqref{MainTHMeq3} is a consequence of the interpolation Lemma \ref{InterpLemma}, which is obtained by interpolating the maximum gain of regularity $\lfloor 2r \rfloor$, and the maximum decay $r$ given by the initial condition. The condition $r-\frac{\lceil s\rceil}{2}$ is a consequence of the proof of Theorem \ref{mainTHM}, which is based on an iterative method where the size of the weight decreases in proportion to the gain of regularity. More precisely, when there is no regularity, the decay is maximum, after one iteration of our argument, we find that any derivative of order one of the solution $u$ of ZK has a decay of order $r-\frac{1}{2}$ (here we use Lemma \ref{lemmapropaform} to obtained that for every regularity $0<s\leq 1$, $J^su$ decays of order $r-\frac{1}{2}$). Consequently, we keep iterating, until we reach localized regularity of order $\lfloor 2r \rfloor$, for which the maximum decay is of order $r-\frac{\lfloor 2r\rfloor}{2}$. Figure \ref{fig:P1} shows a representation of the argument just described.
\begin{figure}[ht]
\resizebox{.65\textwidth}{!}{
\begin{tikzpicture}
 \draw[->] (0,0) -- (7,0) node[right] {Regularity};
 \draw[->] (0,0) -- (0,3.8) node[above] {Decay };
\fill[blue] (0,3.3) circle (2pt) node[right,xshift=0.1cm] {$r$}; 
\draw[-,blue] (0.1,2.5) -- (1.5,2.5) node[above,xshift=-0.7cm] {$r-\frac{1}{2}$};
\node at (0,2.5) {$\circ$};
\draw[-] (1.5,-0.1) -- (1.5,0.1) node[below,yshift=-0.2cm] {1};
\draw[-,blue] (1.53,1.7) -- (3,1.7) node[above,xshift=-0.7cm] {$r-1$};
\node at (1.5,1.7) {$\circ$};
\draw[-] (3,-0.1) -- (3,0.1) node[below,yshift=-0.2cm] {2};
\draw[dotted] (3,0.95) -- (4.5,0.95);
\draw[-,blue] (4.53,0.3) -- (6,0.3) node[above,xshift=-0.7cm] {$r-\frac{\lfloor 2r \rfloor}{2}$};
\node at (4.5,0.3) {$\circ$};
\draw[-] (6,-0.1) -- (6,0.1) node[below,yshift=-0.2cm] {$\lfloor 2r \rfloor$};
\end{tikzpicture}}
\caption{Distribution between decay and regularity condition $r-\frac{\lceil s \rceil}{2}$ in \eqref{MainTHMeq3}} \label{fig:P1}
\end{figure}
\end{remark}
\begin{remark}\label{Remarkscont}
\begin{itemize}[leftmargin=15pt]
\item[(i)] Theorem \ref{TheorArg} implies that extra regularity on the initial data is propagated by the solution flow of \eqref{ZK}. In contrast, Theorem \ref{mainTHM} establishes that localized decay on the initial data \eqref{inicond} is propagated by the corresponding solution of \eqref{ZK}, see \eqref{MainTHMeq1}, but in addition, we obtain a gain of extra localized regularity, which is determined by the magnitude of $r$ according to $\lfloor 2r \rfloor$, see \eqref{MainTHMeq2} and \eqref{MainTHMeq3}. When $r>0$ is an integer, \eqref{MainTHMeq2} shows that propagation of a weight of order $r$ produces regularity of order up to $2r$. We formally expect such a relation between decay and regularity by checking the following commutator identity \footnote{We denote the \emph{commutator operator} between two operators $A$ and $B$  by  $[A, B]:=AB-BA$, and $\delta_{1,j}$ denotes the \emph{ Kronecker's delta}  function.}
    \begin{equation}\label{eqremark1}
        \begin{aligned}
        \big[S(t),x_j \big]=-\delta_{1,j}\Delta-2\partial_{x_1}\partial_{x_j},
        \end{aligned}
    \end{equation}
where $S(t)$ denotes the group of liner operators associated with solutions of the linear equation in \eqref{ZK}, that is, $\partial_t u+\partial_{x_1}\Delta u=0$. Notice that \eqref{eqremark1} determines the balance between decay and regularity.
\item[(ii)] Let $\sigma=e_1$, where $e_1$ denotes the canonical vector in the first component. In this case, Theorem \ref{mainTHM} shows the propagation of decay on the first variable, i.e., with weighted function $\langle x_1 \rangle$ on the region $\{x_1\geq \kappa\}$. Thus, in the integer case, the case $\sigma=e_1$ yields results compatible with the one-dimensional propagation of weights established for the KdV equation, see \cite[Theorem 1.2]{IsazaLinaresPonce2015}. In this sense, Theorem \ref{mainTHM} is an extension to the higher dimensions of the results in \cite{IsazaLinaresPonce2015}.

\item[(iv)] Given $r>0$ with $2r\notin \mathbb{Z}^{+},$ then  $r-\frac{(\lfloor 2r\rfloor+1)}{2}<0$, which means that \eqref{MainTHMeq2} provides a type of smoothing effect, which, regarding the technique used, it shows the maximum regularity transferred by the polynomial decay of the initial data. In contrast, \eqref{MainTHMeq2.1} also establishes the maximum gain of regularity propagated without incorporating any weight, but this holds on a ``smaller" domain.  
\item[(v)] Setting $n=2,3$ in Theorem \ref{mainTHM}, and $n\geq 1$ arbitrary in Theorem \ref{wellposweighted}, the results in these theorems can be extended to initial conditions $u_0\in H^s(\mathbb{R}^n)$ for some $s\geq 0$, provided that there exists a local theory (i.e., existence, uniqueness, and continuous dependence) in $H^s(\mathbb{R}^n)$ for which the solutions $u$ of ZK satisfy
\begin{equation}\label{l1cond}
    u \in L^{1}([0,T];W^{1,\infty}(\mathbb{R}^n)).
\end{equation}
In contrast, when $n\geq 4$, by technical reasons detailed in Remark \ref{remarkl1condt} below, the proof of Theorem Theorem \ref{mainTHM} requires the regularity assumptions $H^{\big(\frac{n+2}{2}\big)^{+}}(\mathbb{R}^n)$ for solutions of ZK. We believe that such a restriction on the regularity could be improved, for example, with a more refined expansion of the commutator estimate in Lemma \ref{conmKP} when $n\geq 4$. 
\end{itemize}
\end{remark}
We present our third main result.
\begin{theorem}\label{expodecay}
Let $n\geq 2$ be fixed, $u_0 \in H^{\big(\frac{n+2}{2}\big)^{+}}(\mathbb{R}^n)$, and $\sigma=(\sigma_1,\sigma_2,\dots,\sigma_n)\in \mathbb{R}^n$, satisfying condition \eqref{sigmacond}. If for some $b>0$, and $\kappa\in \mathbb{R}$
\begin{equation}\label{decayiniexp}
\|e^{b\sigma\cdot x} u_0\|_{L^2(\mathcal{H}_{\{\sigma,\kappa\}})}<\infty,
\end{equation}
then the corresponding solution $u=u(x,t)\in C([0,T];H^{\big(\frac{n+2}{2}^{+}\big)}(\mathbb{R}^n))$ of the IVP \eqref{ZK} satisfies: For any $\nu > 0$, $\epsilon>0$, 
\begin{equation}\label{expresult1}
\sup_{0\leq t \leq T} \int_{\mathcal{H}_{\{\sigma,\kappa+\epsilon-\nu t\}}} \big(e^{b \sigma\cdot x} u(x,t) \big)^2\,  \mathrm{d}x < \infty.
\end{equation}
Moreover, for any multi-index $\beta$, any $\nu> 0$, $\delta>0$ $\epsilon>0$, 
\begin{equation}\label{expresult2}
\sup_{\delta\leq t \leq T}  \int_{\mathcal{H}_{\{\sigma,\kappa+\epsilon-\nu t\}}}  \big(e^{b \sigma \cdot x}\partial^{\beta} u(x,t) \big)^2  \, \mathrm{d}x <\infty.
\end{equation}
\end{theorem}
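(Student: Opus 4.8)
The idea is to run the same weighted energy method as for Theorem~\ref{mainTHM}, but now with the exponential weight $e^{b\,\sigma\cdot x}$ in place of $\langle\sigma\cdot x\rangle^r$, exploiting the fact that the exponential is ``infinitely many weights at once'': morally $e^{bx}$ behaves like $\langle x\rangle^r$ for every $r$, so the gain of regularity $\lfloor 2r\rfloor$ from Theorem~\ref{mainTHM} becomes infinite, which is exactly the content of \eqref{expresult2}. Concretely, I would first truncate the exponential to make the computations legitimate: for $N\in\mathbb{N}$ introduce a smooth, bounded, nondecreasing weight $w_{N}(x)=w_{N}(\sigma\cdot x)$ with $w_{N}(y)=e^{by}$ for $y$ in a fixed large interval, $w_{N}$ bounded by $e^{bN}$, and with the crucial property that all derivatives are controlled by the weight itself, $|w_{N}^{(k)}|\lesssim_{k} b^{k} w_{N}$, uniformly in $N$ (a standard construction, e.g. $w_N$ equal to $e^{by}$ up to level $N$ and then continued so as to level off). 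One also needs the spatial cutoffs $\chi_{\{\sigma,\kappa+\epsilon-\nu t\}}$ moving with speed $\nu$, exactly as in Theorem~\ref{TheorArg} and Theorem~\ref{mainTHM}.

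\textbf{Energy estimate, step one (persistence \eqref{expresult1}).} Apply $\partial_t$ to $\int (e^{b\sigma\cdot x}\theta_{\epsilon,\nu}(x,t)\, u)^2\,dx$ where $\theta_{\epsilon,\nu}$ is a smooth cutoff supported in $\mathcal{H}_{\{\sigma,\kappa+\epsilon-\nu t\}}$ and equal to $1$ further in. Using the equation \eqref{ZK} and integrating by parts, the leading term from $\partial_{x_1}\Delta u$ produces, after moving the weight past three derivatives, a good term controlled by the moving cutoff: the key algebraic point is that $\big[\partial_{x_1}\Delta,\, e^{b\sigma\cdot x}\big] = e^{b\sigma\cdot x}\,P_{\le 2}(\partial)$ where $P_{\le 2}$ is a constant-coefficient differential operator of order $\le 2$ with coefficients polynomial in $b$ and the $\sigma_i$; the third-order term is exactly cancelled by the speed-$\nu$ transport of the cutoff provided $\sigma$ satisfies \eqref{sigmacond} (this is where the condition $\sqrt3\sigma_1>\sqrt{\sigma_2^2+\cdots+\sigma_n^2}$ enters, giving a coercive quadratic form, precisely as in \cite{LinaresPonce2018} and \cite{Mendez2024}). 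The lower-order terms, together with the nonlinear contribution $u\,\partial_{x_1}u$ handled by the $L^1_tW^{1,\infty}_x$ bound coming from $u\in C([0,T];H^{((n+2)/2)^+})$ and the commutator estimates already cited, are absorbed by Gronwall. Because the weight $w_N$ and all its derivatives are dominated by $w_N$, every constant is uniform in $N$; letting $N\to\infty$ and using monotone convergence gives \eqref{expresult1} on $[0,T]$. The right-hand side of the Gronwall inequality also yields, as a byproduct, the space-time bound $\int_0^T\!\!\int_{Q} (e^{b\sigma\cdot x}\nabla u)^2\,dx\,dt<\infty$ on any moving strip $Q$, i.e.\ the first rung of the induction.

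\textbf{Energy estimate, step two (infinite smoothing \eqref{expresult2}).} Now induct on $|\beta|$. Suppose $e^{b\sigma\cdot x}\partial^{\gamma}u\in L^\infty([\delta,T];L^2(\mathcal{H}_{\{\sigma,\kappa+\epsilon-\nu t\}}))$ and $\int_\delta^T\!\!\int_Q (e^{b\sigma\cdot x}\nabla\partial^{\gamma}u)^2<\infty$ for all $|\gamma|\le m$ and all $\epsilon,\nu,\delta>0$ and all moving strips. Differentiate \eqref{ZK} $m+1$ times, multiply by $e^{2b\sigma\cdot x}\theta_{\epsilon,\nu}^2\,\partial^{\beta}u$ with $|\beta|=m+1$, and integrate. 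The structure is identical: the worst term is the commutator of $\partial_{x_1}\Delta$ with $e^{b\sigma\cdot x}\theta^2$, whose order-three part is beaten by the moving cutoff under \eqref{sigmacond}, whose order-two part is a priori bad but is of the form $\int e^{2b\sigma\cdot x}\theta^2 (\partial^{\beta}u)(\nabla^2\partial^{\beta}u)$-type and after one integration by parts becomes $\int e^{2b\sigma\cdot x}|\nabla\partial^{\beta}u|^2(\cdots)$ plus lower order — but here the ``$(\cdots)$'' is a nonnegative multiple of $b$ times the weight, so it has a sign and goes to the \emph{good} side; and the remaining order-$\le 1$ terms involve only $e^{b\sigma\cdot x}\partial^{\gamma}u$ with $|\gamma|\le m+1$ and one extra derivative landing in a strip slightly to the left, which is finite by the inductive hypothesis (with a slightly larger $\epsilon$) and by the just-obtained space-time bound at level $m$. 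The appearance of the initial time $\delta>0$ is because each step of the induction spends a positive amount of time converting $L^2_t$ information into $L^\infty_t$ information, exactly as in the polynomial case; choosing the intermediate times $\delta/2^k$ handles finitely many steps for each fixed $\beta$. Nonlinear terms are again controlled by $\|u\|_{L^1_tW^{1,\infty}}$ plus fractional Leibniz / the commutator lemmas \ref{conmKP}, \ref{lemmadecom}, all of which were already invoked in the proof of Theorem~\ref{mainTHM}. Running Gronwall on the moving region closes the induction and yields \eqref{expresult2} for all multi-indices $\beta$.

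\textbf{Main obstacle.} The technical heart — and the step most likely to require care — is making the second-order commutator term from $[\partial_{x_1}\Delta,\,e^{b\sigma\cdot x}\theta_{\epsilon,\nu}^2]$ land on the good side of the inequality \emph{uniformly in the truncation parameter $N$}: one must check that after integrating by parts the coefficient multiplying $|\nabla\partial^{\beta}u|^2$ is genuinely a nonnegative multiple of $b\,w_N^2$ (up to terms controlled by lower-order pieces and by the moving cutoff), rather than an indefinite term that would have to be absorbed — the polynomial-weight argument of Theorem~\ref{mainTHM} handles an analogous term but there it is merely \emph{small} (order $r-(\lfloor2r\rfloor+1)/2<0$), whereas here one genuinely needs its \emph{sign}, which is available precisely because $\frac{d}{dy}e^{2by}=2b\,e^{2by}>0$. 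A secondary subtlety is bookkeeping the shrinking-in-time, expanding-in-$\epsilon$ nested regions so that the induction over all $\beta$ stays consistent; this is routine but must be set up cleanly at the outset.
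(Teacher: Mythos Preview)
Your overall plan matches the paper's: truncate the exponential weight, run weighted energy estimates localized by $\chi_{\epsilon,\tau}(\sigma\cdot x+\nu t+\kappa)$, use the positive-definiteness of the matrix coming from \eqref{sigmacond} to put the $|\nabla\partial^\beta u|^2$ term on the good side, and induct on $|\beta|$ by converting $L^2_t$ smoothing into $L^\infty_t$ control at a sequence of intermediate times in $(0,\delta)$. Your reading of the second-order commutator sign is correct (the paper uses $p_\eta'=2b\rho_\eta^2>0$ for exactly this). The paper's truncation is Kato's $q_\eta(y)=e^{by}(1+\eta e^{2by})^{-1/2}$ rather than a level-off $w_N$, but this is cosmetic.

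There is, however, a genuine gap in your handling of the nonlinear term at the inductive step. You write that it is ``controlled by $\|u\|_{L^1_tW^{1,\infty}}$ plus fractional Leibniz / the commutator lemmas,'' but once $|\beta|\ge 2$ the Leibniz expansion of $\partial^\beta(u\partial_{x_1}u)$ contains products $(\partial^{\beta_1}u)(\partial^{\beta_2}\partial_{x_1}u)$ with \emph{both} factors of order $\ge 2$, and the base $W^{1,\infty}$ bound says nothing about those. To put one factor in $L^\infty$ (the exponential weight must stay on the $L^2$ factor) you need \emph{localized} $L^\infty$ control of derivatives of $u$ of order up to $|\beta|+1$, which is far beyond what $u\in H^{((n+2)/2)^+}$ gives and also beyond what Sobolev embedding extracts from your induction hypothesis at level $|\beta|-1$ (that would only reach order roughly $|\beta|-1-\lfloor n/2\rfloor-1$). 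The paper fixes this by a preliminary step you do not mention: since \eqref{decayiniexp} implies $\langle\sigma\cdot x\rangle^{r}u_0\in L^2(\mathcal H_{\{\sigma,\kappa\}})$ for \emph{every} $r>0$, one first invokes Theorem~\ref{mainTHM} with $r>|\beta|+\tfrac{n}{2}+2$ to get $\chi_{\epsilon_0,\tau_0,\sigma}\partial^\gamma u\in L^\infty([\delta_0,T];L^2)$ for all $|\gamma|\le |\beta|+\lfloor n/2\rfloor+2$ and some small $\delta_0<\delta$; Sobolev then yields $\sup_{[\delta_0,T]}\|\chi_{\epsilon',\tau',\sigma}\partial^{\beta_1}u\|_{L^\infty}<\infty$ for all $|\beta_1|\le|\beta|+1$, which is precisely the input the nonlinear estimate \eqref{eqexpodecay4} requires. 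Without this call to Theorem~\ref{mainTHM} your induction does not close as written.
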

\begin{remark}
\begin{itemize}[leftmargin=10pt]
\item[(i)] The condition \eqref{sigmacond} in Theorems \ref{mainTHM} and \ref{expodecay} also appears in other studies on the dynamics of the ZK equation. To see this, setting  $\rho=\sqrt{\sigma_{2}^{2}+\dots+\sigma_{n}^{2}}$, it follows from \eqref{sigmacond}  $$\left\{(\sigma_1,\rho):\sigma_1>0,\, \rho\geq 0,\,  \sqrt{3}\sigma_1>\rho\right\}\subset \left\{(\sigma_1,\rho):\sigma_1>0,\, \rho\geq 0, \, \, \tan\Big(\frac{\rho}{\sigma_1}\Big)\leq \frac{\pi}{3}\right\}.$$ 
The angle $\frac{\pi}{3}$ also determines the region $\mathcal{H}_{\{\sigma,\kappa\}}$. This same angle and similar regions also appear in the study of asymptotic stability of solutions of \eqref{ZK}: In the $n=2$ ZK equation, see the work of C\^{o}te, Mu\~{n}oz, Pilod, and Simpson \cite{CoteMunozPilodSimpson2016}, and when $n=3$, see also Farah, Holmer, Roudenko and Yang \cite{FarahHolmerRoudYang2023}. The condition \eqref{sigmacond} also appears in the well-posedness result for the exponential initial condition in the work of Del\'eage and Linares \cite{DeleageLinares2023}. In general, it seems that conditions similar to \eqref{sigmacond} are also needed in some studies of dispersive generalizations of the ZK equation, see the numerical investigations in \cite{RianoRoudenkoYang2022} and the propagation of regularity results in \cite{Argenis2023}.

We believe that the region determined by  \eqref{sigmacond} may be optimal in the propagation of extra regularity and decay. Nevertheless, we do not address such a problem in this work.
    \item[(ii)] In the case of localized exponential decay on the initial data (i.e., condition \eqref{decayiniexp}), our arguments show that the corresponding solution of ZK is smooth on the domain $\mathcal{H}_{\{\sigma,k+\epsilon-\nu t\}}$, with $t\in[0, T]$. This contrasts with the polynomial case where polynomial decay on the initial data of order $r$ (i.e., \eqref{inicond}), yields a gain of maximum regularity of order $\lfloor 2r \rfloor$, see \eqref{MainTHMeq3}. An interesting problem is to study whether solutions of ZK with initial data satisfying \eqref{decayiniexp} are analytic in the domain $\mathcal{H}_{\{\sigma,k+\epsilon-\nu t\}}$ with $t\in[0, T]$. This question is beyond the scope of this manuscript.
\item[(iii)] When $n=2,3$, the results of Theorem \ref{expodecay} are also valid for regularity $H^s(\mathbb{R}^n)$ for some $s\geq 0$ provided that there exists a local well-posedness result for solution of \eqref{ZK} for which the solutions generated also belong to the class \eqref{l1cond}.
\end{itemize}
\end{remark}
As an outcome of the proof of Theorems \ref{mainTHM} and \ref{expodecay}, we deduce propagation of fractional polynomial and exponential weights for solutions of the KdV equation. We will use the local well-posedness theory in $H^{\frac{3}{4}^{+}}(\mathbb{R})$ deduced by Kenig, Ponce, and Vega in \cite{KenigPonceVega1993}. 
\begin{theorem}\label{mainTHMKdV}
Let $u_0 \in H^{\frac{3}{4}^{+}}(\mathbb{R})$.
\begin{itemize}[leftmargin=15pt]
    \item[(i)] \underline{{\bf Propagation of polynomial weights}}:
     If for some $\kappa\in \mathbb{R}$, and $r\in \mathbb{R}^{+}$,
\begin{equation}\label{condMainKdV}
\|\langle x \rangle^r u_0\|_{L^2((\kappa,\infty))}<\infty,
\end{equation}
then the corresponding solution $u=u(x,t)\in C([0,T];H^{\frac{3}{4}^{+}}(\mathbb{R}))$ of the IVP \eqref{KdV} with initial condition $u_0$ satisfies: For any $\nu > 0$, $\epsilon>0$, 
\begin{equation}\label{KdVMain1}
\sup_{0\leq t \leq T} \int_{\kappa}^{\infty} \big(\langle x \rangle^r u(x,t) \big)^2 \mathrm{d}x \leq c^{\ast}
\end{equation}
with $c^{\ast}=c^{\ast}\big(\epsilon,\sigma,T,\nu,\|u_0\|_{H^{\frac{3}{4}^{+}}},\|\langle x \rangle^r u_0\|_{L^2((\kappa, \infty))}\big)$. Furthermore, for any $\nu> 0$, $\delta>0$ $\epsilon>0$, and $\tau \geq 5\epsilon$,
\begin{equation*}
\int_{\delta}^T \int_{\kappa+\epsilon-\nu t}^{\kappa+\tau-\nu t} \big(J^{\lfloor 2r \rfloor+1}u\big)^2 \mathrm{d}x \, \mathrm{d}t< c^{\ast},
\end{equation*}
and if in addition $2r\notin \mathbb{Z}^{+}$, 
\begin{equation*}
\int_{\delta}^{T}\int_{\kappa+\epsilon-\nu t}^{\infty} \big(\partial_x J^{\lfloor 2r\rfloor} u(x,t) \big)^2 \langle x \rangle^{2\big(r-\frac{(\lfloor 2r\rfloor+1)}{2}\big)} \, \mathrm{d}x \, \mathrm{d}t
 \leq c^{\ast}.
\end{equation*}
Moreover, if $r\geq \frac{1}{2}$,
\begin{equation}\label{KdVfracprop}
\sup_{\delta\leq t \leq T}  \int_{\kappa+\epsilon-\nu t}^{\infty} \big(J^{s} u(x,t) \big)^2 \langle  x \rangle^{2r_s} \, \mathrm{d}x,
 \leq c^{\ast},
\end{equation}
where $$r_s=\max\left\{\left(1-\frac{s}{\lfloor 2r \rfloor}\right)r,r-\frac{\lceil s \rceil}{2}\right\}$$ and    $0\leq s \leq \lfloor 2r \rfloor.$ 
\item[(ii)] \underline{{\bf Propagation of exponential weights}:} If for some $b>0$, $\kappa\in \mathbb{R}$,
\begin{equation}\label{condo1dexp}
\|e^{b x} u_0\|_{L^2((\kappa,\infty))}<\infty,
\end{equation}
then the corresponding solution $u=u(x,t)\in C([0,T];H^{\frac{3}{4}^{+}}(\mathbb{R}))$ of the IVP \eqref{KdV} with initial condition $u_0$ satisfies: For any $\nu > 0$, $\epsilon>0$, 
\begin{equation*}
\sup_{0\leq t \leq T} \int_{\kappa}^{\infty} \big(e^{b x } u(x,t) \big)^2\,  \mathrm{d}x < \infty.
\end{equation*}
Moreover, for any multi-index $\beta$, any $\nu> 0$, $\delta>0$ $\epsilon>0$, 
\begin{equation*}
\sup_{\delta\leq t \leq T}  \int_{\kappa+\epsilon-\nu t}^{\infty}  \big(e^{b x}\partial^{\beta}_{x} u(x,t) \big)^2  \, \mathrm{d}x <\infty.
\end{equation*}
\end{itemize}
\end{theorem}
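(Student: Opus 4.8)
The plan is to obtain Theorem \ref{mainTHMKdV} as a by-product of the machinery built for the ZK equation, by treating the one-dimensional KdV equation \eqref{KdV} as the ``$n=1$'' case of \eqref{ZK}. First I would observe that for $n=1$ the vector $\sigma$ reduces to a nonzero scalar, and the condition \eqref{sigmacond} is automatically satisfied (there are no components $\sigma_2,\dots,\sigma_n$, so the constraint $\sqrt 3\,\sigma_1>0$ is the only requirement); thus the half-space $\mathcal H_{\{\sigma,\kappa\}}$ becomes the half-line $(\kappa,\infty)$ and the strip $Q_{\{\sigma,\gamma,\kappa\}}$ becomes an interval. The regularity threshold $(\frac{n+2}{2})^+$ would formally be $(\frac32)^+$, but for KdV one can do better: by the sharp local well-posedness of Kenig, Ponce, and Vega \cite{KenigPonceVega1993}, the IVP \eqref{KdV} is well-posed in $H^{\frac34^+}(\mathbb R)$, and — crucially — the solutions produced satisfy the Kato-type local smoothing and maximal-function estimates that guarantee the class \eqref{l1cond}, i.e. $u\in L^1([0,T];W^{1,\infty}(\mathbb R))$. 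This is exactly the hypothesis flagged in Remark \ref{Remarkscont}(v) that allows lowering the regularity assumption below $(\frac{n+2}{2})^+$ when $n=1$.

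Next I would run the proofs of Theorems \ref{mainTHM} and \ref{expodecay} verbatim in the one-dimensional setting. The weighted energy estimates there rely on: (a) the pseudo-differential calculus of Definition \ref{pesudodeff} and the commutator/propagation formulas of Lemma \ref{lemmadecom}, Lemma \ref{lemmapropaform}, and Lemma \ref{InterpLemma} — all of which are stated and proved for arbitrary dimension $n$, hence apply with $n=1$; (b) the approximation argument using smooth, spatially decaying solutions, for which Theorem \ref{wellposweighted} supplies local well-posedness in $H^s(\mathbb R)\cap L^2(|x|^{2r}dx)$ for $s\ge\max\{2r,(\frac32)^+\}$ — again valid at $n=1$; and (c) pointwise control of $u$ and $\partial_x u$ in the nonlinear-term estimates, which is precisely what \eqref{l1cond} provides. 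The only genuinely KdV-specific inputs are the commutator identity replacing \eqref{eqremark1}, namely $[S(t),x]=-\partial_x^2\cdot 3 = -3\partial_x^2$ (since for KdV $S(t)=e^{-t\partial_x^3}$ and $[e^{-t\partial_x^3},x]=-3t\,\partial_x^2$, up to the normalization), which still exhibits the decay-gains-two-derivatives balance driving the gain of regularity $\lfloor 2r\rfloor$; and the local smoothing gain of one derivative built into the $H^{\frac34^+}$ theory, which replaces the role played by Theorem \ref{TheorArg} for ZK. With these substitutions, \eqref{KdVMain1} and \eqref{KdVfracprop} follow from the same iteration: no regularity $\Rightarrow$ decay $r$; one derivative $\Rightarrow$ decay $r-\frac12$; iterating up to $\lfloor 2r\rfloor$ derivatives $\Rightarrow$ decay $r-\frac{\lfloor 2r\rfloor}{2}$, interpolated via Lemma \ref{InterpLemma} to give $r_s=\max\{(1-\frac{s}{\lfloor 2r\rfloor})r,\ r-\frac{\lceil s\rceil}{2}\}$. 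Part (ii), the exponential case, is likewise the $n=1$ specialization of Theorem \ref{expodecay}: the exponential weight $e^{bx}$ commutes with $\partial_x^3$ up to lower-order terms with bounded coefficients, so the weighted energy method closes at every regularity level, yielding smoothness of $e^{bx}u(t)$ on $(\kappa+\epsilon-\nu t,\infty)$ for $t\ge\delta$.

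I expect the main obstacle to be purely bookkeeping rather than conceptual: one must check that every lemma invoked in the ZK proof — especially the pseudo-differential estimates in the Appendix and the propagation formula Lemma \ref{lemmapropaform} — was indeed stated for general $n\ge 1$ (or trivially degenerates correctly at $n=1$), and that the $H^{\frac34^+}$ well-posedness theory of \cite{KenigPonceVega1993} really does deliver \eqref{l1cond} with the uniform-in-time control needed to run the approximation argument of Theorem \ref{wellposweighted}'s proof. A secondary technical point is that the cutoff functions $\chi(\sigma\cdot x+\nu t-\kappa)$ used to localize to the moving half-space must be adapted to the scalar variable $x$; the derivative of the cutoff produces precisely the favorable sign (from $\partial_x^3$ acting on the cutoff, giving a positive-definite $3(\chi')(\partial_x u)^2$ term after integration by parts) that makes the energy inequality close, exactly as the condition \eqref{sigmacond} engineers in higher dimensions. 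Once these verifications are in place, Theorem \ref{mainTHMKdV} follows, and I would state explicitly that all constants $c^\ast$ carry the same dependence on $\epsilon,T,\nu,\|u_0\|_{H^{3/4^+}}$, and the weighted norm of $u_0$ as in the ZK statements.
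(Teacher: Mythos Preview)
Your proposal is correct and follows essentially the same approach as the paper: specialize the ZK machinery to $n=1$, use the Kenig--Ponce--Vega $H^{3/4^+}$ theory to secure the class \eqref{l1cond}, and rerun the weighted energy identity and the inductive iteration on the weight exponent exactly as in the proofs of Theorems \ref{mainTHM} and \ref{expodecay}. One imprecision worth correcting: the role of Theorem \ref{TheorArg} in the iteration is not played by the Kato-type local smoothing coming from the $H^{3/4^+}$ well-posedness, but by the fractional \emph{propagation of regularity} result for KdV of Kenig, Linares, Ponce, and Vega \cite{KenigLinaresVega2018}, which is the genuine one-dimensional analog of Theorem \ref{TheorArg} and is what lets you upgrade, at each inductive step, the localized regularity gained at time $t_l$ to a persistence statement on $[t_l,T]$.
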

\begin{remark}\label{Remarkscont2}
\begin{itemize}[leftmargin=15pt]
\item[(i)] Theorems \ref{mainTHM} and \ref{mainTHMKdV} (i) seem to be the first results in the propagation of localized fractional polynomial decay. Similarly, Theorems \ref{expodecay} and \ref{mainTHMKdV} (ii) is to our knowledge the first results concerning the propagation of exponential weights. We believe that the arguments developed in this manuscript can be adapted in the study of localized propagation of weights for other partial differential equations of dispersive type. 
\item[(ii)] Theorem \ref{mainTHMKdV} generalizes to fractional weights the results of Isaza, Linares, and Ponce \cite[Theorem 1.2]{IsazaLinaresPonce2015}. To see this, let $m\in \mathbb{Z}^{+}$, taking $r=\frac{m}{2}$ and $s=l_1\leq m$ in  \eqref{KdVfracprop} yield
\begin{equation}\label{IsazaLinaresPOnceres1}
\sup_{\delta\leq t \leq T}  \int_{\kappa+\epsilon-\nu t}^{\infty} \big(J^{l_1} u(x,t) \big)^2 \langle  x \rangle^{m-l_1} \, \mathrm{d}x<\infty,
\end{equation}
which in particular implies when $m\geq 2$,
\begin{equation}\label{IsazaLinaresPOnceres2}
\int_{\delta}^T \int_{\kappa+\epsilon-\nu t}^{\infty} \big(J^{l'_1+1} u(x,t) \big)^2 \langle  x \rangle^{m-l'_1-1} \, \mathrm{d}x\, \mathrm{d}t
<\infty,
\end{equation}
where $1\leq l'_1\leq m-1$. Notice that by Lemma \ref{lemmapropaform}, we can replace $J^{l_1}$, and $J^{l_1'+1}$ in the above expressions by any derivative of order $l_1$ and $l_1'+1$, respectively. Hence, \eqref{IsazaLinaresPOnceres1} and \eqref{IsazaLinaresPOnceres2} correspond to the propagation of integer weights deduced in \cite[Theorem 1.2]{IsazaLinaresPonce2015}. Nevertheless, Theorem \ref{mainTHMKdV} shows an explicit connection between the propagation of fractional regularity and integer decay, thus enlarging the study initiated in \cite{IsazaLinaresPonce2015}. 
\item[(iii)] The proof of Theorem \ref{mainTHMKdV} is based on energy estimates which also require solutions of \eqref{KdV} to be in the class
\begin{equation*}
    u \in L^{1}([0,T];W^{1,\infty}(\mathbb{R})).
\end{equation*}
Notice that condition above is given by the local theory in $H^{\frac{3}{4}^+}(\mathbb{R})$ due to Kenig, Ponce, and Vega in \cite{KenigPonceVega1993}, which assures the existence of solutions $u$ of KdV such that
\begin{equation*}
    u \in C([0,T];H^{\frac{3}{4}^{+}}(\mathbb{R})),\, \text{ and } \, \partial_x u \in L^4([0,T];L^{\infty}(\mathbb{R})).
\end{equation*}
However, the existence of solutions for the Cauchy problem \eqref{KdV} is known in $H^{s}(\mathbb{R})$, $s\geq -1$ (see, \cite{RowanVisan2019, KenigPonceVega1993,BourgainI1993, Guo2009,KenigPonceVega1996, ColKeStaTakaTao2003,ChristColliTao2003}). An interesting problem would be to investigate the fractional propagation of weights when $u_0\in H^s(\mathbb{R})\cap L^2((\kappa,\infty); \langle x \rangle^{2r}\, dx)$, $s\leq \frac{3}{4}.$ 

Furthermore, the previous discussion and our argument imply that when $n=1$, the results of Theorem \ref{wellposweighted} establish well-posedness results in the space $H^{s}(\mathbb{R})\cap L^2(|x|^{2r}\, dx)$ with $r>0$, $s\geq\left\{2r,\frac{3}{4}^{+}\right\}$. 
\end{itemize}
\end{remark}
\subsection{Organization}
This paper is organized as follows: In Section \ref{pseudoSection}, we introduce and develop our results for the class of pseudo-differential operators in Definition \ref{pesudodeff}. In this same section, we also present some key consequences of pseudo-differential calculus such as propagation and interpolation formulas. We conclude Section \ref{pseudoSection} by introducing the weighted approximations that will be used along with our arguments. Section \ref{sectionMain} concerns the deduction of the propagation of fractional polynomial decay in Theorem \ref{mainTHM}. In Section \ref{ExpodecaySect}, we deduce the propagation of localized exponential decay stated in Theorem \ref{expodecay}. Next, in Section \ref{sectionKdV}, we establish our results for the KdV equation, i.e., we prove Theorem \ref{mainTHMKdV}. Finally, we present an appendix where in the first part, Appendix \ref{AppendixWellposs}, we prove the well-posedness results in anisotropic weighted Sobolev spaces detailed in Lemma \ref{wellposweighted}, and we conclude with Appendix \ref{Appendix}, where we establish continuity and asymptotic expansion for the class of pseudo-differential operators introduced in Section \ref{pseudoSection}.

\subsection{Acknowledgments}

The authors wish to extend their gratitude to Prof. Felipe Linares for the careful reading and helpful comments towards improving the manuscript. O. R. acknowledges financial support from Universidad Nacional de Colombia, Bogot\'a.

\subsection{Notation and preliminaries}
Given two positive quantities $a$ and $b$,  $a\lesssim b$ means that there exists a positive constant $c>0$ such that $a\leq c b$. We write $a\sim b$ to symbolize that $a\lesssim b$ and $b\lesssim a$. We will denote by $\mathbb{N}_{0}=\mathbb{N}\cup\{0\}$. Given $m\in \mathbb{R}$, the \emph{Bessel potential} of order $-m$ is denoted by $J^m=(1-\Delta)^{\frac{m}{2}}$. We will use the standard \emph{Lebesgue spaces} $L^p(\mathbb{R}^n)$, $1\leq p\leq \infty$ with norm $\|f\|_{L^p}$. $\mathcal{S}(\mathbb{R}^{n})$ denotes the \emph{Schwartz class} of functions. $H^{s}(\mathbb{R}^n)$ denotes the \emph{Sobolev space} of square integrable tempered distributions $f$ for which $\|J^s f\|_{L^2}<\infty$. 
\section{Preliminary results and Pseudo-differential Calculus}\label{pseudoSection}
In this section, we introduce a new class of pseudo-differential operators with polynomial decay. 
\begin{defi}\label{pesudodeff}
Let $\sigma \in \mathbb{R}^n$ and $m,\omega, q\in \mathbb{R}$. We define $\mathbb{S}_{\sigma,\omega}^{m,q}(\mathbb{R}^n\times \mathbb{R}^n)$ as the set  of functions $a \in C^{\infty}(\mathbb{R}^n\times \mathbb{R}^n)$ such that for all multi-index $\alpha, \beta\in \mathbb{N}_{0}^{n} $, there exists a constant $c>0,$ such that
\begin{equation*}
|\partial^{\alpha}_x\partial^{\beta}_{\xi}a(x,\xi)|\leq c\langle \sigma \cdot x+\omega \rangle^{q-|\alpha|}\langle \xi \rangle^{m-|\beta|},\quad \mbox{for all}\quad x,\xi\in\mathbb{R}^{n}.
\end{equation*}
We call an element  $a\in \mathbb{S}_{\sigma,\omega}^{m,q}(\mathbb{R}^n\times \mathbb{R}^n)$  a symbol  of order $(m,q).$
\end{defi}
\begin{remark}
In dimension $n=1$, the class $\mathbb{S}_{\sigma,\omega}^{m,q}$ has been used to describe properties of solutions of the nonlinear Schr\"odinger equation, e.g.,  \cite{KenigPonceRolvungVega2005}. Furthermore, in the one-dimensional setting, an equivalent class has been used to describe properties of the critical dispersive generalized Benjamin-Ono equation in \cite{KenigMartelRobbiano2011}. However, up to our knowledge, the case $n\geq 2$ in Definition \ref{pesudodeff} has not been considered before. 
\end{remark}
\begin{example}\label{examppseudo}
In the  case $\sigma=0,$ the class $ \mathbb{S}_{0,\omega}^{m,q}$ agrees with the  classical class of symbols $\mathbb{S}^{m}$ defined by Kohn and Nirenberg in  \cite{KohnNirenberg1965}. More precisely,  for $m\in\mathbb{R}$, we have $a\in\mathbb{S}^{m}$, if  $a \in C^{\infty}(\mathbb{R}^n\times \mathbb{R}^n)$, and  for all $\alpha,\beta\in\mathbb{N}_{0}^{n},$ there exists a constant $c>0,$ such that 
	\begin{equation}\label{kn}
		|\partial_{x}^{\beta}\partial_{\xi}^{\alpha}a(x,\xi)|\leq c\langle \xi \rangle^{m-|\alpha|}\quad\mbox{for all}\quad x,\xi\in\mathbb{R}^{n}.
	\end{equation}
\end{example}
\begin{example}
 Let $m,\omega, q\in \mathbb{R}.$ For $\sigma\neq 0,$ the function
	\begin{equation*}
		a(x,\xi)=\langle \sigma\cdot x+\omega \rangle^{q}\langle \xi\rangle ^{m} \quad\mbox{for}\quad x,\xi \in\mathbb{R}^{n}
	\end{equation*}
defines a symbol in the class $\mathbb{S}_{\sigma,\omega}^{m,q}.$
\end{example}
\begin{example}
Let $e_j\in \mathbb{R}^n$ be the canonical vector in the $j$ direction for some $1\leq j \leq n$. Setting $\omega=0$, we have that $a\in \mathcal{S}^{m,q}_{e_j,0}$, if for all $\alpha,\beta\in\mathbb{N}_{0}^{n},$ there exists $c>0$ such that
	\begin{equation}\label{extraExm1}
		|\partial_{x}^{\beta}\partial_{\xi}^{\alpha}a(x,\xi)|\leq c\langle x_j \rangle^{q-|\beta|}\langle \xi\rangle^{m-|\alpha|}\quad\mbox{for all}\quad x,\xi\in\mathbb{R}^{n}.
	\end{equation}
Thus, the class $a\in \mathcal{S}^{m,q}_{e_j,0}$ only considers certain polynomial decay on the $j$-variable. In this regard, the previous class can also be seen as a higher dimensional extension of the pseudo-operators studied in \cite{KenigMartelRobbiano2011, KenigPonceRolvungVega2005}.
\end{example}
\begin{example}
A variant of the symbol defined above is the following:	Let $\chi\in \C^{\infty}_{0}(\mathbb{R})$  and $m,\omega,q\in \mathbb{R},$ then the function
	\begin{equation*}
		a(x,\xi)=\langle \sigma\cdot x+\omega \rangle^{q}\chi(\sigma\cdot x+\omega)\langle \xi\rangle ^{m} \quad\mbox{for}\quad x,\xi \in\mathbb{R}^{n}
	\end{equation*}
	defines a symbol in $\mathbb{S}_{\sigma,\omega}^{m,q}.$ 
\end{example}
Before proceeding with the description of some properties of the class of symbols in Definition \ref{pesudodeff}, we will fix some further notations.
\begin{remark}
We will suppress the dependence on the domain when we indicate a symbol in the class $\mathbb{S}_{\sigma,\omega}^{m,q}(\mathbb{R}^n\times \mathbb{R}^n)$, that is, to refer to a symbol $a$ in the class determined by $m,q,\sigma,\omega$,  we write  $a\in  \mathbb{S}_{\sigma,\omega}^{m,q}.$
\end{remark}
	\begin{remark}
			Given a symbol $a\in \mathbb{S}_{\sigma,\omega}^{m,q}$, we write  the \emph{pseudo-differential operator associated with the symbol $a$} as    
		\begin{equation*}
			\Psi_a f(x)=\int_{\mathbb{R}^{n}} a(x,\xi) \widehat{f}(\xi)\,e^{2\pi i x\cdot \xi} \, d\xi,\, x\in\mathbb{R}^{n}.
		\end{equation*}
	\end{remark}
\begin{defi}
	If   $\Sigma$   is any class of symbols  and $a(x,\xi)\in \Sigma$,  we say that  $\Psi_{a}\in \mathrm{OP}\Sigma.$ 
\end{defi}
One of the main properties that underlie the study of pseudo-differential operators is the continuity in Lebesgue spaces. The description of our results depends strongly on the following lemma.
\begin{lemma}\label{contiprop}
 Let $\sigma \in \mathbb{R}^n$ and $m,\omega,q \in \mathbb{R}$ be fixed. Consider $1<p<\infty$ and $\Psi_{a}\in \mathrm{OP} \mathbb{S}_{\sigma,\omega}^{m,q}$. Then
\begin{equation}\label{continuity}
\|\Psi_a f\|_{L^p}\lesssim \|\langle \sigma \cdot x+\omega \rangle^{q}J^{m}f\|_{L^p}.
\end{equation}
\end{lemma}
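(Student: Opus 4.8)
The plan is to reduce the estimate to a classical Calder\'on--Zygmund / Mikhlin-type multiplier bound by factoring out the decay weight. Write $\langle\sigma\cdot x+\omega\rangle^{q}$ on the left and $\langle\sigma\cdot x+\omega\rangle^{-q}$ inside, so that if $a\in\mathbb{S}_{\sigma,\omega}^{m,q}$ then
\begin{equation*}
b(x,\xi):=\langle\sigma\cdot x+\omega\rangle^{-q}\, a(x,\xi)\,\langle\xi\rangle^{-m}
\end{equation*}
should satisfy the uniform bounds $|\partial_x^{\alpha}\partial_\xi^{\beta}b(x,\xi)|\le c\langle\xi\rangle^{-|\beta|}$ for all $\alpha,\beta$; that is, $b\in\mathbb{S}^{0}$ (the Kohn--Nirenberg class of Example \ref{examppseudo}), uniformly. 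The point is that differentiating $\langle\sigma\cdot x+\omega\rangle^{-q}$ in $x$ produces a factor $\langle\sigma\cdot x+\omega\rangle^{-q-|\alpha|}$ times a bounded function, and multiplying against the bound $\langle\sigma\cdot x+\omega\rangle^{q-|\alpha'|}$ for $\partial_x^{\alpha'}a$ and summing via the Leibniz rule keeps everything controlled by $\langle\sigma\cdot x+\omega\rangle^{0}$ — all the $x$-decay cancels. First I would verify this factorization carefully, checking that $b$ is genuinely in $\mathbb{S}^{0}$ with seminorms controlled by those of $a$.

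\textbf{Main step.} Once $b\in\mathbb{S}^{0}$, invoke the classical $L^{p}$-boundedness of pseudo-differential operators with symbols in $\mathbb{S}^{0}_{1,0}$ (the Calder\'on--Vaillancourt / Mikhlin--H\"ormander theorem, valid for $1<p<\infty$): $\|\Psi_b g\|_{L^p}\lesssim\|g\|_{L^p}$. Applying this to $g=J^{m}f$ and noting that $\Psi_a f(x)=\langle\sigma\cdot x+\omega\rangle^{q}\,\Psi_b\big(J^{m}f\big)(x)$ — which follows because multiplication by a function of $x$ alone commutes correctly with the Fourier-multiplier-in-$\xi$ structure in the obvious way, reading off from the integral definition $\Psi_a f(x)=\int a(x,\xi)\widehat f(\xi)e^{2\pi i x\cdot\xi}\,d\xi$ — we obtain
\begin{equation*}
\|\Psi_a f\|_{L^p}=\big\|\langle\sigma\cdot x+\omega\rangle^{q}\,\Psi_b(J^{m}f)\big\|_{L^p}.
\end{equation*}
But this is not yet \eqref{continuity}: we need the weight back on the \emph{input}, not wrapped around the output. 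The correct route is instead to define $\widetilde b(x,\xi):=\langle\sigma\cdot x+\omega\rangle^{-q}a(x,\xi)\langle\xi\rangle^{-m}$ as above and observe $\Psi_a = M_{w}\circ \Psi_{\widetilde b}\circ J^{m}$ where $M_w$ is multiplication by $w(x)=\langle\sigma\cdot x+\omega\rangle^{q}$; since this puts the weight outside, one instead absorbs it by writing $\Psi_a f = M_w \Psi_{\widetilde b}M_w^{-1}\, M_w J^m f$ and checking $M_w\Psi_{\widetilde b}M_w^{-1}=\Psi_c$ for another symbol $c\in\mathbb{S}^{0}$ (here one uses that conjugating a $\mathbb{S}^0$ operator by the slowly varying weight $w$ stays in $\mathbb{S}^0$, again because $\partial_x$ of $w/w=1$ vanishes and cross terms are lower order in $\langle\xi\rangle$ — this is where a short asymptotic-expansion or commutator argument is needed). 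Then $\|\Psi_a f\|_{L^p}=\|\Psi_c(wJ^m f)\|_{L^p}\lesssim\|wJ^m f\|_{L^p}=\|\langle\sigma\cdot x+\omega\rangle^{q}J^m f\|_{L^p}$, which is exactly \eqref{continuity}.

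\textbf{Main obstacle.} The delicate point is the conjugation $M_w\Psi_{\widetilde b}M_w^{-1}\in\mathrm{OP}\,\mathbb{S}^0$: unlike the trivial factorizations, this requires controlling the commutator $[\,\Psi_{\widetilde b},M_w\,]$ and verifying it is lower order, using that $w$ and $w^{-1}$ have all derivatives bounded (indeed $|\partial_x^{\alpha}w|\lesssim\langle\sigma\cdot x+\omega\rangle^{q}$ and $|\partial_x^{\alpha}w^{-1}|\lesssim\langle\sigma\cdot x+\omega\rangle^{-q}$, so the product is uniformly bounded with all derivatives). This is precisely the kind of statement the asymptotic expansion for the class $\mathbb{S}_{\sigma,\omega}^{m,q}$ in Appendix \ref{Appendix} is designed to supply, so in the write-up I would cite that expansion (composition of symbols in this class) to conclude $\widetilde b\,\#\,$(symbol of $w$) lands in the right class, rather than redoing the stationary-phase estimates by hand. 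Modulo that appendix input, the argument is the standard weighted-multiplier reduction, and the only genuinely new ingredient over the $\sigma=0$ case is bookkeeping the $\langle\sigma\cdot x+\omega\rangle$ powers through the Leibniz rule.
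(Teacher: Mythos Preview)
Your approach is correct and reaches the same endpoint as the paper, but by a different path. The paper argues by \emph{duality}: it pairs $\Psi_a f$ against a test function $g$, moves the adjoint $\Psi_{a^*}$ onto $g$, and then observes that $\langle\sigma\cdot x+\omega\rangle^{-q}J^{-m}\Psi_{a^*}\in\mathrm{OP}\,\mathbb{S}_{\sigma,\omega}^{0,0}\subset\mathrm{OP}\,\mathbb{S}^0$, whence H\"older plus classical $L^p$-boundedness closes the estimate. You instead factor $\Psi_a = M_w\Psi_{\widetilde b}J^m$ directly and then conjugate $M_w\Psi_{\widetilde b}M_w^{-1}=\Psi_c$ with $c\in\mathbb{S}^0$. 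Both routes ultimately rest on the same two ingredients: (i) the classical $\mathbb{S}^0$ boundedness, and (ii) a verification that a certain composite lands in $\mathbb{S}_{\sigma,\omega}^{0,0}$. Your direct argument avoids introducing the adjoint symbol, which is a small economy; the paper's duality argument avoids the explicit conjugation step.

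One bookkeeping point: your conjugation step $M_w\Psi_{\widetilde b}M_w^{-1}\in\mathrm{OP}\,\mathbb{S}^0$ is exactly an instance of the composition calculus (Proposition~\ref{PO1}), which in the paper is stated and proved \emph{after} Lemma~\ref{contiprop}. There is no circularity, since the proof of Proposition~\ref{PO1} does not invoke Lemma~\ref{contiprop}; but if you adopt your argument you should either reorder the two results or note explicitly that the composition formula is independent of the continuity lemma. (The paper's duality proof faces the same issue in asserting $\Psi_b\in\mathrm{OP}\,\mathbb{S}_{\sigma,\omega}^{0,0}$ and handles it just as informally.)
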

\begin{proof}
The proof is detailed in Appendix \ref{Appendix} below.
\end{proof}
We will use the following asymptotic analysis to derive some estimates connecting localized weights and regularity.
\begin{prop}\label{PO1}
Let $\sigma \in \mathbb{R}^n$ and $m_{1},m_{2}, \omega,q_{1},q_{2} \in \mathbb{R}$. Consider $a\in \mathbb{S}_{\sigma,\omega}^{m_1,q_1}$ and $b\in\mathbb{S}_{\sigma,\omega}^{m_2,q_2}$. Then, there exists $c \in \mathbb{S}^{m_1+m_2-1,q_1+q_2-1}_{\sigma,\omega}$ such that 
\begin{equation*}
     [\Psi_a,\Psi_b]=\Psi_a \Psi_b-\Psi_b \Psi_a=:\Psi_c.
\end{equation*}
Moreover, 
\begin{equation*}
    c \approx \sum_{|\beta|>0 }\frac{1}{(2\pi i)^{|\beta|} \beta !}\Big((\partial^{\beta}_{\xi}a)\cdot(\partial^{\beta}_{x}b)-(\partial^{\beta}_{\xi}b)\cdot(\partial^{\beta}_{x}a) \Big),
\end{equation*}
in the sense that 
\begin{equation*}
    c - \sum_{0<|\beta| < N}\frac{1}{(2\pi i)^{|\beta|}\beta !}\Big((\partial^{\beta}_{\xi}a)\cdot(\partial^{\beta}_{x}b)-(\partial^{\beta}_{\xi}b)\cdot(\partial^{\beta}_{x}a) \Big) \in \mathbb{S}^{m_1+m_2-N,q_{1} + q_{2}- N}_{\sigma, \omega},
\end{equation*}
for all $N \geq 2$.
\end{prop}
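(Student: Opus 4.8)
The plan is to reduce Proposition~\ref{PO1} to a composition calculus for the class $\mathbb{S}^{m,q}_{\sigma,\omega}$, namely a result of the form: if $a \in \mathbb{S}^{m_1,q_1}_{\sigma,\omega}$ and $b \in \mathbb{S}^{m_2,q_2}_{\sigma,\omega}$, then $\Psi_a\Psi_b = \Psi_{a\# b}$ with $a\#b \in \mathbb{S}^{m_1+m_2,q_1+q_2}_{\sigma,\omega}$ and the standard asymptotic expansion
\begin{equation*}
a\#b \approx \sum_{\beta} \frac{1}{(2\pi i)^{|\beta|}\beta!} (\partial^{\beta}_{\xi}a)(\partial^{\beta}_{x}b),
\end{equation*}
meaning the difference of $a\#b$ with the partial sum over $|\beta|<N$ lies in $\mathbb{S}^{m_1+m_2-N, q_1+q_2-N}_{\sigma,\omega}$. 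This composition statement is the analogue of the classical Kohn--Nirenberg calculus, and I would prove it in Appendix~\ref{Appendix} alongside Lemma~\ref{contiprop} by the usual oscillatory-integral manipulation: write $\Psi_a\Psi_b f(x)$ as a double integral, change variables, Taylor-expand $a(x,\xi)$ in $\xi$ around the relevant frequency to order $N$, and control the remainder by non-stationary phase / integration by parts. The only genuinely new bookkeeping relative to the classical case is the extra polynomial factor $\langle \sigma\cdot x+\omega\rangle^{q}$: one checks that $x$-differentiation of the symbol gains a factor $\langle \sigma\cdot x+\omega\rangle^{-1}$ exactly as the definition demands, and that the spatial localization weight is, up to harmless constants, essentially constant on the unit scale in $x$, so it behaves like a $0$-order symbol factor and simply multiplies through the expansion. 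Thus each term $(\partial^{\beta}_{\xi}a)(\partial^{\beta}_{x}b)$ lies in $\mathbb{S}^{m_1+m_2-|\beta|,\,q_1+q_2-|\beta|}_{\sigma,\omega}$, which is the order claimed.

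Granting the composition calculus, the proposition is immediate: set $c = a\#b - b\#a$. Then $\Psi_c = \Psi_a\Psi_b - \Psi_b\Psi_a = [\Psi_a,\Psi_b]$, and a priori $c \in \mathbb{S}^{m_1+m_2,q_1+q_2}_{\sigma,\omega}$. The key cancellation is in the $\beta = 0$ term of the two expansions: both $a\#b$ and $b\#a$ have leading term $ab$, so these cancel in $c$, and the leading surviving contribution comes from $|\beta|=1$, namely
\begin{equation*}
\sum_{|\beta|=1}\frac{1}{2\pi i}\big((\partial^{\beta}_{\xi}a)(\partial^{\beta}_{x}b) - (\partial^{\beta}_{\xi}b)(\partial^{\beta}_{x}a)\big) \in \mathbb{S}^{m_1+m_2-1,\,q_1+q_2-1}_{\sigma,\omega}.
\end{equation*}
More precisely, subtracting the two asymptotic expansions term by term and using that for $|\beta|<N$ the partial sums of $a\#b$ and $b\#a$ differ by exactly $\sum_{0<|\beta|<N}\frac{1}{(2\pi i)^{|\beta|}\beta!}\big((\partial^{\beta}_{\xi}a)(\partial^{\beta}_{x}b)-(\partial^{\beta}_{\xi}b)(\partial^{\beta}_{x}a)\big)$, one gets
\begin{equation*}
c - \sum_{0<|\beta|<N}\frac{1}{(2\pi i)^{|\beta|}\beta!}\big((\partial^{\beta}_{\xi}a)(\partial^{\beta}_{x}b)-(\partial^{\beta}_{\xi}b)(\partial^{\beta}_{x}a)\big) \in \mathbb{S}^{m_1+m_2-N,\,q_1+q_2-N}_{\sigma,\omega}
\end{equation*}
for all $N\geq 2$, since each of the two remainders is separately in that class. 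In particular, taking $N=2$ and noting that every term with $|\beta|\geq 1$ lies in $\mathbb{S}^{m_1+m_2-1,q_1+q_2-1}_{\sigma,\omega}$, we conclude $c \in \mathbb{S}^{m_1+m_2-1,q_1+q_2-1}_{\sigma,\omega}$, which is the stated order of the commutator symbol.

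The main obstacle is entirely in the deferred composition lemma, specifically verifying that the remainder after Taylor expansion genuinely lands in the class with the correct \emph{simultaneous} loss of one order in both the $\xi$-weight and the $\langle\sigma\cdot x+\omega\rangle$-weight. The subtlety is that the composition mixes the $x$-dependence of $a$ with the $\xi$-dependence of $b$ through the oscillatory integral, so one must track how the spatial weight $\langle\sigma\cdot x+\omega\rangle^{q_1}$ of $a$ interacts with translations in $x$ coming from the phase; the point is that $\langle\sigma\cdot x+\omega\rangle$ and $\langle\sigma\cdot y+\omega\rangle$ are comparable when $|x-y|\lesssim 1$ and, on the non-compact region, the decay of the symbol in $\xi$ gives enough room to absorb the slowly-varying weight via a Peetre-type inequality. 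Once this is set up carefully — which is the content of Appendix~\ref{Appendix} — the algebraic step above is routine. I would therefore state the composition result as a lemma in the appendix, prove it there, and in the body simply carry out the two-line subtraction argument given above.
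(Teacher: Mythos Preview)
Your proposal is correct and follows essentially the same approach as the paper: reduce to a composition formula $\Psi_a\Psi_b=\Psi_{a\#b}$ with the standard asymptotic expansion in $\mathbb{S}^{m_1+m_2,q_1+q_2}_{\sigma,\omega}$, then obtain the commutator by subtracting the two expansions so that the $\beta=0$ terms cancel. The paper carries out the composition calculus in Appendix~\ref{Appendix} exactly along the lines you sketch (oscillatory integral representation, Taylor expansion, and a Peetre-type inequality $\langle z-z_1\rangle^s\lesssim\langle z\rangle^s\langle z_1\rangle^{|s|}$ to handle the weight), with the minor technical device of first truncating $a,b$ to compactly supported symbols $a_k,b_k$ and passing to the limit.
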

\begin{proof}
We refer to Appendix \ref{Appendix} below.
\end{proof}
\subsection{Propagation Formulas and Interpolation}
In this part, we deduce some consequences of the pseudo-differential calculus introduced above. We emphasize that the results in this subsection are of independent interest and they can be applied in different situations.
The following Lemma is fundamental in the proof of Theorem \ref{mainTHM}.
\begin{lemma}\label{lemmadecom}
Let $\sigma \in \mathbb{R}^n$ and $\omega \in \mathbb{R}$. Let  $g(x,\xi)=g(x)\in \mathbb{S}^{0,q_1}_{\sigma, \omega}$, and $a\in \mathbb{S}^{m_2,q_2}_{\sigma, \omega}$ for some $ m_2,q_1,q_2\in \mathbb{R}$. Then, for each $N\in \mathbb{N},$  there exist pseudo-differential operators $\Psi_{a_{k}}\in \mathrm{OP} \mathbb{S}^{m_2-k,q_2}_{\sigma, \omega}$, with $k=1,\dots,N$ and $\mathcal{K}_N\in  \mathrm{OP}\mathbb{S}^{m_2-N-1,q_1+q_2-N-1}_{\sigma, \omega},$ such that the following factorization holds true
\begin{equation}\label{decomident}
\begin{aligned}
g\Psi_a f=\Psi_a(g f)+\sum_{k=1}^N \sum_{|\beta|=k} \Psi_{a_k}(\partial^{\beta}g f)+\mathcal{K}_{N}(f).
\end{aligned}
\end{equation}
For simplicity in the notation, we are using $\partial^{\beta}g$ for both the symbol and the pseudo-differential operator determined by it. Moreover, for any $N\geq 1$ there exist pseudo-differential operators  $\widetilde{\Psi}_{a_{k}}\in \mathrm{OP}\mathbb{S}^{m_2-k,q_2}_{\sigma, \omega}$, $k=1,\dots,N$ and $\widetilde{\mathcal{K}}_N\in\mathrm{OP} \mathbb{S}^{m_2-N-1,q_1+q_2-N-1}_{\sigma, \omega}$ such that
\begin{equation}\label{decomident2}
\begin{aligned}
\Psi_a (g f)=g\Psi_a( f)+\sum_{k=1}^N \sum_{|\beta|=k}\partial^{\beta} g \widetilde{\Psi}_{a_k}(f)+\widetilde{\mathcal{K}}_N(f).
\end{aligned}
\end{equation}
\end{lemma}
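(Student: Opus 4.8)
The plan is to derive both factorization formulas \eqref{decomident} and \eqref{decomident2} directly from the asymptotic composition calculus for the class $\mathbb{S}^{m,q}_{\sigma,\omega}$, namely from Proposition \ref{PO1} and its underlying symbol-composition expansion (which is proved in Appendix \ref{Appendix}). The starting observation is that $g=g(x)$, viewed as a symbol independent of $\xi$, belongs to $\mathbb{S}^{0,q_1}_{\sigma,\omega}$, and the pointwise product $g\Psi_a f$ is exactly $\Psi_{g\cdot a}f = \Psi_g \Psi_a f$ (multiplication operators compose trivially on the symbol side since $g$ has no $\xi$-dependence). So \eqref{decomident} and \eqref{decomident2} are really statements about the composition $\Psi_g\Psi_a$ and $\Psi_a\Psi_g$, and the commutator $[\Psi_g,\Psi_a]$ is governed by Proposition \ref{PO1}.

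First I would prove \eqref{decomident}. Apply the composition formula for the product of two pseudo-differential operators in $\mathrm{OP}\,\mathbb{S}_{\sigma,\omega}$: the symbol of $\Psi_a\Psi_g$ has an asymptotic expansion $\sum_{\beta}\frac{1}{(2\pi i)^{|\beta|}\beta!}(\partial_\xi^\beta a)(\partial_x^\beta g)$. Since $g$ depends only on $x$, this is $a\cdot g + \sum_{|\beta|\ge 1}\frac{1}{(2\pi i)^{|\beta|}\beta!}(\partial_\xi^\beta a)(\partial_x^\beta g)$, where the term $a\cdot g$ produces $\Psi_a(gf)$. For each multi-index $\beta$ with $|\beta|=k$, the symbol $\frac{1}{(2\pi i)^{|\beta|}\beta!}\partial_\xi^\beta a$ lies in $\mathbb{S}^{m_2-k,q_2}_{\sigma,\omega}$ — call it $a_k$ (absorbing the constant and reindexing so that $a_k$ depends on $\beta$; the statement's double sum $\sum_{k=1}^N\sum_{|\beta|=k}$ records this) — and $\partial^\beta g\in \mathbb{S}^{0,q_1-k}_{\sigma,\omega}\subset \mathbb{S}^{0,q_1}_{\sigma,\omega}$. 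Splitting the expansion at order $N$, the remainder symbol lies in $\mathbb{S}^{m_2-N-1,\,q_1+q_2-N-1}_{\sigma,\omega}$ by the asymptotic-expansion estimate of Proposition \ref{PO1}/Appendix \ref{Appendix} (here one uses that composing with $g$, which carries decay $q_1$, degrades the decay order of the remainder by $N+1$ exactly as the regularity order drops by $N+1$); this is $\mathcal{K}_N$. Rearranging $\Psi_g\Psi_a f=\Psi_a\Psi_g f-[\Psi_a,\Psi_g]f$ — or more directly, writing $g\Psi_a f=\Psi_a(gf)+(\Psi_g\Psi_a-\Psi_a\Psi_g)f$ — and feeding in the above expansion yields \eqref{decomident}. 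The abuse of notation "$\partial^\beta g$ for both symbol and operator" is harmless since $\partial^\beta g$ is again $x$-only, hence its operator is multiplication by $\partial^\beta g(x)$.

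The second identity \eqref{decomident2} is proved symmetrically: expand the symbol of $\Psi_g\Psi_a$ — which by the composition formula is $g\cdot a+\sum_{|\beta|\ge 1}\frac{1}{(2\pi i)^{|\beta|}\beta!}(\partial_\xi^\beta g)(\partial_x^\beta a)$ — but now $\partial_\xi^\beta g=0$ for $|\beta|\ge 1$, so in fact $\Psi_g\Psi_a=\Psi_{ga}$ exactly, with no correction terms. The content of \eqref{decomident2} is therefore to expand the \emph{other} composition $\Psi_a\Psi_g$ and solve for $g\Psi_a f=\Psi_{ga}f$: writing $\Psi_a(gf)=\Psi_a\Psi_g f$ and using its expansion $\Psi_{ga}f+\sum_{k=1}^N\sum_{|\beta|=k}\widetilde\Psi_{a_k}(\partial^\beta g\, f)$ plus a remainder $\widetilde{\mathcal K}_N\in\mathrm{OP}\,\mathbb{S}^{m_2-N-1,q_1+q_2-N-1}_{\sigma,\omega}$, one rearranges to get \eqref{decomident2} with $\widetilde\Psi_{a_k}$ the operator with symbol $\frac{1}{(2\pi i)^{|\beta|}\beta!}\partial_\xi^\beta a\in\mathbb{S}^{m_2-k,q_2}_{\sigma,\omega}$; note the order of $\partial^\beta g$ and $\widetilde\Psi_{a_k}$ is reversed compared to \eqref{decomident} because here the $x$-differentiation lands on $a$ while $g$ stays as the multiplier applied first. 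The main obstacle is bookkeeping the decay orders in the remainder terms: one must check carefully that truncating the composition expansion at order $N$ leaves a symbol whose \emph{decay} index drops by exactly $N+1$ (not merely its $\xi$-order), which requires tracking how derivatives $\partial_x^\beta g$ contribute decay $q_1-|\beta|$ and invoking the precise form of the Appendix \ref{Appendix} remainder estimate rather than only the leading-order statement of Proposition \ref{PO1}. Everything else is routine manipulation of the symbol calculus.
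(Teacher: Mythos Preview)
Your approach has the right overall idea but contains a confusion that creates a real gap for \eqref{decomident}. The composition formula gives the symbol of $\Psi_a\Psi_g$ as $ag+\sum_{|\beta|\ge1}c_\beta(\partial_\xi^\beta a)(\partial_x^\beta g)$ modulo a remainder. But the operator with symbol $ag$ is $g\Psi_a$ (not $\Psi_a(g\,\cdot)$, as you write), because $g$ is $x$-only; and likewise the operator with symbol $(\partial_\xi^\beta a)(\partial_x^\beta g)$ is $\partial^\beta g\cdot\Psi_{\partial_\xi^\beta a}$, i.e.\ multiplication by $\partial^\beta g$ \emph{after} the pseudo-differential operator acts. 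So what a single expansion actually yields is
\[
\Psi_a(gf)=g\Psi_a f+\sum_{k=1}^N\sum_{|\beta|=k}c_\beta\,\partial^\beta g\cdot\Psi_{\partial_\xi^\beta a}(f)+\widetilde{\mathcal K}_N(f),
\]
which is precisely \eqref{decomident2}. Thus \eqref{decomident2} does follow from one application of the calculus, once you sort out which side the factor $\partial^\beta g$ lands on (you have this reversed in both paragraphs).

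For \eqref{decomident}, however, you need terms of the form $\Psi_{a_k}(\partial^\beta g\,f)$, with $\partial^\beta g$ applied to $f$ \emph{before} $\Psi_{a_k}$ acts. The direct expansion gives instead $\partial^\beta g\cdot\Psi_{a_k}(f)$, and these differ by $[\partial^\beta g,\Psi_{a_k}]f$, which is lower order but not zero. The paper closes this gap by iterating: write $\partial^\beta g\,\Psi_{a_k}(f)=\Psi_{a_k}(\partial^\beta g\,f)+[\partial^\beta g,\Psi_{a_k}]f$, expand the new commutator via Proposition~\ref{PO1} to produce terms $\partial^{\beta+\beta_1}g\,\Psi_{a_{k+|\beta_1|}}(f)$ of strictly lower order, and repeat until every remainder sits in $\mathbb S^{m_2-N-1,q_1+q_2-N-1}_{\sigma,\omega}$. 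This terminates in finitely many steps because each commutation lowers both the $\xi$-order and the decay order by at least one. Your write-up omits this iteration and effectively identifies the two factorization forms; you need to add it to obtain \eqref{decomident}.
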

\begin{proof}
We will only deduce \eqref{decomident} as the proof of \eqref{decomident2} follows by a similar reasoning. Writing
\begin{equation*}
g\Psi_a f=[g,\Psi_a]f+\Psi_a(gf),
\end{equation*}
we are reduced to decompose the commutator term in the above identity. By Proposition \ref{PO1}, there exist pseudo-differential operators of order $\Psi_{a,m_2-|\beta|}$ with symbol in the class $\mathbb{S}^{m_2-|\beta|,q_2}_{\sigma, \omega}$ for each multi-index $0<|\beta|\leq N$, and $K$ with symbol in $\mathbb{S}^{m_2-N-1,q_1+q_2-N-1}_{\sigma, \omega}$ such that
\begin{equation*}
[g,\Psi_a]f=\sum_{0<|\beta|\leq N} \partial^{\beta}g \Psi_{a,m_2-|\beta|}(f)+K(f),
\end{equation*}
where we have used that $g$ depends only on the $x$ variable. We observe that $\partial^{\beta}g \Psi_{a,m_2-|\beta|}$ determines a pseudo-differential operator in the class $ \mathbb{S}^{m_2-|\beta|,q_1+q_2-|\beta|}_{\sigma, \omega}$, $0<|\beta|\leq N$. This suggests that 
$$ \partial^{\beta}g \Psi_{a,m_2-|\beta|}(f)=[\partial^{\beta}g,\Psi_{a,m_2-|\beta|}]f+\Psi_{a,m_2-|\beta|}(\partial^{\beta}g f),$$
and by noticing that the last term of the above identity is part of  \eqref{decomident}, we can repeat the above argument, expanding the commutator in the identity above to lower the class of the operator until we deduce \eqref{decomident}. More precisely, fixing a multi-index $\beta$ with $0<|\beta|\leq N$, by Proposition \ref{PO1} we get
\begin{equation*}
\begin{aligned}
\left[\partial^{\beta}g, \Psi_{a,m_2-|\beta|}\right]f=&\sum_{0<|\beta_1|\leq N} \partial^{\beta+\beta_1}g \Psi_{a,m_2-|\beta|-|\beta_1|}(f)+K_2(f),
\end{aligned}
\end{equation*}
where $ \Psi_{a,m_2-|\beta|-|\beta_1|}\in\mathrm{OP}\mathbb{S}^{m_2-|\beta|-|\beta_1|,q_2}$ and $K_2\in\mathrm{OP}\mathbb{S}^{m_2-|\beta|-N-1,q_1+q_2-|\beta|-N-1}_{\sigma, \omega}$. Since $\partial^{\beta+\beta_1}g \Psi_{a,m_2-|\beta|-|\beta_1|}\in \mathrm{OP}  \mathbb{S}^{m_2-|\beta|-|\beta_1|,q_1+q_2-|\beta|-|\beta_1|}_{\sigma, \omega}$, we can repeat the above argument a finite number of times until we arrive at identity \eqref{decomident}. The proof is complete. 
\end{proof}
The previous lemma establishes the following corollary, which can be regarded as an extension of \cite[Lemma 4.1]{Mendez2024} to the setting of the class of operators introduced in Definition \ref{pesudodeff}. 
\begin{cor}\label{supportsepareted3}
Let $\sigma \in \mathbb{R}^n,$ $\omega \in \mathbb{R},$ and $m_2,q_1, q_2\in \mathbb{R}$. Let  $g(x,\xi)=g(x)\in \mathbb{S}^{0,q_1}_{\sigma, \omega}$ and $\Psi_a\in \mathrm{OP} \mathbb{S}^{m_2,q_2}_{\sigma, \omega}$.  If $f \in L^{p}(\mathbb{R}^d)$  with $p\in(1,\infty)$ is such that
    \begin{equation*}
    \dist(\supp(f),\supp(g)) \geq \delta > 0, 
    \end{equation*}
then
\begin{equation*}
\|g\Psi_a(f)\|_{L^p}\lesssim \|f\|_{L^p}.
\end{equation*}
\end{cor}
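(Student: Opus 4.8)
The plan is to exploit the factorization identity \eqref{decomident} from Lemma \ref{lemmadecom} together with the continuity estimate \eqref{continuity} from Lemma \ref{contiprop}, using the disjoint-support hypothesis to kill all the ``main'' terms. First I would apply \eqref{decomident} with a large parameter $N$ (to be fixed later, depending only on $m_2$) to write
\begin{equation*}
g\Psi_a(f)=\Psi_a(gf)+\sum_{k=1}^{N}\sum_{|\beta|=k}\Psi_{a_k}(\partial^{\beta}g\, f)+\mathcal{K}_N(f),
\end{equation*}
where $\Psi_{a_k}\in\mathrm{OP}\,\mathbb{S}^{m_2-k,q_2}_{\sigma,\omega}$ and $\mathcal{K}_N\in\mathrm{OP}\,\mathbb{S}^{m_2-N-1,q_1+q_2-N-1}_{\sigma,\omega}$. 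Since $\dist(\supp(f),\supp(g))\geq\delta>0$, the products $gf$ and $\partial^{\beta}g\, f$ vanish identically for every $\beta$, so the first two terms drop out entirely and we are left with
\begin{equation*}
g\Psi_a(f)=\mathcal{K}_N(f).
\end{equation*}

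Next I would estimate $\mathcal{K}_N(f)$ in $L^p$. By Lemma \ref{contiprop} applied to the symbol class $\mathbb{S}^{m_2-N-1,q_1+q_2-N-1}_{\sigma,\omega}$, one has
\begin{equation*}
\|\mathcal{K}_N(f)\|_{L^p}\lesssim \big\|\langle\sigma\cdot x+\omega\rangle^{q_1+q_2-N-1}J^{m_2-N-1}f\big\|_{L^p}.
\end{equation*}
Now choose $N$ large enough that $m_2-N-1\leq 0$ and $q_1+q_2-N-1\leq 0$; then both $J^{m_2-N-1}$ and multiplication by $\langle\sigma\cdot x+\omega\rangle^{q_1+q_2-N-1}$ are bounded operators on $L^p(\mathbb{R}^n)$ for $1<p<\infty$ (the former since $J^{s}$ for $s\le 0$ is an $L^p$-multiplier, the latter since the weight is bounded above by $1$ up to a constant when the exponent is nonpositive). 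Hence $\|\mathcal{K}_N(f)\|_{L^p}\lesssim\|f\|_{L^p}$, which combined with the previous display yields $\|g\Psi_a(f)\|_{L^p}\lesssim\|f\|_{L^p}$, as claimed. (The implicit constant depends on $\delta$ only through the choice of cutoffs implicit in $\mathbb{S}^{0,q_1}_{\sigma,\omega}$, and on $m_2,q_1,q_2,\sigma,\omega,p$.)

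I should double-check one subtlety: strictly speaking $g\in\mathbb{S}^{0,q_1}_{\sigma,\omega}$ need not be a Schwartz function, so ``$gf=0$'' requires $f$ to actually be a genuine function (which it is, $f\in L^p$) and $g$ to vanish on a neighborhood of $\supp(f)$ — this is exactly the disjoint-support condition, so the pointwise product is $0$ a.e. and the pseudo-differential operators, being linear, send $0$ to $0$. There is no real obstacle here beyond bookkeeping; the one genuinely load-bearing ingredient is that the asymptotic expansion in Lemma \ref{lemmadecom} has a \emph{remainder} sitting in a symbol class whose order can be pushed arbitrarily negative in \emph{both} indices $m$ and $q$ by taking $N$ large, which is what lets Lemma \ref{contiprop} close the estimate. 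The main thing to be careful about is simply confirming that the order $q_1+q_2-N-1$ of the weight in the remainder does go to $-\infty$ with $N$ (it does, by the statement of Lemma \ref{lemmadecom}), so that the weight factor is harmless.
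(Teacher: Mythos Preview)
Your proof is correct and follows essentially the same approach as the paper: apply Lemma \ref{lemmadecom}, use the disjoint-support hypothesis to eliminate all terms except the remainder $\mathcal{K}_N$, and then choose $N$ large enough that the remainder lies in $\mathrm{OP}\,\mathbb{S}^{0,0}_{\sigma,\omega}$ so Lemma \ref{contiprop} gives the $L^p$ bound. The paper phrases the last step as the inclusion $\mathbb{S}^{m_2-N-1,q_1+q_2-N-1}_{\sigma,\omega}\subset\mathbb{S}^{0,0}_{\sigma,\omega}$ rather than bounding the weight and Bessel potential separately, but this is the same argument.
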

\begin{proof}
We consider a fixed integer $N \geq  \max\{1,m_2-1,q_1+q_2-1\}$. Since $g$ and $f$ have separated support, $f \partial^{\beta}g=0$ for all multi-index $\beta$, then an application of Lemma \ref{lemmadecom} shows
\begin{equation*}
g\Psi_a(f)=R_N(f),
\end{equation*}
where $R_N\in  \mathrm{OP}\mathbb{S}^{m_2-N-1,q_1+q_2-N-1}_{\sigma, \omega}\subset \mathrm{OP}\mathbb{S}^{0,0}_{\sigma, \omega}$. At this point, the desired result is a consequence of the continuity of pseudo-differential operators in Lemma \ref{contiprop}.
\end{proof}
By noticing that if $\Psi_a\in \mathrm{OP}\mathbb{S}^{m,0}_{\sigma,\omega}$, then $J^{-m}\Psi_a \in \mathrm{OP}\mathbb{S}^{0,0}_{\sigma,\omega}$, we can use multiple applications of Lemma \ref{lemmadecom} to obtain the following result.
\begin{cor}\label{remlemmadecom}
	Let $s>0$ and $g(x,\xi)=g(x)\in \mathbb{S}^{0,q_1}_{\sigma, \omega}$. Consider $N \geq\max\{s,q_1\}$ be integer. Then there exist pseudo-differential operators $\Psi_0,\Psi_1,\widetilde{\Psi}_0,\widetilde{\Psi}_1 K_N, \widetilde{K}_N\in \mathrm{OP}\mathbb{S}^{0,0}_{\sigma,\omega}$ such that:
	\begin{equation*}
		\begin{aligned}
			g J^s f=&J^s(g f)+\Psi_0 \Big(\sum_{1\leq k < s}\sum_{|\beta|=k} J^{s-k}(\partial^{\beta}g f)\Big)\\
			&+\Psi_1 \Big(\sum_{s \leq k < N}\sum_{|\beta|=k} \partial^{\beta}g f \Big)+K_N(f),
		\end{aligned}
	\end{equation*}
	and
	\begin{equation*}
		\begin{aligned}
			J^s (g f)=&g J^s f+\widetilde{\Psi}_0\Big(\sum_{1\leq k <s} \sum_{|\beta|=k}\partial^{\beta} g J^{s-k}f\Big)\\
			&+\widetilde{\Psi}_1\Big(\sum_{s\leq k <N} \sum_{|\beta|=k}\partial^{\beta} g f\Big)+\widetilde{K}_N(f),
		\end{aligned}
	\end{equation*}
	where we assume the zero convention for the empty summation\footnote{ We assume the convention  $\sum_{1\leq k <s}(\dots)=0$, if $0< s \leq 1$.}.
\end{cor}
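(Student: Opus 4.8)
The plan is to derive both factorization identities in Corollary \ref{remlemmadecom} as iterated applications of Lemma \ref{lemmadecom}, combined with the elementary observation that a Bessel potential $J^{-m}$ composed with an operator in $\mathrm{OP}\mathbb{S}^{m,0}_{\sigma,\omega}$ lands in $\mathrm{OP}\mathbb{S}^{0,0}_{\sigma,\omega}$. Concretely, I would first rewrite $J^s$ as $\Psi_a$ with $a(x,\xi) = \langle \xi\rangle^s \in \mathbb{S}^{s,0}_{\sigma,\omega}$ (here $g$ is irrelevant to $a$, and the symbol has $q=0$ since it is $x$-independent). Then I would apply \eqref{decomident} of Lemma \ref{lemmadecom} with $m_2 = s$, $q_2 = 0$, and the chosen $N \ge \max\{s,q_1\}$, obtaining
\[
gJ^s f = J^s(gf) + \sum_{k=1}^{N}\sum_{|\beta|=k}\Psi_{a_k}(\partial^\beta g\, f) + \mathcal{K}_N(f),
\]
where $\Psi_{a_k}\in \mathrm{OP}\mathbb{S}^{s-k,0}_{\sigma,\omega}$ and $\mathcal{K}_N\in\mathrm{OP}\mathbb{S}^{s-N-1,q_1-N-1}_{\sigma,\omega}\subset \mathrm{OP}\mathbb{S}^{0,0}_{\sigma,\omega}$ by the choice of $N$.

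The second step is to sort the sum over $k$ into the two regimes appearing in the statement. For $1\le k < s$ the operator $\Psi_{a_k}$ has positive order $s-k>0$; I would factor $\Psi_{a_k} = (\Psi_{a_k} J^{-(s-k)})\circ J^{s-k}$, where $\Psi_{a_k}J^{-(s-k)}\in \mathrm{OP}\mathbb{S}^{0,0}_{\sigma,\omega}$ by Proposition \ref{PO1} (product of symbols of orders $(s-k,0)$ and $(-(s-k),0)$), so this term contributes $\Psi_0\big(\sum_{1\le k<s}\sum_{|\beta|=k}J^{s-k}(\partial^\beta g\, f)\big)$ after collecting all the $\mathbb{S}^{0,0}$ prefactors into a single $\Psi_0$ (summing finitely many $\mathrm{OP}\mathbb{S}^{0,0}$ operators stays in $\mathrm{OP}\mathbb{S}^{0,0}$). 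For $s\le k < N$ the order $s-k\le 0$, so $\Psi_{a_k}$ itself already lies in $\mathrm{OP}\mathbb{S}^{0,0}_{\sigma,\omega}$ (a symbol of order $s-k\le 0$ is dominated by one of order $0$), and these terms are absorbed into $\Psi_1\big(\sum_{s\le k<N}\sum_{|\beta|=k}\partial^\beta g\, f\big)$. The remainder $\mathcal{K}_N$ is renamed $K_N$. The second identity \eqref{decomident2} of Lemma \ref{lemmadecom} is processed identically: the $1\le k<s$ terms have the $J^{s-k}$ placed to the right of $\partial^\beta g$ (matching $\widetilde\Psi_0(\sum \partial^\beta g\, J^{s-k}f)$ after pulling out the $\mathrm{OP}\mathbb{S}^{0,0}$ factor), the $s\le k<N$ terms go into $\widetilde\Psi_1$, and the remainder becomes $\widetilde K_N$.

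The main subtlety — not really an obstacle, but the point requiring care — is bookkeeping of the symbol classes: one must check that each regrouping keeps the prefactor operators in $\mathrm{OP}\mathbb{S}^{0,0}_{\sigma,\omega}$, which rests on (a) $\mathbb{S}^{m,q}_{\sigma,\omega}\subset \mathbb{S}^{m',q'}_{\sigma,\omega}$ whenever $m\le m'$ and $q\le q'$, (b) the composition statement from Proposition \ref{PO1} applied to $\Psi_{a_k}$ and $J^{-(s-k)}$ (a symbol in $\mathbb{S}^{0,0}$), and (c) that $N\ge\max\{s,q_1\}$ forces the leftover kernels $\mathcal{K}_N,\widetilde{\mathcal{K}}_N$ into $\mathbb{S}^{0,0}_{\sigma,\omega}$. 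One also invokes the convention (stated in the footnote) that the sum over $1\le k<s$ is empty when $0<s\le 1$, in which case $\Psi_0$ simply does not appear. Assembling these observations gives both displayed identities, completing the proof.
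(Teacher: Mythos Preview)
Your proposal is correct and follows essentially the same route as the paper, which simply remarks that the corollary follows from multiple applications of Lemma \ref{lemmadecom} together with the observation that $J^{-m}\Psi_a\in\mathrm{OP}\mathbb{S}^{0,0}_{\sigma,\omega}$ whenever $\Psi_a\in\mathrm{OP}\mathbb{S}^{m,0}_{\sigma,\omega}$. One small bookkeeping point: the $k=N$ term from Lemma \ref{lemmadecom} (which your regime split $1\le k<s$ and $s\le k<N$ omits) should be absorbed into $K_N$, since for $|\beta|=N\ge\max\{s,q_1\}$ both $\Psi_{a_N}\in\mathrm{OP}\mathbb{S}^{s-N,0}\subset\mathrm{OP}\mathbb{S}^{0,0}$ and $\partial^\beta g\in\mathbb{S}^{0,q_1-N}\subset\mathbb{S}^{0,0}$.
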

Next, we present some consequences of the theory of pseudo-differential operators. We remark that similar results were obtained in \cite{Mendez2024} for the class of pseudo-differential operator presented in Example \ref{examppseudo}, but here we extend these results to incorporate the weight $\langle \sigma\cdot x+\omega \rangle$.
\begin{lemma}\label{lemmapropaform}
Let $\sigma \in \mathbb{R}^n$ and $\omega \in \mathbb{R}$ be fixed, $1<p<\infty$, and $f\in L^{p}(\mathbb{R}^n)$. Consider $\theta_1,\theta_2 \in C^{\infty}(\mathbb{R}^n)$ such that $0\leq \theta_1, \theta_2 \leq 1$ and satisfying that $\partial^{\alpha}\theta_j \in L^{\infty}(\mathbb{R}^n)$ for all multi-index $\alpha$, $j=1,2$. Additionally, assume that there exists $\delta>0,$ such that
\begin{equation}\label{suppsep}
\dist(\supp(1-\theta_1),\supp(\theta_2))\geq \delta.
\end{equation}
\begin{itemize}[leftmargin=15pt]
\item[(I)] Let $m\in \mathbb{Z}^{+}$ and $r\geq 0$. Assume that 
\begin{equation}\label{hip1}
\langle \sigma\cdot x+\omega \rangle^r \theta_j\in \mathbb{S}^{0,r}_{\sigma,\omega},
\end{equation}
for all $j=1,2$. If $\langle \sigma\cdot x+\omega \rangle^{r} \theta_1 f, \langle \sigma\cdot x+\omega \rangle^{r}\theta_1 \partial^{\beta}f \in L^p(\mathbb{R}^n)$ for all multi-index $\beta$ of order $|\beta|=m$, then 
\begin{equation*}
\begin{aligned}
\|\langle \sigma\cdot x+\omega \rangle^{r}\theta_2 J^m f\|_{L^p}\lesssim & \|\langle \sigma\cdot x+\omega \rangle^{r} \theta_1 f\|_{L^p}\\
&+\sum_{|\beta|=m}\|\langle \sigma\cdot x+\omega \rangle^{r} \theta_1 \partial^{\beta} f\|_{L^p}+\|f\|_{L^p}.
\end{aligned}
\end{equation*}
In other words, $\langle \sigma\cdot x+\omega \rangle^r\theta_2J^{m}f \in L^{p}(\mathbb{R}^n)$. 
\item[(II)] Let $m\in \mathbb{Z}^{+}$ and $r\geq 0$. Assume that $\theta_1$ and $\theta_2$ satisfy the condition \eqref{hip1}. If $\langle \sigma\cdot x+\omega \rangle^r \theta_1 J^m f \in L^{p}(\mathbb{R}^n)$, then 
\begin{equation*}
\|\langle \sigma\cdot x+\omega \rangle^r\theta_2A_mf\|_{L^{p}} \lesssim \|\langle \sigma\cdot x+\omega \rangle^r \theta_1 J^m f\|_{L^p}+\|f\|_{L^p},
\end{equation*}
where $A_m$ is either the operator $\partial^{\beta}$ with $0\leq |\beta|\leq m$, or $J^s$ with $0\leq s \leq m$. That is to say, $\langle \sigma\cdot x+\omega \rangle^r \theta_2 \partial^{\beta} f \in L^{p}(\mathbb{R}^n)$ for each multi-index of order $0\leq |\beta|\leq m$, and $\langle \sigma\cdot x+\omega \rangle^r \theta_2 J^s f \in L^{p}(\mathbb{R}^n)$ for any $0\leq s \leq m$.
\end{itemize}
\end{lemma}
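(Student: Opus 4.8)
\textbf{Proof proposal for Lemma \ref{lemmapropaform}.}

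The plan is to reduce both parts to the continuity statement in Lemma \ref{contiprop} together with the factorization formulas in Corollary \ref{remlemmadecom}. For part (I), I would start by writing $\theta_2 J^m f = \theta_2 J^m(\theta_1 f) + \theta_2 J^m((1-\theta_1)f)$. The second term is harmless: since $\dist(\supp(1-\theta_1),\supp(\theta_2))\geq \delta$ by \eqref{suppsep}, and $\langle \sigma\cdot x+\omega\rangle^r\theta_2$ is (up to an $\mathbb{S}^{0,r}_{\sigma,\omega}$ multiplier) a symbol, Corollary \ref{supportsepareted3} applied with $g=\langle\sigma\cdot x+\omega\rangle^r\theta_2$ and the operator $J^m$ composed with multiplication by $(1-\theta_1)$ shows that $\|\langle\sigma\cdot x+\omega\rangle^r\theta_2 J^m((1-\theta_1)f)\|_{L^p}\lesssim \|f\|_{L^p}$; this accounts for the $\|f\|_{L^p}$ term in the conclusion. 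For the main term, I would apply the first identity of Corollary \ref{remlemmadecom} with $g$ replaced by $\langle\sigma\cdot x+\omega\rangle^r\theta_2\in\mathbb{S}^{0,r}_{\sigma,\omega}$ (here I use hypothesis \eqref{hip1}) and with $f$ replaced by $\theta_1 f$, and with $s=m$ an integer so that the $\Psi_0$-sum runs over $1\leq k<m$ and the $\Psi_1$-sum over $m\leq k<N$; since $m$ is an integer, $J^{m-k}\partial^\beta = \partial^{\beta'}$ type terms of total order $m$ appear, and each summand is of the form $\Psi\big(\partial^\alpha(\langle\sigma\cdot x+\omega\rangle^r\theta_2)\cdot \partial^\gamma(\theta_1 f)\big)$ with $|\gamma|\leq m$ and $\Psi\in\mathrm{OP}\mathbb{S}^{0,0}_{\sigma,\omega}$. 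Because $\partial^\alpha(\langle\sigma\cdot x+\omega\rangle^r\theta_2)$ is supported where $\theta_2\neq 0$ and is bounded by $\langle\sigma\cdot x+\omega\rangle^{r-|\alpha|}\lesssim\langle\sigma\cdot x+\omega\rangle^r$ on a neighborhood of $\supp\theta_1$ (again using \eqref{suppsep} to know $\theta_1$ equals $1$ there, or at least is comparable), each such term is bounded by $\|\langle\sigma\cdot x+\omega\rangle^r\theta_1\partial^\gamma f\|_{L^p}$ after invoking Lemma \ref{contiprop}; expanding $\partial^\gamma(\theta_1 f)$ by Leibniz and absorbing lower-order derivatives of $f$ into the same family of norms (or into $\|f\|_{L^p}$ for the pieces where $\partial^{\gamma'}\theta_1$ lands off $\supp\theta_1$) yields the stated bound. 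The leading term $J^m(\langle\sigma\cdot x+\omega\rangle^r\theta_2\theta_1 f)$ from the Corollary is handled the same way, using $J^m = J^m(1-\Delta)^{0}$ and distributing derivatives.

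For part (II), the structure is the same but in the reverse direction: write $\theta_2 A_m f = \theta_2 A_m(\theta_1 f) + \theta_2 A_m((1-\theta_1)f)$, dispose of the second piece by Corollary \ref{supportsepareted3} exactly as above (noting $A_m\in\mathrm{OP}\mathbb{S}^{m,0}_{\sigma,\omega}\subset\mathrm{OP}\mathbb{S}^{\lceil m\rceil,0}_{\sigma,\omega}$ in the $J^s$ case and is a genuine differential operator in the $\partial^\beta$ case, so the separated-support argument applies verbatim), and for the main piece use the \emph{second} identity of Corollary \ref{remlemmadecom} to trade $A_m(\theta_1 f)$ against $\theta_1 J^m f$ plus lower-order pieces. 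Concretely, since $A_m = A_m J^{-m} J^m$ with $A_m J^{-m}\in\mathrm{OP}\mathbb{S}^{0,0}_{\sigma,\omega}$ (as $0\leq|\beta|\leq m$ or $0\leq s\leq m$), I would apply the second formula of Corollary \ref{remlemmadecom} to $J^m(\theta_1 f)$, giving $J^m(\theta_1 f) = \theta_1 J^m f + (\text{terms }\partial^\beta\theta_1\cdot\Psi(\dots)\text{ of lower order})$; multiplying through by $\langle\sigma\cdot x+\omega\rangle^r\theta_2 A_m J^{-m}$ and applying Lemma \ref{contiprop} bounds everything by $\|\langle\sigma\cdot x+\omega\rangle^r\theta_1 J^m f\|_{L^p}+\|f\|_{L^p}$, where the lower-order terms carrying fewer than $r$ powers of the weight or derivatives of $\theta_1$ off $\supp\theta_1$ are absorbed into $\|f\|_{L^p}$. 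The case $A_m=J^s$ for non-integer $s\in(0,m]$ requires noting $J^{s-m}\in\mathrm{OP}\mathbb{S}^{s-m,0}_{\sigma,\omega}$ has negative order, which only helps.

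The main obstacle I anticipate is bookkeeping the interaction between the cutoffs $\theta_1,\theta_2$ and the powers of the weight throughout the Leibniz expansions: one must consistently verify that every term produced either (a) carries the full weight $\langle\sigma\cdot x+\omega\rangle^r$ against a factor $\theta_1\partial^\gamma f$ with $|\gamma|\leq m$, or (b) has a derivative landing on $\theta_1$ (or $\theta_2$) whose support is separated from the other cutoff, so that Corollary \ref{supportsepareted3} reduces it to a pure $\|f\|_{L^p}$ bound with no weight. The delicate point is that a derivative $\partial^\alpha(\langle\sigma\cdot x+\omega\rangle^r\theta_2)$ that hits $\theta_2$ rather than the weight is supported in $\supp\partial^\alpha\theta_2\subset\supp(\nabla\theta_2)$, and one needs $\dist(\supp(\nabla\theta_2),\supp\theta_1)$ — or an analogous separation — to make that term harmless; this is exactly where hypothesis \eqref{suppsep} together with the assumption $\partial^\alpha\theta_j\in L^\infty$ is used, and the verification that these geometric conditions propagate correctly through the finite iteration of Corollary \ref{remlemmadecom} is the part requiring care rather than any deep new idea.
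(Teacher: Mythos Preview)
Your plan for part (II) is essentially right in spirit and matches the paper's idea of writing $A_m = (A_m J^{-m})\,J^m$ with $A_m J^{-m}\in\mathrm{OP}\mathbb{S}^{0,0}_{\sigma,\omega}$. The paper, however, does \emph{not} first split off $\theta_1/(1-\theta_1)$ or expand $J^m(\theta_1 f)$: it simply commutes $\langle\sigma\cdot x+\omega\rangle^r\theta_2$ directly through $A_m J^{-m}$ via Lemma~\ref{lemmadecom}, obtaining $A_m J^{-m}\big(\langle\sigma\cdot x+\omega\rangle^r\theta_2\,J^m f\big)$ plus terms $\widetilde\Psi_k\big(\partial^\beta(\langle\sigma\cdot x+\omega\rangle^r\theta_2)\,J^m f\big)$ and a remainder $\widetilde K_N(J^m f)$ with $\widetilde K_N J^m\in\mathrm{OP}\mathbb{S}^{0,0}_{\sigma,\omega}$. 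Every piece is then bounded using only $|\partial^\beta(\langle\sigma\cdot x+\omega\rangle^r\theta_2)|\lesssim\langle\sigma\cdot x+\omega\rangle^r\theta_1$. Your extra expansion of $J^m(\theta_1 f)$ produces terms $\partial^\beta\theta_1\cdot J^{m-k}f$ for $1\le k<m$, and you have no global control of $J^{m-k}f$ (only $f\in L^p$ and the localized $\theta_1 J^m f$); the separated-support argument you invoke controls the outer weighted multiplier but still leaves you needing $\|\partial^\beta\theta_1\,J^{m-k}f\|_{L^p}$. This is not fatal, but it is an unnecessary complication that the direct route avoids.

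For part (I) there is a real gap. Your scheme --- apply the first identity of Corollary~\ref{remlemmadecom} with $g=\langle\sigma\cdot x+\omega\rangle^r\theta_2$ and argument $\theta_1 f$ --- produces a leading term $J^m\big(\langle\sigma\cdot x+\omega\rangle^r\theta_2\,\theta_1 f\big)$ and intermediate terms $J^{m-k}\big(\partial^\beta(\langle\sigma\cdot x+\omega\rangle^r\theta_2)\,\theta_1 f\big)$ with $1\le k<m$. To bound these in $L^p$ you must expand the positive-order operator $J^{m-k}$ (or $J^m$) by Leibniz, which generates weighted localized \emph{intermediate} derivatives $\langle\sigma\cdot x+\omega\rangle^{r'}\theta_1\,\partial^\gamma f$ for $0<|\gamma|<m$. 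These are not supplied by the hypothesis, which gives only $|\gamma|=0$ and $|\gamma|=m$; your phrase ``absorbing lower-order derivatives of $f$ into the same family of norms'' hides exactly this missing control, and there is no evident interpolation that recovers it in the weighted localized setting. The paper circumvents the problem by writing $J^m = J^{-m}(1-\Delta)^m = \sum_{|\gamma|\le 2m} c_\gamma\,J^{-m}\partial^\gamma$: for $|\gamma|\le m$ the operator $J^{-m}\partial^\gamma$ has order $\le 0$, so commuting the weight through (Lemma~\ref{lemmadecom}) uses only $\|\langle\sigma\cdot x+\omega\rangle^r\theta_1 f\|_{L^p}$; for $m<|\gamma|\le 2m$ one peels off a factor $\partial^{\gamma_m}$ with $|\gamma_m|=m$ exactly, inserts $\theta_1+(1-\theta_1)$ in front of $\partial^{\gamma_m}f$, and repeats the order-$\le 0$ argument on $J^{-m}\partial^{\gamma_m'}$ applied to $\theta_1\partial^{\gamma_m}f$. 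This is the missing idea in your proposal.
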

\begin{proof}
We first deal with (I). Since $J^{2m}=(1-\Delta)^{m}$, by writing $J^mf=J^{-m}J^{2m}f$, there exists some constants $c_{\gamma}\geq 0$ for each multi-index $|\gamma|\leq 2m$ such that 
\begin{equation}\label{eqsuppsep0}
\begin{aligned}
\langle \sigma \cdot x +\omega \rangle^{r}\theta_2 J^m f=&\sum_{0\leq |\gamma| \leq m} c_{\gamma} \, \langle \sigma \cdot x +\omega \rangle^{r}\theta_2 J^{-m}\partial^{\gamma}f \\
&+ \sum_{m<|\gamma| \leq 2m} c_{\gamma} \langle \sigma \cdot x +\omega \rangle^{r} \theta_2J^{-m}\partial^{\gamma} f \\
=:&\mathcal{I}_1+\mathcal{I}_2.
\end{aligned}
\end{equation}
Notice that if $r=0$, by using that $f\in L^{p}(\mathbb{R}^n)$ and $ J^{-m}\partial^{\gamma}$ has associated symbol in the class $\mathbb{S}^{0,0}_{\sigma, \omega}$ for each $|\gamma|\leq m$, at once we get 
\begin{equation*}
\begin{aligned}
\|\mathcal{I}_1\|_{L^p} \lesssim \sum_{0\leq |\gamma| \leq m} \|\theta_2\|_{L^{\infty}}\| J^{-m}\partial^{\gamma}f\|_{L^p} \lesssim \|f\|_{L^p}.
\end{aligned}
\end{equation*}
Now if $r>0$, let $N\geq \max\{m,r\}$ integer. For each $0\leq |\gamma| \leq m$, we apply Lemma \ref{lemmadecom} to find pseudo-differential operators $\Psi_k$, $k=1, \dots N$ of order $\mathbb{S}^{0,0}_{\sigma, \omega}$ and $K_N$ of order $\mathbb{S}^{-N-1,r-N-1}_{\sigma,\omega}$ such that
\begin{equation}\label{eqsuppsep0.1}
\begin{aligned}
\langle \sigma \cdot x +\omega \rangle^{r} \theta_2 J^{-m}\partial^{\gamma} f=& J^{-m}\partial^{\gamma}(\langle \sigma \cdot x +\omega \rangle^r \theta_2 f)\\
&+\sum_{k=1}^N \sum_{|\beta|=k}\Psi_{k}\big(\partial^{\beta}(\langle \sigma \cdot x +\omega \rangle^{r} \theta_2) f \big)+K_N(f).
\end{aligned}
\end{equation}
By support considerations, we see that 
\begin{equation}\label{suppppro}
|\partial^{\beta}(\langle \sigma \cdot x+\omega \rangle^{r} \theta_2)|\lesssim |\langle \sigma \cdot x+\omega \rangle^{r} \theta_1|\end{equation}
for any multi-index $\beta$. This remark and identity \eqref{eqsuppsep0.1} yield
\begin{equation}\label{eqsuppsep0.2}
\begin{aligned}
\|\langle \sigma \cdot x+\omega \rangle^{r} \theta_2 J^{-m}\partial^{\gamma} f\|_{L^p}\lesssim  \|\langle \sigma \cdot x+\omega \rangle^{r} \theta_1 f\|_{L^p}+\|f\|_{L^p},
\end{aligned}
\end{equation}
for each $0\leq |\gamma|\leq m$. Summing over all multi-index $\gamma$ of order $0\leq |\gamma|\leq m$, up to some modification of the constant, we obtain the same upper bound in \eqref{eqsuppsep0.2} for the complete sum in $\mathcal{I}_1$.

Let us now deal with $\mathcal{I}_2$. For each  $m<|\gamma|\leq 2m$, we consider a fixed multi-index $\gamma_m$ of order $m$ such that $\gamma=\gamma_m+\gamma_{m}'$, with $0<|\gamma_{m}'|\leq m$. Thus, we write
\begin{equation}\label{eqsuppsep1}
\begin{aligned}
\theta_2J^{-m}\partial^{\gamma} f=\theta_2J^{-m}\partial^{\gamma_{m}'}(\theta_1\partial^{\gamma_{m}} f)+\theta_2J^{-m}\partial^{\gamma_{m}'}((1-\theta_1)\partial^{\gamma_{m}} f).
\end{aligned}
\end{equation}
To estimate the second term on the right-hand side of \eqref{eqsuppsep1}, we write
\begin{equation}\label{eqsuppsep1.1}
\begin{aligned}
&\langle \sigma \cdot x+\omega  \rangle^{r}\theta_2J^{-m}\partial^{\gamma_{m}'}\big((1-\theta_1)\partial^{\gamma_{m}} f\big)\\
&\, \, =\sum_{\gamma_{m,1}+\gamma_{m,2}=\gamma_m} c_{\gamma_{m,1},\gamma_{m_2}} \langle \sigma \cdot x+\omega  \rangle^{r}\theta_2 J^{-m}\partial^{\gamma_{m}'}\partial^{\gamma_{m,1}}\big(\partial^{\gamma_{m,2}}(1-\theta_1) f\big),
\end{aligned}
\end{equation}
for some constants $c_{\gamma_{m,1},\gamma_{m_2}}$. Now, since $\theta_2$ and the derivatives of any order of $1-\theta_1$ have separate supports, we apply Corollary \ref{supportsepareted3} to the identity \eqref{eqsuppsep1.1} to get 
\begin{equation}\label{eqsuppsep1.2}
\begin{aligned}
\|\langle \sigma \cdot x+\omega  \rangle^{r}\theta_2J^{-m}\partial^{\gamma_{m}'}((1-\theta_1)\partial^{\gamma_{m}} f)\|_{L^p} \lesssim \|f\|_{L^p}.
\end{aligned}
\end{equation} 
On the other hand, since $J^{-m}\partial^{\gamma_{m}'}$ determines a symbol in the class $S^{0,0}_{\sigma,\omega}$ for each $0<|\gamma_{m}'|\leq m$, to estimate the first term on the right-hand side of \eqref{eqsuppsep1}, we can repeat the same arguments developed in \eqref{eqsuppsep0.1} and \eqref{eqsuppsep0.2}. This line of arguments mostly depends on the identity in Lemma \ref{lemmadecom}. To avoid falling into repetition, we omit these computations. However, we emphasize that the following bound holds 
\begin{equation}\label{eqsuppsep1.3}
\begin{aligned}
\|\langle \sigma \cdot x+\omega \rangle^{r}\theta_2J^{-m}&\partial^{\gamma_{m}'}(\theta_1\partial^{\gamma_{m}} f)\|_{L^p} \\
&\lesssim \sum_{|\beta|=m}\|\langle \sigma \cdot x+\omega \rangle^{r}\theta_1\partial^\beta f\|_{L^p}+\|f\|_{L^p}.
\end{aligned}
\end{equation}
Collecting \eqref{eqsuppsep1.2} and \eqref{eqsuppsep1.3}, we control the $L^2$-norm of $\langle x\rangle^{\theta}\theta_2J^{-m}\partial^{\gamma} f$ for each multi-index $m<|\gamma|\leq 2m$. Thus, summing over these multi-indexes, we complete the estimate for $\mathcal{I}_2$ and in turn the proof of (I).

The proof of (II) follows by an application of Lemma \ref{lemmadecom}. Indeed, let us consider first the local operator $\partial^{\beta}$. Let $\beta$ be a multi-index of order $|\beta|\leq m$, and $N_1\geq \{m,r\}$ be a fixed integer. By  Lemma \ref{lemmadecom}, there exist $\widetilde{\Psi}_k$, $k=1,\dots,N_1$  pseudo-differential operators in the class $\mathbb{S}^{0,0}_{\sigma, \omega}$, and $\widetilde{K}_{N_1}$ in $\mathbb{S}^{-N_1-1,r-N_1-1}_{\sigma,\omega}$ such that
\begin{equation*}
\begin{aligned}
\langle \sigma\cdot x+\omega \rangle^{r}\theta_2 \partial^{\beta} f=& \langle \sigma \cdot x +\omega\rangle^{r}\theta_2 \partial^{\beta}J^{-m}\big(J^m f \big) \\
=& \partial^{\beta}J^{-m}\big(\langle \sigma\cdot x+\omega \rangle^{r}\theta_2 J^m f \big)+\sum_{k=1}^{N_1}\sum_{|\beta|=k}\widetilde{\Psi}_{k}\big(\partial^{\beta}(\langle \sigma\cdot x+\omega \rangle^{r} \theta_2) J^mf \big)\\
&+\widetilde{K}_{N_1}(J^m f).
\end{aligned}
\end{equation*}
Consequently, property \eqref{suppppro} and the above identity show
\begin{equation*}
\|\langle \sigma\cdot x+\omega \rangle^{r}\theta_2 \partial^{\beta} f\|_{L^p}\lesssim \|\langle \sigma\cdot x+\omega \rangle^{r}\theta_1 J^m f\|_{L^p} +\|f\|_{L^p},
\end{equation*}
which holds true for any multi-index $\beta$ of order $|\beta|\leq m$. By replacing the role of $\partial^{\beta}$ by $J^s$, the same arguments above also yield the desired conclusion for the non-homogeneous derivatives $J^s$ with $0\leq s \leq m$.  The proof of (II) is complete. 
\end{proof}
To control some estimates involving derivatives and weights, we require the following interpolation inequality whose proof follows similar arguments in \cite[Lemma 4]{NahasPonce2009}.
\begin{lemma}\label{InterpLemma}
Let $\sigma \in \mathbb{R}^n$ and $\omega \in \mathbb{R}$ be fixed, $a, b>0$ and $1<p<\infty$. Assume that $J^af \in L^{p}(\mathbb{R}^n)$ and $\langle \sigma \cdot x+\omega \rangle^{b} f \in L^{p}(\mathbb{R}^n)$. Then for any $\theta \in (0,1)$
\begin{equation}\label{eqsuppsep2}
\|\langle \sigma \cdot x+\omega \rangle^{\theta b}J^{(1-\theta)a}f\|_{L^p} \lesssim \|\langle \sigma \cdot x+\omega \rangle^{b}f\|_{L^{p}}^{\theta}\|J^a f\|_{L^p}^{1-\theta}.
\end{equation} 
Moreover,
\begin{equation}\label{eqsuppsep3}
\|J^{(1-\theta) a}\big(\langle \sigma \cdot x+\omega \rangle^{\theta b}f)\|_{L^p} \lesssim \|\langle \sigma \cdot x+\omega \rangle^{b}f\|_{L^{p}}+\|J^a f\|_{L^p}.
\end{equation}
\end{lemma}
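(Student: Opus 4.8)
\textbf{Proof proposal for Lemma \ref{InterpLemma}.}

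The plan is to reduce both inequalities to the classical weighted interpolation inequality of Nahas and Ponce \cite[Lemma 4]{NahasPonce2009} and to the pseudo-differential calculus developed above, in particular the continuity estimate of Lemma \ref{contiprop}. First I would record the abstract interpolation inequality we want to invoke: for $a,b>0$, $1<p<\infty$, and $\theta\in(0,1)$,
\begin{equation*}
\|\langle x\rangle^{\theta b}J^{(1-\theta)a}f\|_{L^p}\lesssim \|\langle x\rangle^{b}f\|_{L^p}^{\theta}\|J^a f\|_{L^p}^{1-\theta},
\end{equation*}
which is the $\sigma=e_1$, $\omega=0$ analogue of \eqref{eqsuppsep2} but with the full isotropic weight $\langle x\rangle$; this is precisely what \cite{NahasPonce2009} provides (via the three-lines theorem applied to the analytic family $z\mapsto \langle x\rangle^{zb}J^{(1-z)a}$, together with the $L^p$-boundedness of the imaginary-order operators $\langle x\rangle^{iy b}$ and $J^{iy a}$). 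The first task is to pass from $\langle x\rangle$ to the affine weight $\langle \sigma\cdot x+\omega\rangle$: a linear change of variables $x\mapsto Ax$ sending $\sigma\cdot x+\omega$ to (a multiple of) the first coordinate reduces the affine weight to $\langle x_1\rangle$, and one checks that $\langle x_1\rangle\le \langle x\rangle$ while conversely the one-variable weight controls enough; more efficiently, one observes that multiplication by $\langle \sigma\cdot x+\omega\rangle^{\theta b}\langle x\rangle^{-\theta b}$ and by $J^{(1-\theta)a}$ composed with the analogous two-variable reductions are all $L^p$-bounded pseudo-differential operators in the classes of Definition \ref{pesudodeff}, so Lemma \ref{contiprop} applies.

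More concretely, here is the cleaner route I would actually carry out. Fix $\theta\in(0,1)$. The operator
\begin{equation*}
T:=\langle \sigma\cdot x+\omega\rangle^{\theta b}\,J^{(1-\theta)a}\,J^{-(1-\theta)a}\langle \sigma\cdot x+\omega\rangle^{-\theta b}
\end{equation*}
is the identity, but the point is to write $\langle \sigma\cdot x+\omega\rangle^{\theta b}J^{(1-\theta)a}$ as a composition whose mapping properties follow from an analytic-interpolation (Stein) argument: consider the analytic family of operators $S_z f=\langle \sigma\cdot x+\omega\rangle^{zb}J^{(1-z)a}f$ for $z$ in the strip $0\le \operatorname{Re}z\le 1$. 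On $\operatorname{Re}z=0$, write $S_{iy}=\langle\sigma\cdot x+\omega\rangle^{iyb}J^{-iya}J^a\cdot J^{-a}$... — rather, on $\operatorname{Re}z=0$ one has $\|S_{iy}f\|_{L^p}\lesssim (1+|y|)^{C}\|J^a f\|_{L^p}$ because $\langle\sigma\cdot x+\omega\rangle^{iyb}$ and $J^{iya}$ are Calderón–Zygmund operators with operator norm polynomial in $y$ (this uses the symbol class $\mathbb{S}^{0,0}_{\sigma,\omega}$ together with Lemma \ref{contiprop}); on $\operatorname{Re}z=1$ one has $\|S_{1+iy}f\|_{L^p}\lesssim (1+|y|)^C\|\langle\sigma\cdot x+\omega\rangle^b f\|_{L^p}$ for the same reason. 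Stein interpolation then yields \eqref{eqsuppsep2}. The second inequality \eqref{eqsuppsep3} follows from \eqref{eqsuppsep2} by a commutator argument: by Lemma \ref{lemmadecom} (with $g=\langle\sigma\cdot x+\omega\rangle^{\theta b}$, which lies in $\mathbb{S}^{0,\theta b}_{\sigma,\omega}$, and the symbol of $J^{(1-\theta)a}$),
\begin{equation*}
J^{(1-\theta)a}\big(\langle\sigma\cdot x+\omega\rangle^{\theta b}f\big)=\langle\sigma\cdot x+\omega\rangle^{\theta b}J^{(1-\theta)a}f+\sum_{k\ge 1}(\text{lower order in }\langle\xi\rangle)\,,
\end{equation*}
and every term on the right is controlled either by \eqref{eqsuppsep2} (taking a $\theta'$ with $\theta'b<\theta b$, absorbed via Young's inequality into $\|\langle\sigma\cdot x+\omega\rangle^b f\|_{L^p}+\|J^a f\|_{L^p}$) or, for the remainder, directly by Lemma \ref{contiprop}.

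The main obstacle I anticipate is bookkeeping the polynomial-in-$y$ growth of the operator norms of the purely imaginary powers $\langle\sigma\cdot x+\omega\rangle^{iyb}$ and $J^{iya}$ on $L^p$ (so that Stein's interpolation theorem applies — one needs the $\log$ of the norms to be of admissible growth), together with checking that these imaginary powers genuinely fall, uniformly in $y$ up to such polynomial factors, into the symbol classes of Definition \ref{pesudodeff}; the affine weight $\langle\sigma\cdot x+\omega\rangle$ rather than $\langle x\rangle$ requires a short verification that differentiating it repeatedly only improves decay in $\sigma\cdot x+\omega$, which is immediate since $\partial_{x_j}\langle\sigma\cdot x+\omega\rangle=\sigma_j(\sigma\cdot x+\omega)\langle\sigma\cdot x+\omega\rangle^{-1}$. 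Since this is not the focus of the paper, I would, as the authors indicate, relegate these details to Appendix \ref{AppendixWellposs}, citing \cite{NahasPonce2009} for the scalar prototype and invoking Lemma \ref{contiprop} and Lemma \ref{lemmadecom} for the adaptation to the class $\mathbb{S}^{m,q}_{\sigma,\omega}$.
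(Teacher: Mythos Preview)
Your proposal is correct and follows essentially the same route as the paper: Hadamard/Stein three-lines interpolation applied to the analytic family $z\mapsto \langle\sigma\cdot x+\omega\rangle^{zb}J^{(1-z)a}f$, with the imaginary-endpoint bounds coming from Lemma \ref{contiprop} (the paper inserts the damping factor $e^{z^2-1}$ to absorb the polynomial-in-$y$ growth you flag), and then Lemma \ref{lemmadecom} plus \eqref{eqsuppsep2} and Young's inequality for \eqref{eqsuppsep3}. The initial detour through a change of variables reducing to the isotropic Nahas--Ponce inequality is unnecessary --- the paper works directly with the affine weight throughout --- but your ``cleaner route'' is exactly what is done.
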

\begin{proof}
We define
\begin{equation*}
F(z)=e^{z^2-1}\int_{\mathbb{R}^n} \langle \sigma \cdot x+\omega \rangle^{bz} J^{a(1-z)}f(x)\overline{g(x)} \, dx,
\end{equation*}
with $g\in \mathcal{S}(\mathbb{R}^n)$, and $\|g\|_{L^{p'}}=1$. Then $F$ is continuous in $\{z=\eta+iy\, :\, 0\leq \eta \leq 1\}$ and analytic in its interior. Moreover,
\begin{equation*}
|F(0+iy)|\leq e^{-(y^2+1)}c(y)\|J^a f\|_{L^p},
\end{equation*}
 where $c_1(y)>0$ is a constant of polynomial order in the $y$-variable. Now, since $\langle \sigma \cdot x+\omega \rangle^{b}J^{-iay}$ determines a pseudo-differential operator with symbol the class $\mathbb{S}_{\sigma,\omega}^{0,b}$, the continuity of pseudo-differential operators, Proposition \ref{contiprop}, yields 
\begin{equation*}
|F(1+iy)|\leq e^{-y^2}\|\langle \sigma\cdot x+\omega \rangle^b J^{-iay }f\|_{L^p} \leq e^{-y^2}c_2(y)\|\langle \sigma \cdot x+\omega \rangle^{b}f\|_{L^p},
\end{equation*}
where $c_2(y)$ is a constant of polynomial order in the $y$-variable. Gathering the previous estimates, \eqref{eqsuppsep2} is now a consequence of Hadamard's three lines Theorem.

Let $\theta \in (0,1)$ be fixed. Let $N \geq \max\{ a,b\}$ be fixed. We claim that there exist pseudo-differential operators $\Psi_{\theta}$ and $K$ both with symbols in the class $\mathbb{S}^{0,0}_{\sigma,\omega}$ such that
\begin{equation}\label{eqsuppsep1.4}
\begin{aligned}
J^{(1-\theta)a}(\langle \sigma \cdot x+\omega \rangle^{\theta b} f)=& \Psi_{\theta}\Big(\sum_{0\leq k < N} \langle \sigma \cdot x+\omega \rangle^{\theta b-k} J^{(1-\theta)a-k}f\Big)\\
&+K(f),
\end{aligned}
\end{equation}
The above identity is a consequence of Lemma \ref{lemmadecom}, together with the fact that $J^{-m}\Psi J^m \in \mathrm{OPS}^{m,0}_{\sigma,\omega}$  for any operator $\Psi \in \mathrm{OPS}^{m,0}_{\sigma,\omega}$, and that $\partial_{x}^{\beta}(\langle \sigma \cdot x+\omega \rangle^r)\langle \sigma \cdot x+\omega \rangle^{|\beta|-r}$ determines a symbol in the class $\mathbb{S}^{0,0}_{\sigma,\omega}$. We will assume $\theta b-k>0$, and $(1-\theta)a-k>0$ for all $0\leq k <N$, otherwise, the desired inequality \eqref{eqsuppsep3} holds true.  Hence, we get
\begin{equation*}
\begin{aligned}
\|J^{(1-\theta)a}&(\langle \sigma \cdot x+\omega \rangle^{\theta b} f)\|_{L^p}\\
\lesssim & \sum_{0\leq k < N} \|\langle \sigma \cdot x+\omega \rangle^{\theta b-k} J^{(1-\theta)a-k}f\|_{L^p}+\|f\|_{L^p} \\
\lesssim & \sum_{0\leq k < N} \|\langle \sigma \cdot x+\omega \rangle^{b}f\|_{L^{p}}^{\theta-\frac{k}{b}}\|J^{\frac{((1-\theta)a-k)b}{(1-\theta)b+k}}f\|_{L^p}^{1-\theta+\frac{k}{b}}+\|f\|_{L^p} \\
\lesssim & \|\langle \sigma \cdot x+\omega \rangle^{b}f\|_{L^{p}}+\|J^a f\|_{L^p},
\end{aligned}
\end{equation*}
where we have used \eqref{eqsuppsep2},  $\|J^{\frac{((1-\theta)a-k)b}{(1-\theta)b+k}}f\|_{L^p}\lesssim \|J^{a}f\|_{L^{p}}$ and Young's inequality. The proof is complete.
\end{proof}
Finally, we also recall the following commutator estimate deduced in \cite{KatoPonce1988, Gustavo1990}.
\begin{lemma}\label{conmKP}
	If $s>1$ and $1<p<\infty$, then
	\begin{equation*}
		\|[J^s,f]g\|_{L^p} \lesssim  \|\nabla f\|_{L^{p_1}} \|J^{s-1}g\|_{L^{p_2}}+ \|J^s f\|_{L^{p_3}} \|g\|_{L^{p_4}},
	\end{equation*}
	where $p_1,p_{2},p_{3},p_4\in(1,\infty]$ are such that 
	\begin{equation*}
		\frac{1}{p}=\frac{1}{p_1}+\frac{1}{p_2}=\frac{1}{p_3}+\frac{1}{p_4}.
	\end{equation*}
\end{lemma}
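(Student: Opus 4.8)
The plan is to run the standard Littlewood--Paley / paraproduct argument; the underlying point is that the commutator $[J^s,f]$, with $J^s=(1-\Delta)^{s/2}$, is one full derivative smoother than $J^s$ itself, at the cost of one derivative landing on $f$. Fix a Littlewood--Paley family $\{P_j\}_{j\in\Z}$, with $P_j$ localizing Fourier support to $|\xi|\sim 2^{j}$, together with the low--frequency cutoffs $P_{<j}=\sum_{k<j}P_k$. Since $J^s(fg)=\sum_{j,k}J^s(P_jf\,P_kg)$ and $f\,J^sg=\sum_{j,k}P_jf\,J^sP_kg$, I would write
\begin{equation*}
[J^s,f]g=\sum_{j,k\in\Z}\Big(J^s(P_jf\,P_kg)-P_jf\,J^sP_kg\Big)=:\mathrm{I}+\mathrm{II}+\mathrm{III},
\end{equation*}
splitting the sum according to the \emph{low--high} interactions $j\le k-3$ (term $\mathrm{I}$), the \emph{high--low} interactions $k\le j-3$ (term $\mathrm{II}$), and the \emph{resonant} interactions $|j-k|\le 2$ (term $\mathrm{III}$).

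Term $\mathrm{I}$ is the only one that uses the commutator cancellation. Summing first in $j$, for each $k$ it equals $J^s(P_{<k-2}f\,P_kg)-P_{<k-2}f\,J^sP_kg$, a bilinear Fourier multiplier with symbol $\langle\xi\rangle^s-\langle\xi-\eta\rangle^s$, where $\eta$ is dual to $f$ and $|\eta|\ll|\xi-\eta|\sim|\xi|\sim 2^{k}$ on the relevant support. The first--order Taylor expansion
\begin{equation*}
\langle\xi\rangle^s-\langle\xi-\eta\rangle^s=\eta\cdot\int_0^1\nabla\big(\langle\cdot\rangle^s\big)(\xi-\eta+t\eta)\,dt,
\end{equation*}
together with $|\nabla\langle\zeta\rangle^s|\lesssim\langle\zeta\rangle^{s-1}\sim 2^{k(s-1)}$ on that region, shows that after dividing by $2^{k(s-1)}$ and restricting to the frequency annuli in play the symbol lies in a fixed Coifman--Meyer class, uniformly in $k$. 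Thus each $k$--piece is $2^{k(s-1)}$ times a uniformly bounded bilinear operator applied to $(\nabla P_{<k-2}f,\,P_kg)$ --- morally $\nabla f\cdot J^{s-1}g$ at frequency $2^{k}$ --- and summing in $k$ via the Fefferman--Stein vector--valued maximal inequality, the square--function characterization of $L^{p_2}$, and Hölder's inequality with $\tfrac{1}{p}=\tfrac{1}{p_1}+\tfrac{1}{p_2}$ yields $\|\mathrm{I}\|_{L^p}\lesssim\|\nabla f\|_{L^{p_1}}\|J^{s-1}g\|_{L^{p_2}}$ (the quadratic remainder in the Taylor expansion gains two derivatives and is absorbed the same way).

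For $\mathrm{II}$ and $\mathrm{III}$ no cancellation is needed. Summing the inner index, $\mathrm{II}=\sum_j\big(J^s(P_jf\,P_{<j-2}g)-P_jf\,J^sP_{<j-2}g\big)$ and the summands have Fourier support in an annulus $|\xi|\sim 2^{j}$, so they are almost orthogonal; Bernstein's inequality replaces $J^s$ by the scalar $2^{js}$ (turning $\|P_jf\|$ into $\|J^sP_jf\|$), while extracting $2^{ks}=2^{(k-j)s}\cdot 2^{js}$ on the low--frequency side of $g$ leaves a geometric series in $k-j$ that converges because $s>0$, and the surviving low--frequency part of $g$ is controlled pointwise by its Hardy--Littlewood maximal function. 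The square--function estimate for the almost-orthogonal pieces together with Hölder's inequality ($\tfrac{1}{p}=\tfrac{1}{p_3}+\tfrac{1}{p_4}$) then gives $\|\mathrm{II}\|_{L^p}\lesssim\|J^sf\|_{L^{p_3}}\|g\|_{L^{p_4}}$; the resonant term $\mathrm{III}$ is handled identically, using $j\sim k$ to move $2^{js}$ onto the $f$--factor. Adding the three estimates proves the claim.

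The step I expect to be the main obstacle is the heart of $\mathrm{I}$: verifying that, after the Taylor expansion and the dyadic rescaling, the rescaled symbol really belongs to a fixed Coifman--Meyer class with constants independent of the scale $2^{k}$; this is exactly where the smoothness of $\langle\,\cdot\,\rangle^s$ and the frequency separation $|\eta|\ll|\xi|$ are essential, and it is also the point where the endpoint exponents $p_i=\infty$ require extra care --- there the square--function step must be replaced by a direct $\ell^1$ summation over dyadic scales, which still closes thanks to the geometric gain. Everything else reduces to routine bookkeeping with Littlewood--Paley square functions and Hölder's inequality.
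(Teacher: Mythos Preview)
The paper does not give its own proof of this lemma; it simply cites it from the literature (\cite{KatoPonce1988, Gustavo1990}) as a known commutator estimate. Your sketch is the standard Littlewood--Paley/paraproduct argument underlying those references (and their modern refinements), and the outline is correct: the low--high piece is where the commutator gain appears via the mean--value/Taylor expansion of $\langle\xi\rangle^{s}-\langle\xi-\eta\rangle^{s}$, reducing to a Coifman--Meyer multiplier uniformly in scale, while the high--low and diagonal pieces are handled termwise by Bernstein and square--function bounds together with the pointwise maximal control $|P_{<j}g|\lesssim Mg$. Your remark that the $p_i=\infty$ endpoints need separate bookkeeping is accurate and is exactly the place where later works (e.g.\ Grafakos--Oh, Li) sharpen the original argument; for the purposes of this paper the estimate is quoted, not proved, so there is nothing to compare beyond noting that your approach is the canonical one.
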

\subsection{Smooth approximations}\label{smoothapprosec}
In this part,  we introduce the cutoff functions to be used along with our arguments. For a more detailed discussion on these approximations, see the results in \cite{IsazaLinaresPonce2015}. 

Given $\epsilon>0$ and $\tau\geq 5\epsilon$, we consider the family of functions
\begin{equation*}
    \chi_{\epsilon,\tau}, \phi_{\epsilon,\tau}, \psi_{\epsilon}\in C^{\infty}(\mathbb{R}),
\end{equation*}
satisfying the following properties:
\begin{itemize}
    \item[(i)] $\chi_{\epsilon,\tau}' \geq 0$,
    \item[(ii)] 
    $\chi_{\epsilon,\tau}(x)=\left\{\begin{aligned}
     0, \, \, x\leq \epsilon \\ 1, \, \, x\geq \tau,
    \end{aligned}
     \right.$,
    \item[(iii)] $\chi_{\epsilon,\tau}'(x) \geq \frac{1}{10(b-\epsilon)}\mathbbm{1}_{[2\epsilon,\tau-2\epsilon]}(x)$,
    \item[(iv)] $\chi_{\epsilon,\tau}(x) \geq \frac{1}{2} \frac{\epsilon}{b-3\epsilon}$, whenever $x\in (3\epsilon,\infty)$,
    \item[(v)] $\supp(\chi'_{\epsilon,\tau})\subset [\epsilon,\tau]$,
    \item[(vi)] $\supp(\phi_{\epsilon,\tau})\subset [\frac{\epsilon}{4},\tau]$,
    \item[(vii)] $\phi_{\epsilon}(x)=\widetilde{\phi}_{\epsilon,\tau}(x)=1, x\in [\frac{\epsilon}{2},\epsilon]$,
    \item[(viii)] $\supp(\psi_{\epsilon})\subset (-\infty,\frac{\epsilon}{2}]$.
    \item[(ix)]  Given $x\in \mathbb{R}$, we have the following partitions of unity
    \begin{equation}\label{phidecomp}
        \chi_{\epsilon,\tau}(x)+\phi_{\epsilon,\tau}(x)+\psi_{\epsilon}(x)=1.
    \end{equation}
\end{itemize}
We will use the following notation
\begin{equation}\label{defiweight}
\begin{aligned}
\chi_{r,\epsilon,\tau,\sigma}(x,t)=\langle \sigma \cdot x+\nu t+\kappa \rangle^{r}\chi_{\epsilon,\tau}(\sigma \cdot x+\nu t+\kappa), \, \, x\in \mathbb{R}^n, \, t\geq 0. 
\end{aligned}
\end{equation}
We will also use the notation
\begin{equation*}
\begin{aligned}
\chi_{\epsilon,\tau,\sigma}(x,t)=\chi_{0,\epsilon,\tau,\sigma}(x,t)=\chi_{\epsilon,\tau}(\sigma \cdot x+\nu t+\kappa), \, \, x\in \mathbb{R}^n, \, t\geq 0. 
\end{aligned}
\end{equation*}
Similarly, we set $\phi_{\epsilon,\tau,\sigma}(x)$ and $\psi_{\epsilon,\sigma}(x)$. Notice that the above functions are an extension to $n$-dimensions of the one dimensional functions $\chi_{\epsilon,\tau}$, $\phi_{\epsilon,\tau}$, and $\psi_{\epsilon}$. For example, given a multi-index $\beta$, we have 
\begin{equation}\label{notationderchi}
    \partial^{\beta}(\chi_{\epsilon,\tau,\sigma}(x,t))=\sigma^{\beta}\chi_{\epsilon,\tau,\sigma}^{(|\beta|)}(x,t),
\end{equation}
where $\chi_{\epsilon,\tau}^{(j)}(x)$ denotes the $j$th derivative of $\chi_{\epsilon,\tau}(x)$, $x\in \mathbb{R}$.
\begin{remark}
Since any derivative of $\chi_{\epsilon,\tau}$ is compactly supported, we have that $\chi_{r,\epsilon,\tau,\sigma} \in \mathbb{S}^{0,r}_{\sigma, \nu t+\kappa}$. Similarly, $\phi_{\epsilon,\tau,\sigma},\psi_{\epsilon,\sigma}\in  \mathbb{S}^{0,0}_{\sigma, \nu t+\kappa}$.
\end{remark}
\begin{remark}
Restricted to the weighted functions $\chi_{r,\epsilon,\tau,\sigma}$ , $\phi_{\epsilon,\tau,\sigma}$ and $\psi_{\epsilon,\sigma}$,  the constants in Lemmas \ref{lemmadecom}, \ref{lemmapropaform}, \ref{InterpLemma}, and Corollaries \ref{supportsepareted3}, \ref{remlemmadecom} can be taken independent of $\nu t+\kappa$, provided that $|t|\leq M$. For example, this can be motivated by Definition \ref{pesudodeff}, and the fact that the constants in it for the functions $\chi_{r,\epsilon,\tau,\sigma}$, $\phi_{\epsilon,\tau,\sigma}$ and $\psi_{\epsilon,\sigma}$ can be taken independent of the translation $\nu t+\kappa$, $t\geq 0$. 
\end{remark}
\section{Propagation of polynomial decay: Proof of Theorem \ref{mainTHM}}\label{sectionMain}
By translation, we may set $\kappa=0$. The proof of Theorem \ref{mainTHM} follows from weighted energy estimates in the equation in \eqref{ZK}. By approximation by smooth functions and taking the limit in our arguments, we will consider solutions of \eqref{ZK} with sufficient regularity and decay to perform all the estimates detailed below. This assumption can be justified at the end of Section 3 in  \cite{IsazaLinaresPonce2015}. We remark that such an approximation argument is granted thanks to our local well-posedness theory in weighted spaces deduced in Theorem \ref{wellposweighted}.

Since our argument depends on several energy estimates, we will state a general identity for an arbitrary differential operator $\mathcal{B}^s$, which will be specified in each step of the proof of Theorem \ref{mainTHM}. Hence, by applying $\mathcal{B}^s$ to the equation in \eqref{ZK}, multiplying then by $(\mathcal{B}^s u )\chi_{r,\epsilon,\tau,\sigma}^2$ and integrating in space the resulting expression, we get
\begin{equation}\label{differineq}
\begin{aligned}
\frac{1}{2}\frac{d}{dt}\int &(\mathcal{B}^su(x,t))^2 \chi_{r,\epsilon,\tau,\sigma}^2(x,t)\, \mathrm{d}x- \frac{1}{2}\underbrace{ \int (\mathcal{B}^s u(x,t))^2\frac{\partial}{\partial t}\chi_{r,\epsilon,\tau,\sigma}^2(x,t)\, \mathrm{d}x}_{A_1} \\
&+\underbrace{\int \partial_{x_1}\Delta \mathcal{B}^s u(x,t) \mathcal{B}^su(x,t)\chi_{r,\epsilon,\tau,\sigma}^2(x,t)\, \mathrm{d}x}_{A_2} \\
&+\underbrace{\int \mathcal{B}^s\big(u\partial_{x_1}u(x,t)) \mathcal{B}^s  u(x,t)\chi_{r,\epsilon,\tau,\sigma}^2(x,t)\, \mathrm{d}x}_{A_3}\\
&=0.
\end{aligned}
\end{equation}
Considering that the estimates for $A_1$ and $A_2$ follow by rather general considerations independent of $\mathcal{B}^s$, we summarize them in the following lemma.
\begin{lemma}\label{linearEstlemma1} 
Let $n\geq 2$,  $A_1$ and $A_2$ defined in \eqref{differineq}, then
\begin{equation}\label{estA1}
\begin{aligned}
|A_1|\lesssim &\int (\mathcal{B}^su(x,t))^2\chi_{r,\epsilon,\tau,\sigma}^2(x,t)\, \mathrm{d}x \\
&+\Big|\int (\mathcal{B}^su(x,t))^2 \langle \sigma \cdot x+\nu t \rangle^{2r}\chi_{\epsilon,\tau,\sigma}\chi_{\epsilon,\tau,\sigma}'(x,t)\, \mathrm{d}x\Big|.
\end{aligned}
\end{equation}
Moreover, there exists some constant $c_{\sigma, r}>0$ such that
\begin{equation}\label{estA2}
\begin{aligned}
A_2= & c_{\sigma,r}\int |\nabla \mathcal{B}^s u|^2 \chi_{\epsilon,\tau,\sigma}\big(\langle \sigma \cdot x+\nu t\rangle^{2r-2}(\sigma \cdot x+\nu t)\chi_{\epsilon,\tau,\sigma}\\
&\hspace{2cm}+\langle\sigma \cdot x+\nu t\rangle^{2r}\chi_{\epsilon,\tau,\sigma}' \big) \, \mathrm{d}x \\
&+R(\mathcal{B}^su,t),
\end{aligned}
\end{equation}
where 
\begin{equation}\label{estR}
\begin{aligned}
|R(\mathcal{B}^su,t)| \lesssim & \int (\mathcal{B}^su(x,t))^2\chi_{r,\epsilon,\tau,\sigma}^2(x,t)\,\mathrm{d}x \\
&+ \int (\mathcal{B}^s u(x,t))^2 \big(|\chi_{\epsilon,\tau,\sigma}\chi_{\epsilon,\tau,\sigma}'|+ |\chi_{\epsilon,\tau,\sigma}'|^2 +|\chi_{\epsilon,\tau,\sigma}\chi_{\epsilon,\tau,\sigma}''|\\
&\hspace{2cm}+|\chi_{\epsilon,\tau,\sigma}'\chi_{\epsilon,\tau,\sigma}''|+|\chi_{\epsilon,\tau,\sigma}\chi_{\epsilon,\tau,\sigma}'''|\big)\,\mathrm{d}x.
\end{aligned}
\end{equation}
\end{lemma}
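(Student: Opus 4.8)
The plan is to prove \eqref{estA1}, \eqref{estA2}, and \eqref{estR} by direct computation starting from the definitions of $A_1$ and $A_2$ in \eqref{differineq}, using only the chain rule, integration by parts, and the support/size properties of the cutoff functions from Subsection \ref{smoothapprosec}. Throughout I write $w = \sigma\cdot x + \nu t$ for brevity, so that $\chi_{r,\epsilon,\tau,\sigma}^2 = \langle w + \kappa\rangle^{2r}\chi_{\epsilon,\tau,\sigma}^2$ (recall $\kappa = 0$ after translation, so I will just write $w$), and I record the two elementary identities that drive everything: $\partial_t(\langle w\rangle^{2r}\chi_{\epsilon,\tau,\sigma}^2) = \nu\big(2r\langle w\rangle^{2r-2}w\,\chi_{\epsilon,\tau,\sigma}^2 + 2\langle w\rangle^{2r}\chi_{\epsilon,\tau,\sigma}\chi'_{\epsilon,\tau,\sigma}\big)$ and, for each $j$, $\partial_{x_j}(\langle w\rangle^{2r}\chi_{\epsilon,\tau,\sigma}^2) = \sigma_j\big(2r\langle w\rangle^{2r-2}w\,\chi_{\epsilon,\tau,\sigma}^2 + 2\langle w\rangle^{2r}\chi_{\epsilon,\tau,\sigma}\chi'_{\epsilon,\tau,\sigma}\big)$, with analogous formulas for higher $x$-derivatives producing terms in $\chi'',\chi'''$ and in $\langle w\rangle^{2r-2}, \langle w\rangle^{2r-4}$ times powers of $w$.

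For \eqref{estA1}: since $A_1 = \int (\mathcal{B}^s u)^2\,\partial_t(\langle w\rangle^{2r}\chi_{\epsilon,\tau,\sigma}^2)\,dx$, plugging in the first identity above splits $A_1$ into two pieces. The second piece is exactly $2\nu\int(\mathcal{B}^s u)^2\langle w\rangle^{2r}\chi_{\epsilon,\tau,\sigma}\chi'_{\epsilon,\tau,\sigma}\,dx$, which is the second term on the right of \eqref{estA1}. For the first piece, $|2r\nu\langle w\rangle^{2r-2}w| \le 2r\nu\langle w\rangle^{2r-1} \lesssim \langle w\rangle^{2r}$ (using $|w| \le \langle w\rangle$ and absorbing $\nu, r$ into the implicit constant, which is legitimate for $|t| \le M$ by the Remark after the smooth-approximation subsection), so that piece is bounded by $\int(\mathcal{B}^s u)^2\langle w\rangle^{2r}\chi_{\epsilon,\tau,\sigma}^2\,dx = \int(\mathcal{B}^s u)^2\chi_{r,\epsilon,\tau,\sigma}^2\,dx$, giving the first term. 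This part is routine.

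For \eqref{estA2} and \eqref{estR}: write $A_2 = \int \partial_{x_1}\Delta(\mathcal{B}^s u)\,(\mathcal{B}^s u)\,\langle w\rangle^{2r}\chi_{\epsilon,\tau,\sigma}^2\,dx$ and integrate by parts. With $v = \mathcal{B}^s u$ and $\Phi = \langle w\rangle^{2r}\chi_{\epsilon,\tau,\sigma}^2$, the standard KdV-type manipulation is to integrate by parts in $x_1$ once to move $\partial_{x_1}$ onto $v\Phi$, then integrate by parts again to symmetrize, landing on the identity
\begin{equation*}
\int \partial_{x_1}\Delta v\, v\,\Phi\, dx = \frac{1}{2}\int \big(3|\nabla v|^2\,\partial_{x_1}\Phi - 2\textstyle\sum_{j}\partial_{x_j}v\,\partial_{x_1}\partial_{x_j}\Phi \cdot (\text{rearranged}) \big)dx + \cdots
\end{equation*}
more precisely one obtains a leading quadratic form in $\nabla v$ with coefficient a first-order $x$-derivative of $\Phi$, plus terms with $v^2$ times second and third $x$-derivatives of $\Phi$. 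Since $\partial_{x_1}\Phi = \sigma_1(2r\langle w\rangle^{2r-2}w\,\chi_{\epsilon,\tau,\sigma}^2 + 2\langle w\rangle^{2r}\chi_{\epsilon,\tau,\sigma}\chi'_{\epsilon,\tau,\sigma})$ and similarly for $\partial_{x_1}\partial_{x_j}\Phi$, the main term assembles into $c_{\sigma,r}\int|\nabla v|^2\chi_{\epsilon,\tau,\sigma}(\langle w\rangle^{2r-2}w\,\chi_{\epsilon,\tau,\sigma} + \langle w\rangle^{2r}\chi'_{\epsilon,\tau,\sigma})\,dx$ as in \eqref{estA2}, with $c_{\sigma,r}$ collecting the combinatorial constants and the components of $\sigma$; here one must be careful that the cross terms $\partial_{x_j}v\,\partial_{x_1}\partial_{x_j}v$ for $j\neq 1$ genuinely contribute only to the $|\nabla v|^2$ coefficient after symmetrization, which is where condition \eqref{sigmacond} will eventually matter downstream (though not for the mere form of the identity). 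The remainder $R(\mathcal{B}^s u, t)$ collects (a) terms where at least two derivatives fell on $\chi_{\epsilon,\tau,\sigma}$ — these are supported where $\chi',\chi'',\chi'''$ live and give the $|\chi\chi'|, |\chi'|^2, |\chi\chi''|, |\chi'\chi''|, |\chi\chi'''|$ terms in \eqref{estR} — and (b) terms where derivatives fell on the $\langle w\rangle^{2r}$ factor producing $\langle w\rangle^{2r-2}, \langle w\rangle^{2r-4}$ times bounded powers of $w$, all of which are $\lesssim \langle w\rangle^{2r}\chi_{\epsilon,\tau,\sigma}^2 = \chi_{r,\epsilon,\tau,\sigma}^2$ pointwise after using $|w|\le\langle w\rangle$ and absorbing constants.

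The main obstacle is purely bookkeeping: correctly performing the double integration by parts for the third-order operator $\partial_{x_1}\Delta$ so that the top-order term in $\nabla v$ appears with the stated structure and sign, and then carefully classifying every lower-order term into either the displayed main term of \eqref{estA2} or one of the admissible remainder types in \eqref{estR}. No subtle analytic input is needed beyond $|w| \le \langle w\rangle$ and the compact support of $\chi'_{\epsilon,\tau,\sigma}$; the only thing to watch is that the integration by parts is justified, which holds because we are working (by the approximation argument invoked at the start of Section \ref{sectionMain}) with solutions smooth and decaying enough that all boundary terms vanish and all integrals converge. I would carry out \eqref{estA1} first (two lines), then set up the integration by parts for $A_2$, extract the $|\nabla v|^2$ main term, and finally sweep the debris into $R$, checking term by term against the list in \eqref{estR}.
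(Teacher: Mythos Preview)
Your treatment of $A_1$ and the overall integration-by-parts scheme for $A_2$ match the paper. The clean identity the paper uses is
\[
A_2=\tfrac12\int|\nabla v|^2\,\partial_{x_1}\Phi\,dx+\int\partial_{x_1}v\,\nabla v\cdot\nabla\Phi\,dx-\tfrac12\int v^2\,\partial_{x_1}\Delta\Phi\,dx=:A_{2,1}+A_{2,2}+R,
\]
with $v=\mathcal{B}^s u$ and $\Phi=\chi_{r,\epsilon,\tau,\sigma}^2$; your remainder analysis for $R$ is correct.

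The gap is in your handling of the leading term $A_{2,1}+A_{2,2}$. You write that it ``assembles into $c_{\sigma,r}\int|\nabla v|^2(\dots)$'' with $c_{\sigma,r}$ just collecting combinatorial constants, and that condition \eqref{sigmacond} matters only ``downstream (though not for the mere form of the identity).'' This is not right. Since $\partial_{x_j}\Phi=\sigma_j\cdot(\text{weight})$, the sum $A_{2,1}+A_{2,2}$ is
\[
\int\Big(\sigma_1|\nabla v|^2+2\sum_{m}\sigma_m\,\partial_{x_1}v\,\partial_{x_m}v\Big)\times(\text{weight})\,dx=\int\langle\nabla v,\nabla v\rangle_M\times(\text{weight})\,dx,
\]
where $M$ is the symmetric matrix with $M_{11}=3\sigma_1$, $M_{jj}=\sigma_1$ for $j\geq 2$, $M_{1j}=M_{j1}=\sigma_j$ for $j\geq 2$, and all other entries zero. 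This is a genuine quadratic form in $\nabla v$, not a scalar multiple of $|\nabla v|^2$. The statement that $A_2$ equals $c_{\sigma,r}\int|\nabla v|^2(\dots)+R$ with $c_{\sigma,r}>0$ is obtained only after invoking that $M$ is positive definite, so that $\langle\nabla v,\nabla v\rangle_M\sim|\nabla v|^2$; and positive definiteness of $M$ is exactly the content of \eqref{sigmacond} (one checks that the eigenvalues of $M$ are $\sigma_1$ with multiplicity $n-2$ and $2\sigma_1\pm\sqrt{\sigma_1^2+\sigma_2^2+\cdots+\sigma_n^2}$). So \eqref{sigmacond} is used precisely here, in this lemma, to secure the sign of $c_{\sigma,r}$---not merely downstream.
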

\begin{proof}
The estimate \eqref{estA1} is obtained by computing $\partial_{t}\left(\chi_{r,\epsilon,\tau,\sigma}^2(x,t)\right)$. To deduce \eqref{estA2}, we integrate by parts to write
\begin{equation*}
\begin{aligned}
A_2=&\frac{1}{2}\int |\nabla \mathcal{B}^s u|^2 \partial_{x_1}(\chi_{r,\epsilon,\tau,\sigma}^2) \, \mathrm{d}x +\int \partial_{x_1}\mathcal{B}^s u\nabla \mathcal{B}^s u \cdot \nabla(\chi_{r,\epsilon,\tau,\sigma}^2) \, \mathrm{d}x \\
&-\frac{1}{2}\int (\mathcal{B}^su)^2 \partial_{x_1}\Delta (\chi_{r,\epsilon,\tau,\sigma}^2) \, \mathrm{d}x  \\
=:& A_{2,1}+A_{2,2}+R(\mathcal{B}^s u,t).
\end{aligned}
\end{equation*}
We notice that
\begin{equation*}
\begin{aligned}
 A_{2,1}+ A_{2,2}=&\sum_{m=1}^n \sigma_1 \big(\int (\partial_{x_m} \mathcal{B}^s u)^2 \langle \sigma\cdot x+\nu t \rangle^{2r}\chi_{\epsilon,\tau,\sigma}\chi_{\epsilon,\tau,\sigma}' \, \mathrm{d}x \\
&\hspace{0.4cm}+r\int (\partial_{x_m} \mathcal{B}^s u)^2 \langle \sigma\cdot x+\nu t \rangle^{2r-2}(\sigma\cdot x+\nu t)\chi_{\epsilon,\tau,\sigma}^2 \, \mathrm{d}x\big) \\
&+2\sum_{m=1}^n\sigma_{m}\big(\int \partial_{x_1}\mathcal{B}^s u\partial_{x_m}\mathcal{B}^s u \langle \sigma\cdot x+\nu t \rangle^{2r}\chi_{\epsilon,\tau,\sigma}\chi_{\epsilon,\tau,\sigma}' \, \mathrm{d}x \\
& \hspace{0.4cm} +r\int \partial_{x_1}\mathcal{B}^s u\partial_{x_m} \mathcal{B}^s u \langle \sigma\cdot x+\nu t \rangle^{2r-2}(\sigma\cdot x+\nu t)\chi_{\epsilon,\tau,\sigma}^2 \, \mathrm{d}x \big)\\
=& \int \langle \nabla \mathcal{B}^s u, \nabla \mathcal{B}^s u \rangle_{M} \langle \sigma\cdot x+\nu t \rangle^{2r}\chi_{\epsilon,\tau,\sigma}\chi_{\epsilon,\tau,\sigma}' \, \mathrm{d}x\\
&+ r\int \langle \nabla \mathcal{B}^s u, \nabla \mathcal{B}^s u \rangle_M \langle \sigma\cdot x+\nu t \rangle^{2r-2}(\sigma\cdot x+\nu t)\chi_{\epsilon,\tau,\sigma}^2 \, \mathrm{d}x,
\end{aligned}
\end{equation*}
where for $x,y \in \mathbb{R}^n$, the product $\langle x,y \rangle_M=x^{T}My$ is defined via the symmetric matrix  
\begin{equation*}
		M=\begin{pmatrix}
			3\sigma_{1} & \sigma_{2}&\sigma_{3}&\cdots &\sigma_{n}\\
			\sigma_{2} & \sigma_{1}&0&\cdots &0\\
			\sigma_{3}&0 &\sigma_{1}&\cdots&0\\
			\vdots&\vdots&\vdots&\ddots&\vdots \\
			\sigma_{n}& 0&\vdots&\cdots &\sigma_{1}
		\end{pmatrix}.
	\end{equation*} 
Since $\sigma_1>0$ and $\sqrt{3}\sigma_1 > \sqrt{\sigma_2^2+\dots+ \sigma_n^2}$, it turns out that $M$ is positive definite, thus, $\langle x,x \rangle_M \sim |x|^2$ for all $x\in \mathbb{R}^n$ (this was observed before in \cite{LinaresPonceZK2018,Mendez2024}). Then
\begin{equation*}
\begin{aligned}
A_{2,1}+A_{2,2}\sim_{\sigma,r} \int |\nabla \mathcal{B}^s u|^2 \chi_{\epsilon,\tau,\sigma}\big(&\langle \sigma \cdot x+\nu t\rangle^{2r-2}(\sigma \cdot x+\nu t)\chi_{\epsilon,\tau,\sigma}\\
&+\langle\sigma \cdot x+\nu t\rangle^{2r}\chi_{\epsilon,\tau,\sigma}' \big) \, \mathrm{d}x.
\end{aligned}
\end{equation*}
On the other hand, using the identity
\begin{equation*}
\begin{aligned}
\Delta(\chi^2_{r,\epsilon,\tau,\sigma})=&\Delta (\langle\sigma\cdot x+\nu t\rangle^{2r})\chi_{\epsilon,\tau,\sigma}^2(x,t)+2\nabla(\langle\sigma \cdot x+\nu t\rangle^{2r})\cdot\nabla(\chi_{\epsilon,\tau,\sigma}^2)(x,t) \\
& +\langle\sigma \cdot x+\nu t\rangle^{2r}\Delta(\chi_{\epsilon,\tau,\sigma}^2)(x,t),
\end{aligned}
\end{equation*}
we deduce
\begin{equation*}
\begin{aligned}
|\partial_{x_1}\Delta(\chi^2_{r,\epsilon,\tau,\sigma})|\lesssim & \langle\sigma \cdot x+\nu t\rangle^{2r-3}\chi_{\epsilon,\tau,\sigma}^2(x,t)+\langle\sigma \cdot x+\nu t \rangle^{2r-2}\chi_{\epsilon,\tau,\sigma}\chi_{\epsilon,\tau,\sigma}'(x,t) \\
&+\langle\sigma \cdot x+\nu t\rangle^{2r-1}\big((\chi_{\epsilon,\tau,\sigma}')^2+\chi_{\epsilon,\tau,\sigma}|\chi_{\epsilon,\tau,\sigma}''|\big)(x,t)\\
&+\langle\sigma \cdot x+\nu t\rangle^{2r}\big(\chi_{\epsilon,\tau,\sigma}'|\chi_{\epsilon,\tau,\sigma}''|+\chi_{\epsilon,\tau,\sigma}|\chi_{\epsilon,\tau,\sigma}'''|\big)(x,t).
\end{aligned}
\end{equation*}
Consequently, the above inequality and the fact that any derivative of $\chi_{\epsilon,\tau, \sigma}$ is compactly supported yield \eqref{estR}.
\end{proof}
Next, we present the key estimates for $A_3(t)$ in \eqref{differineq}.
\begin{lemma}\label{NonlinearEstlemma1}
Let $\epsilon>0,$ $\tau \geq 5 \epsilon$. Assume $\mathcal{B}^s=\partial^{\beta}$ for some multi-index $|\beta|\leq 2$, then
\begin{equation}\label{estnonT}
\begin{aligned}
|A_3|\lesssim & \big(\|u\|_{L^{\infty}}+\|\nabla u\|_{L^{\infty}}\big)\sum_{|\gamma|=|\beta|}\Big(\int  (\partial^{\gamma} u(x,t))^2 \chi_{r,\epsilon,\tau,\sigma}^2 \, \mathrm{d}x\\
&+\int  (\partial^{\gamma} u(x,t))^2 \chi_{r,\epsilon,\tau,\sigma}\chi_{r,\epsilon,\tau,\sigma}' \, \mathrm{d}x\Big).
\end{aligned}
\end{equation}
Moreover, assume $\mathcal{B}^s=J^s$, then
\begin{equation}\label{estnonT2}
|A_3|\lesssim |\widetilde{A}_3|\|\chi_{r,\epsilon,\tau,\sigma}J^s u \|_{L^2},
\end{equation}
where $\widetilde{A}_3$ is bounded according to the following cases:
\begin{itemize}[leftmargin=20pt]
    \item[(I)] Assume $3\leq s\leq \lfloor\frac{n}{2}+1\rfloor$ with $n\geq 4$, and $u\in H^{(\frac{n}{2}+1)^{+}}(\mathbb{R}^n)$, then
\begin{equation}\label{extranonlEsteq}
\begin{aligned}
|\widetilde{A}_3|\lesssim & \Big(\|u\|_{L^{\infty}}+\|\nabla u\|_{L^{\infty}} \Big)\Big(\|u\|_{L^2}+\|\chi_{r,\epsilon',\tau',\sigma} u\|_{L^2}\Big)\\
&+\Big(\|J^{(\frac{n}{2}+1)^{+}}u\|_{L^2}+\|u\|_{L^{\infty}}+\|\partial_{x_1}u\|_{L^{\infty}}\Big)\\
&\, \times\Big(\|\chi_{r,\epsilon,\tau,\sigma}J^s u\|_{L^2}+\|\chi_{r-1,\epsilon',\tau',\sigma}J^s u\|_{L^2}+\|\chi_{r,\epsilon',\tau',\sigma} u\|_{L^2}+\|u\|_{L^2} \Big).
\end{aligned}
\end{equation}    
\item[(II)] Assume $s\geq 3$ with $s>\frac{n}{2}+1$. Then
    \begin{equation}\label{nonlEsteq}
\begin{aligned}
|\widetilde{A}_3|\lesssim & \Big(\|u\|_{L^{\infty}}+\|\nabla u\|_{L^{\infty}} \Big)\Big(\|u\|_{L^2}+\|\chi_{r,\epsilon',\tau',\sigma} u\|_{L^2}\Big)\\
&+\Big(\|\chi_{r,\epsilon,\tau,\sigma}J^s u\|_{L^2}+\|\chi_{r-1,\epsilon',\tau',\sigma}J^s u\|_{L^2}+\|\chi_{r,\epsilon',\tau',\sigma} u\|_{L^2}+\|u\|_{L^2} \Big)\\
& \hspace{0.5cm} \times \Big(\|\chi_{\epsilon',\tau',\sigma}J^s u\|_{L^2}+\|\widetilde{\phi}_{\epsilon,\tau,\sigma}J^s u\|_{L^2}+\|u\|_{L^{\infty}}+\|u\|_{L^2}\Big).
\end{aligned}
\end{equation}
\end{itemize}
Where $\epsilon'>0$, $\tau'\geq 5\epsilon'$, $\tau' \leq \epsilon$ and $\widetilde{\phi}_{\epsilon,\tau,\sigma}(x)=\widetilde{\phi}_{\epsilon,\tau}( \sigma\cdot x+vt)$, with $\widetilde{\phi}_{\epsilon,\tau} \in C^{\infty}_c(\mathbb{R})$ be such that $\widetilde{\phi}_{\epsilon,\tau}\phi_{\epsilon,\tau}=\phi_{\epsilon,\tau}$, and $\phi_{\epsilon,\tau, \sigma}$ be defined in Subsection \ref{smoothapprosec}.
\end{lemma}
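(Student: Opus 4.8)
The proof splits according to the two choices of $\mathcal{B}^s$, the case $\mathcal{B}^s=\partial^\beta$ being elementary and $\mathcal{B}^s=J^s$ carrying all the content. For $\mathcal{B}^s=\partial^\beta$ with $|\beta|\le2$, expand $\partial^\beta(u\partial_{x_1}u)=\sum_{\gamma\le\beta}\binom{\beta}{\gamma}\partial^\gamma u\,\partial^{\beta-\gamma}\partial_{x_1}u$ by the Leibniz rule. In every term except $u\,\partial_{x_1}\partial^\beta u$, one of the factors $\partial^\gamma u$, $\partial^{\beta-\gamma}\partial_{x_1}u$ carries at most one derivative (here $|\beta|\le2$ is used); placing it in $L^\infty$, bounded by $\|u\|_{L^\infty}+\|\nabla u\|_{L^\infty}$, and the other factor together with $\partial^\beta u$ in $L^2$ against $\chi_{r,\epsilon,\tau,\sigma}$ gives a contribution of the shape in \eqref{estnonT}. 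For the borderline term $u\,\partial_{x_1}\partial^\beta u$, write $u(\partial_{x_1}\partial^\beta u)(\partial^\beta u)\chi_{r,\epsilon,\tau,\sigma}^2=\tfrac12 u\,\partial_{x_1}\big((\partial^\beta u)^2\big)\chi_{r,\epsilon,\tau,\sigma}^2$ and integrate by parts in $x_1$; since $\partial_{x_1}(\chi_{r,\epsilon,\tau,\sigma}^2)=\sigma_1\big(2r\langle w\rangle^{2r-2}w\,\chi_{\epsilon,\tau,\sigma}^2+2\langle w\rangle^{2r}\chi_{\epsilon,\tau,\sigma}\chi_{\epsilon,\tau,\sigma}'\big)$ with $w=\sigma\cdot x+\nu t$, and $\langle w\rangle^{2r-2}|w|\le\langle w\rangle^{2r}$, the two resulting terms are dominated by $(\|u\|_{L^\infty}+\|\nabla u\|_{L^\infty})\int(\partial^\beta u)^2\big(\chi_{r,\epsilon,\tau,\sigma}^2+\chi_{r,\epsilon,\tau,\sigma}\chi_{r,\epsilon,\tau,\sigma}'\big)$, which is \eqref{estnonT}.

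For $\mathcal{B}^s=J^s$ I would first reduce to a single weighted commutator. Decompose $J^s(u\partial_{x_1}u)=uJ^s\partial_{x_1}u+[J^s,u]\partial_{x_1}u$ and split $A_3=A_{3,1}+A_{3,2}$ accordingly. In $A_{3,1}$ the integrand is $u\,\partial_{x_1}(J^su)\,(J^su)\,\chi_{r,\epsilon,\tau,\sigma}^2=\tfrac12 u\,\partial_{x_1}\big((J^su)^2\big)\chi_{r,\epsilon,\tau,\sigma}^2$; integrating by parts in $x_1$ exactly as above bounds $|A_{3,1}|$ by $(\|u\|_{L^\infty}+\|\partial_{x_1}u\|_{L^\infty})\|\chi_{r,\epsilon,\tau,\sigma}J^su\|_{L^2}^2$ plus a term supported in $\{\epsilon\le w\le\tau\}$ which, since $\chi_{\epsilon',\tau',\sigma}\equiv1$ there once $\tau'\le\epsilon$, is $\lesssim\|u\|_{L^\infty}\|\chi_{\epsilon',\tau',\sigma}J^su\|_{L^2}^2$; both have the asserted form $|\widetilde A_3|\,\|\chi_{r,\epsilon,\tau,\sigma}J^su\|_{L^2}$. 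For $A_{3,2}=\int\big([J^s,u]\partial_{x_1}u\big)(J^su)\chi_{r,\epsilon,\tau,\sigma}^2$, distribute one factor $\chi_{r,\epsilon,\tau,\sigma}$ to each side and use Cauchy--Schwarz, so $|A_{3,2}|\le\|\chi_{r,\epsilon,\tau,\sigma}[J^s,u]\partial_{x_1}u\|_{L^2}\,\|\chi_{r,\epsilon,\tau,\sigma}J^su\|_{L^2}$; it remains to bound the first factor, the principal part of $\widetilde A_3$.

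Because $\chi_{r,\epsilon,\tau,\sigma}\in\mathbb{S}^{0,r}_{\sigma,\nu t+\kappa}$, Lemma \ref{lemmadecom} (equivalently Corollary \ref{remlemmadecom}) lets me push $\chi_{r,\epsilon,\tau,\sigma}$ through $J^s$ in both $\chi_{r,\epsilon,\tau,\sigma}J^s(u\partial_{x_1}u)$ and $\chi_{r,\epsilon,\tau,\sigma}u\,J^s\partial_{x_1}u$; after cancellation the principal term becomes the \emph{unweighted} commutator $[J^s,u]\big(\chi_{r,\epsilon,\tau,\sigma}\partial_{x_1}u\big)$, while every remainder carries a derivative $\partial^\beta\chi_{r,\epsilon,\tau,\sigma}$ with $|\beta|\ge1$. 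Using that $|\partial^\beta\chi_{r,\epsilon,\tau,\sigma}|\lesssim\chi_{r-1,\epsilon',\tau',\sigma}$ when no derivative of $\chi_{\epsilon,\tau}$ itself is differentiated (as $\langle w\rangle^{r-|\beta|}\le\langle w\rangle^{r-1}$), and $|\partial^\beta\chi_{r,\epsilon,\tau,\sigma}|$ is bounded and supported in $\{\epsilon\le w\le\tau\}$ otherwise, the remainders are controlled by Lemma \ref{lemmapropaform}, Corollary \ref{supportsepareted3}, the continuity Lemma \ref{contiprop} and the propagation-of-regularity data of Theorem \ref{TheorArg}, and produce precisely the quantities $\|\chi_{r-1,\epsilon',\tau',\sigma}J^su\|_{L^2}$, $\|\chi_{r,\epsilon',\tau',\sigma}u\|_{L^2}$, $\|\chi_{\epsilon',\tau',\sigma}J^su\|_{L^2}$, $\|\widetilde\phi_{\epsilon,\tau,\sigma}J^su\|_{L^2}$, $\|u\|_{L^\infty}$ and $\|u\|_{L^2}$ appearing on the right of \eqref{extranonlEsteq}--\eqref{nonlEsteq}. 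To the principal term I would apply the Kato--Ponce commutator estimate (Lemma \ref{conmKP}) with $f=u$ and $g=\chi_{r,\epsilon,\tau,\sigma}\partial_{x_1}u$, extracting $\|\nabla u\|_{L^\infty}$ from the first product and $\|u\|_{L^\infty}$ (Case (II)) or $\|J^{(\frac{n}{2}+1)^{+}}u\|_{L^2}$ (Case (I)) from the second, then Lemma \ref{lemmapropaform} to convert $\|J^{s-1}(\chi_{r,\epsilon,\tau,\sigma}\partial_{x_1}u)\|_{L^2}$ into $\|\chi_{r,\epsilon,\tau,\sigma}J^su\|_{L^2}$ up to lower-order remainders, and the interpolation Lemma \ref{InterpLemma} followed by Young's inequality to bound $\|\chi_{r,\epsilon,\tau,\sigma}\partial_{x_1}u\|_{L^2}$ by an interpolate of $\|\chi_{r,\epsilon',\tau',\sigma}u\|_{L^2}$ and $\|\chi_{r-1,\epsilon',\tau',\sigma}J^su\|_{L^2}$. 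The split between \eqref{extranonlEsteq} and \eqref{nonlEsteq} is forced here: when $3\le s\le\lfloor\frac{n}{2}+1\rfloor$ one has $H^s\subset H^{(\frac{n}{2}+1)^{+}}$ globally, so the low-regularity factor in Lemma \ref{conmKP} may be taken in $L^2$ and estimated globally by $\|J^{(\frac{n}{2}+1)^{+}}u\|_{L^2}$, whereas when $s>\frac{n}{2}+1$ there is no global $H^s$ bound and every $J^s$-factor must stay localized, yielding the quantities $\|\chi_{\epsilon',\tau',\sigma}J^su\|_{L^2}$, $\|\widetilde\phi_{\epsilon,\tau,\sigma}J^su\|_{L^2}$ supplied by Theorem \ref{TheorArg}.

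The genuinely delicate step is the bookkeeping in the weighted commutator: one must expand far enough (take $N$ large) so that all remainder symbols lie in $\mathbb{S}^{0,0}_{\sigma,\nu t+\kappa}$, and at the same time arrange that every remainder falls into exactly one of three controllable families — strictly lower weight $\chi_{r-1,\epsilon',\tau',\sigma}$, compactly supported in $\{\epsilon\le w\le\tau\}$ (handled by Theorem \ref{TheorArg}), or of negative order in $\xi$ (absorbed by $L^2$ norms of $u$) — while choosing the Hölder pair in Lemma \ref{conmKP} so that each surviving norm either is a propagation-of-regularity quantity or carries a factor $\|\chi_{r,\epsilon,\tau,\sigma}J^su\|_{L^2}$ to be absorbed later by Gronwall; correctly locating the $H^{(\frac{n}{2}+1)^{+}}$ norm when $n\ge4$ and $s$ is small is the one place where the two cases genuinely diverge.
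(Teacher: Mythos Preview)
Your treatment of the case $\mathcal{B}^s=\partial^\beta$ is fine and matches the paper. The difficulty is in your handling of the principal term for $\mathcal{B}^s=J^s$, and here your decomposition diverges from the paper's in a way that creates a gap.

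You reduce to the commutator $[J^s,u]\big(\chi_{r,\epsilon,\tau,\sigma}\partial_{x_1}u\big)$ and apply Lemma~\ref{conmKP} with $f=u$ and $g=\chi_{r,\epsilon,\tau,\sigma}\partial_{x_1}u$. The second product in Kato--Ponce is then $\|J^s u\|_{L^{p_3}}\|g\|_{L^{p_4}}$. In Case~(II) you cannot take $p_3=2$: there is no global $H^s$ bound on $u$ when $s>\frac{n}{2}+1$ (this is precisely the regime where only \emph{localized} $J^s u$ is available, via Theorem~\ref{TheorArg}). Taking $p_4=2$ instead forces $p_3=\infty$, and $\|J^s u\|_{L^\infty}$ is even worse. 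In Case~(I) the choice $p_3=2$ is admissible, but then $p_4=\infty$ and $\|g\|_{L^\infty}=\|\chi_{r,\epsilon,\tau,\sigma}\partial_{x_1}u\|_{L^\infty}$ involves the \emph{unbounded} weight $\langle\sigma\cdot x+\nu t\rangle^r$, which you have no way to control. So as written, neither case closes.

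The paper avoids this by placing the weight on the \emph{inner} function of the commutator and a bounded cutoff on the outer one: after moving the weight inside $J^s$ it writes $J^s(\chi_{r,\epsilon,\tau,\sigma}u\,\partial_{x_1}u)$, then localizes the second factor via the partition $\chi_{\epsilon,\tau,\sigma}+\phi_{\epsilon,\tau,\sigma}+\psi_{\epsilon,\sigma}=1$ to obtain commutators of the form $[J^s,\chi_{r,\epsilon,\tau,\sigma}u]\,\partial_{x_1}(\chi_{\epsilon,\tau,\sigma}u)$ and $[J^s,\chi_{r,\epsilon,\tau,\sigma}u]\,\partial_{x_1}(\phi_{\epsilon,\tau,\sigma}u)$. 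Now Kato--Ponce with $f=\chi_{r,\epsilon,\tau,\sigma}u$ gives $\|J^s(\chi_{r,\epsilon,\tau,\sigma}u)\|_{L^2}$, which is exactly the localized weighted quantity one wants, while $\|\partial_{x_1}(\chi_{\epsilon,\tau,\sigma}u)\|_{L^\infty}$ is harmless because $\chi_{\epsilon,\tau,\sigma}$ is bounded. The $\psi_{\epsilon,\sigma}$ piece is killed by separated supports (Corollary~\ref{supportsepareted3}). The split between Cases~(I) and~(II) then arises only in how one estimates $\|\nabla(\chi_{r,\epsilon,\tau,\sigma}u)\|_{L^{p_1}}$, via Hardy--Littlewood--Sobolev for $n\ge4$ versus $L^\infty$ Sobolev embedding for $s>\frac{n}{2}+1$. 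Your proposal would be repaired by adopting this placement of the weight; the rest of your bookkeeping (remainders via Lemma~\ref{lemmadecom}, $\chi_{r-1,\epsilon',\tau',\sigma}$ terms, etc.) is essentially correct.
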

\begin{remark}\label{remarkl1condt}
In the cases $n=2,3$, the results of Lemma \ref{NonlinearEstlemma1} parts \eqref{estnonT} and (II), in conjunction with the argument developed below, show that Theorem \ref{mainTHM} can be extended to initial condition $u_0\in H^{s'}(\mathbb{R}^n)$ with $s'\leq \frac{n}{2}+1$ provided that there exists a local theory for which solutions of \eqref{ZK} are in the class \eqref{l1cond}. This is not the case when the dimension $n\geq 4$, where we need solutions of \eqref{ZK} in the space $H^{(\frac{n}{2}+1)^{+}}(\mathbb{R}^n)$ (this is stated in Lemma \ref{NonlinearEstlemma1} (I)).
    
The reason for such difference is that when applying the Kato-Ponce inequality, Lemma \ref{conmKP}, we have some difficulties keeping under control the regularity needed to bound factors such as $\|\nabla (\chi_{r,\epsilon,\tau,\sigma}u)\|_{L^p}$, $p\neq 2$. This can be seen for instance in \eqref{extraeqdecomp} and \eqref{eqdecom1} below, where we use different Sobolev embedding to control $\nabla (\chi_{r,\epsilon,\tau,\sigma}u)$. Nevertheless, we believe that this is a technical limitation that can be overcome with a more detailed expansion of Lemma \ref{conmKP} when $n\geq 4$. However, since our results for $n=1,2,3$ extend to regularity $s \leq \frac{n}{2}+1$ provided \eqref{l1cond}, we decided not to pursue the more involved lower regularity case when $n\geq 4$.
\end{remark}
\begin{proof}[Proof of Lemma \ref{NonlinearEstlemma1}]
Estimate \eqref{estnonT} can be easily deduced via integration by parts. Let us deal with the deduction of \eqref{estnonT2}. Let $N \geq \max\{r,s\}$, by Corollary \ref{remlemmadecom}, we write
\begin{equation}\label{eqdecom0}
\begin{aligned}
\chi_{r,\epsilon,\tau,\sigma} & J^s(u\partial_{x_1}u)\\
=& J^s\big(\chi_{r,\epsilon,\tau,\sigma}u\partial_{x_1}u \big)+\Psi_0\Big(\sum_{1\leq k<s} \sum_{|\beta|=k} J^{s-k}(\partial^{\beta}(\chi_{r,\epsilon,\tau,\sigma})u\partial_{x_1}u)\Big)\\
&+\Psi_1\Big(\sum_{s\leq k<r} \sum_{|\beta|=k} \partial^{\beta}(\chi_{r,\epsilon,\tau,\sigma})u\partial_{x_1}u\Big)+K_N(u\partial_x u),
\end{aligned}
\end{equation}
for some pseudo-differential operators $\Psi_0, \Psi_1$ and $K_N$ in $\mathrm{OP}\mathbb{S}^{0,0}_{\sigma, \omega}$. Above we have also used the zero convention for the empty summation. In view that
\begin{equation}\label{nonlEsteq1}
\|K_N(u\partial_x u)\|_{L^2}\lesssim \|\nabla u\|_{L^{\infty}}\| u\|_{L^{2}},
\end{equation}
and 
\begin{equation}\label{nonlEsteq2}
\begin{aligned}
\|\Psi_1\Big(\sum_{s\leq k<r} \sum_{|\beta|=k} \partial^{\beta}(\chi_{r,\epsilon,\tau,\sigma})u\partial_{x_1}u\Big)\|_{L^2}\lesssim \|\partial_{x_1}u\|_{L^{\infty}}\|\chi_{r,\epsilon',\tau',\sigma} u\|_{L^2},
\end{aligned}
\end{equation}
where $\epsilon'>0$, $\tau'\geq 5\epsilon'$, $\tau'\leq \epsilon$, we are reduced to control the first and second term on the right-hand side of \eqref{eqdecom0}. For the former term, by using the decomposition \eqref{phidecomp}, that is, $\chi_{\epsilon,\tau,\sigma}+\phi_{\epsilon,\tau,\sigma}+\psi_{\epsilon,\sigma}=1$, the fact that $\chi_{\epsilon,\tau,\sigma}$ and $\psi_{\epsilon,\sigma}$ have separated supports, we write
\begin{equation}\label{eqdecom}
\begin{aligned}
J^s\big(\chi_{r,\epsilon,\tau,\sigma}u\partial_{x_1}u \big)=&J^s\big(\chi_{r,\epsilon,\tau,\sigma}u\partial_{x_1}(\chi_{\epsilon,\tau,\sigma} u )\big)+J^s\big(\chi_{r,\epsilon,\tau,\sigma}u\partial_{x_1}(\phi_{\epsilon,\tau,\sigma} u )\big) \\
=& [J^s,\chi_{r,\epsilon,\tau,\sigma}u]\partial_{x_1}(\chi_{\epsilon,\tau,\sigma} u)+ [J^s,\chi_{r,\epsilon,\tau,\sigma}u]\partial_{x_1}(\phi_{\epsilon,\tau,\sigma}u)\\
&+\chi_{r,\epsilon,\tau,\sigma}u \partial_{x_1}J^s(\chi_{\epsilon,\tau,\sigma} u)+\chi_{r,\epsilon,\tau,\sigma}u \partial_{x_1}J^s(\phi_{\epsilon,\tau,\sigma} u)\\
=& [J^s,\chi_{r,\epsilon,\tau,\sigma}u]\partial_{x_1}(\chi_{\epsilon,\tau,\sigma} u)+ [J^s,\chi_{r,\epsilon,\tau,\sigma}u]\partial_{x_1}(\phi_{\epsilon,\tau,\sigma} u)\\
&-\chi_{r,\epsilon,\tau,\sigma}u \partial_{x_1}J^s(\psi_{\epsilon,\sigma} u)+\chi_{r,\epsilon,\tau,\sigma}u \partial_{x_1}J^su.
\end{aligned}
\end{equation}
The last term on the right-hand side of the above equality is estimated by going back to the integral defining $A_{3}$ and integrating by parts. Now, we consider the cases (I) and (II), i.e., $3\leq s\leq \lfloor \frac{n}{2}+1\rfloor$ with $n\geq 4$, and $s>\frac{n}{2}+1$ with $s\geq 3$.

\underline{Proof of (I): Case $3\leq s\leq \lfloor\frac{n}{2}+1\rfloor$ with $n\geq 4$}. Since $u\in H^{(\frac{n}{2}+1)^{+}}(\mathbb{R}^n)$, let $\delta>0$ be small enough such that $\delta<s-1$, and $u\in H^{\frac{n}{2}+1+\delta}(\mathbb{R}^n)$. We apply Lemma \ref{conmKP}, and Hardy-Littlewood-Sobolev inequality to get
\begin{equation}\label{extraeqdecomp}
\begin{aligned}
\|[J^s,\chi_{r,\epsilon,\tau,\sigma}u]\partial_{x_1}(\chi_{\epsilon,\tau,\sigma}u) \|_{L^2}\lesssim & \|\nabla(\chi_{r,\epsilon,\tau,\sigma}u)\|_{L^{p_1}}\|J^{s-1}\partial_{x_1}( \chi_{\epsilon,\tau,\sigma} u)\|_{L^{p_2}}\\
&+\|J^s(\chi_{r,\epsilon,\tau,\sigma}u)\|_{L^{2}}\|\partial_{x_1}(\chi_{\epsilon,\tau,\sigma} u)\|_{L^{\infty}}\\
\lesssim &\big(\|J^{\frac{n}{2}+1+\delta}(\chi_{\epsilon,\tau,\sigma} u)\|_{L^{2}}+\|\partial_{x_1}(\chi_{\epsilon,\tau,\sigma} u)\|_{L^{\infty}}\big)\\
& \times\|J^s(\chi_{r,\epsilon,\tau,\sigma}u)\|_{L^{2}},
\end{aligned}
\end{equation}
where
\begin{equation*}
\left\{\begin{aligned}
&\frac{1}{p_1}+\frac{1}{p_2}=\frac{1}{2}, \qquad \frac{1}{p_1}=\frac{1}{2}-\frac{(s-1-\delta)}{n}, \\
& \frac{1}{p_2}=\frac{1}{2}-\frac{(\frac{n}{2}+1+\delta-s)}{n}.
\end{aligned}\right.
\end{equation*}
By Corollary \ref{remlemmadecom} and \eqref{extraeqdecomp}, we get
\begin{equation}\label{extraeqdecomp1}
\begin{aligned}
\|[J^s,\chi_{r,\epsilon,\tau,\sigma}u]\partial_{x_1}(\chi_{\epsilon,\tau,\sigma}u) \|_{L^2}\lesssim &\big(\|J^{\frac{n}{2}+1+\delta}u\|_{L^{2}}+\|u\|_{L^{\infty}}+\|\partial_{x_1}u\|_{L^{\infty}}\big)\\
& \times\|J^s(\chi_{r,\epsilon,\tau,\sigma}u)\|_{L^{2}}.
\end{aligned}
\end{equation}
A further application of Corollary \ref{remlemmadecom} shows
\begin{equation}\label{extranonlEsteq2}
\begin{aligned}
\|J^{s}(\chi_{r,\epsilon,\tau,\sigma}u)\|_{L^{2}} 
\lesssim & \|\chi_{r,\epsilon,\tau,\sigma}J^{s} u\|_{L^2}+ \sum_{1\leq k<s} \|\chi_{r-k,\epsilon',\tau',\sigma}J^{s-k} u\|_{L^2}\\
&+\|\chi_{r,\epsilon',\tau',\sigma}u\|_{L^2}+\|u\|_{L^2},
\end{aligned}
\end{equation}
where $\epsilon'>0$, $\tau'\geq 5\epsilon'$, $\tau' \leq \epsilon$. Summarizing, we have deduced
\begin{equation}\label{extraeqdecomp1.1}
\begin{aligned}
\|[J^s,\chi_{r,\epsilon,\tau,\sigma}u]&\partial_{x_1}(\chi_{\epsilon,\tau,\sigma}u) \|_{L^2}\\
\lesssim &\big(\|J^{\frac{n}{2}+1+\delta}u\|_{L^{2}}+\|u\|_{L^{\infty}}+\|\partial_{x_1}u\|_{L^{\infty}}\big)\\
& \times\Big(\|\chi_{r,\epsilon,\tau,\sigma}J^{s} u\|_{L^2}+ \sum_{1\leq k<s} \|\chi_{r-k,\epsilon',\tau',\sigma}J^{s-k} u\|_{L^2}\\
&+\|\chi_{r,\epsilon',\tau',\sigma}u\|_{L^2}+\|u\|_{L^2}\Big).
\end{aligned}
\end{equation}
Similarly, it follows
\begin{equation}\label{extraeqdecomp2}
\begin{aligned}
\|[J^s,\chi_{r,\epsilon,\tau,\sigma}u]&\partial_{x_1}(\phi_{\epsilon,\tau,\sigma}u) \|_{L^2}\\
\lesssim &\big(\|J^{\frac{n}{2}+1+\delta}u\|_{L^{2}}+\|u\|_{L^{\infty}}+\|\partial_{x_1}u\|_{L^{\infty}}\big)\\
& \times\Big(\|\chi_{r,\epsilon,\tau,\sigma}J^{s} u\|_{L^2}+ \sum_{1\leq k<s} \|\chi_{r-k,\epsilon',\tau',\sigma}J^{s-k} u\|_{L^2}\\
&+\|\chi_{r,\epsilon',\tau',\sigma}u\|_{L^2}+\|u\|_{L^2}\Big).
\end{aligned}
\end{equation}
To estimate the remaining term in \eqref{eqdecom}, since $\chi_{r,\epsilon,\tau,\sigma}$ and $\psi_{\epsilon,\sigma}$ have separated supports, we use Corollary \ref{supportsepareted3} to get
\begin{equation}\label{extraeqdecomp3}
\begin{aligned}
\|\chi_{r,\epsilon,\tau,\sigma} u\partial_{x_1}J^s(\psi_{\epsilon,\sigma} u)\|_{L^2}&\lesssim \|u\|_{L^{\infty}}\|\chi_{r,\epsilon,\tau,\sigma}\partial_{x_1}J^s(\psi_{\epsilon,\sigma} u)\|_{L^2}\\
&\lesssim \|u\|_{L^{\infty}}\|u\|_{L^2}.
\end{aligned}
\end{equation}   
This completes the study of \eqref{eqdecom}. Finally, we deal with the second term on the right-hand side of \eqref{eqdecom0}. For fixed $1\leq k<s$, and $|\beta|=k$, we write
\begin{equation}\label{nonlEsteq7}
\begin{aligned}
 J^{s-k}(\partial^{\beta}(\chi_{r,\epsilon,\tau,\sigma})u\partial_{x_1}u)=&\frac{1}{2}\partial_{x_1}J^{s-k}(\partial^{\beta}(\chi_{r,\epsilon,\tau,\sigma})u^2)\\
 &-\frac{1}{2}J^{s-k}(\partial_{x_1}\partial^{\beta}(\chi_{r,\epsilon,\tau,\sigma})u^2)\\
 =&\frac{1}{2}\partial_{x_1}J^{-1}\Big(J^{s+1-k}(\partial^{\beta}(\chi_{r,\epsilon,\tau,\sigma})u^2)\Big)\\
 &-\frac{1}{2}J^{s-k}(\partial_{x_1}\partial^{\beta}(\chi_{r,\epsilon,\tau,\sigma})u^2).
\end{aligned}
\end{equation}
Given that $k\geq 1$, the above identity yields
\begin{equation*}
\begin{aligned}
\|J^{s-k}(\partial^{\beta}(\chi_{r,\epsilon,\tau,\sigma})u\partial_{x_1}u)\|_{L^2}\lesssim \|J^s(\partial^{\beta}(\chi_{r,\epsilon,\tau,\sigma})u^2)\|_{L^2}+\|J^s(\partial_{x_1}\partial^{\beta}(\chi_{r,\epsilon,\tau,\sigma})u^2)\|_{L^2}.
\end{aligned}
\end{equation*}
To complete the estimate of the above inequality, we write
\begin{equation*}
\begin{aligned}
J^s(\partial^{\beta}(\chi_{r,\epsilon,\tau,\sigma})u^2)=[J^s,\partial^{\beta}(\chi_{r,\epsilon,\tau,\sigma})u]u+\partial^{\beta}(\chi_{r,\epsilon,\tau,\sigma})u J^s u.
\end{aligned}    
\end{equation*}
Since for each multi-index $\gamma$, $|\partial^{\gamma}(\chi_{r,\epsilon,\tau,\sigma})|\lesssim \chi_{r-|\gamma|,\epsilon',\tau',\sigma}$, where $\epsilon'>0$, $\tau'\geq 5\epsilon'$, $\tau'\leq \epsilon$, we use the same ideas in \eqref{extraeqdecomp}-\eqref{extranonlEsteq2}, and the identity above to deduce
\begin{equation}\label{extraeqdecomp4}
\begin{aligned}
\|J^s (\partial^{\beta}&(\chi_{r,\epsilon,\tau,\sigma})u^2)\|_{L^2}\\
\lesssim & \big( \|J^{\frac{n}{2}+1+\delta}u\|_{L^2}+\|u\|_{L^{\infty}}+\|\partial_{x_1}u\|_{L^{\infty}}\big)\|J^s(\partial^{\beta}(\chi_{r,\epsilon,\tau,\sigma}) u)\|_{L^2}\\
&+\|u\|_{L^{\infty}}\|\chi_{r-k,\epsilon',\tau',\sigma}J^s u\|_{L^2}\\
\lesssim & \big( \|J^{\frac{n}{2}+1+\delta}u\|_{L^2}+\|u\|_{L^{\infty}}+\|\partial_{x_1}u\|_{L^{\infty}}\big)\Big(\|\chi_{r-k,\epsilon,\tau,\sigma}J^{s} u\|_{L^2}\\
&+ \sum_{1\leq k'<s} \|\chi_{r-k-k',\epsilon',\tau',\sigma}J^{s-k'} u\|_{L^2}+\|\chi_{r,\epsilon',\tau',\sigma}u\|_{L^2}+\|u\|_{L^2}\Big)\\
&+\|u\|_{L^{\infty}}\|\chi_{r-k,\epsilon',\tau',\sigma}J^s u\|_{L^2}.
\end{aligned}    
\end{equation}
By the same argument just described, changing $\partial^{\beta}(\chi_{r,\epsilon,\tau,\sigma})$ by $\partial_{x_1}\partial^{\beta}(\chi_{r,\epsilon,\tau,\sigma})$, we deduce
\begin{equation}\label{extraeqdecomp5}
\begin{aligned}
\|J^s(\partial_{x_1}\partial^{\beta}&(\chi_{r,\epsilon,\tau,\sigma})u^2)\|_{L^2}\\
\lesssim & \big( \|J^{\frac{n}{2}+1+\delta}u\|_{L^2}+\|u\|_{L^{\infty}}+\|\partial_{x_1}u\|_{L^{\infty}}\big)\Big(\|\chi_{r-k-1,\epsilon,\tau,\sigma}J^{s} u\|_{L^2}\\
&+ \sum_{1\leq k'<s} \|\chi_{r-k-1-k',\epsilon',\tau',\sigma}J^{s-k'} u\|_{L^2}+\|\chi_{r,\epsilon',\tau',\sigma}u\|_{L^2}+\|u\|_{L^2}\Big)\\
&+\|u\|_{L^{\infty}}\|\chi_{r-k-1,\epsilon',\tau',\sigma}J^s u\|_{L^2}.
\end{aligned}    
\end{equation}
Collecting \eqref{extraeqdecomp1}-\eqref{extraeqdecomp5}, we almost get \eqref{extranonlEsteq}, however, we need a further simplification using Lemma \ref{lemmadecom}.  For example, writing 
\begin{equation}\label{exampleestim}
    \chi_{r-k,\epsilon',\tau',\sigma'}J^{s-k}u=[\chi_{r-k,\epsilon',\tau',\sigma'},J^{-k}]J^s u+J^{-k}\big(\chi_{r-k,\epsilon',\tau',\sigma'}J^s u\big),
\end{equation}
we can use Lemma \ref{lemmadecom}, the fact that $\chi_{r-k,\epsilon',\tau',\sigma'}\lesssim \chi_{r-1,\epsilon',\tau',\sigma'}$, and similar considerations as above to get the desired result.

\underline{Proof of (II): Case $s> \frac{n}{2}+1$ with $s\geq 3$}. This case shares some similarities to the previous one, however, as the regularity can be arbitrary, we need to keep localization in each of our estimates. We will use the same identity \eqref{eqdecom0}, thus it only remains to study the first and second factors on the right-hand side of \eqref{eqdecom0}. For the former, we use again the same identity \eqref{eqdecom}, and we proceed with the study of the right-hand side of \eqref{eqdecom}. We apply Lemma \ref{conmKP}, Sobolev embedding $H^{(\frac{n}{2})^{+}}(\mathbb{R}^n)\hookrightarrow L^{\infty}(\mathbb{R}^n)$ and the fact that $s>\frac{n}{2}+1$ to get
\begin{equation}\label{eqdecom1}
\begin{aligned}
\|[J^s,\chi_{r,\epsilon,\tau,\sigma}u]\partial_{x_1}(\chi_{\epsilon,\tau,\sigma}u) \|_{L^2}\lesssim & \|\nabla(\chi_{r,\epsilon,\tau,\sigma}u)\|_{L^{\infty}}\|J^{s}( \chi_{\epsilon,\tau,\sigma} u)\|_{L^2}\\
&+\|J^s(\chi_{r,\epsilon,\tau,\sigma}u)\|_{L^{2}}\|\partial_{x_1}(\chi_{\epsilon,\tau,\sigma} u)\|_{L^{\infty}}\\
\lesssim & \|J^{s}(\chi_{r,\epsilon,\tau,\sigma}u)\|_{L^{2}}\|J^{s}(\chi_{\epsilon,\tau,\sigma} u)\|_{L^2}.
\end{aligned}
\end{equation}
By Corollary \ref{remlemmadecom}, we deduce
\begin{equation}\label{nonlEsteq4}
\begin{aligned}
\|J^{s}(\chi_{\epsilon,\tau,\sigma}u)\|_{L^{2}}\lesssim \|\chi_{\epsilon,\tau,\sigma}J^{s} u\|_{L^2}+ \sum_{1\leq k <s} \|\chi_{\epsilon',\tau',\sigma}J^{s-k} u\|_{L^2}+\|u\|_{L^2}.
\end{aligned} 
\end{equation}
Gathering the estimate above and \eqref{extranonlEsteq2}, we have
\begin{equation}\label{extraneweq1}
\begin{aligned}
\|[J^s &,\chi_{r,\epsilon,\tau,\sigma}u]\partial_{x_1}(\chi_{\epsilon,\tau,\sigma}u) \|_{L^2}\\
\lesssim & \Big(\|\chi_{r,\epsilon,\tau,\sigma}J^{s} u\|_{L^2}+ \sum_{1\leq k <s} \|\chi_{r-k,\epsilon',\tau',\sigma}J^{s-k} u\|_{L^2}+\|\chi_{r,\epsilon',\tau',\sigma}u\|_{L^2}+\|u\|_{L^2}\Big)\\
& \times \Big(\|\chi_{\epsilon,\tau,\sigma}J^{s} u\|_{L^2}+ \sum_{1\leq k <s} \|\chi_{\epsilon',\tau',\sigma}J^{s-k} u\|_{L^2}+\|u\|_{L^2}\Big).
\end{aligned}
\end{equation}
Notice that in \eqref{extraneweq1}, we have used the functions $\chi_{\epsilon,\tau,\sigma}$, $\chi_{\epsilon',\tau',\sigma}$ to keep the localization of the operators $J^s$ and $J^{s-k}$. In contrast, in \eqref{extraeqdecomp1}, we used the fact that $u\in H^{(\frac{n}{2}+1)^{+}}(\mathbb{R}^n)$. This last condition is crucial when $n\geq 4$. By a similar argument, it follows 
\begin{equation}\label{nonlEsteq5}
\begin{aligned}
\|[J^s &,\chi_{r,\epsilon,\tau,\sigma}u] \partial_{x_1}(\phi_{\epsilon,\tau,\sigma} u)\|_{L^2}\\
\lesssim & \Big(\|\chi_{r,\epsilon,\tau,\sigma}J^{s} u\|_{L^2}+ \sum_{1\leq k <s} \|\chi_{r-k,\epsilon',\tau',\sigma}J^{s-k} u\|_{L^2}+\|\chi_{r,\epsilon',\tau',\sigma}u\|_{L^2}+\|u\|_{L^2}\Big)\\
&\times \Big(\|\widetilde{\phi}_{\epsilon,\tau,\sigma} J^{s} u\|_{L^2}+ \sum_{1\leq k <s} \|\widetilde{\phi}_{\epsilon,\tau,\sigma} J^{s-k} u\|_{L^2}+\|u\|_{L^2}\Big).
\end{aligned}
\end{equation}
where $\widetilde{\phi}_{\epsilon,\tau,\sigma}(x)=\widetilde{\phi}_{\epsilon,\tau}( \sigma\cdot x+vt)$, with $\widetilde{\phi}_{\epsilon,\tau} \in C^{\infty}_c(\mathbb{R})$ be such that $\widetilde{\phi}_{\epsilon,\tau}\phi_{\epsilon,\tau}=\phi_{\epsilon,\tau}$. 

Next, the estimate of the third term on the right-hand side of \eqref{eqdecom} is the same in \eqref{extraeqdecomp3}. This completes the estimate of \eqref{eqdecom}. Let us focus now on the second factor on the right-hand side of \eqref{eqdecom0}.

For fixed $1\leq k <s$, $|\beta|=k$, we use the same decomposition in \eqref{nonlEsteq7}. Each of the two factors on the right-hand side of \eqref{nonlEsteq7} can be estimated as we did in \eqref{eqdecom}, that is, using commutator estimates and Lemma \ref{conmKP}. We omit these computations to avoid repetitions. However, we remark that we end up with
\begin{equation}\label{nonlEsteq8}
\begin{aligned}
\|J^{s-k}(\partial^{\beta}(\chi_{r,\epsilon,\tau,\sigma})u\partial_{x_1}u)\|_{L^2}\lesssim \mathcal{E}_1+\mathcal{E}_2,
\end{aligned}
\end{equation}
where $\mathcal{E}_1$  and $\mathcal{E}_2$ are the estimates for the two terms on the right-hand side of \eqref{nonlEsteq7}, respectively, which satisfy
\begin{equation*}
    \begin{aligned}
\mathcal{E}_1=&\Big(\|\chi_{r-k,\epsilon',\tau',\sigma}J^{s} u\|_{L^2}+ \sum_{1\leq k'<s} \|\chi_{r-k-k',\epsilon',\tau',\sigma}J^{s-k'} u\|_{L^2}+\|\chi_{r,\epsilon',\tau',\sigma}u\|_{L^2}+\|u\|_{L^2}\Big)\\
&\times \Big(\|\chi_{\epsilon,\tau,\sigma}J^{s-k} u\|_{L^2}+\|\phi_{\epsilon,\tau,\sigma}J^{s-k} u\|_{L^2}\\
&\hspace{0.2cm}+ \sum_{1\leq k'<s-k}\big( \|\chi_{\epsilon',\tau',\sigma}J^{s-k-k'}u\|_{L^2}+\|\widetilde{\phi}_{\epsilon,\tau,\sigma}J^{s-k-k'}u\|_{L^2}\big)+\|\chi_{\epsilon',\tau',\sigma}u\|_{L^2}+\|u\|_{L^2}\Big)\\
 &+\Big(\|\chi_{r-k,\epsilon',\tau',\sigma}J^{s+1-k} u\|_{L^2}+ \sum_{1\leq k'<s+1-k} \|\chi_{r-k-k',\epsilon',\tau',\sigma}J^{s+1-k-k'} u\|_{L^2}\\
 &\hspace{1cm}+\|\chi_{r,\epsilon',\tau',\sigma}u\|_{L^2}+\|u\|_{L^2}\Big)\times \Big(\|\chi_{\epsilon,\tau,\sigma}J^{s} u\|_{L^2}+\|\phi_{\epsilon,\tau,\sigma}J^{s} u\|_{L^2}+ \\
&\sum_{1\leq k'<s}\big( \|\chi_{\epsilon',\tau',\sigma}J^{s-k'}u\|_{L^2}+\|\widetilde{\phi}_{\epsilon,\tau,\sigma}J^{s-k'}u\|_{L^2}\big)+\|\chi_{\epsilon',\tau',\sigma}u\|_{L^2}+\|u\|_{L^2}+\|u\|_{L^{\infty}}\Big),
    \end{aligned}
\end{equation*}
and
\begin{equation*}
    \begin{aligned}
\mathcal{E}_2=&\Big(\|\chi_{r-k-1,\epsilon',\tau',\sigma}J^{s} u\|_{L^2}+ \sum_{1\leq k'<s} \|\chi_{r-k-1-k',\epsilon',\tau',\sigma}J^{s-k'} u\|_{L^2}+\|\chi_{r,\epsilon',\tau',\sigma}u\|_{L^2}+\|u\|_{L^2}\Big)\\
&\times \Big(\|\chi_{\epsilon,\tau,\sigma}J^{s-k} u\|_{L^2}+\|\phi_{\epsilon,\tau,\sigma}J^{s-k} u\|_{L^2}\\
&\hspace{0.2cm}+ \sum_{1\leq k'<s-k}\big( \|\chi_{\epsilon',\tau',\sigma}J^{s-k-k'}u\|_{L^2}+\|\widetilde{\phi}_{\epsilon,\tau,\sigma}J^{s-k-k'}u\|_{L^2}\big)+\|\chi_{\epsilon',\tau',\sigma}u\|_{L^2}+\|u\|_{L^2}\Big)\\
 &+\Big(\|\chi_{r-k-1,\epsilon',\tau',\sigma}J^{s-k} u\|_{L^2}+ \sum_{1\leq k'<s-k} \|\chi_{r-k-1-k',\epsilon',\tau',\sigma}J^{s-k-k'} u\|_{L^2}\\
 &\hspace{1cm}+\|\chi_{r,\epsilon',\tau',\sigma}u\|_{L^2}+\|u\|_{L^2}\Big)\times \Big(\|\chi_{\epsilon,\tau,\sigma}J^{s} u\|_{L^2}+\|\phi_{\epsilon,\tau,\sigma}J^{s} u\|_{L^2}+ \\
&\sum_{1\leq k'<s}\big( \|\chi_{\epsilon',\tau',\sigma}J^{s-k'}u\|_{L^2}+\|\widetilde{\phi}_{\epsilon,\tau,\sigma}J^{s-k'}u\|_{L^2}\big)+\|\chi_{\epsilon',\tau',\sigma}u\|_{L^2}+\|u\|_{L^2}+\|u\|_{L^{\infty}}\Big),
    \end{aligned}
\end{equation*}
where  $1\leq k<s$, and $|\beta|=k$. Thus, to get \eqref{nonlEsteq}, we can use the same idea in \eqref{exampleestim}, which consists of applying Lemma \ref{lemmadecom} to further simplify \eqref{eqdecom1}-\eqref{nonlEsteq8}. The proof is complete. 
\end{proof}
Now that we have estimated $A_1$, $A_2$, and $A_3$, we proceed with the deduction of Theorem \ref{mainTHM}. Setting $\mathcal{B}^s=Id$ the identity operator in \eqref{differineq}, by using Lemmas \ref{linearEstlemma1} and \ref{NonlinearEstlemma1}, together with Gronwall's inequality, we deduce the inequality
\begin{equation}\label{eq3}
\begin{aligned}
\sup_{t\in[0,T]}\int &u^2(x,t)\chi_{r,\epsilon,\tau,\sigma}^2(x,t)\, \mathrm{d}x \\
&+\int_0^T \int |\nabla u|^2(x,t)\big(\langle\sigma \cdot x+\nu t\rangle^{2r-2}(\sigma \cdot x+\nu t)\chi_{\epsilon,\tau,\sigma}^2 \\
& \hspace{3cm} +\langle \sigma \cdot x+\nu t\rangle^{2r}\chi_{\epsilon,\tau,\sigma} \chi_{\epsilon,\tau,\sigma}' \big)\, \mathrm{d}x \, \mathrm{d} t \\
&\leq c(\|u\|_{H^s},\|u\|_{L^1_T W^{1,\infty}},\|\langle \sigma \cdot x \rangle^{r}u_0\|_{L^2(\mathcal{H}_{\sigma,0})}).
\end{aligned}
\end{equation}
Therefore, the above estimate establishes the propagation of polynomial decay
\begin{equation}\label{eqconclu1}
\begin{aligned}
    \chi_{r,\epsilon,\tau,\sigma} u(x,t)\in L^{\infty}([0,T];L^2(\mathbb{R}^n)),
\end{aligned}
\end{equation}
a type of regularity and decay localized estimate
\begin{equation}\label{eqconclu2}
\begin{aligned}
\chi_{\epsilon,\tau,\sigma}^2\langle\sigma \cdot x+\nu t\rangle^{2r-2}(\sigma \cdot x+\nu t) |\nabla u(x,t)|^2\in L^{1}((0,T);L^1(\mathbb{R}^n)),
\end{aligned}
\end{equation}
and the local smoothing effect
\begin{equation}\label{eqconclu2.1}
\begin{aligned}
\big(\chi_{\epsilon,\tau,\sigma}\chi'_{\epsilon,\tau,\sigma}\big)^{\frac{1}{2}} \partial^{\gamma} u(x,t)\in L^{2}((0,T);L^2(\mathbb{R}^n)), \, \, \text{ for all } \, |\gamma|=1.
\end{aligned}
\end{equation}
Letting $2\epsilon<\widetilde{\epsilon}<\widetilde{\tau}<\tau-2\epsilon$, by property (iii) in Subsection \ref{smoothapprosec}, Lemma \ref{lemmapropaform} and \eqref{eqconclu2.1}, we get
\begin{equation}\label{eqconclu2.2}
\begin{aligned}
 \Theta^{m} u(x,t)\in L^{2}((0,T);L^2(\mathcal{Q}_{\{\sigma,\widetilde{\epsilon}-\nu t,\widetilde{\tau}-\nu t\}})),
\end{aligned}
\end{equation}
for each $0<m\leq 1$, and $\Theta_m$ is either $J^m$ or $\partial^{\beta}$ with $|\beta|=m$, if $m=1$ (also recall the definition of $\mathcal{Q}_{\{\sigma,\widetilde{\epsilon}-\nu t,\widetilde{\tau}-\nu t\}}$ in  \eqref{defQ}).  Although \eqref{eqconclu2.1} and \eqref{eqconclu2.2} hold without localization as $u\in C([0, T]; H^{(\frac{n}{2}+1)^{+}}(\mathbb{R}^n))$, the previous results are relevant to scheme the proof of Theorem \ref{mainTHM}. On the other hand, notice that \eqref{eqconclu2} involves a gain of decay and regularity for solutions of \eqref{ZK}. To exploit this fact, we will use an inductive argument on the size of the weight $r>0$. 

\underline{\bf Case $0\leq r$}. We will further split this case as follows:
\begin{itemize}
\item[(a)] Case $0<r<\frac{1}{2}$.
\item[(b)] Case $r=\frac{1}{2}$.
\item[(c)] Case $\frac{1}{2}<r$. 
\end{itemize}
\underline{\bf Case (a)}. If $0<r<\frac{1}{2}$, we are unable to obtain further properties from \eqref{eqconclu2}. The proof of Theorem \ref{mainTHM} is complete in this case. 

Let us assume $r\geq \frac{1}{2}$. Before we deal with the cases (b) and (c) separately, we set $r_1=r-\frac{1}{2}=\frac{2r-1}{2}$, since $\sigma \cdot x +\nu t\geq \epsilon$ in the support of $\chi_{\epsilon,\tau,\sigma}$, it is seen that 
\begin{equation*}
\begin{aligned}
\langle\sigma \cdot x+\nu t\rangle^{2r-2}(\sigma \cdot x+\nu t)\chi_{\epsilon,\tau,\sigma}^2 \gtrsim_{\epsilon,r} \chi_{r_1, \epsilon,\tau,\sigma}^2.
\end{aligned}
\end{equation*}
Hence, \eqref{eqconclu2} and an application of Lemma \ref{lemmapropaform} allow us to deduce
\begin{equation}\label{eqconclu2.3}
    \chi_{r_1,\widetilde{\epsilon},\widetilde{\tau},\sigma}\Theta^m u(x,t)\in L^2((0,T);L^2(\mathbb{R}^n)),
\end{equation}
whenever $\widetilde{\epsilon}>\tau$, $\widetilde{\tau} \geq 5\widetilde{\epsilon}$, and either $\Theta^m=J^m$, $0\leq m \leq 1$, or $\Theta^m=\partial^{\beta}$ with $\beta=m=1$. Moreover, \eqref{eqconclu2} establishes that for all $\delta>0$, there exists $t_1\in (0,\delta)$ such that
\begin{equation}\label{newinicond}
\begin{aligned}
\int |\nabla u|^2(x,t_1)\chi_{r_1,\epsilon,\tau,\sigma}^2(x,t_1) \, \mathrm{d}x <\infty.
\end{aligned}
\end{equation}
Consequently,  Lemma \ref{lemmapropaform} yields
\begin{equation}\label{newinicond0.1}
\begin{aligned}
\int |\Theta^m u|^2(x,t_1)\chi_{r_1,\widetilde{\epsilon},\widetilde{\tau},\sigma}^2(x,t_1)\, \mathrm{d}x<\infty,
\end{aligned}
\end{equation}
whenever $\widetilde{\epsilon}>\tau$, $\widetilde{\tau}\geq 5\widetilde{\epsilon}$, and $\Theta^m$ be given as in \eqref{eqconclu2.3}. In particular, when $r\geq \frac{1}{2}$, we get
\begin{equation}\label{newinicond1}
\begin{aligned}
\int |\Theta^m u|^2(x,t_1)\chi_{\widetilde{\epsilon},\widetilde{\tau},\sigma}^2(x,t_1)\, \mathrm{d}x<\infty.
\end{aligned}
\end{equation}
The previous two equations suggest that at later times $t\geq t_1$, the solution of the ZK equation \eqref{ZK} propagates extra regularity and decay. This will be confirmed according to cases (b) and (c) below.

\underline{\bf Case (b)}. Here we assume $r=\frac{1}{2}$. We fix $\widetilde{\epsilon}, \widetilde{\tau}>0$ such that \eqref{eqconclu2.3}-\eqref{newinicond0.1} are valid. By \eqref{newinicond1}, we can apply the propagation of regularity result in  Theorem \ref{TheorArg} with initial data $u(\cdot,t_1)$ to get
\begin{equation}\label{eqconclu3}
  \chi_{\epsilon_1, \tau_1,\sigma}(x,t)  J^{s_1} u(x,t)\in L^{\infty}([t_1,T];L^2({\mathbb{R}^n})).
\end{equation}
for fixed $\epsilon_1>\widetilde{\tau}$, $\tau_1 \geq 5\epsilon_1$, and any $s_1\in (0,1]$. Moreover, up to a further modification of $\epsilon_1$, and $\tau_1$, Theorem \ref{TheorArg} and Lemma \ref{lemmapropaform} (II) yield 
\begin{equation*}
\Theta^{m}u\in L^{2}((t_1,T);\mathcal{Q}_{\{\sigma,\epsilon_1-\nu t,\tau_1-\nu t\}}),
\end{equation*}
where $0\leq m\leq 2$, and $\Theta^m=J^m$, or $\Theta^m=\partial^{\beta}$, $0\leq \beta \leq m$. Let us now use \eqref{eqconclu1} and \eqref{eqconclu3} to obtain a relation between decay and regularity for fractional derivatives of order $0<s<1$. An application of Lemma \ref{InterpLemma} reveals  
\begin{equation*}
\begin{aligned}
\|J^{s}\big(\chi_{\frac{1-s}{2},\epsilon_1, \tau_1,\sigma} u\big)\|_{L^2} \lesssim \|\chi_{\frac{1}{2},\epsilon_1, \tau_1,\sigma} u\|_{L^2}+\|J(\chi_{\epsilon_1, \tau_1,\sigma} u)\|_{L^2}.
\end{aligned}
\end{equation*}
The first term on the right-hand side of the above identity is controlled by \eqref{eqconclu1}, while the second term is estimated by Corollary \ref{remlemmadecom} and the propagation result \eqref{eqconclu3}. Therefore, up to choosing new parameters $\widetilde{\epsilon}_1>\tau_1$, and $\widetilde{\tau}_1 \geq 5\widetilde{\epsilon}_1$, a second application of Corollary \ref{remlemmadecom} and Lemma \ref{lemmapropaform} show 
\begin{equation}\label{interpoldecay}
\sup_{t_1 \leq t \leq T}\int  \big(J^s u(x,t) \big)^2 \langle \sigma \cdot x+\nu t \rangle^{(1-s)} \chi_{\epsilon_1,\tau_1,\sigma}^2(x,t)\, \mathrm{d}x <\infty,
\end{equation}
for each $0<s\leq 1$. This completes the proof of Case (b).

\underline{\bf Case (c)}. Since in this case, $\frac{1}{2}<r$, \eqref{newinicond0.1} shows that starting from $t_1$, one expects additional localized decay and regularity 
for solutions $u$ of \eqref{ZK}. Thus, we study the propagation of such regularity by using \eqref{differineq} with $\mathcal{B}^s=\partial^{\gamma}$, $|\gamma|=1$, $\chi_{r_1,\epsilon_1,\tau_2,\sigma}$ with $r_1=r-\frac{1}{2}$, and the restrictions $\epsilon_1>\widetilde{\tau}$, $\tau_1\geq 5\epsilon_1$, where $\widetilde{\epsilon}, \widetilde{\tau}$ are fixed positive numbers such that \eqref{eqconclu2.3}-\eqref{newinicond0.1} hold. The idea is that under the present restrictions, we will derive bounds for $A_1$, $A_2$, and $A_3$ that combined with Gronwall's inequality yield the desired conclusion.

The estimate for $A_1$ is developed in \eqref{estA1}. The first term on the right-hand side of \eqref{estA1} is the quantity to be estimated using Gronwall's inequality. Noticing that $\langle \sigma\cdot x+\nu t \rangle^{2r}\chi_{\epsilon_1,\tau_1,\sigma}\chi_{\epsilon_1,\tau_1,\sigma}'\lesssim \chi_{\widetilde{\epsilon},\widetilde{\tau},\sigma}^2$, the second term on the right-hand side of \eqref{estA1} is estimated by integrating on the time variable between $(t_1,T)$ and using \eqref{eqconclu2.3}. (The integration of the time variable is justified by Gronwall's inequality). Similarly, we control $R(\partial^{\gamma}u,t)$ in \eqref{estA2}, and the remaining part of the expansion for $A_2$ in \eqref{estA2} provides a positive quantity.

Finally, the estimate for $A_3$ follows from the fact that $u\in L^{1}([0,T];L^2(\mathbb{R}^n))$, which is a consequence of Sobolev embedding $H^{(\frac{n}{2}+1)^{+}}(\mathbb{R}^n)\hookrightarrow W^{1,\infty}(\mathbb{R}^n)$. Thus, by the discussion above and summing over all multi-index $\gamma$ of order one, Gronwall's inequality yields the estimate
\begin{equation}\label{differineq2}
\begin{aligned}
\sup_{t\in[t_1,T]}&\sum_{|\gamma|=1}\int \big(\partial^{\gamma}u(x,t)\big)^2\chi_{r_1,\epsilon_1,\tau_1,\sigma}^2(x,t)\, \mathrm{d}x \\
&+\sum_{|\gamma|=1}\int_{t_1}^T \int |\nabla \partial^{\gamma}u|^2(x,t)\big(\langle\sigma \cdot x+\nu t\rangle^{2r_1-2}(\sigma \cdot x+\nu t)\chi_{\epsilon_1,\tau_1,\sigma}^2 \\
& \hspace{3cm} +\langle \sigma \cdot x+\nu t\rangle^{2r_1}\chi_{\epsilon_1,\tau_1,\sigma} \chi_{\epsilon_1,\tau_1,\sigma}' \big)\, \mathrm{d}x \, \mathrm{d} t \\
&\leq c_1,
\end{aligned}
\end{equation}
where the constant $c_1>0$ depends on
\begin{equation*}
\|u\|_{H^s},\|u\|_{L^1_T W^{1,\infty}},\|\nabla u (x,t_1)\chi_{r_1,\epsilon_1,\tau_1,\sigma}\|_{L^2},\|\chi_{r_1,\widetilde{\epsilon},\widetilde{\tau},\sigma}\nabla u\|_{L^2((t_1,T);L^2(\mathbb{R}^n))}.    
\end{equation*}
In particular, we find
\begin{equation}\label{eqconclu4}
    \chi_{r_1,\epsilon_1,\tau_1,\sigma}\nabla u(x,t)\in L^{\infty}([t_1,T];L^2(\mathbb{R}^n)),
\end{equation}
\begin{equation}\label{eqconclu4.0}
\begin{aligned}
\chi_{\epsilon_1,\tau_1,\sigma}^2\langle\sigma \cdot x+\nu t\rangle^{2r_l-2}(\sigma \cdot x+\nu t) |\nabla \partial^{\gamma} u(x,t)|^2\in L^{1}((t_1,T);L^1(\mathbb{R}^n)),
\end{aligned}
\end{equation}
and
\begin{equation}\label{eqconclu4.01}
    \big(\chi_{\epsilon_1,\tau_1,\sigma}\chi_{\epsilon_1,\tau_1,\sigma}'\big)^{\frac{1}{2}}\nabla \partial^{\gamma} u(x,t)\in L^{2}([t_1,T];L^2(\mathbb{R}^n)),
\end{equation}
for all multi-index $|\gamma|=1$. Hence, when $\epsilon_1>\widetilde{\tau}$ and $\tau_1 \geq 5\epsilon_1$, at later times $t\geq t_1$, \eqref{eqconclu4} establishes a stronger propagation of regularity effect than that obtained in  \eqref{eqconclu2.3}. In particular, Lemma \ref{lemmapropaform} and \eqref{eqconclu4} yield
\begin{equation}\label{eqconclu4.1}
    \chi_{r_1,\widetilde{\epsilon}_1,\widetilde{\tau}_1,\sigma}\Theta^m u(x,t)\in L^{\infty}([t_1,T];L^2(\mathbb{R}^n)),
\end{equation}
provided that $\widetilde{\epsilon}_1>\tau_1$, and $\widetilde{\tau}_1\geq 5 \widetilde{\epsilon}_1$, where either $\Theta^m=J^{m}$, $0\leq m \leq 1$, or $\theta^{m}=\partial^{\beta}$, with $|\beta|=1$ if $m=1$. Finally, by using interpolation Lemma \ref{InterpLemma} with the fact
\begin{equation}\label{eqconclu4.2}
\chi_{\epsilon_1,\tau_1,\sigma}Ju,\chi_{r,\epsilon,\tau,\sigma}u \in L^{\infty}([t_1,T];L^{2}(\mathbb{R}^n)),   
\end{equation} 
together with \eqref{eqconclu4.1}, it follows
\begin{equation}\label{eqconclu5}
\sup_{t_1 \leq t \leq T}\int  \big(J^{s_1} u(x,t) \big)^2 \langle \sigma \cdot x+\nu t \rangle^{2r^{\ast}_1} \chi_{\widetilde{\epsilon}_1,\widetilde{\tau}_1,\sigma}^2(x,t)\, \mathrm{d}x <\infty,
\end{equation}
where $r_1^{\ast}=\max\{(1-s_1)r, r-\frac{1}{2}\}$, $0\leq s_1\leq 1$. We remark that in order to apply the interpolation Lemma \ref{InterpLemma} with \eqref{eqconclu4.2}, we have used first Corollary \ref{remlemmadecom} to write $\chi_{\epsilon_1,\tau_1,\sigma}Ju$ in terms of $J(\chi_{\epsilon_1,\tau_1,\sigma}u)$ plus a remainder.  Besides, notice that if $0<r_1<\frac{1}{2}$ (i.e., $\frac{1}{2}<r<1$), \eqref{eqconclu4.0} does not provide extra decay to repeat the process described above. 

The argument now follows by iteration of the previous cases, where the number of repetitions is determined by the weight parameter $r$. For the general case, we will argue by induction on some lower bound of the weight parameter $r>0$. 

\underline{Case weight $\frac{l}{2}\leq  r$ with $l\in \mathbb{N}_0.$} By the cases considered previously, we will restrict our considerations to $l\geq 2$. In virtue of the inductive hypothesis, we will assume that the case $l-1$ holds, that is for $\delta>0$, with $\delta<T$, which was fixed before, there exists a time $t_{l-1}\in (0,\delta)$ such that
\begin{equation}\label{propaInd}
    \chi_{r_{l-1},\epsilon_{l-1},\tau_{l-1},\sigma}\Theta^m u(x,t)\in L^{\infty}([t_{l-1},T];L^2(\mathbb{R}^n)),
\end{equation}
and
\begin{equation}
\begin{aligned}\label{propaInd2}
\chi_{\epsilon_{l-1},\tau_{l-1},\sigma}^2\langle\sigma \cdot x+\nu t\rangle^{2r_{l-1}-2}&(\sigma \cdot x+\nu t) |\nabla \widetilde{\Theta}^{l-1} u(x,t)|^2\\
&\in L^{1}((t_{l-1},T);L^1(\mathbb{R}^n)),
\end{aligned}
\end{equation}
with $r_{l-1}=r-\frac{(l-1)}{2}$, $\epsilon_{l-1}>0$, $\tau_{l-1} \geq 5 \epsilon_{l-1}$ \footnote{The inductive argument shows that $\epsilon_{l-1}$ and $\tau_{l-1}$ are constructed recursively by considering $\{\epsilon_j\}_{j=1}^{l-1}$, $\{\tau_{j}\}_{j=1}^{l-1}$, with the  condition $\epsilon_{j+1}> \tau_j$, $j=1\dots,l-2$, and $\tau_j \geq 5\epsilon_j$ for all $j$.}, where either $\Theta^m=J^m$ with $0\leq m \leq l-1$, or $\Theta^m=\partial^{\beta}$ with $0\leq |\beta|\leq m$. Moreover,  $\widetilde{\Theta}^{l-1}=J^{l-1}$ if $l\geq 4$, and $\widetilde{\Theta}^{l-1}=\partial^{\beta}$, $|\beta|=l-1$ if $l\leq 3$. Notice that $\widetilde{\Theta}^{l-1}$ is motivated by the cases \eqref{estnonT} and \eqref{estnonT2} in Lemma \ref{NonlinearEstlemma1}, in other words, when the regularity index $l-1\leq 2$, we use partial derivatives in \eqref{propaInd2}, and when it is greater or equal than $3$, we use the operator $J^{l-1}$ instead. We will also assume by induction
\begin{equation}\label{propaInd3}
\sup_{t_{l-1} \leq t \leq T}\int  \big(J^{s_{l-1}} u(x,t) \big)^2 \langle \sigma \cdot x+\nu t \rangle^{2r^{\ast}_{l-1}} \chi_{\epsilon_{l-1},\tau_{l-1},\sigma}^2(x,t)\, \mathrm{d}x <\infty,
\end{equation}
where $r^{\ast}_{l-1}=\max\left\{\big(1-\frac{s_{l-1}}{l-1}\big)r,r-\frac{\lceil s_{l-1} \rceil}{2}\right\}$, and $s_{l-1}\in[0,l-1]$. We remark that $l-1=1$ in \eqref{propaInd}, \eqref{propaInd2} and \eqref{propaInd3} correspond to  \eqref{eqconclu4.1}, \eqref{eqconclu4.0} and \eqref{eqconclu5}, respectively.

Since $r\geq \frac{l}{2}$, by support considerations of the function $\chi_{\epsilon_{l-1},\tau_{l-1},\sigma}$, it is seen that
\begin{equation*}
    \chi_{\epsilon_{l-1},\tau_{l-1},\sigma}^2\langle\sigma \cdot x+\nu t\rangle^{2r_{l-1}-2}(\sigma\cdot x+\nu t)\gtrsim \chi_{\epsilon_{l-1},\tau_{l-1},\sigma}^2\langle\sigma \cdot x+\nu t\rangle^{2r_{l-1}-1}.
\end{equation*}
Therefore, \eqref{propaInd2} provides a smoothing effect, which combined with Lemma \ref{lemmapropaform} implies
\begin{equation}\label{L2smoothing}
    \begin{aligned}
\chi_{r_{l},\widetilde{\epsilon}_{l-1}, \widetilde{\tau}_{l-1},\sigma}\Theta^m_1 u (x,t)\in L^{2}((t_{l-1},T);L^2(\mathbb{R}^n))
    \end{aligned}
\end{equation}
with $r_{l}=r-\frac{l}{2}$, and either $\Theta^{m}_1=J^m$, $1\leq m\leq l$, or $\Theta^{m}_1=\partial^{\beta}$, $0\leq |\beta|\leq m$, and $\widetilde{\epsilon}_{l-1}> \tau_{l-1}$, $\widetilde{\tau}_{l-1}\geq 5\widetilde{\epsilon}_{l-1}$. On the other hand, since $t_{l-1}\in (0,\delta)$, by \eqref{propaInd2} and Lemma \ref{lemmapropaform} there exists $t_{l}\in (t_{l-1},\delta)$ such that
\begin{equation}\label{newinitialcond2}
    \int |\Theta^{m}_1 u(x,t_{l})|^{2}\chi_{r_{l},\widetilde{\epsilon}_{l-1}, \widetilde{\tau}_{l-1},\sigma}^2(x,t_{l}) \, \mathrm{d}x<\infty.
\end{equation}
In particular, since $\langle x \rangle^{r_l}\gtrsim 1$, we have $\Theta^{m}_1 u(x,t_{l})\chi_{\widetilde{\epsilon}_{l-1}, \widetilde{\tau}_{l-1},\sigma}(x,t_{l})\in L^{2}(\mathbb{R}^n)$. Thus, we can apply Theorem \ref{TheorArg} on the interval $[t_{l},T]$ to deduce
\begin{equation}\label{eqconclu6}
    \chi_{\widetilde{\epsilon}_l,\widetilde{\tau}_l,\sigma}(x,t)J^{s_l}u \in L^{\infty}([t_l,T];L^2(\mathbb{R}^n)), 
\end{equation}
and
\begin{equation*}
J^{l+1}u\in L^{2}((t_1,T);\mathcal{Q}_{\{\sigma,\widetilde{\epsilon}_l-\nu t,\widetilde{\tau}_l-\nu t\}}),
\end{equation*}
with $\widetilde{\epsilon}_l> \widetilde{\tau}_{l-1}$, $\widetilde{\tau}_l\geq 5\widetilde{\epsilon}_l$, and any $s_l\in (0,l]$. Consequently, \eqref{eqconclu6} shows that at later times $t \geq t_l$, the solution $u$ of the ZK equation propagates extra regularity localized in the support of $\chi_{\widetilde{\epsilon}_l,\widetilde{\tau}_l,\sigma}$. Thus, when $r>\frac{l}{2}$, we can use identity \eqref{differineq} and Gronwall's inequality on the interval $(t_l, T)$ to obtain extra decay and regularity. To check this remark, we further divide our arguments into the following cases:
\begin{itemize}
    \item[(d)] case $r=\frac{l}{2}$.
    \item[(e)] case $\frac{l}{2}<r$.
\end{itemize}
\underline{\bf Case (d)}. If $r=\frac{l}{2}$, we cannot obtain further regularity and polynomial decay properties from \eqref{propaInd2}. Thus, by using Lemma \ref{InterpLemma}, we interpolate \eqref{propaInd3} with $s_{l-1}=0$, and \eqref{eqconclu6} to deduce
\begin{equation*}
\sup_{t_{l} \leq t \leq T}\int  \big(J^{s_{l}} u(x,t) \big)^2 \langle \sigma \cdot x+\nu t \rangle^{2r^{\ast}_{l}} \chi_{\epsilon_1,\tau_1,\sigma}^2(x,t)\, \mathrm{d}x <\infty,
\end{equation*}
where $r^{\ast}_{l}=\max\{\big(1-\frac{s_{l}}{l}\big)\frac{l}{2},r-\frac{\lceil s_l \rceil}{2}\}$, $s_{l}\in[0,l]$, $\epsilon_l>\widetilde{\tau}_l$, $\tau_l\geq 5\epsilon_l$. Note that given the induction hypothesis, in this case, the proof of Theorem \ref{mainTHM} is complete.

\underline{\bf Case (e)}. We set $\frac{l}{2}<r$. In this case, we can use \eqref{newinitialcond2} and the differential equation \eqref{differineq} to obtain extra propagation of regularity for the given solution $u$ of the ZK equation. If $l=2$, we consider \eqref{differineq} with $\mathcal{B}^s=\partial^{\beta}$, $|\beta|=2$. However, by Lemma \ref{NonlinearEstlemma1}, this occurrence is similar to that treated in Case (c) above, thus we omit its analysis. We will assume that $l\geq 3$, and let $\epsilon_l> 4\widetilde{\tau}_{l}$, $\tau_l\geq 5\epsilon_l$, where we emphasize that $\widetilde{\epsilon}_{l}$, and $\widetilde{\tau}_{l}$ were chosen according to \eqref{eqconclu6}. We consider \eqref{differineq} with $\mathcal{B}^s=J^{l}$, and $\chi_{r_l, \epsilon_l,\tau_l,\sigma}$, $r_l=r-\frac{l}{2}$. Consequently, we proceed with the study of $A_1$, $A_2$ and $A_3$ when $\mathcal{B}^s=J^{l}$, and with the weighted function $\chi_{r_l, \epsilon_l,\tau_l,\sigma}$.

By our choice of parameters, and the fact that $1\lesssim \langle x\rangle^{r_l}$, we have
\begin{equation*}
    \begin{aligned}
    \chi_{\epsilon_l,\tau_l,\sigma}^{(j)}\chi_{\epsilon_l,\tau_l,\sigma}^{(j')}\lesssim \chi_{r_l,\widetilde{\epsilon}_{l-1},\widetilde{\tau}_{l-1},\sigma}^2,
    \end{aligned}
\end{equation*}
for any integer $j,j' \geq 0$. Hence, the above estimate, Lemma \ref{linearEstlemma1}, and Gronwall's inequality applied to \eqref{differineq} between $(t_l, T)$ allow us to use \eqref{L2smoothing} to obtain the desired estimate for $A_1$ and $A_2$. (Notice that the required integration in time is justified by applying Gronwall's inequality, and the fact $t_{l-1}\leq t_l$ by construction). Furthermore, after Gronwall's inequality, \eqref{estA2} establishes
\begin{equation}\label{L2smoothing2}
\begin{aligned}
\chi_{\epsilon_{l},\tau_{l},\sigma}^2\langle\sigma \cdot x+\nu t\rangle^{2r_{l}-2}(\sigma \cdot x+\nu t) |\nabla J^{l} u(x,t)|^2\in L^{1}((t_{l},T);L^1(\mathbb{R}^n)).
\end{aligned}
\end{equation}
Next, we turn to the estimate for $A_3$. We will only estimate the most difficult case of $A_3$ given by the bound \eqref{nonlEsteq} as that from \eqref{extranonlEsteq} follows from the same ideas. We use \eqref{nonlEsteq} in Lemma \ref{NonlinearEstlemma1} to deduce
\begin{equation}\label{eqconclu7}
\begin{aligned}
|A_3|\lesssim & \Big(\|u\|_{L^{\infty}}+\|\nabla u\|_{L^{\infty}} \Big)\Big( \|u\|_{L^2}+\|\chi_{r_l,\widetilde{\epsilon}_{l},\widetilde{\tau}_{l},\sigma} u\|_{L^2}\Big)\|\chi_{r_l,\epsilon_l,\tau_l,\sigma}J^l u\|_{L^2}\\
&+\Big(\|\chi_{r_l,\widetilde{\epsilon}_{l},\widetilde{\tau}_{l},\sigma}J^l u\|_{L^2}+\|\chi_{r_l,\widetilde{\epsilon}_{l},\widetilde{\tau}_{l},\sigma} u\|_{L^2}+\|u\|_{L^2}\Big)\\
& \qquad \times\Big(\|\chi_{\widetilde{\epsilon}_{l},\widetilde{\tau}_{l},\sigma}J^l u\|_{L^2}+\|u\|_{L^{\infty}}+\|u\|_{L^2}\Big)\|\chi_{r_l,\epsilon_l,\tau_l,\sigma}J^l u\|_{L^2}\\
&+\Big(1+\|u\|_{L^{\infty}}+\|\nabla u\|_{L^{\infty}}\Big)\|\chi_{r_l,\epsilon_l,\tau_l,\sigma}J^l u\|_{L^2}^{2}\\
=:& A_{3,1}\|\chi_{r_l,\epsilon_l,\tau_l,\sigma}J^l u\|_{L^2}+A_{3,2}\|\chi_{r_l,\epsilon_l,\tau_l,\sigma}J^l u\|_{L^2}\\
&+A_{3,3}\|\chi_{r_l,\epsilon_l,\tau_l,\sigma}J^l u\|_{L^2}^2,
\end{aligned}
\end{equation}
where recalling the choice of $\widetilde{\epsilon}_{l}$, $\widetilde{\tau}_l$ in \eqref{eqconclu6}, we have used the fact $\widetilde{\tau}_{l} \geq 5\widetilde{\epsilon}_l$, and $\epsilon_l>4\widetilde{\tau}_{l}$. As a matter of fact, $\epsilon_l>4\widetilde{\tau}_{l}$ implies that $\phi_{\epsilon_l,\tau_l}\lesssim \chi_{\widetilde{\epsilon}_{l},\widetilde{\tau}_{l},\sigma}$. Therefore, when we apply Gronwall's inequality, after integration on $(t_l, T)$, the estimate for \eqref{eqconclu7} will be finite provided that $A_{3,j}\in L^1_t(t_1, T)$, $j=1,2,3$. Indeed, by applying Cauchy Schwarz inequality in time, we have that  $A_{3,1}\in L^1_t(t_1, T)$ requires
\begin{equation}\label{Condieq1}
    u, \nabla u  \in L^{1}((t_l,T);L^{\infty}(\mathbb{R}^n)),
\end{equation}
and
\begin{equation}\label{Condieq2}
    u, \chi_{r_l,\widetilde{\epsilon}_l,\widetilde{\tau}_l,\sigma} u \in L^{\infty}([t_l,T];L^{2}(\mathbb{R}^n))
\end{equation}
Considering that the solution $u$ of the Cauchy problem \eqref{ZK} determines a continous curve in $H^{\big(\frac{n+2}{2}\big)^{+}}(\mathbb{R}^n)$, an application of Sobolev embedding $H^{\frac{n}{2}^{+}}(\mathbb{R}^n)\hookrightarrow L^{\infty}(\mathbb{R}^n)$ justify the validity of \eqref{Condieq1}, while \eqref{Condieq2} holds true by the induction hypothesis \eqref{propaInd3}. Note that our definition of parameters $\widetilde{\epsilon}_l$, $\widetilde{\tau}_l$ also plays a crucial role to keep the localization properties of $u$. Similarly, \eqref{Condieq1} implies that $A_{3,3}\in L^1_t(t_1,T)$. On the other hand, to have that $A_{3,2}\in L^1_t(t_1, T)$, an application of Cauchy Schwarz inequality on the time variable reduces us to prove
\begin{equation}\label{Condieq3}
    \chi_{r_l,\widetilde{\epsilon}_l,\widetilde{\tau}_l,\sigma}u, \chi_{r_l,\widetilde{\epsilon}_l,\widetilde{\tau}_l,\sigma}J^l u, \chi_{\widetilde{\epsilon}_l,\widetilde{\tau}_l,\sigma}J^l u  \in L^{2}((t_l,T);L^{2}(\mathbb{R}^n)).
\end{equation}
However, \eqref{Condieq3} is a consequence of \eqref{propaInd3} (with $s_{l-1}=0$), \eqref{L2smoothing}, \eqref{eqconclu6}, and the fact that $t_{l-1}<t_l$. Therefore, the previous results complete the estimate for $A_3$ in \eqref{differineq}. Collecting the estimates for $A_1$, $A_2$ and $A_3$ deduced above, we can apply Gronwall's inequality as we did in  \eqref{differineq2} to conclude
\begin{equation}\label{eqconclu8}
    \begin{aligned}
\chi_{r_{l},\epsilon_{l}, \tau_{l},\sigma}J^l u (x,t)\in L^{\infty}([t_{l},T];L^2(\mathbb{R}^n)).
    \end{aligned}
\end{equation}
Setting $\widetilde{\epsilon}_l>\tau_l$, $\widetilde{\tau}_l\geq 5 \widetilde{\epsilon}_l$, we use \eqref{eqconclu8} and Lemma \ref{lemmapropaform} to get
\begin{equation}\label{eqconclu9}
    \begin{aligned}
\chi_{r_{l},\widetilde{\epsilon}_{l}, \widetilde{\tau}_{l},\sigma}\Theta^m_1 u (x,t)\in L^{\infty}([t_{l},T];L^2(\mathbb{R}^n)),
    \end{aligned}
\end{equation}
with $r_{l}=r-\frac{l}{2}$, $\Theta^{m}_1=J^m$, either $1\leq m\leq l$, or $\Theta^{m}_1=\partial^{\beta}$, $0\leq |\beta|\leq m$, and $\widetilde{\epsilon}_{l}> \tau_{l}$, $\widetilde{\tau}_{l}\geq 5\widetilde{\epsilon}_{l}$. Next, by \eqref{eqconclu9}, and using Lemma \ref{InterpLemma} to interpolate \eqref{propaInd3} with $s_{l-1}=0$, and \eqref{eqconclu6}, we deduce
\begin{equation}\label{interpconclus} 
\sup_{t_{l} \leq t \leq T}\int  \big(J^{s_{l}} u(x,t) \big)^2 \langle \sigma \cdot x+\nu t\rangle^{2r^{\ast}_{l}} \chi_{\overline{\epsilon}_1,\overline{\tau}_1,\sigma}^2(x,t)\, \mathrm{d}x <\infty,
\end{equation}
where $r^{\ast}_{l}=\max\{\big(1-\frac{s_{l}}{l}\big)r,r-\frac{\lceil s_l \rceil}{2}\}$, $s_{l}\in[0,l]$, $\overline{\epsilon}_l>\widetilde{\tau}_l$, $\overline{\tau}_l\geq 5\overline{\epsilon}_l$. Gathering \eqref{L2smoothing2} \eqref{eqconclu9}, \eqref{interpconclus} and using the indicator function $\chi_{\overline{\epsilon}_1,\overline{\tau}_1,\sigma}$ in each case, we conclude the inductive step. Observe that the process established above can be iterated an integer number of times $l\in \mathbb{N}_0$ as long as the condition $0\leq r-\frac{l}{2}<\frac{1}{2}$ holds, in other words, when $l=\lfloor 2r\rfloor$. In particular, this implies that when the initial condition has localized polynomial decay of order $r$,  $\lfloor 2r\rfloor$ is the maximum localized regularity propagated by solutions of ZK. The proof of Theorem \ref{mainTHM} is complete.
\section{Propagation exponential decay: Proof of Theorem \ref{expodecay}}\label{ExpodecaySect}
This section concerns the deduction of Theorem \ref{expodecay}. As pointed out by Kato \cite{Kato1983}, the exponential function $e^{bx}$ is highly unbounded, which makes it more difficult to work with it in the energy estimates as we did with the polynomial weights in previous sections. To avoid this problem, we follow the approach used in \cite{Kato1983} in the context of the KdV equation,  that is,  we approximate the function $e^{bx}$ by  a bounded  weighted function
\begin{equation*}
	q_{\eta}(x):=e^{bx}\left(1+\eta e^{2bx}\right)^{-\frac{1}{2}}\, x\in\mathbb{R},
\end{equation*}
depending on the parameter $\eta>0.$  Additionally, it will be convenient in our analysis to define the following auxiliary functions of the variable $x\in\mathbb{R},$ namely,
\begin{equation*}
	\rho_{\eta}(x):=e^{bx}\left(1+\eta e^{2bx}\right)^{-1},\quad p_{\eta}(x):=q_{\eta}^{2}(x).
\end{equation*}
Both functions $\rho_{\eta}$ and $q_{\eta}$ are monotonous and converge to $e^{bx}$ when $\eta\rightarrow 0^{+}$. We also emphasize that this approach was used more recently in \cite{DeleageLinares2023} to study the dispersive blow-up of the solutions to the ZK equation.

Next, we show some properties of the weighted functions introduced above. For any $\eta>0$ and $x\in\mathbb{R}$ the following inequality holds
\begin{equation*}
	0\leq \rho_{\eta}(x)\leq q_{\eta}
	(x)\leq e^{bx},
\end{equation*}
\begin{equation*}
	p_{\eta}'(x)=2b\rho_{\eta}^{2}(x),\quad |p''(x)|\leq 4b^{2}\rho_{\eta}^{2}(x),
\end{equation*}
and 
\begin{equation*}
	|p'''_{\eta}(x)|\lesssim \rho_{\eta}^{2}(x),\quad |\rho'_{\eta}(x)|\leq b\rho_{\eta}(x),\quad \forall x\in\mathbb{R}.
\end{equation*}
We also introduce some additional notation to be used in our arguments in this section. Let $\sigma\in \mathbb{R}^n$ such that \eqref{sigmacond} holds, $\kappa\in \mathbb{R}$, $\epsilon>0$ and $\tau\geq 5\epsilon$, we consider the function
\begin{equation}\label{chiexpdef}
\begin{aligned}
\chi_{\epsilon,\tau,\sigma,\eta}(x,t):=&q_{\eta}(\sigma\cdot x+\nu t+\kappa)\chi_{\epsilon,\tau}(\sigma\cdot x+\nu t+\kappa),\\
=&q_{\epsilon,\tau,\sigma,\eta}(x,t)\chi_{\epsilon,\tau,\sigma}(x,t),
\end{aligned}
\end{equation}
$x\in\mathbb{R}^{n}$, $t\geq 0$, where $\chi_{\epsilon,\tau}$, and $\chi_{\epsilon,\tau,\sigma}$ are defined in Subsection \ref{smoothapprosec}. Similarly, we define $p_{\epsilon,\tau,\sigma,\eta}$ and $\rho_{\epsilon,\tau,\sigma,\eta}$. Using the properties of $q_{\eta}$, $p_{\eta}$, and $\rho_{\eta}$,  the the function $\chi_{\epsilon,\tau,\sigma,\eta}$ satisfies the following properties:
\begin{itemize}
    \item[(i)] Given $x\in \mathbb{R}^n$, it follows
	\begin{equation}\label{p1}
		\begin{split}
		    \partial_{x_j} &(\chi_{\epsilon,\tau,\sigma,\eta}^{2}(x,t))\\
			&=\sigma_j p_{\epsilon,\tau,\sigma,\eta}'(x,t)\chi_{\epsilon,\tau,\sigma}^{2}(x,t)+2\sigma_j p_{\epsilon,\tau,\sigma,\eta}(x)\chi_{\epsilon,\tau,\sigma}(x,t)\chi_{\epsilon,\tau,\sigma}'(x)\\
			&\geq \sigma_j p_{\epsilon,\tau,\sigma,\eta}'(x,t)\chi_{\epsilon,\tau,\sigma}^{2}(x)=2b\sigma_j \rho_{\epsilon,\tau,\sigma,\eta}^{2}(x,t)\chi_{\epsilon,\tau,\sigma}^{2}(x).
		\end{split}
	\end{equation}
	\item[(ii)] Since $p_{\epsilon,\tau,\sigma,\eta}(x,t)\chi'_{\epsilon,\tau,\sigma}(x,t)\leq e^{2b \tau} \chi'_{\epsilon,\tau,\sigma}$,  
	\begin{equation}\label{p2}
		\begin{split}
			\partial_{x_j}(\chi_{\epsilon,\tau,\sigma,\eta}^{2}(x,t))\leq &  2b \sigma_j p_{\epsilon,\tau,\sigma,\eta}(x,t)\chi_{\epsilon,\tau,\sigma}^{2}(x)\\
			&+2\sigma_je^{2b\tau}\chi_{\epsilon,\tau,\sigma}(x,t)\chi_{\epsilon,\tau,\sigma}'(x,t).
		\end{split}
	\end{equation}
	\item[(iii)] Let $\beta$ be a multi-index  with  $2\leq |\beta|\leq 3$, then
	\begin{equation}\label{p3}
	\begin{aligned}
	|\partial^{\beta}\big(\chi_{\epsilon,\tau,\sigma,\eta}^{2}(x,t)\big)|\lesssim & p_{\epsilon,\tau,\sigma,\eta}(x,t)\big((\chi_{\epsilon,\tau,\sigma}(x,t))^2+\chi_{\epsilon,\tau,\sigma}'(x,t)\chi_{\epsilon,\tau,\sigma}'(x,t)\\
	&+\chi_{\epsilon,\tau,\sigma}(x,t)|\chi_{\epsilon,\tau,\sigma}''(x,t)|+\chi_{\epsilon,\tau,\sigma}'(x,t)|\chi_{\epsilon,\tau,\sigma}''(x,t)|\\
	&+\chi_{\epsilon,\tau,\sigma}(x,t)|\chi_{\epsilon,\tau,\sigma}'''(x,t)|\big).
	\end{aligned}
	\end{equation}
\end{itemize}
Similar to the propagation polynomial decay studied above, Theorem \ref{expodecay} follows by using energy estimates in the equation in \eqref{ZK}. More precisely, let $\beta$ be a multi-index, by standard arguments, we get
\begin{equation}\label{differineqexp}
\begin{aligned}
\frac{1}{2}\frac{d}{dt}\int &(\partial^{\beta}u(x,t))^2 \chi_{\epsilon,\tau,\sigma,\eta}^2(x,t)\, \mathrm{d}x- \frac{1}{2}\underbrace{ \int (\partial^{\beta} u(x,t))^2\frac{\partial}{\partial t}\chi_{\epsilon,\tau,\sigma,\eta}^2(x,t)\, \mathrm{d}x}_{\mathcal{C}_1} \\
&+\underbrace{\int \partial_{x_1}\Delta \partial^{\beta} u(x,t) \partial^{\beta} u(x,t)\chi_{\epsilon,\tau,\sigma,\eta}^2(x,t)\, \mathrm{d}x}_{\mathcal{C}_2} \\
&+\underbrace{\int \partial^{\beta}\big(u\partial_{x_1}u(x,t)) \partial^{\beta}  u(x,t)\chi_{\epsilon,\tau,\sigma,\eta}^2(x,t)\, \mathrm{d}x}_{\mathcal{C}_3}=0.\\
\end{aligned}
\end{equation}
Notice that the above equation is justified for sufficiently regular solutions $u$ of \eqref{ZK}, the validity, in general, can be obtained by approximation in  $H^{(\frac{n+2}{2})^{+}}(\mathbb{R}^n)$ and our estimates below. We remark that since the function $q_{\eta}\in L^{\infty}$, the approximation argument in the proof of Theorem \ref{expodecay} can be carried out in $C([0, T]; H^{(\frac{n+2}{2})^{+}}(\mathbb{R}^n))$, i.e., we do not need to use any extra exponential decay assumption for smooth solutions $u$ of ZK. This contrasts with our proof of Theorem \ref{mainTHM}, where we need Theorem \ref{wellposweighted} to justify the validity of \eqref{differineq} and the subsequent limit in our estimates for that case.
\begin{lemma}\label{expolemma}
Let $n\geq 2$,  $\mathcal{C}_1$ and $\mathcal{C}_2$ be defined as in \eqref{differineqexp}. Then
\begin{equation}\label{eqexpodecay1}
\begin{aligned}
|\mathcal{C}_1|\lesssim &\int (\partial^{\beta}u(x,t))^2\chi_{\epsilon,\tau,\sigma,\eta}^2(x,t)\, \mathrm{d}x \\
&+\Big|\int (\partial^{\beta} u(x,t))^2 \chi_{\epsilon,\tau,\sigma}\chi_{\epsilon,\tau,\sigma}'(x,t)\, \mathrm{d}x\Big|.
\end{aligned}
\end{equation}
There exists some constant $c_{\sigma,b}>0$ such that
\begin{equation}\label{eqexpodecay2}
\begin{aligned}
\mathcal{C}_2= & c_{\sigma,b}\int |\nabla \partial^{\beta} u|^2 \chi_{\epsilon,\tau,\sigma}\big(p_{\epsilon,\tau,\sigma,\eta}'\chi_{\epsilon,\tau,\sigma}+p_{\epsilon,\tau,\sigma,\eta}\chi_{\epsilon,\tau,\sigma}' \big) \, \mathrm{d}x \\
&+R(\partial^{\beta} u,t),
\end{aligned}
\end{equation}
where 
\begin{equation}\label{eqexpodecay3}
\begin{aligned}
|R(\partial^{\beta} u,t)| \lesssim & \int (\partial^{\beta }u(x,t))^2\chi_{\epsilon,\tau,\sigma,\eta}^2(x,t)\,\mathrm{d}x \\
&+ \int (\partial^{\beta} u(x,t))^2 \big(|\chi_{\epsilon,\tau,\sigma}\chi_{\epsilon,\tau,\sigma}'|+ |\chi_{\epsilon,\tau,\sigma}'|^2 +|\chi_{\epsilon,\tau,\sigma}\chi_{\epsilon,\tau,\sigma}''|\\
&\hspace{2cm}+|\chi_{\epsilon,\tau,\sigma}'\chi_{\epsilon,\tau,\sigma}''|+|\chi_{\epsilon,\tau,\sigma}\chi_{\epsilon,\tau,\sigma}'''|\big)\,\mathrm{d}x.
\end{aligned}
\end{equation}
Moreover, let $\epsilon'>0$, $\tau'\geq 5\epsilon'$, $\tau'<\epsilon$, then
\begin{equation}\label{eqexpodecay4}
\begin{aligned}
|\mathcal{C}_3| \lesssim &\Big(\sum_{|\beta_1|\leq |\beta|+1}\|\chi_{\epsilon',\tau',\sigma}\partial^{\beta_1}u\|_{L^{\infty}}\Big)\Big(\sum_{|\beta_2|\leq |\beta|}\|\chi_{\epsilon,\tau,\sigma,\eta}\partial^{\beta_2}u\|_{L^{2}}\Big)\\
&\qquad \times\|\chi_{\epsilon,\tau,\sigma,\eta}\partial^{\beta} u\|_{L^2}.
\end{aligned}
\end{equation}
All the implicit constants \eqref{eqexpodecay1}-\eqref{eqexpodecay4} above are independent of $\eta>0$.
\end{lemma}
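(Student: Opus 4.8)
# Proof Proposal for Lemma \ref{expolemma}

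The plan is to mirror the structure of the proof of Lemma \ref{linearEstlemma1} (for the polynomial case), carefully tracking the $\eta$-dependence of every constant so that the uniformity in $\eta>0$ is manifest. All four estimates come from direct differentiation of the weight $\chi_{\epsilon,\tau,\sigma,\eta}^2 = p_{\epsilon,\tau,\sigma,\eta}\,\chi_{\epsilon,\tau,\sigma}^2$ together with integration by parts, and the only genuinely new ingredient is the use of the pointwise bounds on $p_\eta, p_\eta', p_\eta'', p_\eta''', \rho_\eta, \rho_\eta'$ recorded just above the statement, which are all uniform in $\eta$.

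\textbf{Estimates for $\mathcal{C}_1$ and $\mathcal{C}_2$.} For $\mathcal{C}_1$, I compute $\partial_t(\chi_{\epsilon,\tau,\sigma,\eta}^2) = \nu\big(p_{\epsilon,\tau,\sigma,\eta}'\chi_{\epsilon,\tau,\sigma}^2 + 2 p_{\epsilon,\tau,\sigma,\eta}\chi_{\epsilon,\tau,\sigma}\chi_{\epsilon,\tau,\sigma}'\big)$ using \eqref{notationderchi} (applied to the time translation). Since $p_\eta' = 2b\rho_\eta^2 \le 2b\, p_\eta$ pointwise by the stated properties, the first term is $\lesssim p_{\epsilon,\tau,\sigma,\eta}\chi_{\epsilon,\tau,\sigma}^2 = \chi_{\epsilon,\tau,\sigma,\eta}^2$, giving the first term of \eqref{eqexpodecay1}; the cross term uses $p_{\epsilon,\tau,\sigma,\eta} \le e^{2b\tau}$ on $\supp(\chi'_{\epsilon,\tau,\sigma})$ to produce the second term, with an $\eta$-independent constant. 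For $\mathcal{C}_2$, I integrate by parts exactly as in the proof of \eqref{estA2}: write $\mathcal{C}_2 = \frac12\int|\nabla\partial^\beta u|^2\,\partial_{x_1}(\chi^2_{\epsilon,\tau,\sigma,\eta}) + \int \partial_{x_1}\partial^\beta u\,\nabla\partial^\beta u\cdot\nabla(\chi^2_{\epsilon,\tau,\sigma,\eta}) - \frac12\int(\partial^\beta u)^2\partial_{x_1}\Delta(\chi^2_{\epsilon,\tau,\sigma,\eta})$. The first two terms recombine, via the same positive-definite matrix $M$ built from $\sigma$ under condition \eqref{sigmacond}, into $c_{\sigma,b}\int|\nabla\partial^\beta u|^2\,\chi_{\epsilon,\tau,\sigma}\big(p_{\epsilon,\tau,\sigma,\eta}'\chi_{\epsilon,\tau,\sigma} + p_{\epsilon,\tau,\sigma,\eta}\chi_{\epsilon,\tau,\sigma}'\big)$, which is \eqref{eqexpodecay2}; here I use that $\nabla(p_{\epsilon,\tau,\sigma,\eta})=\sigma\,p'_{\epsilon,\tau,\sigma,\eta}$ and $\nabla(\chi^2_{\epsilon,\tau,\sigma})=2\sigma\,\chi_{\epsilon,\tau,\sigma}\chi'_{\epsilon,\tau,\sigma}$. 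The remainder $R(\partial^\beta u,t)$ is $-\frac12\int(\partial^\beta u)^2\partial_{x_1}\Delta(\chi^2_{\epsilon,\tau,\sigma,\eta})$; expanding $\partial_{x_1}\Delta(p_{\epsilon,\tau,\sigma,\eta}\chi^2_{\epsilon,\tau,\sigma})$ by Leibniz and using \eqref{p3} together with $|p_\eta^{(j)}|\lesssim p_\eta$ (via $\rho_\eta^2 \le p_\eta$) reduces every term either to $p_{\epsilon,\tau,\sigma,\eta}\chi^2_{\epsilon,\tau,\sigma}=\chi^2_{\epsilon,\tau,\sigma,\eta}$ or to a product of derivatives of $\chi_{\epsilon,\tau,\sigma}$ (compactly supported, so $p_{\epsilon,\tau,\sigma,\eta}\lesssim e^{2b\tau}$ there), yielding \eqref{eqexpodecay3} with constants independent of $\eta$.

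\textbf{Estimate for $\mathcal{C}_3$.} This is the nonlinear term $\int\partial^\beta(u\partial_{x_1}u)\,\partial^\beta u\,\chi^2_{\epsilon,\tau,\sigma,\eta}\,dx$. I expand $\partial^\beta(u\partial_{x_1}u) = \sum_{\beta_1\le\beta}\binom{\beta}{\beta_1}\partial^{\beta_1}u\,\partial^{\beta-\beta_1}\partial_{x_1}u$ by the Leibniz rule, so $\mathcal{C}_3$ is a finite sum of integrals $\int \partial^{\beta_1}u\,\partial^{\beta_2}u\,\partial^\beta u\,\chi^2_{\epsilon,\tau,\sigma,\eta}\,dx$ with $|\beta_1|+|\beta_2|\le|\beta|+1$ and at least one of $|\beta_1|,|\beta_2|$ at most $\lceil(|\beta|+1)/2\rceil$. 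For the term where the top-order derivative $\partial_{x_1}\partial^\beta u$ appears (i.e. $\beta_1=0$), I integrate by parts in $x_1$ to move one derivative off, picking up $\partial_{x_1}(\chi^2_{\epsilon,\tau,\sigma,\eta})$ which is controlled as in \eqref{p1}–\eqref{p2} and absorbed; the remaining terms have all derivatives of order $\le|\beta|$. For each such term I place the lowest-order factor in $L^\infty$ and, since $\supp(\chi_{\epsilon,\tau,\sigma,\eta})\subset\{\sigma\cdot x+\nu t+\kappa\ge\epsilon\}$, I may insert $\chi_{\epsilon',\tau',\sigma}$ (which equals $1$ on a neighborhood of that support when $\tau'<\epsilon$) at no cost, writing that factor as $\chi_{\epsilon',\tau',\sigma}\partial^{\beta_1}u$; the two remaining factors are each estimated in $L^2$ against $\chi_{\epsilon,\tau,\sigma,\eta}$, using $\chi^2_{\epsilon,\tau,\sigma,\eta}=\chi_{\epsilon,\tau,\sigma,\eta}\cdot\chi_{\epsilon,\tau,\sigma,\eta}$ and $\chi_{\epsilon,\tau,\sigma,\eta}\lesssim\chi_{\epsilon,\tau,\sigma,\eta}$ trivially, plus Cauchy–Schwarz. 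This produces the bound \eqref{eqexpodecay4}, again with $\eta$-independent constants since the only place $\eta$ enters is through the factor $q_{\epsilon,\tau,\sigma,\eta}$, which is bounded pointwise by $e^{2b\tau}$ on the supports in play and never needs to be estimated from below.

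\textbf{Main obstacle.} The delicate point is the bookkeeping in $\mathcal{C}_3$: one must ensure that after the single integration by parts removing the top-order derivative, \emph{every} surviving term has a factor whose derivative order is low enough to land in $L^\infty$ via $H^{(\frac{n}{2}+1)^+}\hookrightarrow W^{1,\infty}$ — this is why the statement only asks for $\sum_{|\beta_1|\le|\beta|+1}\|\chi_{\epsilon',\tau',\sigma}\partial^{\beta_1}u\|_{L^\infty}$ rather than a fixed Sobolev norm, keeping the lemma self-contained and postponing the use of the regularity hypothesis. Closely related is the need to verify that the constant from the boundary term in the integration by parts (which involves $\partial_{x_1}(\chi^2_{\epsilon,\tau,\sigma,\eta})$, i.e. both $p'_\eta=2b\rho_\eta^2\le 2b\,p_\eta$ and the $\chi'_{\epsilon,\tau,\sigma}$ piece) does not degenerate as $\eta\to0^+$; this is precisely where the pointwise inequalities $\rho_\eta^2\le p_\eta\le q_\eta^2\le e^{2b\tau}$ on the relevant supports are used, and where the approximation scheme of Kato \cite{Kato1983} pays off. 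Everything else is a routine though lengthy Leibniz-rule-and-integration-by-parts computation paralleling Lemma \ref{linearEstlemma1}.
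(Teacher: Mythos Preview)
Your treatment of $\mathcal{C}_1$ and $\mathcal{C}_2$ matches the paper's: both simply transport the computation from Lemma \ref{linearEstlemma1} and replace the polynomial-weight bounds by the pointwise inequalities $|p_\eta^{(j)}|\lesssim p_\eta$ (via $\rho_\eta^2\le p_\eta$) and $p_\eta\le e^{2b\tau}$ on $\supp(\chi'_{\epsilon,\tau,\sigma})$, with the positive-definite matrix $M$ supplying the constant $c_{\sigma,b}$.

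For $\mathcal{C}_3$, however, you take a detour the paper does not. The paper's argument is one line: Cauchy--Schwarz gives $|\mathcal{C}_3|\le \|\chi_{\epsilon,\tau,\sigma,\eta}\partial^{\beta}(u\partial_{x_1}u)\|_{L^2}\|\chi_{\epsilon,\tau,\sigma,\eta}\partial^{\beta}u\|_{L^2}$, then Leibniz on $\partial^{\beta}(u\partial_{x_1}u)=\sum_{\beta_1+\beta_2=\beta}c_{\beta_1,\beta_2}\partial^{\beta_1}u\,\partial^{\beta_2}\partial_{x_1}u$, and finally the factor $\partial^{\beta_2}\partial_{x_1}u$ (of order up to $|\beta|+1$) is placed \emph{directly} in $L^\infty$ after inserting $\chi_{\epsilon',\tau',\sigma}$ via $\chi_{\epsilon,\tau,\sigma}\lesssim\chi_{\epsilon',\tau',\sigma}$. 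No integration by parts is performed. Your extra step---integrating by parts on the $\beta_1=0$ term to avoid the top-order derivative---is correct but unnecessary: the sum $\sum_{|\beta_1|\le|\beta|+1}$ in \eqref{eqexpodecay4} is there precisely to absorb that $(|\beta|+1)$-order factor as is. Your remark in the ``Main obstacle'' paragraph that one must keep the $L^\infty$ factor low enough for $H^{(\frac{n}{2}+1)^+}\hookrightarrow W^{1,\infty}$ reflects a misunderstanding of how the lemma is applied: in the proof of Theorem \ref{expodecay} those $L^\infty$ norms are controlled not by the ambient Sobolev regularity but by the localized smoothing \eqref{proofexpdecaeq2}, which comes from Theorem \ref{mainTHM} and allows arbitrarily high derivatives. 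So your route works, but the paper's is both shorter and explains the form of \eqref{eqexpodecay4}.
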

\begin{proof}
We notice that \eqref{eqexpodecay1} follows from the ideas in the inequality \eqref{p2}. The estimate \eqref{eqexpodecay2} follows by \eqref{p2}, \eqref{p3}, and  the same argument in the deduction of \eqref{estA2}. We remark that in this part, it is fundamental to have $\sqrt{3}\sigma_1>\sqrt{\sigma_2^2+\dots+\sigma_n^2}$. Finally, we use the Cauchy-Schwarz inequality, and Leibniz rule to deduce
\begin{equation*}
\begin{aligned}
|\mathcal{C}_3|\lesssim & \|\chi_{\epsilon,\tau,\sigma,\eta}\partial^{\beta}(u\partial_{x_1}u)\|_{L^2}\|\chi_{\epsilon,\tau,\sigma,\eta}\partial^{\beta} u\|_{L^2}\\
\lesssim &\sum_{\beta_1+\beta_2=\beta}\|\chi_{\epsilon,\tau,\sigma,\eta}(\partial^{\beta_1}u)(\partial^{\beta_2}\partial_{x_1}u)\|_{L^2}\|\chi_{\epsilon,\tau,\sigma,\eta}\partial^{\beta} u\|_{L^2}\\ \lesssim &\sum_{\beta_1+\beta_2=\beta}\|\chi_{\epsilon',\tau',\sigma}\partial^{\beta_2}\partial_{x_1}u\|_{L^{\infty}}\|\chi_{\epsilon,\tau,\sigma,\eta}\partial^{\beta_1}u\|_{L^2}\|\chi_{\epsilon,\tau,\sigma,\eta}\partial^{\beta} u\|_{L^2},
\end{aligned}
\end{equation*}
where given that $\epsilon'>0$, $\tau'\geq 5\epsilon'$, and $\tau'\leq \epsilon$, we have used (see the notation in \eqref{notationderchi})
\begin{equation*}
    \chi_{\epsilon,\tau,\sigma}^{(j)}\lesssim \chi_{\epsilon',\tau',\sigma}, \text{ for all integer } j\geq 0.
\end{equation*}
This completes the deduction of \eqref{eqexpodecay4}.
\end{proof}
Now we are in the condition to deduce the propagation of exponential decay result. 
\begin{proof}[Proof of Theorem \ref{expodecay}]
The proof follows by an inductive argument on the size of the multi-index $\beta$ in \eqref{expresult2}. But first, let us obtain some consequence of \eqref{decayiniexp}. The exponential decay condition in \eqref{decayiniexp} implies that for all $r>0$
\begin{equation}\label{proofexpdecaeq1}
\|\langle \sigma\cdot x \rangle^{r} u_0\|_{L^2(\mathcal{H}_{\{\sigma,\kappa\}})}<\infty.
\end{equation}
For a given multi-index $\beta$, let $r>|\beta|+\frac{n}{2}+2$,  it follows from \eqref{proofexpdecaeq1} and Theorem \ref{mainTHM} that for any $0<\delta_0<T$ fixed, we have
\begin{equation*}
\sup_{t\in [\delta_0,T]} \int_{\mathbb{R}^d}  \big(J^{s} u(x,t) \big)^2 \chi_{r_s,\epsilon_0,\tau_0,\sigma}^2(x,t) \, \mathrm{d}x <\infty,
\end{equation*}
where $r_s\geq r-\frac{\lfloor 2r\rfloor}{2}\geq 0$, $ 0\leq s \leq \lfloor 2r\rfloor$, any $\epsilon_0$, $\tau_0$ such that $\epsilon_0>0$, $\tau_0\geq 5\epsilon_0$, and $\chi_{r_s,\epsilon,\tau,\sigma}$ be defined by \eqref{defiweight}. In particular, since $\chi_{r_s,\epsilon_0,\tau_0,\sigma}^2\geq \chi_{\epsilon_0,\tau_0,\sigma}^2$, up to some modification of $\epsilon_0$, and $\tau_0$ (for simplicity, we will keep the same notation), we can apply Lemma \ref{lemmapropaform} to deduce that there exists a constant $c_2>0$ such that 
\begin{equation}\label{proofexpdecaeq2}
\sup_{t\in [\delta_0,T]} \int_{\mathbb{R}^d}  \big(\partial^{\gamma} u(x,t) \big)^2 \chi_{\epsilon_0,\tau_0,\sigma}^2(x,t) \, \mathrm{d}x\leq c_2,
\end{equation}
for all $|\gamma|\leq |\beta|+\frac{n}{2}+2$. Let $\delta>0$ arbitrary but fixed, by taking $\delta_0$ small in the statements above, we will assume that $\delta_0<\delta$ in \eqref{proofexpdecaeq2}. Thus, fixing $\epsilon_0$ and $\tau_0$ in \eqref{proofexpdecaeq2}, we proceed with the deduction of Theorem \ref{expodecay}. We will apply an inductive argument on the magnitude of the index $\beta$ in \eqref{expresult2} with $|\beta|=l$, $l\in \mathbb{Z}^{+}$.

\underline{\bf Case $|\beta|=0$ in \eqref{expresult2}}. We use the differential equation \eqref{differineqexp} with $\beta=0$, $\epsilon>0$, and $\tau\geq 5\epsilon$. Let us estimate the factors $\mathcal{C}_1$, $\mathcal{C}_2$, and $\mathcal{C}_3$. By Lemma \ref{expolemma},  we have $|\mathcal{C}_1|\lesssim \|u(t)\|_{L^2}=\|u_0\|_{L^2}$. Now, to estimate $\mathcal{C}_2$, using \eqref{p1}, we notice that 
\begin{equation*}
\begin{aligned}
    \int |\nabla u|^2 & \chi_{\epsilon,\tau,\sigma}\big(p_{\epsilon,\tau,\sigma,\eta}'\chi_{\epsilon,\tau,\sigma}+p_{\epsilon,\tau,\sigma,\eta}\chi_{\epsilon,\tau,\sigma}' \big) \, \mathrm{d}x\\
    &\gtrsim \sum_{|\gamma|=1}\int |\partial^{\gamma} u|^2 \rho_{\epsilon,\tau,\sigma,\eta}^2\chi_{\epsilon,\tau,\sigma}^2  \, \mathrm{d}x
\end{aligned}
\end{equation*}
and by \eqref{eqexpodecay3},
\begin{equation*}
    |R(u,t)|\lesssim \int_{\mathbb{R}^{n}}u^{2}(x,t)\chi_{\epsilon,\tau,\sigma,\eta}^{2}(x,t)\,\mathrm{d}x+\|u_0\|_{L^2}.
\end{equation*}
Also from Lemma \ref{expolemma}, we get
\begin{equation*}
    \mathcal{C}_3\lesssim \|\nabla u\|_{L^{\infty}}\|\chi_{\epsilon,\tau,\sigma,\eta} u\|_{L^2}^2.
\end{equation*}
Finally, we gather all the estimates above, together with Gronwall's inequality to get 
	\begin{equation}\label{initstep}
		\begin{split}
			\sup_{t\in[0,T]}\int_{\mathbb{R}^{n}}u^{2}(x,t)\chi_{\epsilon,\tau,\sigma,\eta}^{2}(x,t)\,\mathrm{d}x
			&\leq c_1<\infty,
		\end{split}
	\end{equation}
and
\begin{equation}\label{initstep2}
    \sum_{|\gamma|=1}\int_0^T\int |\partial^{\gamma} u(x,t)|^2 \rho_{\epsilon,\tau,\sigma,\eta}^2\chi_{\epsilon,\tau,\sigma}^2(x,t)  \, \mathrm{d}x \mathrm{d}t\leq c_1,
\end{equation}
where $c_1=(\|u_0\|_{L^2},\|u\|_{L^1_T W^{1,\infty}}, \|e^{b\sigma \cdot x}u_0\|_{\mathcal{H}_{\{\sigma,\kappa\}}})$ is independent of $\eta$. Thus, taking $\eta\to 0^{+}$, we get \eqref{expresult1}, and 
\begin{equation*}
    \sum_{|\gamma|=1}\int_0^T\int |\partial^{\gamma} u(x,t)|^2 e^{b\sigma \cdot x}\chi_{\epsilon,\tau,\sigma}^2(x,t)  \, \mathrm{d}x\, \mathrm{d} t<\infty.
\end{equation*}
Above, we have also used that since $t\in [0,T]$, $\kappa\in \mathbb{R}$, $e^{b(\sigma\cdot x+vt+\kappa)}\sim_{T,\kappa} e^{b\sigma \cdot x}$. As a further consequence of the inequality above, there exists $\delta_1<t_1<\delta$ such that
\begin{equation*}
    \sum_{|\gamma|=1}\int |\partial^{\gamma} u(x,t_1)|^2 e^{b\sigma \cdot x}\chi_{\epsilon,\tau,\sigma}^2(x,t_1)  \, \mathrm{d}x<\infty.
\end{equation*}
This completes the considerations for the case $|\beta|=0$ in \eqref{expresult2}.

\underline{\bf Case $|\beta|=l$, $l\geq 1$ in \eqref{expresult2}}. We will assume by induction hypothesis that there exists $t_{l-1}\in (\delta_0,\delta)$ such that
\begin{equation}\label{induchypho}
    \sup_{t_{l-1}\leq t\leq T}\int_{\mathbb{R}^n}(\partial^{\beta'}u(x,t))^2 e^{2b\sigma \cdot x}\chi_{\epsilon_{l-1},\tau_{l-1},\sigma}^2(x,t)\, \mathrm{d}x<\infty,
\end{equation}
for all $|\beta'|\leq |\beta|-1=l-1$, $\epsilon_{l-1}>0$, $\tau_{l-1}\geq 5 \epsilon_{l-1}$, and $\epsilon_{l-1}>\tau_0$, where $\epsilon_0>0$ and $\tau_0$ satisfy \eqref{proofexpdecaeq2}. Moreover, we will also assume by induction hypothesis
\begin{equation}\label{induchypho2}
    \sum_{|\beta'|=l}\int_{t_{l-1}}^T\int_{\mathbb{R}^n}(\partial^{\beta'}u(x,t))^2 e^{2b\sigma \cdot x}\chi_{\epsilon_{l-1},\tau_{l-1},\sigma}^2(x,t)\, \mathrm{d}x\mathrm{d}t<\infty.
\end{equation}
We notice that when $l=1$, \eqref{induchypho} and \eqref{induchypho2} correspond to \eqref{initstep} and \eqref{initstep2}, respectively. Now, by  \eqref{induchypho2} there exists $\delta_0<t_{l-1}<t_l<\delta$ such that
\begin{equation}\label{induchypho3}
    \sum_{|\beta'|=l}\int |\partial^{\beta'} u(x,t_l)|^2 e^{b\sigma \cdot x}\chi_{\epsilon_{l-1},\tau_{l-1},\sigma}^2(x,t_l)  \, \mathrm{d}x<\infty.
\end{equation}
Starting from $t_l$, we will show that $\partial^{\beta}u $, $|\beta|=l$ propagates localized exponential decay. To this aim, we let $\epsilon_l>0$, $\tau_l\geq 5\epsilon_l$, and $\epsilon_l\geq \tau_{l-1}$, we consider the differential identity \eqref{differineqexp} with weighted function $\chi_{\epsilon_l,\tau_l,\sigma,\eta}$. Let us estimate $\mathcal{C}_j$, $j=1,2,3$ for the present case. Our choice of parameter $\epsilon_{0}$, $\epsilon_{l}$, $\epsilon_{l-1}$, $\tau_{0}$, $\tau_l$, and $\tau_{l-1}$ implies that for all integers $j, j'\geq 0$ 
\begin{equation}\label{supportchiprop}
    \chi^{(j')}_{\epsilon_{l},\tau_{l},\sigma}\chi_{\epsilon_{l},\tau_{l},\sigma}^{(j)}\lesssim \chi_{\epsilon_{l-1},\tau_{l-1},\sigma}^2,
\end{equation}
and
\begin{equation}\label{supportchiprop2}
    \chi^{(j')}_{\epsilon_{l-1},\tau_{l-1},\sigma}\chi_{\epsilon_{l-1},\tau_{l-1},\sigma}^{(j)}\lesssim \chi_{\epsilon_{0},\tau_{0},\sigma}^2.
\end{equation}
Hence \eqref{supportchiprop}, \eqref{supportchiprop2} and \eqref{eqexpodecay1} yield
\begin{equation*}
\begin{aligned}
|\mathcal{C}_1|\lesssim & \|\chi_{\epsilon_l,\tau_l,\sigma,\eta}\partial^{\beta}u(t)\|_{L^2}^2+\sup_{t\in[\delta_0,T]}\|\chi_{\epsilon_0,\tau_0,\sigma}\partial^{\beta}u(t)\|_{L^2}^2 \\
\lesssim & \|\chi_{\epsilon_l,\tau_l,\sigma,\eta}\partial^{\beta}u(t)\|_{L^2}^2+c_2^2,
\end{aligned}    
\end{equation*}
where we have also used \eqref{proofexpdecaeq2}. Next, we estimate $R(\partial^{\beta}u,t)$ in the representation for $\mathcal{C}_2$ in \eqref{eqexpodecay2}. We observe that the argument used in the above estimate for $\mathcal{C}_1$ also implies
\begin{equation*}
|R(\partial^{\beta} u,t)|\lesssim \|\chi_{\epsilon_l,\tau_l,\sigma,\eta}\partial^{\beta}u(t)\|_{L^2}^2+c_2^2.
\end{equation*}
Next, we deal with $\mathcal{C}_3$. By \eqref{supportchiprop} and \eqref{eqexpodecay4}, we get
\begin{equation*}
\begin{aligned}
|\mathcal{C}_3| \lesssim &\Big(\sup_{t\in [\delta_0,T]}\sum_{|\beta_1|\leq |\beta|+1}\|\chi_{\epsilon_{l-1},\tau_{l-1},\sigma}\partial^{\beta_1}u(t)\|_{L^{\infty}}\Big)\Big(\sum_{|\beta_2|\leq |\beta|}\|\chi_{\epsilon_l,\tau_l,\sigma,\eta}\partial^{\beta_2}u\|_{L^{2}}\Big)\\
&\qquad \times\|\chi_{\epsilon_l,\tau_l,\sigma,\eta}\partial^{\beta} u\|_{L^2}.
\end{aligned}
\end{equation*}
Let us estimate the right-hand side of the inequality above. Let $\beta_1$ be a multi-index of order $|\beta_1|\leq |\beta|+1$, by using Sobolev embedding $H^{\lfloor\frac{n}{2} \rfloor+1}(\mathbb{R}^n)\hookrightarrow L^{\infty}(\mathbb{R}^n)$, Leibniz rule, \eqref{supportchiprop2}, and \eqref{proofexpdecaeq2}, we get
\begin{equation*}
\begin{aligned}
\|\chi_{\epsilon_{l-1},\tau_{l-1},\sigma}\partial^{\beta_1}u(t)\|_{L^{\infty}}\lesssim &  \|\chi_{\epsilon_{l-1},\tau_{l-1},\sigma}\partial^{\beta_1}u(t)\|_{L^{2}}+\sum_{|\gamma|=\lfloor \frac{n}{2}\rfloor+1}\|\partial^{\gamma}(\chi_{\epsilon_{l-1},\tau_{l-1},\sigma}\partial^{\beta_1}u(t))\|_{L^{2}}\\
\lesssim &  \sum_{|\gamma|\leq \lfloor \frac{n}{2}\rfloor+2+|\beta|}\|\chi_{\epsilon_0,\tau_0,\sigma}\partial^{\gamma}u(t)\|_{L^{2}}\\
\lesssim &  c_2,
\end{aligned}
\end{equation*}
where $t\in [\delta_0,T]$. Now, using the induction hypothesis \eqref{induchypho}, we have
\begin{equation*}
\begin{aligned}
\sup_{t\in [t_{l-1},T]}\sum_{|\beta_2|< |\beta|=l}\|\chi_{\epsilon_l,\tau_l,\sigma,\eta}\partial^{\beta_2}u(t)\|_{L^{2}}\leq & \sup_{t\in [t_{l-1},T]}\sum_{|\beta_2|< |\beta|}\|e^{b\sigma \cdot x}\chi_{\epsilon_{l-1},\tau_{l-1},\sigma}\partial^{\beta_2}u(t)\|_{L^{2}}\\
\lesssim & 1,
\end{aligned}    
\end{equation*}
where the implicit constant is independent of $\eta$. Collecting the previous estimates, we conclude
\begin{equation*}
\begin{aligned}
|\mathcal{C}_3|\lesssim & c_2\Big(\sum_{|\beta_2|\leq |\beta|}\|\chi_{\epsilon_l,\tau_l,\sigma,\eta}\partial^{\beta_2}u\|_{L^{2}}\Big)\|\chi_{\epsilon_l,\tau_l,\sigma,\eta}\partial^{\beta} u\|_{L^2}\\
\lesssim & c_2\Big(\sum_{|\beta_2|= |\beta|}\|\chi_{\epsilon_l,\tau_l,\sigma,\eta}\partial^{\beta_2}u\|_{L^{2}}\Big)^2\\
&+c_2\Big(\sum_{|\beta_2|< |\beta|}\|\chi_{\epsilon_l,\tau_l,\sigma,\eta}\partial^{\beta_2}u\|_{L^{2}}\Big)\Big(\sum_{|\beta_2|= |\beta|}\|\chi_{\epsilon_l,\tau_l,\sigma,\eta}\partial^{\beta_2}u\|_{L^{2}}\Big)\\
\lesssim & c_2\Big(\sum_{|\beta_2|= |\beta|}\|\chi_{\epsilon_l,\tau_l,\sigma,\eta}\partial^{\beta_2}u\|_{L^{2}}\Big)^2\\
&+c_2\Big(\sum_{|\beta_2|= |\beta|}\|\chi_{\epsilon_l,\tau_l,\sigma,\eta}\partial^{\beta_2}u\|_{L^{2}}\Big).
\end{aligned}
\end{equation*}
We can apply Young's inequality and standard estimates to get
\begin{equation*}
\begin{aligned}
|\mathcal{C}_3|
\lesssim & c_2^2 +(1+c_2)\Big(\sum_{|\beta_2|= |\beta|}\|\chi_{\epsilon_l,\tau_l,\sigma,\eta}\partial^{\beta_2}u\|_{L^{2}}^2\Big).
\end{aligned}
\end{equation*}
Finally, gathering the previous estimates in \eqref{differineqexp}, and summing over all multi-index $|\gamma|=|\beta|$, there exists a constant $c_3>0$ independent of $\eta$ such that
\begin{equation*}
\begin{aligned}
\frac{d}{dt} &\Big(\sum_{|\gamma|=|\beta|}\|\chi_{\epsilon_l,\tau_l,\sigma,\eta}\partial^{\beta}u(t)\|_{L^2}^2\Big)\\
&+\sum_{|\gamma|=|\beta|}c_{\sigma,\gamma,b}\int |\nabla \partial^{\gamma} u|^2 \chi_{\epsilon_l,\tau_l,\sigma}\big(p_{\epsilon_l,\tau_l,\sigma,\eta}'\chi_{\epsilon_l,\tau_l,\sigma}+p_{\epsilon_l,\tau_l,\sigma_l,\eta}\chi_{\epsilon_l,\tau_l,\sigma}' \big)\, \mathrm{d}x\\
\leq & c_3+c_3\Big(\sum_{|\gamma|=|\beta|}\|\chi_{\epsilon_l,\tau_l,\sigma,\eta}\partial^{\beta}u(t)\|_{L^2}^2\Big).
\end{aligned}
\end{equation*}
Thus, applying Gronwall's inequality in the above differential equation over the interval $[t_l, T]$, and using \eqref{induchypho3}, we conclude that there exists $c_3^{\ast}>0$ independent of $\eta$ such that for all $|\beta|=l$,
\begin{equation*}
    \sup_{t_{l}\leq t\leq T}\int_{\mathbb{R}^n}(\partial^{\beta}u(x,t))^2 \chi_{\epsilon_{l},\tau_{l},\sigma,\eta}^2(x,t)\, \mathrm{d}x\leq c_3,
\end{equation*}
and 
\begin{equation*}
    \sum_{|\gamma|=l+1}\int_{t_{l}}^T\int_{\mathbb{R}^n}(\partial^{\beta}u(x,t))^2\, \rho_{\epsilon,\tau,\sigma,\eta}^2\chi_{\epsilon_{l},\tau_{l},\sigma}^2(x,t)\, \mathrm{d}x\mathrm{d}t\leq c_3^{\ast}.
\end{equation*}
In the inequality above, we have also used \eqref{p1}. Thus, taking $\eta\to 0^{+}$ and using that $e^{b(\sigma\cdot x+vt+\kappa)}\sim_{T,\kappa} e^{b\sigma \cdot x}$, we deduce
\begin{equation}\label{induchypho4}
    \sup_{t_{l}\leq t\leq T}\int_{\mathbb{R}^n}(\partial^{\beta}u(x,t))^2 e^{2b\sigma \cdot x}\chi_{\epsilon_{l},\tau_{l},\sigma}^2(x,t)\, \mathrm{d}x<\infty,
\end{equation}
for all $|\beta|=l$, and
\begin{equation}\label{induchypho5}
    \sum_{|\gamma|=l+1}\int_{t_{l}}^T\int_{\mathbb{R}^n}(\partial^{\beta}u(x,t))^2 e^{2b\sigma \cdot x}\chi_{\epsilon_{l},\tau_{l},\sigma}^2(x,t)\, \mathrm{d}x\mathrm{d}t<\infty.
\end{equation}
The results in \eqref{induchypho4} and \eqref{induchypho5} complete the inductive step and thus the proof of Theorem \ref{expodecay}.
\end{proof}
\section{Propagation of polynomial and exponential  weights KdV equation}\label{sectionKdV}
In this section, we briefly discuss the main ideas leading to the proof of Theorem \ref{mainTHMKdV}, in which we establish propagation of polynomial and exponential weights for solutions of the KdV equation. The deduction of such results follows as an outcome of the arguments developed in the higher dimensional case in the proof of Theorems \ref{mainTHM} and \ref{expodecay}. We first notice that formally \eqref{sigmacond} in dimension $n=1$ reduces to $\sigma_1>0$, and thus we do not need this condition in our study of the KdV equation, which in turn simplifies the energy estimates of the dispersive term in KdV.

Let $ u \in C([0,T];H^{\frac{3}{4}^{+}}(\mathbb{R}))$ be the solutions of \eqref{KdV} with initial condition $u_0\in H^{\frac{3}{4}^{+}}(\mathbb{R})$, which also satisfies \eqref{condMainKdV}. By translation, we will assume that $\kappa=0$. Since here we deal with one spatial dimension, we will denote by $\chi_{r,\epsilon,\tau}=\chi_{r,\epsilon,\tau,1}$, see the notation in \eqref{defiweight} with $\sigma=1$, and dimension $n=1$.

Let us first discuss the propagation of fractional weights, i.e., the proof of Theorem \ref{mainTHMKdV} (i). This result depends on the fractional propagation of regularity for solutions of the KdV equation in \cite[Theorem 1.1]{KenigLinaresVega2018}. Such a result is the analog of Theorem \ref{TheorArg} with $n=1$. Now, let $\mathcal{B}^s$ be a fixed differential operator. By using the equation in \eqref{KdV}, we obtain
\begin{equation}\label{differentialiden}
\begin{aligned}
     \frac{1}{2}&\frac{d}{dt}\int (\mathcal{B}^s u)^2 \chi_{r,\epsilon,\tau}^2(x,t)\, \mathrm{d}x-\frac{1}{2}\underbrace{\int (\mathcal{B}^s u)^2 \partial_{ t}\left(\chi_{r,\epsilon,\tau}^2(x,t)\right)\, \mathrm{d} x}_{\overline{A}_1}\\
     &+\frac{3}{2}\underbrace{\int (\partial_x\mathcal{B}^s u)^2 \frac{\partial}{\partial_x}\chi_{r,\epsilon,\tau}^2(x,t)\, \mathrm{d}x}_{\overline{A}_2}-\frac{1}{2}\underbrace{\int (\mathcal{B}^s u)^2 \partial_x^3\big(\chi_{r,\epsilon,\tau}(x,t)\big)\, \mathrm{d}x}_{\overline{A}_3}\\
     &+ \underbrace{\int \mathcal{B}^s \big(u\partial_x u\big) \mathcal{B}^s u \chi_{r,\epsilon,\tau}^2(x,t)\, \mathrm{d}x}_{\overline{A}_4}=0.
\end{aligned}
\end{equation}
We remark that the above identity is formal, but it can be justified at the end of Section 3 in \cite{IsazaLinaresPonce2015}. Consequently, the proof of Theorem \ref{mainTHMKdV} follows by an inductive argument based on the estimates for $\overline{A}_1,\dots,\overline{A}_4$. We summarize these estimates in the following lemma.
\begin{lemma}
The results in Lemmas \ref{linearEstlemma1} and \ref{NonlinearEstlemma1} are valid in the one-dimensional case of the Cauchy problem \eqref{KdV}. More precisely, it follows
\begin{equation}\label{lemmaKdV}
\begin{aligned}
\overline{A}_2+\overline{A}_3= & 3r\int |\partial_x \mathcal{B}^s u|^2 \chi_{\epsilon,\tau}\big(\langle x+\nu t\rangle^{2r-2}(x+\nu t)\chi_{\epsilon,\tau,\sigma}\\
&\quad +\frac{1}{r}\langle x+\nu t\rangle^{2r}\chi_{\epsilon,\tau}' \big) \, \mathrm{d}x +R(\mathcal{B}^su,t),
\end{aligned}
\end{equation}
where $R(\mathcal{B}^su,t)$ satisfies \eqref{estR} (with $\sigma=1$). Moreover, the estimate for $\overline{A}_4$ is given by that of $A_3$ in \eqref{estnonT} if $\mathcal{B}^s=\partial_x^j$, $j=0,1,2$, and if $\mathcal{B}^s=J^s$, $s\geq 3$, $\overline{A}_4$ satisfies the same estimate of $A_3$ in \eqref{estnonT2} with $\widetilde{A}_3$ given by the $n=1$ version of \eqref{nonlEsteq}.
\end{lemma}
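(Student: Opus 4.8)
The plan is to establish the lemma by transcribing the two- and higher-dimensional energy estimates from Lemmas \ref{linearEstlemma1} and \ref{NonlinearEstlemma1} to the one-dimensional KdV setting, where the only structural change is that the dispersive term $\partial_{x_1}\Delta$ is replaced by $\partial_x^3$. First I would take $\mathcal{B}^s$ an arbitrary (differential or Bessel-potential) operator, apply it to the equation in \eqref{KdV}, multiply by $(\mathcal{B}^s u)\chi_{r,\epsilon,\tau}^2$, and integrate in $x$ to obtain the identity \eqref{differentialiden}. The estimate for $\overline{A}_1$ is then literally the estimate for $A_1$ in \eqref{estA1}, since it only involves $\partial_t(\chi^2_{r,\epsilon,\tau})$ and the velocity $\nu$, which play the same role in both equations; likewise the nonlinear term $\overline{A}_4$ has exactly the same structure as $A_3$, so the bounds \eqref{estnonT} (for $\mathcal{B}^s=\partial_x^j$, $j\le 2$) and \eqref{estnonT2}--\eqref{nonlEsteq} (for $\mathcal{B}^s=J^s$, $s\ge 3$) carry over verbatim upon setting $\sigma=1$, $n=1$ — here one uses Corollary \ref{remlemmadecom}, Lemma \ref{conmKP}, and the fact that the local theory in $H^{\frac34^+}(\mathbb{R})$ of \cite{KenigPonceVega1993} supplies solutions in $L^1([0,T];W^{1,\infty}(\mathbb{R}))$.

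The genuinely one-dimensional computation is the dispersive contribution $\overline{A}_2+\overline{A}_3$. I would integrate by parts in $\overline{A}_2=\frac{3}{2}\int(\partial_x\mathcal{B}^su)^2\partial_x(\chi^2_{r,\epsilon,\tau})\,dx$ and $\overline{A}_3=-\frac12\int(\mathcal{B}^su)^2\partial_x^3(\chi^2_{r,\epsilon,\tau})\,dx$, expanding $\chi^2_{r,\epsilon,\tau}=\langle x+\nu t\rangle^{2r}\chi^2_{\epsilon,\tau}$ by the Leibniz rule. The leading term is
\begin{equation*}
3\int (\partial_x\mathcal{B}^su)^2\Bigl(r\langle x+\nu t\rangle^{2r-2}(x+\nu t)\chi^2_{\epsilon,\tau}+\langle x+\nu t\rangle^{2r}\chi_{\epsilon,\tau}\chi_{\epsilon,\tau}'\Bigr)\,dx,
\end{equation*}
which is exactly the right-hand side of \eqref{lemmaKdV}; all remaining terms carry at least one derivative of $\chi_{\epsilon,\tau}$ (hence are compactly supported) together with a weight of order at most $\langle x+\nu t\rangle^{2r}$, so they are absorbed into $R(\mathcal{B}^su,t)$ and satisfy \eqref{estR} with $\sigma=1$ by the same bookkeeping of $|\partial_x^3(\chi^2_{r,\epsilon,\tau})|$ performed in the proof of Lemma \ref{linearEstlemma1}. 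This is where the simplification mentioned in the text occurs: there is no matrix $M$ and no need for the angular condition \eqref{sigmacond}, since positivity of the quadratic form in $\partial_x\mathcal{B}^su$ is automatic from $r\ge0$ on the support of $\chi_{\epsilon,\tau}$ (where $x+\nu t\ge\epsilon>0$) and from $\chi_{\epsilon,\tau}'\ge0$.

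I do not anticipate a serious obstacle here: the content of the lemma is that the multidimensional machinery degenerates gracefully to $n=1$, and the only care needed is to check that every appeal to the pseudo-differential calculus (Lemmas \ref{lemmadecom}, \ref{lemmapropaform}, \ref{InterpLemma} and Corollaries \ref{supportsepareted3}, \ref{remlemmadecom}) is valid with $\sigma=1$, $\omega=\nu t+\kappa$, which it is since these results hold for any non-null $\sigma\in\mathbb{R}^n$ in particular $n=1$. The mildest subtlety is the regularity threshold: in the multidimensional proof the splitting into cases (I) and (II) of Lemma \ref{NonlinearEstlemma1} was dictated by $s\lessgtr\frac n2+1$, and for $n=1$ one is always in the favourable regime $s>\frac12+1$ once $s\ge3$, so only the $n=1$ version of \eqref{nonlEsteq} is needed; I would simply note that the Kato--Ponce and Sobolev-embedding steps go through with $H^{(1/2)^+}(\mathbb{R})\hookrightarrow L^\infty(\mathbb{R})$ in place of $H^{(n/2)^+}$. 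With $\overline{A}_1,\dots,\overline{A}_4$ controlled, Gronwall's inequality closes the estimate exactly as in the proof of Theorem \ref{mainTHM}, and the lemma follows.
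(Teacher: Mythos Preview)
Your proposal is correct and follows essentially the same approach as the paper: the paper does not give a standalone proof of this lemma but presents it as the one-dimensional specialization of Lemmas \ref{linearEstlemma1} and \ref{NonlinearEstlemma1}, and your outline fills in precisely the expected details---the direct integration-by-parts computation for $\overline{A}_2+\overline{A}_3$ (where the matrix $M$ and the condition \eqref{sigmacond} disappear), and the observation that for $n=1$ only the favourable case (II) of Lemma \ref{NonlinearEstlemma1} is needed since $s\ge 3>\tfrac{n}{2}+1$. The final sentence about Gronwall's inequality belongs to the proof of Theorem \ref{mainTHMKdV} rather than to the lemma itself, but this does not affect the correctness of your argument.
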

Given the validity of the previous lemma, the proof of the Theorem \ref{mainTHMKdV} follows the same steps in the deduction of Theorem \ref{mainTHM}. More precisely, we can use an inductive argument based on the size of the weight $r\geq \frac{l}{2}$, where $l\in\mathbb{N}_0$. A key observation is that \eqref{lemmaKdV} shows that on each iteration of the inductive argument the size of the weight $r$ decreases in the same magnitude as in the higher dimensional case of ZK. These comments enclose the discussion around the proof of Theorem \ref{mainTHMKdV} (i).

Once we have established Theorem \ref{mainTHMKdV} (i), we can adapt the ideas in Theorem \ref{expodecay} to deduce the propagation of exponential weights in Theorem \ref{mainTHMKdV} (ii). To see this, we use the notation  $\chi_{\epsilon,\tau,\eta}(x,t)=\chi_{\epsilon,\tau,1,\eta}(x,t)$, where $\chi_{\epsilon,\tau,1,\eta}(x,t)$ is defined as in \eqref{chiexpdef} in dimension $n=1$, and $\sigma=1$. Similarly, we define $p_{\epsilon,\tau,\eta}=p_{\epsilon,\tau,1,\eta}$, see also the notation  before \eqref{chiexpdef}. Thus, taking an integer $j\geq 0$, we formally get from the equation in \eqref{KdV} 
\begin{equation}\label{differentialiden2}
\begin{aligned}
     \frac{1}{2}&\frac{d}{dt}\int (\partial_x^j u)^2 \chi_{\epsilon,\tau,\eta}^2(x,t)\, \mathrm{d}x-\frac{1}{2}\underbrace{\int (\partial_x^j u)^2 \frac{\partial }{\partial t}\chi_{\epsilon,\tau,\eta}^2(x,t)\, \mathrm{d} x}_{\overline{\mathcal{C}}_1}\\
     &+\frac{3}{2}\underbrace{\int (\partial_x^{j+1} u)^2 \frac{\partial}{\partial_x}\chi_{\epsilon,\tau,\eta}^2(x,t)\, \mathrm{d}x}_{\overline{\mathcal{C}}_2}-\frac{1}{2}\underbrace{\int (\partial_x^j u)^2 \partial_x^3\big(\chi_{\epsilon,\tau,\eta}(x,t)\big)\, \mathrm{d}x}_{\overline{\mathcal{C}}_3}\\
     &+ \underbrace{\int \partial_x^j \big(u\partial_x u\big) \partial_x^j u \chi_{\epsilon,\tau,\eta}^2(x,t)\, \mathrm{d}x}_{\overline{\mathcal{C}}_4}=0.
\end{aligned}
\end{equation}
Once again the validity of the above results is granted from the fact that the function $\chi_{\epsilon,\tau,\eta}$ and its derivatives are bounded as well as an approximation argument by smooth solutions of KdV. The key estimates for \eqref{differentialiden2} are summarized in the following lemma.
\begin{lemma}\label{lemmaexpKdV}
The results of Lemma \ref{expolemma} are valid in the one-dimensional case of the Cauchy problem \eqref{KdV}. More precisely, it follows
\begin{equation*}
\begin{aligned}
\overline{\mathcal{C}}_2+\overline{\mathcal{C}}_3= & \frac{3}{2}\int |\partial_x^{j+1} u|^2 \chi_{\epsilon,\tau}\big(pm_{\epsilon,\tau,\eta}\chi_{\epsilon,\tau}+2p_{\epsilon,\tau,\eta}\chi_{\epsilon,\tau}' \big) \, \mathrm{d}x \\
&+R(\partial_x^j u,t),
\end{aligned}
\end{equation*}
where $R(\partial_x^j u,t)$ satisfies \eqref{eqexpodecay3} (with $\sigma=1$ and $n=1$). Moreover, $\overline{\mathcal{C}}_4$ is bounded exactly as in the estimate for $\mathcal{C}_3$ in \eqref{eqexpodecay4}. 
\end{lemma}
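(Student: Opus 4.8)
The plan is to check that each of the three estimates making up Lemma~\ref{expolemma}, namely \eqref{eqexpodecay1}, \eqref{eqexpodecay2}--\eqref{eqexpodecay3} and \eqref{eqexpodecay4}, survives the passage to the one-dimensional identity \eqref{differentialiden2}; this is a direct computation that repeats the proofs of Lemma~\ref{linearEstlemma1} and Lemma~\ref{expolemma} with $n=1$ and $\sigma=1$, using the properties of $q_\eta,p_\eta,\rho_\eta$ recorded before \eqref{chiexpdef}. The term $\overline{\mathcal{C}}_1$ is treated exactly as in the derivation of \eqref{eqexpodecay1}: compute $\partial_t(\chi_{\epsilon,\tau,\eta}^2)$, use that $\partial_t$ acting on $\chi_{\epsilon,\tau,\eta}$ produces a factor $\nu$ times an $x$-derivative, invoke the pointwise bound behind \eqref{p2} (so that $p_{\epsilon,\tau,\eta}\chi_{\epsilon,\tau}'\le e^{2b\tau}\chi_{\epsilon,\tau}'$), and split the outcome into the quadratic expression $\int(\partial_x^j u)^2\chi_{\epsilon,\tau,\eta}^2$ plus the compactly supported piece $\int(\partial_x^j u)^2\chi_{\epsilon,\tau}\chi_{\epsilon,\tau}'$.

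For $\overline{\mathcal{C}}_2+\overline{\mathcal{C}}_3$ I would integrate by parts in the dispersive contribution coming from \eqref{KdV}, i.e.\ in $\int \partial_x^{j+3}u\,\partial_x^j u\,\chi_{\epsilon,\tau,\eta}^2\,\mathrm{d}x$. Three integrations by parts give
\[
\int \partial_x^{j+3}u\,\partial_x^j u\,\chi_{\epsilon,\tau,\eta}^2\,\mathrm{d}x
=\tfrac32\int (\partial_x^{j+1}u)^2\,\partial_x(\chi_{\epsilon,\tau,\eta}^2)\,\mathrm{d}x
-\tfrac12\int (\partial_x^j u)^2\,\partial_x^3(\chi_{\epsilon,\tau,\eta}^2)\,\mathrm{d}x,
\]
which, after identifying $\overline{\mathcal{C}}_2$ and $\overline{\mathcal{C}}_3$, isolates the leading term $\tfrac32\int(\partial_x^{j+1}u)^2\partial_x(\chi_{\epsilon,\tau,\eta}^2)\,\mathrm{d}x$. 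Expanding $\partial_x(\chi_{\epsilon,\tau,\eta}^2)=p_{\epsilon,\tau,\eta}'\chi_{\epsilon,\tau}^2+2p_{\epsilon,\tau,\eta}\chi_{\epsilon,\tau}\chi_{\epsilon,\tau}'$ by the product rule and using $p_\eta'=2b\rho_\eta^2$ as in \eqref{p1} yields precisely the asserted main term $\tfrac32\int|\partial_x^{j+1}u|^2\chi_{\epsilon,\tau}\bigl(p_{\epsilon,\tau,\eta}'\chi_{\epsilon,\tau}+2p_{\epsilon,\tau,\eta}\chi_{\epsilon,\tau}'\bigr)\,\mathrm{d}x$; note that here the scalar $\sigma_1=1$ plays the role of the positive-definite matrix $M$ from the ZK computation, so no restriction of the type \eqref{sigmacond} is needed. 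Everything left over — the term $-\tfrac12\int(\partial_x^j u)^2\partial_x^3(\chi_{\epsilon,\tau,\eta}^2)\,\mathrm{d}x$ and the $\chi_{\epsilon,\tau}'$-part of the leading term — is collected into $R(\partial_x^j u,t)$, and the bound \eqref{eqexpodecay3} follows from the pointwise estimates \eqref{p2}, \eqref{p3} on the derivatives of $\chi_{\epsilon,\tau,\eta}^2$ together with the compact support of the derivatives of $\chi_{\epsilon,\tau}$, exactly as in the deduction of \eqref{estA2}.

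For the nonlinear term $\overline{\mathcal{C}}_4=\int \partial_x^j(u\partial_x u)\,\partial_x^j u\,\chi_{\epsilon,\tau,\eta}^2\,\mathrm{d}x$ I would argue verbatim as in the proof of \eqref{eqexpodecay4}: Cauchy--Schwarz gives $|\overline{\mathcal{C}}_4|\lesssim \|\chi_{\epsilon,\tau,\eta}\partial_x^j(u\partial_x u)\|_{L^2}\|\chi_{\epsilon,\tau,\eta}\partial_x^j u\|_{L^2}$; the Leibniz rule distributes the $j$ derivatives; and each factor $\chi_{\epsilon,\tau,\eta}(\partial_x^{\beta_1}u)(\partial_x^{\beta_2+1}u)$ with $\beta_1+\beta_2=j$ is controlled by placing the top-order localized factor in $L^\infty$ after replacing $\chi_{\epsilon,\tau}^{(k)}$ by $\chi_{\epsilon',\tau'}$ for suitable $\epsilon'>0$, $\tau'\ge 5\epsilon'$, $\tau'\le\epsilon$, producing the same bound as for $\mathcal{C}_3$ in \eqref{eqexpodecay4}. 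Throughout, since $0\le\rho_\eta\le q_\eta\le e^{bx}$ and $p_\eta',p_\eta'',p_\eta''',\rho_\eta'$ are all dominated by $\rho_\eta^2$ (resp.\ $\rho_\eta$) with constants free of $\eta$, every implicit constant is independent of $\eta>0$.

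I do not expect a genuine obstacle: the point of the lemma is exactly that the one-dimensional case is \emph{easier} than the ZK case, the cubic dispersion $\partial_x^3$ of \eqref{KdV} yielding a single sign-definite term $\tfrac32\int(\partial_x^{j+1}u)^2\partial_x(\chi_{\epsilon,\tau,\eta}^2)\,\mathrm{d}x$ in place of the matrix-weighted quadratic form appearing in \eqref{estA2}, so that the angular condition \eqref{sigmacond} is irrelevant. The only points requiring a little care are the bookkeeping that guarantees the $\eta$-uniformity of the constants (handled by the uniform bounds on $q_\eta,p_\eta,\rho_\eta$ above) and, as in the ZK proof, the choice of auxiliary parameters $\epsilon',\tau'$ in $\overline{\mathcal{C}}_4$ keeping the supports suitably separated.
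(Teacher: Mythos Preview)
Your proposal is correct and follows exactly the approach the paper intends: the paper does not give a separate proof of this lemma but treats it as the $n=1$, $\sigma=1$ specialization of Lemma~\ref{expolemma}, and your sketch carries out precisely that specialization. One small slip: after correctly identifying the main term $\tfrac32\int|\partial_x^{j+1}u|^2\chi_{\epsilon,\tau}\bigl(p_{\epsilon,\tau,\eta}'\chi_{\epsilon,\tau}+2p_{\epsilon,\tau,\eta}\chi_{\epsilon,\tau}'\bigr)\,\mathrm{d}x$, you then say the ``$\chi_{\epsilon,\tau}'$-part of the leading term'' is collected into $R$, which contradicts what you just wrote---that piece stays in the main term, and only $-\tfrac12\int(\partial_x^j u)^2\partial_x^3(\chi_{\epsilon,\tau,\eta}^2)\,\mathrm{d}x$ forms the remainder.
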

It is worth to remark that the estimate for $\overline{\mathcal{C}}_4$ (which is the one-dimensional estimate for $\mathcal{C}_3$ in \eqref{eqexpodecay4}) requires to bound the following factor
\begin{equation*}
\sum_{m\leq j+1}\|\chi_{\epsilon,\tau}\partial^{m}_xu\|_{L^{\infty}},    
\end{equation*}
where $\epsilon>0$, $\tau\geq 5\epsilon$. Such estimate is controlled by using Sobolev embedding and the results of Theorem \ref{mainTHMKdV} (i), which imply that we have arbitrary regularity on the region determined by $\chi_{\epsilon,\tau}$ at later times for solutions of \eqref{KdV} with initial condition $u_0\in H^{\frac{3}{4}^{+}}(\mathbb{R})$ with localized exponential decay \eqref{condo1dexp}. This is why we first deduced Theorem \ref{mainTHMKdV} (i).  Consequently, the proof of Theorem \ref{mainTHMKdV} (ii) follows by Lemma \ref{lemmaexpKdV}, the same inductive argument and the ideas in the proof of Theorem \ref{expodecay}, in which using \eqref{differentialiden2}, we obtain some uniform estimates that allow us to take $\eta \to 0$ to get the desired result. This completes the discussion about the proof of Theorem \ref{mainTHMKdV}. 
\appendix
\section{Well-posedness in weighted spaces}\label{AppendixWellposs} 
This part concerns the deduction of the local well-posedness result in polynomial weighted spaces described in Theorem \ref{wellposweighted}. To deduce Theorem \ref{wellposweighted} when $n\geq 3$, we will adapt the ideas used for the fractional KdV equation in \cite{CunhaRiano2022}, which can be adapted to the ZK equation in any spatial dimension. 
\subsection{Preliminaries}
We denote by $\{S(t)\}$ the group of solutions associated to the linear equation $\partial_t u+\partial_{x_1}\Delta u=0$. 
\begin{lemma}\label{decaylineareq}
Let $n\geq 1$ be integer, $r\in \mathbb{R}^{+}$, and $f\in H^{2r}(\mathbb{R}^n
)\cap L^2(|x|^{2 r}\, dx)$. Then
\begin{equation}
    \|\langle x \rangle^r S(t) f\|_{L^2}\lesssim \langle t \rangle^{r}\big(\|J^{2 r}f\|_{L^2}+\|\langle x \rangle^r f\|_{L^2}\big).
\end{equation}    
\end{lemma}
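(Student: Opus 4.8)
The statement is a decay estimate for the free Zakharov--Kuznetsov group $S(t)$ acting on a function $f$ that lies in $H^{2r}(\R^n)\cap L^2(|x|^{2r}\,dx)$: one wants $\|\langle x\rangle^r S(t)f\|_{L^2}\lesssim \langle t\rangle^r(\|J^{2r}f\|_{L^2}+\|\langle x\rangle^r f\|_{L^2})$. The plan is to pass to the Fourier side, where $S(t)$ becomes multiplication by the unimodular symbol $e^{2\pi i t \phi(\xi)}$ with $\phi(\xi)=\xi_1|\xi|^2$ (up to a fixed constant depending on normalization), and where the weight $\langle x\rangle^r$ becomes (comparable to) the Bessel-type operator $\langle D_\xi\rangle^r$ acting in the frequency variable. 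Thus, up to the equivalence $\|\langle x\rangle^r g\|_{L^2_x}\sim \|\langle D_\xi\rangle^r \widehat g\|_{L^2_\xi}$, the estimate reduces to controlling $\|\langle D_\xi\rangle^r\big(e^{2\pi i t\phi(\xi)}\widehat f(\xi)\big)\|_{L^2_\xi}$ by $\langle t\rangle^r\big(\|\langle\xi\rangle^{2r}\widehat f\|_{L^2_\xi}+\|\langle D_\xi\rangle^r\widehat f\|_{L^2_\xi}\big)$.

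\textbf{Main steps.} First I would reduce to $r$ a nonnegative integer by complex interpolation: define the analytic family of operators $T_z f = \langle x\rangle^z S(t)\langle x\rangle^{-z}$ (or argue directly with the two endpoint cases $r=\lfloor r\rfloor$ and $r=\lceil r\rceil$ together with the interpolation Lemma \ref{InterpLemma}), so that it suffices to prove the bound when $r=N\in\N_0$. For integer $N$, the key is a Leibniz-type expansion: $\langle D_\xi\rangle^N\big(e^{2\pi i t\phi}\widehat f\big)$ is a sum of terms in which at most $N$ derivatives fall on $e^{2\pi i t\phi}$ and the rest on $\widehat f$; each derivative of $e^{2\pi i t\phi}$ produces a factor $t\,\partial^\alpha\phi(\xi)$ with $\partial^\alpha\phi$ a polynomial of degree $\le 3-|\alpha|$ in $\xi$, hence bounded by $\langle\xi\rangle^{3-|\alpha|}$, and when $j$ derivatives hit the exponential one picks up a factor comparable to $\langle t\rangle^j \langle\xi\rangle^{3j-(\text{something})}$; careful bookkeeping shows that the total polynomial weight in $\xi$ accompanying $\langle t\rangle^j$ together with the remaining $\langle D_\xi\rangle^{N-j}$ on $\widehat f$ is controlled by $\langle t\rangle^N(\langle\xi\rangle^{2N}+\langle D_\xi\rangle^N)$ applied to $\widehat f$ (here one uses $3j - j = 2j \le 2N$ after accounting for the two factors of $\xi$ one can "borrow" per pair of derivatives via integration-by-parts symmetry, or more simply one just keeps the crude bound $\langle\xi\rangle^{2N}$ and absorbs the rest). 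More precisely, I would organize this so that a term with $j$ derivatives on the exponential contributes $\langle t\rangle^j\|\langle\xi\rangle^{a_j}\langle D_\xi\rangle^{N-j}\widehat f\|_{L^2}$ with $a_j\le 2j$, and then use the interpolation inequality (Lemma \ref{InterpLemma}, translated to the Fourier side) to dominate $\|\langle\xi\rangle^{a_j}\langle D_\xi\rangle^{N-j}\widehat f\|_{L^2}$ by $\|\langle\xi\rangle^{2N}\widehat f\|_{L^2}+\|\langle D_\xi\rangle^N\widehat f\|_{L^2}$, provided $\frac{a_j}{2N}+\frac{N-j}{N}\le 1$, i.e. $a_j\le 2j$, which is exactly what the expansion gives. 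Summing over the finitely many terms and using $\langle t\rangle^j\le\langle t\rangle^N$ for $0\le j\le N$ yields the claim for integer $r$, and the interpolation step promotes it to all $r>0$.

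\textbf{Anticipated obstacle.} The main technical nuisance is the bookkeeping in the Leibniz expansion of $\langle D_\xi\rangle^N(e^{2\pi i t\phi}\widehat f)$: $\langle D_\xi\rangle^N$ is a nonlocal (Bessel) operator, not a plain differential operator, so one cannot literally apply the Leibniz rule. I would handle this by writing $\langle D_\xi\rangle^N = (1-\Delta_\xi)^{N/2}$ and, after the integer reduction, using $(1-\Delta_\xi)^{N}=\sum c_\gamma \partial_\xi^\gamma$ for $N$ integer (passing from $\langle D_\xi\rangle^N$ to $\langle D_\xi\rangle^{2N}$ composed with $\langle D_\xi\rangle^{-N}$, exactly the trick already used repeatedly in the paper, e.g. in the proof of Lemma \ref{lemmapropaform}), so that only genuine $\xi$-derivatives appear and the ordinary Leibniz rule applies, with the leftover $\langle D_\xi\rangle^{-N}$ being a bounded operator on $L^2$ that is harmless. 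The second, milder, point is justifying the identification of $\langle x\rangle^r$-weights with Fourier-side Bessel operators with constants uniform in $t$; this is standard but should be stated as a lemma (or cited), and the uniformity in $t$ is automatic since the weight does not involve $t$. With these two reductions in place the estimate is essentially a finite sum of elementary bounds, and no serious difficulty remains.
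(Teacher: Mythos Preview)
Your proposal is correct and matches the standard approach that the paper invokes by reference: the paper does not give a self-contained argument but simply points to \cite{CunhaRiano2022}, remarking that the dispersion $\partial_{x_1}\Delta$ is a local (polynomial) operator so the weight propagation holds for every $r>0$. Your Fourier-side reduction to integer $r$, Leibniz expansion of $\partial_\xi^\gamma(e^{2\pi it\phi}\widehat f)$, and interpolation via Lemma~\ref{InterpLemma} is exactly the computation underlying that citation; your bookkeeping check $a_j\le 2j$ (equivalently, for a term with $k$ factors of $t$ and $j$ total derivatives on the exponential, the $\xi$-degree $3k-j$ satisfies $(3k-j)+2(N-j)\le 2N\iff k\le j$) is precisely what makes the argument close.
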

\begin{proof}
The proof follows by setting $a=2$ in the arguments in the deduction of \cite[Lemma 1.2 (i)]{CunhaRiano2022}. We remark that the fact that the dispersion $\partial_{x_1}\Delta$ is a local operator implies that the solution of the linear equation $\partial_t u+\partial_{x_1}\Delta u=0$ propagates polynomial weights of arbitrary size $r>0$ (what is more, it propagates exponential weights), which contrast with the fractional case studied in  \cite{CunhaRiano2022}.
\end{proof}
\begin{lemma}\label{nonlinearestimlemm}
Let $T>0$, $r\in \mathbb{R}^{+}$. Let
\begin{equation}\label{extracond}
    s>\frac{1}{2}\Big(\frac{n}{2}+2 \Big)+\Big(\frac{1}{4}\Big(\frac{n}{2}+2 \Big)^2-\frac{n}{2}\Big)^{\frac{1}{2}}.
\end{equation}
Consider
\begin{equation*}
g\in L^{\infty}([0,T];H^{s}(\mathbb{R}^n))\cap L^{1}((0,T);L^{\infty}(\mathbb{R}^n))\cap L^{\infty}((0,T);L^2(|x|^{2r}\, dx)).
\end{equation*}
Then
\begin{equation*}
    \nabla (g^2) \in L^{1}((0,T);L^2(|x|^{2r_1}\, dx)),
\end{equation*}
where $r_1>r$ is given by
\begin{equation}\label{extradecay}
r_1:=\frac{(s-1)(4s-n)}{2s^2}r.   
\end{equation}
\end{lemma}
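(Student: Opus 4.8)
The plan is to interpolate between a high-order Sobolev bound on $g^2$ and a weighted $L^2$ bound on $g^2$, then differentiate. First I would record the two endpoint estimates. On one hand, for the regularity endpoint: since $H^{s}(\mathbb{R}^n)$ is an algebra when $s>n/2$ (which is implied by \eqref{extracond}) and $g\in L^\infty([0,T];H^s)$, we get $g^2\in L^\infty([0,T];H^{s})$; combined with $g\in L^1((0,T);L^\infty)$, a product estimate of the form $\|\nabla(g^2)\|_{H^{s-1}}\lesssim \|g\|_{L^\infty}\|\nabla g\|_{H^{s-1}}\lesssim \|g\|_{L^\infty}\|g\|_{H^s}$ gives $\nabla(g^2)\in L^1((0,T);H^{s-1})$. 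On the other hand, the decay endpoint: $\||x|^{r}g^2\|_{L^2}\le \|g\|_{L^\infty}\||x|^{r}g\|_{L^2}$, so $|x|^{r}g^2\in L^\infty((0,T);L^2)$, and hence also $|x|^{r}\nabla(g^2)$ once we absorb the derivative — but here one must be careful, so instead I would keep the weight on $g^2$ itself and move the derivative via the interpolation inequality.

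Next I would apply the interpolation Lemma \ref{InterpLemma} (with $\sigma=0$, $\omega=0$, $p=2$), which asserts
\[
\|\langle x\rangle^{\theta b}J^{(1-\theta)a}h\|_{L^2}\lesssim \|\langle x\rangle^{b}h\|_{L^2}+\|J^{a}h\|_{L^2},
\]
applied to $h=g^2$ with $a=s$ and $b=r$. The point is to choose $\theta$ so that $(1-\theta)a = 1 + r_1'$ for some $r_1'\ge 0$ — that is, we want $\langle x\rangle^{\theta r}J^{1+r_1'}(g^2)\in L^1((0,T);L^2)$, and then $\nabla(g^2)$ inherits the weight $\langle x\rangle^{\theta r}$ by writing $\nabla = \nabla J^{-1-r_1'}\cdot J^{1+r_1'}$ and using the continuity Lemma \ref{contiprop} (the operator $\langle x\rangle^{\theta r}\nabla J^{-1-r_1'}\langle x\rangle^{-\theta r}$ has a symbol in $\mathbb{S}^{-r_1',0}_{0,0}\subset \mathbb{S}^{0,0}_{0,0}$). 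Setting $r_1 = \theta r$, the constraint is $(1-\theta)s \ge 1$, i.e. $\theta\le 1-1/s = (s-1)/s$, so the largest admissible weight power from this naive application is $r_1 = \frac{s-1}{s}r$. To match the claimed $r_1 = \frac{(s-1)(4s-n)}{2s^2}r$, I would instead interpolate more carefully, using that $g\in L^1((0,T);L^\infty)$ and splitting the derivative budget between the two factors of $g^2$ via a Leibniz-type decomposition $\nabla(g^2) = 2g\,\nabla g$, applying the interpolation to a single factor $g$ rather than to $g^2$, and using Hölder in time with the $L^1_tL^\infty_x$ norm of $g$. This is where the precise exponent $\frac{(s-1)(4s-n)}{2s^2}$ emerges, and where condition \eqref{extracond} (equivalently $s^2 - (\frac n2+2)s + \frac n2 > 0$, i.e. $r_1 > r$ together with the Sobolev embedding $H^{s-1}\hookrightarrow L^\infty$ needed to interpolate against $\|g\|_{L^\infty}$) must be invoked.

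Concretely, I would write, for fixed $t$,
\[
\|\langle x\rangle^{r_1}\nabla(g^2)\|_{L^2}\lesssim \|\langle x\rangle^{r_1}g\,\nabla g\|_{L^2}\lesssim \|g\|_{L^\infty}\|\langle x\rangle^{r_1}\nabla g\|_{L^2},
\]
then interpolate $\|\langle x\rangle^{r_1}\nabla g\|_{L^2}=\|\langle x\rangle^{\theta r}J^{(1-\theta)a}g\|_{L^2}$ via Lemma \ref{InterpLemma} with the choice $(1-\theta)a$ slightly larger than $1$ and $\theta r = r_1$; the sub-endpoint parameter $a$ that is actually available is not $s$ but rather the interpolated smoothness at which $g$ retains control, and balancing $\|g\|_{L^\infty}\le \|g\|_{H^{s}}^{\alpha}\|g\|_{L^2}^{1-\alpha}$-type gains against the weighted interpolation yields the quadratic in $s$. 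Then I would integrate in time: the factor $\|g\|_{L^\infty}$ goes into $L^1_t$ by hypothesis, while the weighted/unweighted Sobolev norms of $g$ are in $L^\infty_t$, so the product is in $L^1_t$ by Hölder, giving $\nabla(g^2)\in L^1((0,T);L^2(|x|^{2r_1}dx))$ as claimed.

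The main obstacle I expect is pinning down the exact exponent $r_1 = \frac{(s-1)(4s-n)}{2s^2}r$: this requires carefully tracking which intermediate Sobolev index is genuinely available when one simultaneously pays for (i) turning $\|g\|_{H^s}$-regularity into an $L^\infty$ bound on $g$, (ii) placing one derivative on $g$, and (iii) keeping as much of the weight $\langle x\rangle^{r}$ as possible — a three-way Hölder/interpolation balance. The algebraic optimization of this balance, and verifying that \eqref{extracond} is precisely the condition under which the resulting $r_1$ strictly exceeds $r$, is the delicate bookkeeping; everything else (the algebra property of $H^s$, the pseudo-differential continuity in Lemma \ref{contiprop}, Hölder in time) is routine. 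I would also double-check the borderline cases $r_1$ just above $r$ and small $r$, where the ``$+\|f\|_{L^p}$'' remainder terms in Lemmas \ref{lemmapropaform} and \ref{InterpLemma} must be absorbed into the stated norms.
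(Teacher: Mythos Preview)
The paper does not give a self-contained proof here: it simply records that Lemma~\ref{nonlinearestimlemm} is a particular case of \cite[Corollary 4.5]{CunhaRiano2022} and observes that \eqref{extracond} is equivalent to $r_1>r$. So there is no ``paper's approach'' to compare against beyond that citation.

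Your sketch, however, has a genuine gap. In both of your attempts you end up with the splitting
\[
\|\langle x\rangle^{r_1} g\,\nabla g\|_{L^2}\le \|g\|_{L^\infty}\,\|\langle x\rangle^{r_1}\nabla g\|_{L^2},
\]
and then interpolate the second factor between $\|\langle x\rangle^{r}g\|_{L^2}$ and $\|J^s g\|_{L^2}$. Since one derivative must be paid for, this forces $(1-\theta)s\ge 1$, i.e.\ $\theta\le (s-1)/s$, and hence $r_1\le \tfrac{s-1}{s}\,r<r$. No amount of ``three-way balancing'' on top of this H\"older split $(L^\infty,L^2)$ can push $r_1$ above $r$: once the unweighted factor sits in $L^\infty$, all of the decay has to come from a single copy of $g$, and a single copy only carries weight $r$. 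The mechanism that produces $r_1>r$ is to put weight on \emph{both} factors via a H\"older split $(L^p,L^q)$ with $p,q\in(2,\infty)$, and then use the weighted interpolation together with the Sobolev embedding $H^{(1-\theta)s}\hookrightarrow L^{p}$ (which consumes $\tfrac{n}{2}-\tfrac{n}{p}$ derivatives) on each factor separately. Optimising over the split is what yields the quadratic expression $\tfrac{(s-1)(4s-n)}{2s^2}$ and explains why \eqref{extracond}, i.e.\ $s^2-(\tfrac{n}{2}+2)s+\tfrac{n}{2}>0$, is exactly the threshold $r_1>r$. Your outline never leaves the $(L^\infty,L^2)$ endpoint, so it cannot reach the conclusion.

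A smaller point: invoking Lemma~\ref{InterpLemma} ``with $\sigma=0$'' does not give the radial weight $\langle x\rangle$; for $\sigma=0$ the weight $\langle\sigma\cdot x+\omega\rangle$ is the constant $\langle\omega\rangle$. The correct interpolation for $\langle x\rangle$ is the Nahas--Ponce inequality \cite[Lemma~4]{NahasPonce2009} (of which Lemma~\ref{InterpLemma} is the directional analogue), and that is what is used in \cite{CunhaRiano2022}.
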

\begin{proof}
Lemma \ref{nonlinearestimlemm} is a particular case of \cite[Corollary 4.5]{CunhaRiano2022}. It is worth mentioning that the condition \eqref{extracond} assures that $r_1>r$.
\end{proof}
We consider the following approximation of the weight $\langle x\rangle$ used in \cite{FonsecaPonce2011}. Let $N\in \mathbb{Z}^{+}$, we introduce the truncated weights $\tilde{w}_N : \mathbb{R} \rightarrow \mathbb{R}$ such that 
 \begin{equation}
 \tilde{w}_{N}(x)=\left\{\begin{aligned} 
 &\langle x \rangle, \text{ if } |x|\leq N, \\
 &2N, \text{ if } |x|\geq 3N
 \end{aligned}\right.
 \end{equation}
in such a way that $\tilde{w}_N(x)$ is smooth and non-decreasing in $|x|$ with $\tilde{w}'_N(x) \leq 1$ for all $\neq 0$  and there exists a constant $c$ independent of $N$ from which $|\tilde{w}^{(j)}_N(x)|   \leq |\partial_x^{j}(\langle x \rangle)|$, $j=2,3$. We define the $n$-dimensional weights 
\begin{equation}\label{intro2}
w_N(x)=\tilde{w}_N(|x|), \text{ where } |x|=\sqrt{x_1^2+\dots+x_n^2}.
\end{equation}
Consequently, for fixed $0<r<1$, the definition of the $\omega_N$ yields $|\partial^{\alpha}( w_{N}^{r}) | \lesssim 1$, for all multi-index $1\leq |\alpha|\leq 3$, where the implicit constant is independent of $N$.

We use the following well-posedness result, which is obtained by the classical parabolic regularization argument.
\begin{lemma}\label{comwellp}
Let $s>\frac{n}{2}+1$. Then for any $u_0 \in H^s(\mathbb{R}^n)$, there exist $T=T(\left\|u_0\right\|_{H^s})>0$ and a unique solution $u\in C([0,T]; H^s(\mathbb{R}^n))$ of the IVP \eqref{ZK}. In addition, the flow-map $u_0 \mapsto u(t)$ is continuous in the $H^s$-norm. Moreover, the existence time is independent of the regularity in the sense that if $u_0\in H^{s'}(\mathbb{R}^n
)$ with $s'\geq s>\frac{n}{2}+1$, and $u\in C([0,T_{s'}(u_0)],H^{s'}(\mathbb{R}^n))$ solves \eqref{ZK}, then $u$ can be extended, if necessary, to the interval $[0,T_{s}(u_0)]$, with $u_0$ viewed as an element in $H^s(\mathbb{R}^n)$, i.e., the existence times in $H^{s'}(\mathbb{R}^n)$ and $H^{s}(\mathbb{R}^n)$ are the same.
\end{lemma}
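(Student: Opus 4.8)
The plan is to follow the classical parabolic regularization (Kato) scheme. For $\mu>0$ I would first consider the regularized Cauchy problem $\partial_t u^\mu+\partial_{x_1}\Delta u^\mu+u^\mu\partial_{x_1}u^\mu=-\mu\Delta^2 u^\mu$, $u^\mu(0)=u_0$, rewritten in Duhamel form $u^\mu(t)=G_\mu(t)u_0-\int_0^t G_\mu(t-t')\big(u^\mu\partial_{x_1}u^\mu\big)(t')\,dt'$, where $G_\mu(t)$ is the Fourier multiplier with symbol $\exp\!\big(t(-c_1\mu|\xi|^4-ic_2\xi_1|\xi|^2)\big)$. This semigroup is smoothing: $\|G_\mu(t)f\|_{H^{s+\theta}}\lesssim(\mu t)^{-\theta/4}\|f\|_{H^s}$ for $\theta\ge0$. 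Since $s>\tfrac n2+1$ one has $\|u\partial_{x_1}u\|_{H^{s-1}}\lesssim\|u\|_{H^s}^2$, so combining this with the $\theta=1$ smoothing bound and a standard contraction argument in $C([0,T_\mu];H^s(\mathbb{R}^n))$ produces a unique local-in-time solution $u^\mu$; the smoothing of $G_\mu$ moreover gives $u^\mu\in C((0,T_\mu];H^{s'}(\mathbb{R}^n))$ for every $s'>s$, and it is this extra smoothness that makes the subsequent energy estimates rigorous.

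Next I would derive $\mu$-uniform a priori bounds. Applying $J^s$ to the regularized equation, pairing with $J^s u^\mu$ in $L^2$ and integrating by parts, the dispersive term drops by skew-adjointness of $\partial_{x_1}\Delta$ and the viscous term contributes the favourable sign $-\mu\|\Delta J^su^\mu\|_{L^2}^2\le0$. For the nonlinearity I would write $J^s(u^\mu\partial_{x_1}u^\mu)=u^\mu\partial_{x_1}J^su^\mu+[J^s,u^\mu]\partial_{x_1}u^\mu$; the first term integrates to $-\tfrac12\int\partial_{x_1}u^\mu\,(J^su^\mu)^2\,dx$, controlled by $\|\nabla u^\mu\|_{L^\infty}\|J^su^\mu\|_{L^2}^2$, while the commutator is bounded via Lemma \ref{conmKP} together with the embedding $H^{s-1}(\mathbb{R}^n)\hookrightarrow L^\infty(\mathbb{R}^n)$, which is valid because $s-1>\tfrac n2$. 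This yields $\frac{d}{dt}\|u^\mu(t)\|_{H^s}^2\lesssim\|u^\mu(t)\|_{H^s}^3$, hence a time $T=T(\|u_0\|_{H^s})>0$ and a bound $\sup_{[0,T]}\|u^\mu(t)\|_{H^s}\le C(\|u_0\|_{H^s})$, both independent of $\mu$, so that a continuation argument extends each $u^\mu$ to the common interval $[0,T]$.

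Then I would pass to the limit $\mu\to0$. The family $\{u^\mu\}$ is bounded in $L^\infty([0,T];H^s(\mathbb{R}^n))$ and, reading $\partial_t u^\mu$ off the equation, it is equicontinuous with values in $H^{s-4}(\mathbb{R}^n)$ (note $\mu\|\Delta^2 u^\mu\|_{H^{s-4}}\lesssim\mu\|u^\mu\|_{H^s}\to0$); by an Aubin--Lions type compactness argument it therefore converges, up to a subsequence, in $C([0,T];H^{s'}_{\mathrm{loc}}(\mathbb{R}^n))$ for every $s'<s$ to a solution $u$ of \eqref{ZK} with $u\in L^\infty([0,T];H^s)\cap C([0,T];H^{s'})$. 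Uniqueness follows from an $L^2$ energy estimate on the difference $w=u-v$ of two solutions, $\frac{d}{dt}\|w\|_{L^2}^2\lesssim\|\partial_{x_1}(u+v)\|_{L^\infty}\|w\|_{L^2}^2$, together with Gronwall; in particular the whole family $\{u^\mu\}$ converges to $u$. To upgrade to $u\in C([0,T];H^s)$ and to obtain continuous dependence of the flow map $u_0\mapsto u$, I would run the Bona--Smith argument: regularize the data as $u_0^\delta=\rho_\delta * u_0$, compare the corresponding solutions $u^\delta$, and exploit the extra regularity of $u^\delta$ to control the high-frequency part of $u-u^\delta$; this is the step I expect to be the most technical. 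Finally, the persistence of higher regularity—hence the claimed equality of the existence times in $H^{s'}$ and $H^s$—follows from the $J^{s'}$-energy estimate, in which $\|\nabla u(t)\|_{L^\infty}\lesssim\|u(t)\|_{H^s}$ is already under control on $[0,T]$, so that Gronwall keeps $\|u(t)\|_{H^{s'}}$ finite on the same interval and the solution extends.
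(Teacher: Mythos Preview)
Your proposal is correct and follows exactly the approach the paper has in mind: the paper does not give any details but simply states that the lemma ``is obtained by the classical parabolic regularization argument,'' which is precisely the Kato scheme you outline (viscous approximation, $\mu$-uniform $H^s$ energy estimate via the Kato--Ponce commutator in Lemma~\ref{conmKP}, compactness to pass to the limit, Bona--Smith for continuity, and the $H^{s'}$ energy estimate controlled by $\|\nabla u\|_{L^\infty}\lesssim\|u\|_{H^s}$ for persistence of regularity).
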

We require the following result concerning the persistence of the class $\mathcal{S}(\mathbb{R}^n)$ through the flow of the IVP \eqref{ZK}, where the time of existence is given by that provided in Lemma \ref{comwellp}, i.e., the time depends on the $H^s$-norm for some $s>\frac{n}{2}+1$. 
\begin{claim}\label{claimSmooth}
Let $u_0\in \mathcal{S}(\mathbb{R}^n)$, and $s>\frac{n}{2}+1$. Then there exists $T=T(\|u_0\|_{H^s})>0$, and a unique solution $u\in C([0,T];\mathcal{S}(\mathbb{R}^n))$ of \eqref{ZK} with initial condition $u_0$.
\end{claim}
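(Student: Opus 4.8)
The plan is to combine the $H^s$ local well-posedness theory of Lemma \ref{comwellp} with persistence of regularity (to get $u\in C([0,T];H^\infty(\mathbb{R}^n))$) and persistence of polynomial decay of arbitrary order (to conclude $u\in C([0,T];\mathcal{S}(\mathbb{R}^n))$), all on a common time interval whose length depends only on $\|u_0\|_{H^s}$ for a fixed $s>\tfrac n2+1$. First I would fix such an $s$ and invoke Lemma \ref{comwellp} to produce the solution $u\in C([0,T];H^s(\mathbb{R}^n))$ with $T=T(\|u_0\|_{H^s})$. Since $u_0\in\mathcal{S}(\mathbb{R}^n)\subset H^{s'}(\mathbb{R}^n)$ for every $s'\ge s$, the last assertion of Lemma \ref{comwellp} guarantees that the $H^{s'}$-solution extends to the same interval $[0,T]$, so $u\in C([0,T];H^{s'}(\mathbb{R}^n))$ for all $s'$, i.e. $u\in C([0,T];H^\infty(\mathbb{R}^n))$.

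Next I would establish propagation of (unweighted, global) polynomial decay: for every $r\in\mathbb{N}$ one has $u\in C([0,T];L^2(\langle x\rangle^{2r}\,dx))$, with norm bounds depending only on $\|u_0\|_{H^s}$, the already-controlled higher Sobolev norms, and $\|\langle x\rangle^r u_0\|_{L^2}$. This is exactly the content to be obtained from the weighted energy-estimate machinery of Section \ref{sectionMain} applied with $\sigma=e_j$ for each coordinate $j$ (or more directly from Lemma \ref{decaylineareq} and Lemma \ref{nonlinearestimlemm} via a Duhamel/bootstrap argument, since all the relevant quantities are finite for Schwartz data and smooth solutions). Concretely, I would run the Duhamel formula $u(t)=S(t)u_0+\int_0^t S(t-t')\,\partial_{x_1}\!\big(\tfrac12 u^2\big)(t')\,dt'$, apply $\langle x\rangle^r$, use Lemma \ref{decaylineareq} on the linear part and Lemma \ref{nonlinearestimlemm} (together with the now-available $H^\infty$ regularity, which makes condition \eqref{extracond} automatic) on the Duhamel integral, and close the estimate by a standard Gronwall/continuity argument. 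Truncating the weight by $w_N$ as in \eqref{intro2}--\eqref{intro2} makes every step rigorous with $N$-independent constants before letting $N\to\infty$. Iterating on $r$ (each step raising the admissible weight power, as in \eqref{extradecay}) yields all $r\in\mathbb{N}$.

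Finally, combining $u\in C([0,T];H^{s'}(\mathbb{R}^n))$ for all $s'$ with $u\in C([0,T];L^2(\langle x\rangle^{2r}\,dx))$ for all $r$, and using the interpolation Lemma \ref{InterpLemma} to control the mixed quantities $\|\langle x\rangle^{r}J^{s'}u(t)\|_{L^2}$, gives that for every $t\in[0,T]$ the function $u(t)$ lies in $\bigcap_{k,m}H^{k,m}$ with all these norms locally bounded and continuous in $t$; since $\bigcap_{k,m}\{f:\langle x\rangle^m J^k f\in L^2\}=\mathcal{S}(\mathbb{R}^n)$ with the Fréchet topology generated by these norms, this is precisely $u\in C([0,T];\mathcal{S}(\mathbb{R}^n))$. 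Uniqueness is inherited from the $H^s$-uniqueness in Lemma \ref{comwellp}. The main obstacle I expect is the bookkeeping in the weighted energy/Duhamel estimates: one must verify that at each stage the right-hand sides are finite (which is where the a priori $H^\infty$-regularity and the Schwartz initial data are used) and that the truncation $w_N$ can be removed with uniform constants; once the linear decay estimate (Lemma \ref{decaylineareq}) and the bilinear decay-gain estimate (Lemma \ref{nonlinearestimlemm}) are in hand, the remaining work is a routine, if lengthy, induction on $r$ together with a continuity-in-time argument.
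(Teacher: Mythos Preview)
Your proposal is correct and follows essentially the same route as the paper: obtain $u\in C([0,T];H^\infty)$ from Lemma \ref{comwellp}, then establish $u\in C([0,T];L^2(\langle x\rangle^{2r}dx))$ for all $r>0$ via a truncated-weight energy estimate to start and a Duhamel bootstrap based on Lemmas \ref{decaylineareq} and \ref{nonlinearestimlemm} to climb in $r$, and finally interpolate to conclude $u\in C([0,T];\mathcal{S}(\mathbb{R}^n))$. The only cosmetic differences are that the paper explicitly separates the base case $0<r<\tfrac12$ (handled by the $w_N$-truncated energy identity and Gronwall) from the iteration $r\ge\tfrac12$, and for the final interpolation it cites the isotropic Nahas--Ponce/Fonseca--Ponce estimates rather than Lemma \ref{InterpLemma}; your use of Lemma \ref{InterpLemma} with $\sigma=e_j$ for each $j$ achieves the same conclusion after summing over $j$.
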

\begin{proof}
Given $u_0\in \mathcal{S}(\mathbb{R}^d)$, and $s>\frac{n}{2}+1$, by Lemma \ref{comwellp} there exist $T=T(\|u_0\|_{H^s})>0$, and $u\in C([0,T];H^{\infty}(\mathbb{R}^n))$ solution of \eqref{ZK} with initial condition $u_0$, where $H^{\infty}(\mathbb{R}^n)=\bigcap_{s>0}H^s(\mathbb{R}^n)$. We will show that 
\begin{equation}\label{smoothdecay}
  u\in C([0,T];L^2(|x|^{2r}\, dx))  
\end{equation}
for all $r>0$. Once \eqref{smoothdecay} has been established, given a multi-index $\beta$, and positive numbers $r,m$, by using interpolation inequality (see \cite[Lemma 4]{NahasPonce2009} and \cite[Lemma 1]{FonsecaPonce2011}), there exist $r_1>|\beta|+m$, $r_2>r$  such that
\begin{equation}
\begin{aligned}
    \|\langle x \rangle^{r} \partial^{\beta}u\|_{H^m}\lesssim & \|J^{|\beta|+m}(\langle x \rangle^{r} u)\|_{L^2}\\
    \lesssim & \|J^{r_1} u\|_{L^2}+\|\langle x \rangle^{r_2} u\|_{L^2}.
\end{aligned}
\end{equation}
Since the right-hand side of the above inequality is bounded and continuous by assumption \eqref{smoothdecay} and the fact that $u\in C([0, T]; H^{\infty}(\mathbb{R}^n))$,  the above inequality and using that $m>0$ is arbitrary imply that $\langle x \rangle^{r} \partial^{\beta}u\in C([0, T]; H^{\infty}(\mathbb{R}^n))$ for all multi-index $\beta$ and $r>0$, thus, $u\in C([0, T];\mathcal{S}(\mathbb{R}^n))$.

It only remains to deduce \eqref{smoothdecay}. We consider two cases: $0<r<\frac{1}{2}$, and $r\geq \frac{1}{2}$.

\underline{Case $0<r<\frac{1}{2}$}.  Let $w$ be a smooth function to be specified later. By multiplying the equation in \eqref{ZK} by $w^2 u$ and integrating on the spatial variable, we get
\begin{equation}\label{weightedEneres1}
\begin{aligned}
\frac{1}{2}\frac{d}{dt}\int \big(w u(x,t)\big)^2\, \mathrm{d}x+\int \partial_{x_1}\Delta u (w^{2} u)\, \mathrm{d}x+\int u\partial_{x_1} u (w^{2} u)\, \mathrm{d}x=0.
\end{aligned}    
\end{equation}
Integration by parts yields
\begin{equation}\label{weightedEnereseq1}
    \begin{aligned}
    \int \partial_{x_1}\Delta u (w^{2} u)\, \mathrm{d}x=&\frac{1}{2}\int |\nabla u|^2 \partial_{x_1}(w^2) \, \mathrm{d}x-\frac{1}{2}\int |u|^2 \partial_{x_1}\Delta(w^2) \, \mathrm{d}x\\
    &+\int \nabla u \cdot \nabla (w^2) \partial_{x_1} u \, \mathrm{d}x. 
    \end{aligned}
\end{equation}
The nonlinear term satisfies
\begin{equation}\label{weightedEnereseq2}
    \Big|\int u\partial_{x_1} u (w^{2} u)\, \mathrm{d}x\Big|\leq \|\partial_{x_1}u\|_{L^{\infty}}\|w u\|_{L^2}^2.
\end{equation}
If $w=w_N^r$ with $0<r<\frac{1}{2}$, the properties of $w_N$ in \eqref{intro2} imply $|\nabla (w^{2r}_N)|, |\partial_{x_1}\Delta (w^{2r}_N)| \lesssim 1$, where the implicit constant is independent of $N\geq 1$. Thus, by collecting these facts and the previous identities, we get
\begin{equation*}
        \frac{d}{dt}\|w_N^r u(t)\|_{L^2}^2\leq c_0\|u(t)\|_{H^1}^2+\|\partial_{x_1}u(t)\|_{L^{\infty}}\|w_N^r u(t)\|_{L^2}^2
\end{equation*}
for some $c_0>0$ independent of $N\geq 1$. Gronwall's inequality then shows
\begin{equation*}
    \begin{aligned}
    \|w_N^r u(t)\|_{L^2}^2\leq & \big(\|w_N^r u_0\|_{L^2}^2+c_1t\big)e^{\int_0^t\|\partial_{x_1}u(s)\|_{L^{\infty}}\, ds}\\
    \leq & \big(\|\langle x \rangle^r u_0\|_{L^2}^2+c_1t\big)e^{\int_0^t\|\partial_{x_1}u(s)\|_{L^{\infty}}\, ds},
    \end{aligned}
\end{equation*}
where $c_1=c_1(\|u\|_{L^{\infty}_TH^s})$ is independent of $N\geq 1$. By taking $N\to \infty$, we get $u\in L^{\infty}([0,T];L^{2}(|x|^{2r}\, dx))$.  To prove continuity, we first notice that the fact that $u\in C([0, T]; H^s(\mathbb{R}^n))$ implies weak continuity, i.e., $u\in C_{w}([0, T]; L^{2}(|x|^{2r}\, dx))$. On the other hand, 
\begin{equation*}
\begin{aligned}
\|\langle x \rangle^{r}(u(t)-u_0)\|_{L^2}^2=&\|\langle x \rangle^{r}u(t)\|_{L^2}^2+\|\langle x \rangle^{r}u_0\|_{L^2}^2-2\int \langle x \rangle^{2r}u(t)u_0\, \mathrm{d}x \\
\leq & \big(\|\langle x \rangle^r u_0\|_{L^2}^2+c_1t\big)e^{\int_0^t\|\partial_{x_1}u(s)\|_{L^{\infty}}\, ds}+\|\langle x \rangle^{r}u_0\|_{L^2}^2\\
&-2\int \langle x \rangle^{2r}u(t)u_0\, \mathrm{d}x.
\end{aligned}    
\end{equation*}
Hence, weak continuity and taking $t\to 0$ in the above inequality imply continuity at the origin of the function $t\mapsto \langle x \rangle^{r}u(t)$. Thus, by using the fact that the equation in \eqref{ZK} is invariant under the transformations $(x,t)\mapsto (x,t+\tau)$ and $(x,t)\mapsto (-x,\tau-t)$, right continuity at the origin extends to continuity on the whole time interval $[0, T]$. This completes the deduction of \eqref{smoothdecay} when $0<r<\frac{1}{2}$.

\underline{Case $r\geq \frac{1}{2}$}. Let $0<r_1<\frac{1}{2}$ fixed and $s_1>\frac{n}{2}+1$ such that \eqref{extracond} holds. By the previous step, we know $u\in C([0,T];H^{\infty}(\mathbb{R}^n)\cap L^2(|x|^{2r_1}\,dx))$. Thus, by Lemma \ref{nonlinearestimlemm}, it follows $\nabla u\in L^1((0,T);L^2(|x|^{2r_2}\, dx))$, where $r_2=\frac{(s_1-1)(4s_1-d)}{2s_1^2}r_1>r_1$. Consequently, it follows from Lemma \ref{decaylineareq} that
\begin{equation*}
    \int_0^t S(t-\tau)(u\partial_{x_1}u)(\tau)\, d \tau \in C([0,T];L^2(|x|^{2r_2}\, dx)).
\end{equation*}
(The continuity follows from the continuity of the function $t\in[0, T]\mapsto \langle x\rangle^{r_1}u(t)$). Given that Lemma \ref{decaylineareq} implies $S(t)u_0\in C([0,T];L^{2}(|x|^{2r_2}\, dx))$, using the integral formulation of \eqref{ZK}, we conclude
\begin{equation*}
    \begin{aligned}
    u(t)=S(t)u_0-\int_0^t S(t-\tau)(u\partial_{x_1}u)(\tau)\, d \tau\in  C([0,T];L^2(|x|^{2r_2}\, dx)).
    \end{aligned}
\end{equation*}
Given the validity of the above persistence result in weighted spaces, we can apply the same line of arguments developed above with $r_3=\frac{(s_1-1)(4s_1-n)}{2s_1^2}r_2>r_2$. Consequently, an iterative argument on the sequence of weights $r_j=\frac{(s_1-1)(4s_1-n)}{2s_1^2}r_{j-1}>r_{j-1}$, $j=2,3,\dots$ yields \eqref{smoothdecay} for all $r\geq\frac{1}{2}$. The proof is complete.
\end{proof}
\subsection{Proof of Theorem \ref{wellposweighted}}
We will deduce LWP in the class $H^s(\mathbb{R}^n)\cap L^2(|x_j|^{2r}\, dx)$, with $j=1,\dots, n$ as the considerations for $H^s(\mathbb{R}^n)\cap L^2((|x_1|^{2r_1}+\dots+|x_n|^{2r_n})\, dx)$ follow by our same arguments applied to each variable individually. Given $r>0$, $s\geq \max\{(\frac{n}{2}+1)^{+},2r\}$, let $u_0\in H^s(\mathbb{R}^n)\cap L^2(|x_j|^{2r}\, dx)$. By Lemma \ref{comwellp}, there exist a time $T=T(\|u_0\|_{H^s})>0$, and a unique solution $u\in C([0,T];H^s(\mathbb{R}^d))$ of the IVP \eqref{ZK} with initial condition $u_0$. Thus, to deduce Lemma \ref{wellposweighted}, it remains to show
\begin{equation}\label{Contdecay}
    u\in C([0,T];L^2(|x_j|^{2r}\, dx)),
\end{equation}
and the continuous dependence in the norm $L^2(|x_j|^{2r}\, dx)$. Let us first establish \eqref{Contdecay}.  By density, the continuity of the flow map data-to-solution in Lemma \ref{wellposweighted}, and the proof of Claim \ref{claimSmooth}, there exist a sequence of initial conditions $u_{0,k}\in \mathcal{S}(\mathbb{R}^n)$ with corresponding solution $u_k\in C([0,T];\mathcal{S}(\mathbb{R}^n))$  such that
\begin{equation*}
\begin{aligned}
& u_{0,k}\xrightarrow[k \to \infty]{} u_0 \qquad \text{ in } H^s(\mathbb{R}^n)\cap L^2(|x_j|^{2r}\, dx),\\
& u_k \xrightarrow[k \to \infty]{} u \qquad \quad \text{ in } C([0,T];H^s(\mathbb{R}^n)).
\end{aligned}
\end{equation*}
To get \eqref{Contdecay}, we will use identity \eqref{weightedEneres1} with $w=\langle x_j\rangle^{r}$. Since at this point that identity is not justified for the function $u$, we use instead the sequence $u_k$ of smooth solutions with fast decay.  We first observe that \eqref{weightedEnereseq1} with $w=\langle x_j\rangle^{r}$ and $r>\frac{1}{2}$ satisfies
\begin{equation}\label{weightseq1}
    \begin{aligned}
    \Big|\int \partial_{x_1}\Delta u_k (\langle x_j \rangle^{2r} u_k)\, \mathrm{d}x\Big|\lesssim & \|\langle x_j\rangle^{r-\frac{1}{2}}\nabla u_k\|_{L^2}^2+\|\langle x_j\rangle^{r} u_k\|_{L^2}^2\\
    \lesssim & \|J(\langle x_j\rangle^{r-\frac{1}{2}} u_k)\|_{L^2}^2+\|\langle x_j\rangle^{r} u_k\|_{L^2}^2\\
       \lesssim & \|J^{2r} u_k\|_{L^2}^2+\|\langle x_j\rangle^{r} u_k
       \|_{L^2}^2,
    \end{aligned}
\end{equation}
where we have used the interpolation inequality Lemma \ref{InterpLemma} with $\sigma=e_j$, $\omega=0$. In particular, notice that the last line in \eqref{weightseq1} can be directly justified if $0<r\leq \frac{1}{2}$. Thus, the above estimate, \eqref{weightedEnereseq2} and the differential identity \eqref{weightedEneres1}  yield
\begin{equation*}
\begin{aligned}
\frac{d}{dt}\|\langle x_j\rangle^r u_k(t)\|_{L^2}^{2}\leq c_0 \|J^{2r} u_k(t)\|_{L^2}^2+c_0(1+\|\partial_{x_1}u_k(t)\|_{L^{\infty}})\|\langle x_j\rangle^{r} u_k(t)\|_{L^2}^2,
\end{aligned}    
\end{equation*}
for some constant $c_0>0$ independent of $k$. Hence Gronwall's inequality allows us to deduce
\begin{equation*}
    \begin{aligned}
    \|\langle x_j \rangle^n u_k(t)\|_{L^2}^2\leq \Big(\|\langle x_j \rangle^n u_{0,k}\|_{L^2}+ c_0 t(\sup_{t\in [0,T]}\|u_k(t)\|_{H^{2r}}) \Big)e^{c_1\int_0^t (1+\|\partial_{x_1}u_k(\tau)\|_{L^{\infty}})\, d\tau}.
    \end{aligned}
\end{equation*}
For some constants $c_0,c_1>0$ independent of $k$. By Sobolev embedding $\|\partial_{x_1}u_k\|_{L^{\infty}}\lesssim \|u\|_{H^s}$, then by taking $k\to \infty$ in the above inequality, we infer
\begin{equation*}
    \begin{aligned}
u\in L^{\infty}([0,T];L^2(|x|^{2r}\, dx)).
    \end{aligned}
\end{equation*}
The continuity of $u$ (that is \eqref{Contdecay}) can be deduced by similar arguments in the proof of Claim \ref{claimSmooth}. Finally, the continuous dependence in $L^2(|x|^{2r}\, dx)$ follows from this same property in $H^s$ given by Lemma \ref{comwellp}, together with approximation by continuous smooth solutions (Claim \ref{claimSmooth}), and similar arguments as above. The proof is complete.
\section{Pseudo-Differential Operators}\label{Appendix} 
In this section, we prove the continuity and asymptotic decomposition of the class of pseudo-differential operators introduced in Section \ref{pseudoSection}. These results follow by similar considerations in \cite{Hormander2007} (see also \cite{AlinhacGerard2007,Raymond1991,Taylor1981}). However, for the sake of completeness, and to clarify the dependence of the constants involved, we present the proof of Lemma \ref{contiprop} and Proposition \ref{PO1} here.
\begin{proof}[Proof of Lemma \ref{contiprop}]
We divide the proof into two main cases.

\underline{\sc Case $m=0$ and $q=0$.} In this case, $a$ is a symbol in the Kohn-Nirenberg class see \eqref{kn}. In detail, $a\in \mathbb{S}_{\sigma,\omega}^{0,0}\subseteq\mathbb{S}^{0}$. 
Nevertheless,  for $a\in \mathrm{OP}\mathbb{S}^{0}$ is well known  the continuity of operators $\Psi_{a}$ in the Lebesgue spaces $L^{p}(\mathbb{R}^{n})$ for $p\in (1,\infty).$ 
 Thus, 
\begin{equation*}
	\|\Psi_{a}f\|_{L^{p}}\leq c\|f\|_{L^{p}},\quad \mbox{for}\quad f\in\mathcal{S}(\mathbb{R}^{n}).
\end{equation*}
for  a more detailed exposition and results on the class $\mathbb{S}^{0}$ see, e.g., \cite{Hormander2007} or \cite{Taylor1981}.

\underline{\sc Case $m,q\in\mathbb{R}$ with\,  $m,q\neq 0$.} Let $f\in\mathcal{S}(\mathbb{R}^{n})$, then 
\begin{equation*}
	\begin{split}
		\|\Psi_{a}f\|_{L^{p}}&=\sup_{g\in\mathcal{S}(\mathbb{R}^{n}),\, \|g\|_{L^{p'}}\leq 1}\left|\int_{\mathbb{R}^{n}} g(x)(\Psi_{a}J^{-m}J^{m}f)(x)\,\mathrm{d}x\right|\\
		&=\sup_{g\in\mathcal{S}(\mathbb{R}^{n}),\, \|g\|_{L^{p'}}\leq 1}\left|\int_{\mathbb{R}^{n}}\langle \sigma\cdot x+\omega\rangle^{q}J^{m}f(x)\left(\frac{\overline{(J^{-m}\Psi_{a^{*}} \overline{g})(x)}}{\langle \sigma\cdot x+\omega\rangle^{q}}\right)\mathrm{d}x\right|\\
		&=\sup_{g\in\mathcal{S}(\mathbb{R}^{n}),\,\|g\|_{L^{p'}}\leq 1}\left|\int_{\mathbb{R}^{n}}\langle \sigma\cdot x+\omega\rangle^{q} J^m f(x)(\Psi_{b}g)(x)\mathrm{d}x\right|,
	\end{split}
\end{equation*}
where $\Psi_{b}\in\mathrm{OP}\mathbb{S}_{\sigma,\omega}^{0,0}\subseteq\mathrm{OP}\mathbb{S}^{0},$ and $\Psi_{a^{\ast}}\in \mathrm{OP}\mathbb{S}_{\sigma,\omega}^{m,q}$ is the adjoint pseudo-differential operator of  $\Psi_a$, which is defined as in the classical theory of pseudo-differential operators, see \cite{Hormander2007}. Thus, combining H\"older's inequality and the continuity of $\Psi_{b}$, we obtain 
\begin{equation*}
\begin{aligned}
\|\Psi_{a}f\|_{L^{p}}&\leq \sup_{g\in\mathcal{S}(\mathbb{R}^{n}),\|g\|_{L^{p'}}\leq 1}\|	\langle \sigma\cdot x+\omega\rangle^{q}J^m f\|_{L^{p}}\|\Psi_{b}g\|_{L^{p'}}\\
&\leq c\|	\langle \sigma\cdot x+\omega\rangle^{q} J^m f\|_{L^{p}}.     
\end{aligned}
\end{equation*}
The proof of the continuity results in Lemma \ref{contiprop} is complete.
\end{proof}
Next, we deduce the asymptotic expansion formula in Proposition \ref{PO1}.

\begin{proof}[Proof of Proposition \ref{PO1}]
Given  $a\in \mathbb{S}_{\sigma,\omega}^{m_1,q_1}$ and $b\in\mathbb{S}_{\sigma,\omega}^{m_2,q_2}$, it is enough to show $\Psi_c=\Psi_a \Psi_b$ where 
\begin{equation*}
c \approx \sum_{|\beta|\geq 0 }\frac{1}{(2\pi i)^{|\beta|} \beta !}(\partial^{\beta}_{\xi}a)\cdot(\partial^{\beta}_{x}b)
\end{equation*}
for all $N \geq 2$. Once we have established the previous asymptotic expansion, the desired result follows by subtracting the expansion of $\Psi_a \Psi_b$ from that of $\Psi_b \Psi_a$. 

Let $\phi \in C^{\infty}_c(\mathbb{R}^n)$ with $\phi \equiv 1$ in a neighborhood of the origin. For each integer $k \geq 2$, we define
\begin{equation*}
\begin{aligned}
&a_k(x,\xi)=\phi\left(\frac{x}{k}\right
)\phi\left(\frac{\xi}{k}\right)a(x,\xi), \\
&b_k(x,\xi)=\phi\left(\frac{x}{k}\right
)\phi\left(\frac{\xi}{k}\right
) b(x,\xi),
\end{aligned}
\end{equation*}
and let $\Psi_{a_k}$ and $\Psi_{b_k}$ be the associated pseudo-differential operators, respectively. Then, since $a_k$ and $b_k$ are Schwartz functions, we have
\begin{equation}
\begin{aligned}
\Psi_{a_k} \Psi_{b_k} f(x)=\int_{\mathbb{R}^{n}}c(x,\xi) \widehat{f}(\xi)e^{2\pi i x\cdot \xi} \, \mathrm{d}\xi,
\end{aligned}
\end{equation}
where
\begin{equation}
\begin{aligned}
c(x,\xi)=\int_{\mathbb{R}^{n}\times \mathbb{R}^n} a_k(x,\xi-\eta)b_k(x-y,\xi)e^{-2\pi iy \cdot \eta} \, \mathrm{d}y \mathrm{d}\eta.
\end{aligned}
\end{equation}
Now, by Taylor's formula, for each $N \geq 1$,
\begin{equation}\label{eqconm0}
\begin{aligned}
a_k(x,\xi-\eta)&= \sum_{|\beta|< N}\frac{(-\eta)^{\beta}}{\beta!}\partial^{\beta}_{\xi}a_k(x,\xi)+\sum_{|\beta|=N} \frac{|\beta| (-\eta)^{\beta}}{\beta!} R_{1,\beta}(x,\xi,\eta),\\
b_k(x-y,\xi)&= \sum_{|\beta|< N}\frac{(-y)^{\beta}}{\beta!}\partial^{\beta}_{x}b_k(x,\xi)+\sum_{|\beta|=N} \frac{|\beta| (-y)^{\beta}}{\beta!} R_{2,\beta}(x,y,\xi),
\end{aligned}
\end{equation}
where the remainders of the previous expansions are given by
\begin{equation}
\begin{aligned}
R_{1,\beta}(x,\xi,\eta)&=\int_0^1 (1-\mathbf{s})^{|\beta|-1}\partial^{\beta}_{\xi}a_{k}(x,\xi-\mathbf{s}\eta) \, \mathrm{d}\mathbf{s},\\
R_{2,\beta}(x,y,\xi)&= \int_0^1 (1-\mathbf{s})^{|\beta|-1}\partial^{\beta}_{x}b_{k}(x-\mathbf{s}y,\xi) \, \mathrm{d}\mathbf{s}.
\end{aligned}
\end{equation}
The previous oscillatory integral is understood in the sense
\begin{equation}\label{eqconm1.1}
\begin{aligned}
\lim_{\epsilon \to 0} \int  e^{-2\pi i y\cdot \eta} \frac{y^{\alpha}}{\alpha!}\frac{\eta^{\beta}}{\beta!} \phi(\epsilon y)\phi(\epsilon \eta)\, \mathrm{d} y  \mathrm{d} \eta. 
\end{aligned}
\end{equation}
Thus
\begin{equation}\label{eqconm2}
\begin{aligned}
c(x,&\xi)\\
=&\sum_{|\beta|<N} \frac{(-i)^{|\beta|}}{(2\pi)^{|\beta|}\beta!} \partial_x^{\beta}a_k(x,\xi)\partial_{\xi}^{\beta}b_k(x,\xi)\\
&+\sum_{\substack{|\beta|<N \\ |\alpha|=N}} \Big( \int e^{-2\pi i y \cdot \eta} |\alpha|\frac{(-\eta)^{\beta}}{\beta!}\frac{(-y)^{\alpha}}{\alpha!}R_{2,\alpha}(x,y,\xi)\, \mathrm{d} y \mathrm{d} \eta \Big) \partial_{\xi}^{\beta}a_k(x,\eta)\\
&+\sum_{\substack{|\beta|<N \\ |\alpha|=N}} \Big( \int e^{-2\pi i y \cdot \eta} |\alpha|\frac{(-y)^{\beta}}{\beta!}\frac{(-\eta)^{\alpha}}{\alpha!}R_{1,\alpha}(x,\xi,\eta)\, \mathrm{d} y \mathrm{d} \eta \Big) \partial_x^{\beta}b_k(x,\eta)\\
&+\sum_{\substack{|\beta|=N \\ |\alpha|=N}} \int e^{-2\pi i y \cdot \eta} N^2\frac{(-\eta)^{\beta}}{\beta!}\frac{(-y)^{\alpha}}{\alpha!}R_{1,\beta}(x,\xi,\eta)R_{2,\alpha}(x,y,\xi)\, \mathrm{d} y \mathrm{d} \eta.
\end{aligned}
\end{equation}
We emphasize that the second, third, and fourth integrals of the above expression are defined by a limit process as in \eqref{eqconm1.1}. Let us further clarify this statement. We use the inequality $\langle z-z_1 \rangle^s \lesssim \langle z\rangle^s \langle z_1 \rangle^{|s|}$ for any $s\in \mathbb{R}$ to deduce
\begin{equation}\label{eqconm3}
\begin{aligned}
|\partial_{\eta}^{\gamma}R_{1,\beta}(x,\xi,\eta)|&\lesssim \langle \sigma \cdot x+\omega \rangle^{q_1}\langle \xi \rangle^{m_1-|\beta|-|\gamma|}\langle \eta \rangle^{|m_1-|\beta|-|\gamma||}\\
&\lesssim \langle \sigma \cdot x+\omega \rangle^{q_1}\langle \xi \rangle^{m_1-|\beta|-|\gamma|}(1+|\eta|^2+|y|^2 )^{\frac{1}{2}|m_1-|\beta|-|\gamma||}
\end{aligned}
\end{equation}
and 
\begin{equation}\label{eqconm4}
\begin{aligned}
|\partial_y^{\gamma}R_{2,\alpha}(x,y,\xi)|&\lesssim \langle \sigma\cdot x+\omega \rangle^{q_2-|\alpha|-|\gamma|}\langle \sigma\cdot y+\omega \rangle^{|q_2-|\alpha|-|\gamma||}\langle \xi \rangle^{m_2}\\
&\lesssim \langle \sigma\cdot x+\omega \rangle^{q_2-|\alpha|-|\gamma|}\langle \xi \rangle^{m_2}\big(1+|\eta|^2 +|y|^2 \big)^{\frac{1}{2}|q_2-|\alpha|-|\gamma||},
\end{aligned}
\end{equation}
the above estimates are valid for any multi-index $\gamma$. Thus, \eqref{eqconm3}, \eqref{eqconm4}, and the fact that $y \cdot \eta$ determines a nondegenerate quadratic form allow us to apply the theorem of existence of oscillatory integrals (see for instance \cite[Theorem 2.3]{Raymond1991}, or \cite[Appendix 8]{AlinhacGerard2007}) to justify the validity of the identity \eqref{eqconm2}.

Next, for any $|\beta|<N$ and $|\alpha|=N$, we observe
\begin{equation*}
\begin{aligned}
\Big|\int e^{-2\pi i y \cdot \eta} \eta^{\beta}y^{\alpha} & R_{2,\alpha}(x,y,\xi)  \phi(\epsilon \eta)\, \mathrm{d} y \mathrm{d} \eta \Big|\\
&=\Big|c_{\beta}\epsilon^{|\alpha|-|\beta|-n} \int \widehat{\partial^{\alpha}(\eta^{\beta}\phi)}\Big(\frac{y}{\epsilon} \Big )R_{2,\alpha}(x,y,\xi)\, \mathrm{d} y \Big| \\
& \lesssim \epsilon^{|\alpha|-|\beta|}\|\langle y \rangle^{|q_2|}\widehat{\partial^{\alpha}\big(\eta^{\beta}\phi \big)}(y)\|_{L^1}\langle \sigma\cdot x+\omega \rangle^{q_2-|\alpha|}\langle \xi \rangle^{m_2}.
\end{aligned}
\end{equation*}
Since $|\beta|<|\alpha|$, the above expression seeing as an operator acting on an appropriated space (e.g., acting on the Schwartz class of functions) goes to zero as $\epsilon \to 0$.  Likewise, we derive the same conclusion for the third term on the right-hand side of \eqref{eqconm2}. Thus, the right-hand side of \eqref{eqconm2} is reduced to the first and last term. Consequently, we proceed to estimate the latter term.

We consider two fixed multi-indexes $\alpha$, $\beta$ of order $N$. 
Integrating by parts
\begin{equation*}
\begin{aligned}
 \int e^{-2\pi i y \cdot \eta} \eta^{\beta} y^{\alpha} & R_{1,\beta}(x,\xi,\eta)R_{2,\alpha}(x,y,\xi)\, \mathrm{d} y \mathrm{d} \eta \\
&\sim \sum_{\substack{\gamma \leq \beta }}\int e^{-2\pi i y \cdot \eta}  \partial_{\eta}^{\alpha-\gamma}R_{1,\beta}(x,\xi,\eta)\partial_y^{\beta-\gamma}R_{2,\alpha}(x,y,\xi)\, \mathrm{d} y \mathrm{d} \eta,
\end{aligned}
\end{equation*}
where we have omitted the constant in the previous estimate as it is not relevant to our considerations. Again, we remark that the above identity and the integration by part involved in its deduction are obtained by incorporating the function $\phi(\epsilon \eta)\phi(\epsilon y)$ to the integral, and taking the limit $\epsilon \to 0$, we omit these details.  Consequently, the previous expression, \eqref{eqconm3}, \eqref{eqconm4}, and the theorem of existence of oscillatory integrals applied with amplitude $\partial_{\eta}^{\alpha-\gamma}R_{1,\beta}(x,\xi,\eta)\partial_y^{\beta-\gamma}R_{2,\alpha}(x,y,\xi)$ yield
\begin{equation*}
\begin{aligned}
\Big|\int e^{-2\pi i y \cdot \eta} \eta^{\beta} y^{\alpha}  R_{1,\beta}(x,\xi,\eta)&R_{2,\alpha}(x,y,\xi)\, \mathrm{d} y \mathrm{d} \eta \Big| \\
&\lesssim \langle \sigma \cdot x+\omega\rangle^{q_1+q_2-N}\langle \xi \rangle^{m_1+m_2-N}.
\end{aligned}
\end{equation*}
This shows the required asymptotic expansion for the approximations $\Psi_{a_k}\Psi_{b_k}$. The estimate for $\Psi_{a} \Psi_{b}$ is obtained by taking the limit $k \to \infty$. This argument follows by rather similar considerations in \cite[Theorem 18.1.8]{Hormander2007}, we omit this analysis.
\end{proof}


\section*{Declarations}

{\bf Conflict of interest}. On behalf of all authors, the corresponding author states that there is no conflict of interest.


\bibliographystyle{abbrv}
\bibliography{References}

\end{document}